\numberwithin{equation}{section} 
\numberwithin{figure}{section} 
\theoremstyle{plain}
\newtheorem{thm}{Theorem}[section]
  \theoremstyle{definition}
  \newtheorem{defn}[thm]{Definition}
  \theoremstyle{definition}
  \theoremstyle{remark}
  \newtheorem{rem}[thm]{Remark}
  \theoremstyle{plain}
  \newtheorem{prop}[thm]{Proposition}
  \theoremstyle{plain}
  \newtheorem{cor}[thm]{Corollary}
  \theoremstyle{plain}
  \newtheorem{lemma}[thm]{Lemma}
  \theoremstyle{remark}
  \theoremstyle{plain}
  \theoremstyle{plain}
  \theoremstyle{remark}
  \newtheorem{war}[thm]{Warning}
\newcommand\phantomarrow[2]{%
  \setbox0=\hbox{$\displaystyle #1\to$}%
  \hbox to \wd0{%
    $#2\mapstochar
     \cleaders\hbox{$\mkern-1mu\relbar\mkern-3mu$}\hfill
     \mkern-7mu\rightarrow$}%
  \,}
\newcommand*{\doublerightarrow}[2]{\mathrel{
  \settowidth{\@tempdima}{$\scriptstyle#1$}
  \settowidth{\@tempdimb}{$\scriptstyle#2$}
  \ifdim\@tempdimb>\@tempdima \@tempdima=\@tempdimb\fi
  \mathop{\vcenter{
    \offinterlineskip\ialign{\hbox to\dimexpr\@tempdima+1em{##}\cr
    \rightarrowfill\cr\noalign{\kern.5ex}
    \rightarrowfill\cr}}}\limits^{\!#1}_{\!#2}}}
\newcommand*{\triplerightarrow}[1]{\mathrel{
  \settowidth{\@tempdima}{$\scriptstyle#1$}
  \mathop{\vcenter{
    \offinterlineskip\ialign{\hbox to\dimexpr\@tempdima+1em{##}\cr
    \rightarrowfill\cr\noalign{\kern.5ex}
    \rightarrowfill\cr\noalign{\kern.5ex}
    \rightarrowfill\cr}}}\limits^{\!#1}}}
\newcommand\blfootnote[1]{%
  \begingroup
  \renewcommand\thefootnote{}\footnote{#1}%
  \addtocounter{footnote}{-1}%
  \endgroup
}
\title{Derived Moduli of Complexes and Derived Grassmannians}
\author{Carmelo Di Natale}
\date{}
\begin{document}

\maketitle

\abstract{In the first part of this paper we construct a model structure for the category of filtered cochain complexes of modules over some commutative ring $R$ and explain how the classical Rees construction relates this to the usual projective model structure over cochain complexes. The second part of the paper is devoted to the study of derived moduli of sheaves: we give a new proof of the representability of the derived stack of perfect complexes over a proper scheme and then use the new model structure for filtered complexes to tackle moduli of filtered derived modules. As an application, we construct derived versions of Grassmannians and flag varieties.}

\tableofcontents{}

\section*{Introduction}
Recent\blfootnote{This work was supported by the Engineering and Physical Sciences Research Council [grant number EP/I004130/1].} developments in Derived Algebraic Geometry have lead many mathematicians to revise their approach to Moduli Theory: in particular one of the most striking results in this area is certainly \emph{Lurie Representability Theorem} -- proved by Lurie in 2004 as the main result of his PhD thesis \cite{Lu1} -- which provides us with an explicit criterion to check whether a simplicial presheaf over some $\infty$-category of derived algebras gives rise to a derived geometric stack. Unfortunately the conditions in Lurie's result are often quite complicated to verify in concrete derived moduli problems involving algebro-geometrical structures, so for several years a rather narrow range of derived algebraic stacks have actually been constructed: in particular the most significant example known was probably the locally geometric derived stack of perfect complexes over a smooth proper scheme $X$, which was firstly studied by To\" en and Vaqui\' e in 2007 (see \cite{TVa}) without using any representability result \emph{\` a la} Lurie. Nonetheless a few years later Pridham developed in \cite{Pr5} several new representability criteria for derived geometric stacks which have revealed to be more suitable to tackle moduli problems arising in Algebraic Geometry, as he himself showed in \cite{Pr1} where he used such criteria to construct a variety of derived moduli stacks for schemes and (complexes of) sheaves. In \cite{Ha-LePr} Halpern-Leistner and Preygel have also recovered To\" en and Vaqui\' e's result by using some generalisation of \emph{Artin Representability Theorem} for ordinary algebraic stacks (see \cite{Art}), though their approach is not based on Pridham's theory, while in \cite{Pa} Pandit generalised it to non-necessarily smooth schemes by studying the derived moduli stack of compact objects in a perfect symmetric monoidal infinity-category.\\
In this paper we give a third proof of existence and local geometricity of derived moduli for perfect complexes by means of Pridham's representability and then look at derived moduli of filtered perfect complexes: our main result is Theorem \ref{filt Perf repr}, which essentially shows that filtered perfect complexes of $\mathscr O_X$-modules -- where $X$ is a proper scheme -- are parametrised by a locally geometric derived stack. In our strategy a key ingredient to tackle derived moduli of filtrations -- in addition to Pridham's representability -- is a good Homotopy Theory of filtered modules in complexes: for this reason the first part of this paper is devoted to construct a satisfying model structure on the category $\mathfrak{FdgMod}_R$, which is probably an interesting matter in itself. Theorem \ref{fdg model thm} shows that $\mathfrak{FdgMod}_R$ is endowed with a natural cofibrantly generated model structure and Theorem \ref{Rprops} proves that this is nicely related to the standard projective model structure on $\mathfrak{dgMod}_R$ via the Rees construction. In the end, we conclude this paper by constructing derived versions of Grassmannians and flag varieties, which are obtained as suitable homotopy fibres of the derived stack of filtrations over the derived stack of complexes. \\
$\mathbf{Acknowledgements}$ --- The author does wish to thank his PhD supervisor Jonathan P. Pridham for suggesting the problem and for his constant support and advise along all the preparation of this paper. The author is also deeply indebted to Domenico Fiorenza, Ian Grojnowski, Julian V. S. Holstein, Donatella Iacono, Dominic Joyce, Marco Manetti and Elena Martinengo for several inspiring discussions about Grassmannians and flag varieties.

\section{Homotopy Theory of Filtered Structures}

This chapter is devoted to the construction of a good homotopy theory for filtered cochain complexes; for this reason we will first recall the standard projective model structure on cochain complexes and then use it to define a suitable one for filtered objects. At last, we will also study the Rees functor from a homotopy-theoretic viewpoint and see that it liaises coherently dg structures with filtered cochain ones.
\subsection{Background on Model Categories}
This section is devoted to review a few complementary definitions and results in Homotopy Theory which will be largely used in this paper; we will assume that the reader is familiar with the notions of model category, simplicial category and differential graded category: references for them include \cite{DS}, \cite{GJ}, \cite{Hir}, \cite{Ho}, \cite{Qui} and \cite{Toe2}, while \cite{GS} provides a very clear and readable overview.\\
Let $\mathfrak C$ be a complete and cocomplete category and $I$ a class of morphisms in $\mathfrak C$; recall from \cite{Ho} that:
\begin{enumerate}
\item a map is $I$\emph{-injective} if it has the right lifting property with respect to every map in $I$ (denote by $I$-inj the class of $I$-injective morphisms in $\mathfrak C$);
\item a map is $I$\emph{-projective} if it has the left lifting property with respect to every map in $I$ (denote by $I$-proj the class of $I$-projective morphisms in $\mathfrak C$);
\item a map is an $I$\emph{-cofibration} if it has the left lifting property with respect to every $I$-injective map (denote by $I$-cof the class of $I$-cofibrations in $\mathfrak C$);
\item a map is an $I$\emph{-fibration} if it has the right lifting property with respect to every $I$-projective map (denote by $I$-fib the class of $I$-fibrations in $\mathfrak C$);
\item a map is a \emph{relative }$I$\emph{-cell complex} if it is a transfinite composition of pushouts of elements of $I$ (denote by $I$-cell the class of $I$-cell complexes).
\end{enumerate} 
The above classes of morphisms satisfy a number of comparison relations: in particular we have that:
\begin{itemize}
\item $I\text{-cof}=\left(I\text{-inj}\right)\text{-proj}$ and $I\text{-fib}=\left(I\text{-proj}\right)\text{-inj}$;
\item $I\subseteq I$-cof and $I\subseteq I$-fib;
\item $\left(I\text{-cof}\right)\text{-inj}=I\text{-inj}$ and $\left(I\text{-fib}\right)\text{-proj}=I\text{-proj}$;
\item $I\text{-cell}\subseteq I\text{-cof}$ (see \cite{Ho} Lemma 2.1.10);
\item if $I\subseteq J$ then $I\text{-inj}\supseteq J\text{-inj}$ and $I\text{-proj}\supseteq J\text{-proj}$, thus $I\text{-cof}\supseteq J\text{-cof}$ and $I\text{-fib}\supseteq J\text{-fib}$.
\end{itemize}
Fix some class $S$ of morphisms in $\mathfrak C$ and recall that an object $A\in\mathfrak C$ is said to be \emph{compact\footnote{In the language of \cite{Ho} compact objects are called $\aleph_0$-\emph{small}.} relative to $S$} if for all sequences
\begin{equation*}
C_0\longrightarrow C_1\longrightarrow\cdots\longrightarrow C_{n}\longrightarrow C_{n+1}\longrightarrow\cdots
\end{equation*}
such that each map $C_{n}\rightarrow C_{n+1}$ is in $S$, the natural map
\begin{equation*}
\underset{\underset{n}{\longrightarrow}}{\mathrm{lim}}\,\mathrm{Hom}_{\mathfrak C}\left(A,C_n\right)\longrightarrow \mathrm{Hom}_{\mathfrak C}\left(A,\underset{\underset{n}{\longrightarrow}}{\mathrm{lim}}\,C_{n}\right)
\end{equation*}
is an isomorphism; moreover $A$ is said to be \emph{compact} if it is compact relative to $\mathfrak C$.
\begin{defn} \label{cof gen mod cat}
A model category $\mathfrak C$ is said to be \emph{(compactly) cofibrantly generated}\footnote{The definition of cofibrantly generated model category as found in \cite{Ho} Section 2.1 is slightly more general than the one provided by Definition \ref{cof gen mod cat}, as it involves small objects rather than compact ones; anyway the proper definition requires some non-trivial Set Theory and moreover all examples we consider in this paper fit into the weaker notion determined by Definition \ref{cof gen mod cat}, so we will stick to this.} if there are sets $I$ and $J$ of maps such that:
\begin{enumerate}
\item the domains of the maps in $I$ are compact relative to $I$-cell;
\item the domains of the maps in $J$ are compact relative to $J$-cell;
\item the class of fibrations is $J$-inj;
\item the class of trivial fibrations is $I$-inj.
\end{enumerate}
$I$ is said to be the set of \emph{generating cofibrations}, while $J$ is said to be the set of \emph{generating trivial cofibrations}.
\end{defn}
Cofibrantly generated model categories are very useful as they come with a quite explicit characterisation of (trivial) fibrations and (trivial) cofibrations: this is exactly the content of the next result.
\begin{prop}
Let $\mathfrak C$ be a cofibrantly generated model category with $I$ and $J$ respectively being the set of generating cofibrations and generating trivial cofibrations. We have that:
\begin{enumerate}
\item the cofibrations form the class $I$-cof;
\item every cofibration is a retract of a relative $I$-cell complex;
\item the domains of $I$ are compact relative to the class of cofibrations;
\item the trivial cofibrations form the class $J$-cof;
\item every trivial cofibration is a retract of a relative $J$-cell complex;
\item the domains of $J$ are compact relative to the trivial cofibrations.
\end{enumerate} 
\end{prop}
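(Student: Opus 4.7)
The proof rests on two pillars: the formal comparison relations listed earlier (especially $(I\text{-inj})\text{-proj}=I\text{-cof}$ and $(J\text{-inj})\text{-proj}=J\text{-cof}$), and Quillen's small object argument, whose hypotheses are supplied by conditions (1)--(2) of Definition \ref{cof gen mod cat}. The plan is to package everything through a single factorisation-and-retract argument applied twice (once with $I$, once with $J$).

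First I would dispatch (1) and (4). By definition, a cofibration is a map with the left lifting property against all trivial fibrations, and by hypothesis (4) of Definition \ref{cof gen mod cat} the class of trivial fibrations is $I$-inj; hence the cofibrations coincide with $(I\text{-inj})\text{-proj}$, which we have already noted equals $I$-cof. The identical argument with $J$ in place of $I$, using hypothesis (3), shows that the trivial cofibrations form $J$-cof.

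Next, for (2) and (5), I would invoke the small object argument. Since the domains of $I$ are compact relative to $I$-cell, any morphism $f\colon X\to Y$ admits a factorisation $X\xrightarrow{i}Z\xrightarrow{p}Y$ with $i$ a relative $I$-cell complex and $p\in I\text{-inj}$; thus $p$ is a trivial fibration. If $f$ is itself a cofibration, then $f$ has the left lifting property against $p$, so a lift $r\colon Y\to Z$ exists with $ri=f$ and $pr=\mathrm{id}_Y$, which exhibits $f$ as a retract of the relative $I$-cell complex $i$. The same argument with $J$ produces (5).

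Finally, for (3) and (6), the issue is to upgrade compactness from the class $I$-cell to the larger class of all cofibrations. Given a sequence $C_0\to C_1\to\cdots$ of cofibrations and a domain $A$ of some map in $I$, one uses (2) to replace each $C_n\to C_{n+1}$ (functorially, via the small object argument) by a relative $I$-cell complex up to retract; passage to the sequential colimit commutes with retracts, and the $I$-cell compactness of $A$ identifies the two $\mathrm{Hom}$-colimits after retraction, hence before. Formally this is the standard observation that a retract of a natural isomorphism of set-valued functors is an isomorphism. The same scheme applied to $J$ gives (6). The only genuinely delicate step is this last one, where one must verify that the retract replacement can be made compatibly along the whole sequence; this is precisely the place where one leans on the functoriality of the small object argument factorisation.
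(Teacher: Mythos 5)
Parts (1), (4) and the reduction of (2), (5) to the small object argument are correct and are exactly the chain of reasoning underlying Hovey's Proposition~2.1.18, which is the paper's citation. One small slip in (2)/(5): the lift $r\colon Y\to Z$ in the retract argument satisfies $rf=i$ and $pr=\mathrm{id}_Y$, not $ri=f$ (the latter is not even composable, since $r$ and $i$ both land in $Z$).

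For (3) and (6), however, the step as written would fail, and functoriality of the small object argument does not by itself repair it. Applying the functorial factorisation to each transition map $C_n\to C_{n+1}$ yields $C_n\to E_n\to C_{n+1}$ with $C_n\to E_n$ in $I$-cell and $E_n\to C_{n+1}$ in $I$-inj; these $E_n$ do not splice into a sequence of relative $I$-cell complexes, because the composite $E_n\to C_{n+1}\to E_{n+1}$ passes through an $I$-injective map and has no reason to be an $I$-cell attachment, so no retract in the category of $\omega$-sequences falls out of this. What actually works (this is Hovey's Proposition~2.1.16, which the paper cites alongside 2.1.18) is an inductive construction: set $Z_0:=C_0$ with the identity retraction, and given a retract pair $i_n\colon C_n\rightleftarrows Z_n\colon r_n$, factor the composite $Z_n\xrightarrow{r_n}C_n\to C_{n+1}$ --- not the transition map itself --- by the small object argument as a relative $I$-cell complex $Z_n\to Z_{n+1}$ followed by an $I$-injective $Z_{n+1}\to C_{n+1}$, and then obtain the section $i_{n+1}\colon C_{n+1}\to Z_{n+1}$ as the lift against that $I$-injective (using that $C_n\to C_{n+1}$ is a cofibration, so it has the left lifting property against $I$-inj). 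This produces an honest retract $C_\bullet\to Z_\bullet\to C_\bullet$ of $\omega$-sequences with $Z_\bullet$ a sequence of relative $I$-cell complexes, after which your ``retract of a natural isomorphism'' observation does finish the proof.
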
 
\begin{proof}
See \cite{Ho} Proposition 2.1.18, which in turn relies on \cite{Ho} Corollary 2.1.15 and \cite{Ho} Proposition 2.1.16.
\end{proof}
The main reason we are interested in cofibrantly generated model categories is that they fit into a very powerful existence criterion -- essentially due to Kan and Quillen and then developed by many more authors -- which will be repeatedly used along this paper.
\begin{thm} \label{criterion model} \emph{(Kan, Quillen)}
Let $\mathfrak C$ be a complete and cocomplete category and $W$, $I$, $J$ three sets of maps. Then $\mathfrak C$ is endowed with a cofibrantly generated model structure with $W$ as the set of weak equivalences, $I$ as the set of generating cofibrations and $J$ as the set of generating trivial cofibrations if and only if:
\begin{enumerate}
\item the class $W$ has the two-out-of-three property and is closed under retracts;
\item the domains of $I$ are compact relative to $I$-cell;
\item the domains of $J$ are compact relative to $J$-cell;
\item $J$-cell$\,\subseteq W\cap I$-cof;
\item $I$-inj$\,\subseteq W\cap J$-inj;
\item either $W\cap I$-cof$\,\subseteq J$-cof or $W\cap J$-inj$\,\subseteq I$-inj.
\end{enumerate}
\end{thm}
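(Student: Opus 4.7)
The plan is to prove the two implications separately. The forward direction amounts to bookkeeping: (1) is part of the definition of any model structure, (2) and (3) are built into the notion of cofibrant generation, (5) merely restates that $I$-inj equals the class of trivial fibrations, (4) follows from $J$-cell$\,\subseteq J$-cof and the identification of $J$-cof with the trivial cofibrations, and (6) is automatic once the trivial cofibrations and trivial fibrations are known to coincide with $J$-cof and $I$-inj respectively.

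All the content lies in the converse. I would set cofibrations $:=I$-cof, fibrations $:=J$-inj, weak equivalences $:=W$, and verify the model category axioms in turn. The factorization axiom is handled by Quillen's small object argument: thanks to conditions (2) and (3), applied once to $I$ and once to $J$, every morphism $f$ admits functorial factorizations $f=p_I j_I$ with $j_I\in I$-cell$\,\subseteq I$-cof and $p_I\in I$-inj, and $f=p_J j_J$ with $j_J\in J$-cell$\,\subseteq J$-cof and $p_J\in J$-inj. Condition (5) forces $p_I\in W$, giving the cofibration/trivial fibration factorization, while condition (4) forces $j_J\in W\cap I$-cof, giving the trivial cofibration/fibration factorization.

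The lifting axiom is almost formal, since $(I$-cof$,I$-inj$)$ and $(J$-cof$,J$-inj$)$ are weak factorization systems by the general formalism of injectivity and projectivity. What actually needs checking is that $I$-cof, $J$-inj, $J$-cof, and $I$-inj agree with the model-theoretic classes of cofibrations, fibrations, trivial cofibrations, and trivial fibrations. The inclusions $I$-inj$\,\subseteq W$ (from (5)) and $J$-cof$\,\subseteq W$ (from (4), combined with closure of $W$ under retracts from (1), since every element of $J$-cof is a retract of a map in $J$-cell) cover one direction.

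The main obstacle — and the sole purpose of condition (6) — is the converse inclusion: every weak cofibration should lie in $J$-cof, and every weak fibration in $I$-inj. I would settle this by the classical \emph{retract argument}. Given $f\in W\cap I$-cof, factor it as $f=p\circ j$ with $j\in J$-cof and $p\in J$-inj via the small object argument on $J$; since $j\in W$ by (4), two-out-of-three from (1) gives $p\in W\cap J$-inj, which by the first alternative of (6) sits in $I$-inj. Because $f\in I$-cof and $p\in I$-inj, a diagonal filler exists in the square expressing $f=p\circ j$, exhibiting $f$ as a retract of $j$, whence $f\in J$-cof. The dual inclusion $W\cap J$-inj$\,\subseteq I$-inj is handled symmetrically by the other half of (6). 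With these identifications and (1) in hand, all model category axioms are satisfied. A detailed execution of this blueprint appears in \cite[Thm.~2.1.19]{Ho}.
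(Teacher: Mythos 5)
Your outline is correct and reconstructs the argument in \cite[Theorem~2.1.19]{Ho}, which is precisely what the paper offers in lieu of a proof. One small slip: in the retract argument for $W\cap I\text{-cof}\subseteq J\text{-cof}$, the implication $p\in W\cap J\text{-inj}\Rightarrow p\in I\text{-inj}$ that you invoke is the \emph{second} alternative in condition (6), not the first (and, relatedly, the small object argument on $J$ yields $j\in J\text{-cell}$, which is what lets you apply (4) directly to conclude $j\in W$).
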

\begin{proof}
See \cite{Ho} Theorem 2.1.19.
\end{proof}
Theorem \ref{criterion model} is a great tool in order to construct new model categories; moreover if we are given a cofibrantly generated model category we can often induce a good homotopy theory over other categories: this is the content of the following result, which again is essentially due to Kan and Quillen.
\begin{thm} \label{Hirschhorn} \emph{(Kan, Quillen)}
Let $\mathbf F:\mathfrak C\rightleftarrows\mathfrak D:\mathbf G$ be an adjoint pair of functors and assume $\mathfrak C$ is a cofibrantly generated model category, with $I$ as set of generating cofibrations, $J$ as set of generating trivial fibrations and $W$ as set of weak equivalences. Suppose further that:
\begin{enumerate}
\item $\mathbf G$ preserves sequential colimits;
\item $\mathbf G$ maps relative $\mathbf FJ$-cell\footnote{Of course, if $S$ is a set of morphisms in $\mathfrak C$, $\mathbf FS$ will denote the set $$\mathbf FS:=\left\{\mathbf Ff\text{ s.t. }f\in S\right\}.$$} complexes to weak equivalences.
\end{enumerate}
Then the category $\mathfrak D$ is endowed with a cofibrantly generated model structure where $\mathbf FI$ is the set of generating cofibrations, $\mathbf FJ$ is the set of generating trivial cofibrations and $\mathbf FW$ as set of weak equivalences. Moreover $\left(\mathbf F,\mathbf G\right)$ is a Quillen pair with respect to these model structures.
\end{thm}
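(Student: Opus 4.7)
The strategy is to apply the Kan--Quillen recognition criterion (Theorem \ref{criterion model}) to $\mathfrak{D}$ with the two sets $\mathbf{F}I$ and $\mathbf{F}J$, declaring a map $f$ of $\mathfrak{D}$ to be a weak equivalence precisely when $\mathbf{G}f\in W$. The whole argument is driven by the adjoint isomorphism $\mathrm{Hom}_{\mathfrak D}(\mathbf{F}A,-)\cong\mathrm{Hom}_{\mathfrak C}(A,\mathbf{G}-)$, which by the standard adjoint lifting trick yields the dictionary
$$\mathbf{F}I\text{-inj}=\{f:\mathbf{G}f\in I\text{-inj}\},\qquad \mathbf{F}J\text{-inj}=\{f:\mathbf{G}f\in J\text{-inj}\};$$
dually, since $\mathbf{F}$ is a left adjoint it sends $I$-cofibrations to $\mathbf{F}I$-cofibrations, and in particular $\mathbf{F}J\subseteq\mathbf{F}I\text{-cof}$.

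Two-out-of-three and retract closure for the transferred weak equivalences follow at once from the same properties of $W$ and functoriality of $\mathbf{G}$, so condition (1) of Theorem \ref{criterion model} is immediate. For the smallness conditions (2) and (3), let $A$ be the domain of a map in $I$ and consider a tower $\mathbf{F}A\to C_0\to C_1\to\cdots$ whose structure maps lie in $\mathbf{F}I\text{-cell}$. Applying $\mathbf{G}$ and using hypothesis (1) to commute $\mathbf{G}$ past the sequential colimit, the adjunction gives
$$\mathrm{Hom}_{\mathfrak D}(\mathbf{F}A,\mathrm{colim}_n C_n)\cong\mathrm{Hom}_{\mathfrak C}(A,\mathrm{colim}_n\mathbf{G}C_n)\cong\mathrm{colim}_n\mathrm{Hom}_{\mathfrak C}(A,\mathbf{G}C_n)\cong\mathrm{colim}_n\mathrm{Hom}_{\mathfrak D}(\mathbf{F}A,C_n),$$
the middle isomorphism being the compactness of $A$. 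The same argument treats the domains of $J$.

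Conditions (4) and (5) are now essentially formal. For (4), the inclusion $\mathbf{F}J\text{-cell}\subseteq\mathbf{F}I\text{-cof}$ follows from $\mathbf{F}J\subseteq\mathbf{F}I\text{-cof}$ and the stability of $\mathbf{F}I\text{-cof}$ under transfinite pushouts, while $\mathbf{F}J\text{-cell}\subseteq W_{\mathfrak D}$ is exactly hypothesis (2). For (5), a map $f\in\mathbf{F}I\text{-inj}$ corresponds to $\mathbf{G}f$ being a trivial fibration of $\mathfrak{C}$, which is simultaneously a weak equivalence (so $f$ is a transferred weak equivalence) and a fibration (so $f\in\mathbf{F}J\text{-inj}$). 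For condition (6) I choose the second alternative: if $f\in W_{\mathfrak D}\cap\mathbf{F}J\text{-inj}$ then $\mathbf{G}f$ is both a fibration and a weak equivalence, hence a trivial fibration, so $f\in\mathbf{F}I\text{-inj}$. Theorem \ref{criterion model} thus applies; moreover the dictionary shows that $\mathbf{G}$ reflects both fibrations and trivial fibrations, so $(\mathbf{F},\mathbf{G})$ is automatically a Quillen pair.

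The main subtlety, and the only step that is not purely formal, is the smallness argument above: the hypothesis on $\mathfrak{C}$ only gives compactness of the domains of $I$ relative to $I\text{-cell}$, whereas the sequential tower obtained by applying $\mathbf{G}$ to a relative $\mathbf{F}I\text{-cell}$ complex is not a priori an $I\text{-cell}$ complex in $\mathfrak{C}$. In the concrete situations considered later in the paper $\mathbf{G}$ is a forgetful functor that does preserve generating cells, or else the domains of $I$ are compact in the absolute sense, so the above chain of isomorphisms is legitimate; one must however keep this tacit smallness requirement in mind whenever the theorem is invoked.
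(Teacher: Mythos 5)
The paper does not prove this statement at all; it simply cites Hirschhorn, Theorem 11.3.2. Your proof is a correct reconstruction of the standard transfer argument that Hirschhorn's proof also uses: transfer the weak equivalences along $\mathbf G$, use the adjoint lifting trick to obtain the dictionary $\mathbf F I\text{-inj}=\mathbf G^{-1}(I\text{-inj})$ and $\mathbf F J\text{-inj}=\mathbf G^{-1}(J\text{-inj})$, and then verify the six hypotheses of the recognition criterion (Theorem \ref{criterion model}). All six verifications you give are sound, and the observation that $\mathbf G$ preserving (trivial) fibrations via the dictionary gives the Quillen pair is the standard closing step.

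Two remarks. First, the statement as printed contains two slips you have silently and correctly repaired: ``generating trivial fibrations'' should read ``generating trivial cofibrations,'' and ``$\mathbf F W$ as set of weak equivalences'' should read ``maps $f$ with $\mathbf G f\in W$,'' which is what the transfer theorem actually delivers (and what you use). Second, your caveat about the smallness conditions is genuinely well-founded and worth retaining: the paper replaces Hirschhorn's hypothesis ``$\mathbf F I$ and $\mathbf F J$ permit the small object argument'' by ``$\mathbf G$ preserves sequential colimits,'' but the latter only yields the required compactness of the domains of $\mathbf F I$ relative to $\mathbf F I$-cell when one additionally knows that the domains of $I$ are compact against the towers $\mathbf G C_0\to\mathbf G C_1\to\cdots$ arising from $\mathbf F I$-cell towers, which Definition \ref{cof gen mod cat} does not guarantee since those towers need not be in $I$-cell. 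In the paper's applications (e.g.\ the Rees adjunction of Theorem \ref{Rprops}) the domains $D_R(n,p)$, $S_R(n,p)$ are compact in the absolute sense, so the argument goes through, but strictly speaking the stated hypotheses are a little weaker than what the conclusion requires, and you are right to flag that a tacit absolute-compactness or cell-preservation assumption is in play whenever the theorem is invoked.
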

\begin{proof}
See \cite{Hir} Theorem 11.3.2.
\end{proof}
The end of this section is devoted to review a famous comparison result due to Dold and Kan establishing an equivalence between the category of non-negatively graded chain complexes of $k$-vector spaces and that of simplicial $k$-vector spaces, which has very profound consequences in Homotopy Theory.
\begin{war}
Be aware that in the end of this section we will deal with (non-negatively graded) differential graded chain structures, while in the rest of the paper we will mostly be interested in cochain objects.
\end{war}
First of all, recall that the \emph{normalisation} of a simplicial $k$-vector space $\left(V,\partial_i,\sigma_j\right)$ is defined to be the non-negatively graded chain complex of $k$-vector spaces $\left(\mathbf NV,\delta\right)$ where
\begin{equation*}
\left(\mathbf NV\right)_n:=\bigcap_i\mathrm{ker}\left(\partial_i:V_n\rightarrow V_{n-1}\right)
\end{equation*}
and $\delta_n:=\left(-1\right)^n\partial_n$. Such a procedure defines a functor
\begin{equation*}
\mathbf N:\mathfrak{sVect}_k\longrightarrow\mathfrak{Ch}_{\geq 0}\left(\mathfrak{Vect}_k\right).
\end{equation*}
On the other hand, let $V$ be a chain complex of $k$-vector spaces and recall that its \emph{denormalisation} is defined to be the simplicial vector space $\left(\left(\mathbf KV\right),\partial_i,\sigma_j\right)$ given in level $n$ by the vector space 
\begin{equation*}
\left(\mathbf KV\right)_n:=\underset{\eta\text{ surjective}}{\underset{\eta\in\mathrm{Hom}_{\Delta}\left(\left[p\right],\left[n\right]\right)}{\prod}}V_p\left[\eta\right]\qquad\qquad\qquad \left(V_p\left[\eta\right]\simeq V_p\right).
\end{equation*}
\begin{rem}
Notice that
\begin{equation*}
\left(\mathbf KV\right)_n\simeq V_0\oplus V_1^{\oplus n}\oplus V_2^{\oplus\binom{n}{2}}\oplus\cdots\oplus V_k^{\oplus\binom{n}{k}}\oplus\cdots\oplus V_n^{\oplus\binom{n}{n}}.
\end{equation*}
\end{rem}
In order to complete the definition of the denormalisation of $V$ we need to define face and degeneracy maps: we will describe a combinatorial procedure to determine all of them. For all morphisms $\alpha:\left[m\right]\rightarrow\left[n\right]$ in $\Delta$, we want to define a linear map $\mathbf K\left(\alpha\right):\left(\mathbf KV\right)_n\rightarrow\left(\mathbf KV\right)_m$; this will be done by describing all restrictions $\mathbf K\left(\alpha,\eta\right):V_p\left[\eta\right]\rightarrow\left(\mathbf KV\right)_m$, for any surjective non-decreasing map $\eta\in\mathrm{Hom}_{\Delta}\left(\left[p\right],\left[n\right]\right)$. \\
For all such $\eta$, take the composite $\eta\circ\alpha$ and consider its epi-monic factorisation\footnote{The existence of such a decomposition is one of the key properties of the category $\Delta$.} $\epsilon\circ\eta'$, as in the diagram
\begin{equation*}
\tiny{\xymatrix{\left[m\right]\ar[r]^{\alpha}\ar[d]_{\eta'} & \left[n\right]\ar[d]^{\eta} \\
\left[q\right]\ar[r]^{\epsilon} & \left[p\right].} }
\end{equation*}
Now
\begin{itemize}
\item if $p=q$ (in which case $\epsilon$ is just the identity map), then set $\mathbf K\left(\alpha,\eta\right)$ to be the natural identification of $V_p\left[\eta\right]$ with the summand $V_p\left[\eta'\right]$ in $\left(\mathbf KV\right)_m$;
\item if $p=q+1$ and $\epsilon$ is the unique injective non-decreasing map from $\left[p\right]$ to $\left[p+1\right]$ whose image misses $p$, then set $\mathbf K\left(\alpha,\eta\right)$ to be the differential $d_p:V_p\rightarrow V_{p-1}$;
\item in all other cases set $\mathbf K\left(\alpha,\eta\right)$ to be the zero map.
\end{itemize}
The above setting characterises all the structure of the simplicial vector space $\left(\left(\mathbf KV\right),\partial_i,\sigma_j\right)$; again, such a procedure defines a functor 
\begin{equation*}
\mathbf K:\mathfrak{Ch}_{\geq 0}\left(\mathfrak{Vect}_k\right)\longrightarrow\mathfrak{sVect}_k.
\end{equation*}
\begin{thm} \label{Dold-Kan} \emph{(Dold, Kan)}
The functors $\mathbf N$ and $\mathbf K$ form an equivalence of categories between $\mathfrak{sMod}_k$ and $\mathfrak{Ch}_{\geq 0}\mathfrak{Mod}_k$. 
\end{thm}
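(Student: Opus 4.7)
The plan is to exhibit explicit natural isomorphisms $\eta:\mathrm{id}\Rightarrow \mathbf N \mathbf K$ and $\varepsilon:\mathbf K \mathbf N\Rightarrow \mathrm{id}$ directly from the combinatorial definitions; once these are in place, functoriality of $\mathbf N$ and $\mathbf K$ (already observed) upgrades them to an equivalence of categories.

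For $V\cong \mathbf N \mathbf K V$: the $V_n[\mathrm{id}_{[n]}]$ summand of $(\mathbf K V)_n$ gives a natural inclusion $V_n \hookrightarrow (\mathbf K V)_n$, and one checks from the recipe that it lands inside $\bigcap_{i<n}\ker \partial_i$, while $\partial_n$ sends it to $V_{n-1}[\mathrm{id}]$ via $\pm d$ (since the epi-monic factorization of $d^i\circ \mathrm{id}_{[n]}$ only gives $p=q+1$ when $i=n$). A case-by-case analysis on the remaining summands $V_p[\eta]$ with $\eta\ne \mathrm{id}$ shows that any non-zero combination is detected by some $\partial_i$, so the inclusion identifies $V_n$ with the whole Moore intersection and intertwines $d$ with $\delta_n=(-1)^n\partial_n$ up to sign.

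For $\mathbf K \mathbf N W\cong W$: the crucial structural input is the Eilenberg-Zilber decomposition, asserting that for every simplicial vector space $W$ the natural map
\begin{equation*}
\bigoplus_{\eta:[n]\twoheadrightarrow [p]}(\mathbf N W)_p\longrightarrow W_n,\qquad (x_\eta)\longmapsto\sum_\eta \eta^{*}(x_\eta),
\end{equation*}
is an isomorphism (the sum being taken over all surjective non-decreasing maps). Granted this, the left-hand side is exactly $(\mathbf K \mathbf N W)_n$, and comparing the recipe defining $\mathbf K(\alpha)$ with the actual simplicial operators of $W$ via the epi-monic factorization of $\eta\circ\alpha$ and the simplicial identities shows that this decomposition intertwines all faces and degeneracies, yielding $\varepsilon$.

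The main obstacle is the Eilenberg-Zilber decomposition itself, which I would establish by induction on $n$: first showing that $W_n=(\mathbf N W)_n \oplus D_n W$, where $D_n W:=\sum_{j<n}\sigma_j W_{n-1}$ is the degenerate subspace, by constructing an explicit projector onto $(\mathbf N W)_n$ as an alternating sum of $\sigma_j\partial_j$ composites (available because we are working in the abelian category $\mathfrak{Mod}_k$); and then recursively decomposing $D_n W$ along non-identity surjections $[n]\twoheadrightarrow[p]$ via the inductive hypothesis applied to the lower-dimensional pieces. The genuine technical work lies in the simplicial-identity bookkeeping needed to verify idempotency of the projectors and compatibility of the resulting splitting with face and degeneracy operators.
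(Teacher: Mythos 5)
The paper itself does not give a proof of Theorem \ref{Dold-Kan}, only a citation to \cite{GJ} Corollary 2.3 and \cite{We} Theorem 8.4.1; your outline reproduces, in sketch form, essentially the same standard argument found in those references, namely exhibiting the unit via the identity-surjection summand and establishing the counit via the splitting $W_n\cong\bigoplus_{\eta:[n]\twoheadrightarrow[p]}(\mathbf N W)_p$ (itself obtained from the normalized/degenerate decomposition $W_n\cong(\mathbf N W)_n\oplus D_nW$ and induction). The structure of the sketch is correct; the remaining work, as you note, is the simplicial-identity bookkeeping verifying the projectors and the compatibility with faces and degeneracies.
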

\begin{proof}
See \cite{GJ} Corollary 2.3 or \cite{We} Theorem 8.4.1.
\end{proof}
The Dold-Kan correspondence described in Theorem \ref{Dold-Kan} is known to induce a number of very interesting $\infty$-equivalences: for more details see for example \cite{dN}, \cite{GJ}, \cite{Tab2} and \cite{We}.
\subsection{Homotopy Theory of Cochain Complexes}
Fix a commutative unital ring $R$: in this section we will review the standard model structure by which one usually endows the category of (unbounded) cochain complexes $R$, i.e. the so-called \emph{projective model structure}; all the section is largely based on \cite{Ho} Section 2.3, where the homotopy theory of chain complexes over a commutative unital ring is extensively studied: actually all results, constructions and arguments we are about to discuss are essentially dual versions of the ones given there. \\
Recall that the category $\mathfrak{dgMod}_R$ of \emph{cochain complex of $R$-modules} (also referred as \emph{$R$-module in complexes}) is one of the main examples of abelian category: as a matter of fact it is complete and cocomplete (limits and colimits are taken degreewise), the complex $\left(0,0\right)$ defined by the trivial module in each degree gives the zero object and short exact sequences are defined degreewise; for more details see \cite{We}. \\
Let $\left(M,d\right)\in\mathfrak{dgMod}_R$: as usual, we define its $R$-module of $n$-cocycles to be $Z^n\left(M\right):=\ker\left(d^n\right)$, its $R$-module of $n$-coboundaries to be $B^n\left(M\right):=\mathrm{Im}\;d^{n-1}\leq Z^n\left(M\right)$ and its $n^{\mathrm{th}}$ cohomology $R$-module to be $H^n\left(M\right):=\nicefrac{Z^n\left(M\right)}{B^n\left(M\right)}$; $\left(M,d\right)$ is said to be \emph{acyclic} if $H^n\left(M\right)=0$ $\forall n\in\mathbb Z$; cocycles, coboundaries and cohomology define naturally functors from the category $\mathfrak{dgMod}_R$ to the category $\mathfrak{Mod}_R$ of $R$-modules. Finally, recall that a cochain map $f:\left(M,d\right)\rightarrow\left(N,\delta\right)$ is said to be a \emph{quasi-isomorphism} if it is a cohomology isomorphism, i.e. if $H^n\left(f\right)\text{ }\mathrm{is}\text{ }\mathrm{an}\text{ }\mathrm{isomorphism}\text{ }\forall n\in\mathbb Z$. In the following, we will not explicitly mention the differential of a complex whenever it is clear from the context. \\
Now define the complexes 
\begin{equation*}
D_R\left(n\right):=\begin{cases}
                 R & \text{if }k=n,n+1\\
                 0 & \text{otherwise}
                 \end{cases}
\qquad\qquad
S_R\left(n\right):=\begin{cases}
                 R & \text{if }k=n\\
                 0 & \text{otherwise}
                 \end{cases}
\end{equation*}
and the only non-trivial connecting map (the one between $D_R\left(n\right)^{n}$ and $D_R\left(n\right)^{n+1}$) is the identity.
\begin{rem}
Observe that $D_R\left(n\right)$ and $S_R\left(n\right)$ are compact for all $n$.
\end{rem}
\begin{thm} \label{dg model thm}
Consider the sets
\begin{eqnarray} \label{dg model}
&I_{\mathfrak{dgMod}_R}:=\left\{S_R\left(n+1\right)\rightarrow D_R\left(n\right)\right\}_{n\in\mathbb Z}\quad\,&\nonumber \\
&J_{\mathfrak{dgMod}_R}:=\left\{0\rightarrow D_R\left(n\right)\right\}_{n\in\mathbb Z}\qquad\;& \nonumber \\
&W_{\mathfrak{dgMod}_R}:=\left\{f:M\rightarrow N|f\text{ }\mathrm{is}\text{ }\mathrm{a}\text{ }\mathrm{quasi}\text{-}\mathrm{isomorphism}\right\}.&
\end{eqnarray}
The classes \eqref{dg model} define a cofibrantly generated model structure on $\mathfrak{dgMod}_R$, where $I_{\mathfrak{dgMod}_R}$ is the set of generating cofibrations, $J_{\mathfrak{dgMod}_R}$ is the set of generating trivial cofibrations and $W_{\mathfrak{dgMod}_R}$ is the set of weak equivalences. 
\end{thm}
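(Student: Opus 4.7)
I plan to invoke the Kan--Quillen recognition criterion (Theorem \ref{criterion model}) applied to $(W,I,J):=(W_{\mathfrak{dgMod}_R},I_{\mathfrak{dgMod}_R},J_{\mathfrak{dgMod}_R})$ and verify the six conditions in order. Condition (1) is immediate: each functor $H^{n}:\mathfrak{dgMod}_R\to\mathfrak{Mod}_R$ transports two-out-of-three and retract closure for isomorphisms in $\mathfrak{Mod}_R$ to the same properties for quasi-isomorphisms in $\mathfrak{dgMod}_R$. Conditions (2) and (3) follow from the compactness of $0$, $S_R(n+1)$, and $D_R(n)$, the last two by the remark preceding the theorem.

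\textbf{Computation of the right-lifting classes.} The crux is to identify $I$-inj and $J$-inj explicitly. A lifting square for $0\to D_R(n)$ against $f:M\to N$ is just the datum of an element $y\in N^n$, and a lift is a preimage $x\in M^n$; so $J$-inj is exactly the class of degreewise surjections. For $I$-inj, a square unpacks to a pair $(z,y)$ with $z\in Z^{n+1}(M)$, $y\in N^n$, and $f(z)=dy$, while a lift is an $x\in M^n$ with $dx=z$ and $f(x)=y$. Specialising to $z=0$ shows that cocycles of $N$ lift to cocycles of $M$; applying such a cocycle lift of $dy$ for arbitrary $y\in N^n$ then yields a degreewise preimage for $y$, and injectivity of $H^{n+1}(f)$ is a direct reading of the RLP, so $I$-inj is contained in the class of surjective quasi-isomorphisms. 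Conversely, if $f$ is surjective with acyclic kernel $K$, I pick any $\tilde y\in M^n$ with $f(\tilde y)=y$; the difference $d\tilde y-z$ lies in $Z^{n+1}(K)$, so by acyclicity $d\tilde y-z=dk$ for some $k\in K^n$, and $x:=\tilde y-k$ solves the lifting problem. Hence $I$-inj is exactly the class of surjective quasi-isomorphisms.

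\textbf{Assembly.} For condition (4), every pushout of a map in $J$ has the form $X\hookrightarrow X\oplus D_R(n)$, a split monomorphism with acyclic cokernel $D_R(n)$; a transfinite composition of such maps is again a monomorphism whose cokernel is a filtered colimit of acyclic complexes, which remains acyclic because cohomology commutes with filtered colimits over $R$, so $J\text{-cell}\subseteq W$. The inclusion $J\text{-cell}\subseteq I\text{-cof}$ then follows from $I\text{-inj}\subseteq J\text{-inj}$ (step 2) via contravariance of the left-lifting operation and closure of left-lifting classes under pushouts and transfinite composition. Condition (5) is immediate from the characterisations, and condition (6) in its second form $W\cap J\text{-inj}\subseteq I\text{-inj}$ is tautological once one knows that the intersection of quasi-isomorphisms with surjections coincides with $I$-inj. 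The main obstacle is the explicit identification of $I$-inj with surjective quasi-isomorphisms in step 2, especially the upgrade from cocycle-lifting to degreewise surjectivity; everything else is formal manipulation of lifting properties together with a standard exactness fact about filtered colimits of $R$-modules.
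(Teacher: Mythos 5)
The proposal is correct and essentially parallel to the paper's argument. Both routes pass through the Kan--Quillen recognition criterion (Theorem \ref{criterion model}), and the computational heart --- identifying $J_{\mathfrak{dgMod}_R}$-inj with degreewise surjections and $I_{\mathfrak{dgMod}_R}$-inj with surjective quasi-isomorphisms --- is the same in substance as Propositions \ref{DG fib} and \ref{DG tr fib}; your derivation of degreewise surjectivity by lifting the boundary of an arbitrary element, and of injectivity on cohomology from a second reading of the lifting square, tracks the paper's calculation step for step. The genuine divergence is in how condition (4), namely $J\text{-cell}\subseteq W\cap I\text{-cof}$, is handled. The paper takes a structural road: it characterises cofibrant objects (Proposition \ref{DG cofibrant}), then $I$-cofibrations (Proposition \ref{DG cofib}), then the full class $J\text{-cof}$ as degreewise injections with projective cokernel (Proposition \ref{DG tr cofib}), and deduces (4) because projective complexes are acyclic. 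You instead argue directly on $J\text{-cell}$: since $0$ is initial, every pushout of $0\to D_R(n)$ is the split monomorphism $X\hookrightarrow X\oplus D_R(n)$ with acyclic cokernel, a transfinite composition of such maps has cokernel a filtered colimit of acyclics and hence acyclic, and $J\text{-cell}\subseteq I\text{-cof}$ is then formal from $I\text{-inj}\subseteq J\text{-inj}$. Your route is shorter and sidesteps the cofibrancy discussion entirely; what it gives up is the structural information that the paper reuses later (Propositions \ref{DG cofibrant}--\ref{DG tr cofib} reappear in Section~1.3 when building the filtered model structure), so the longer chain in the paper is not wasted, but as a self-contained proof of this particular theorem your version is the more economical one.
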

The proof of Theorem \ref{dg model thm} (which corresponds to \cite{Ho} Theorem 2.3.11) relies on Theorem \ref{criterion model}, thus it amounts to explicitly describe fibrations, trivial fibrations, cofibrations and trivial cofibrations determined by the sets \eqref{dg model}, which we do in the following propositions.
\begin{prop} \label{DG fib}
$p\in\mathrm{Hom}_{\mathfrak{dgMod}_R}\left(M,N\right)$ is a fibration if and only if it is a degreewise surjection.
\end{prop}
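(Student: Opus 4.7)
The strategy is to unpack what the right lifting property against the generating trivial cofibrations $J_{\mathfrak{dgMod}_R}=\{0\rightarrow D_R(n)\}_{n\in\mathbb Z}$ actually says. Since $\mathfrak{dgMod}_R$ is cofibrantly generated with $J_{\mathfrak{dgMod}_R}$ as generating trivial cofibrations, the class of fibrations is precisely $J_{\mathfrak{dgMod}_R}$-inj, so $p$ is a fibration iff for every $n\in\mathbb Z$ and every commutative square
\begin{equation*}
\xymatrix{0 \ar[r]\ar[d] & M\ar[d]^{p} \\ D_R(n)\ar[r]_{\varphi} & N}
\end{equation*}
there exists a cochain map $\widetilde\varphi:D_R(n)\rightarrow M$ with $p\circ\widetilde\varphi=\varphi$.

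The key observation I would record first is a \emph{representability} feature of $D_R(n)$: since $D_R(n)$ is freely generated as a cochain complex by a single element $1$ in degree $n$ (the component in degree $n+1$ being forced to be $d(1)$ via the identity differential), the assignment $f\mapsto f^n(1)$ gives a natural bijection
\begin{equation*}
\mathrm{Hom}_{\mathfrak{dgMod}_R}\bigl(D_R(n),X\bigr)\;\xrightarrow{\;\sim\;}\;X^n
\end{equation*}
for every $X\in\mathfrak{dgMod}_R$; conversely, any $x\in X^n$ determines a cochain map by sending $1\mapsto x$ in degree $n$ and $1\mapsto d_X(x)$ in degree $n+1$, compatibility with differentials being trivially satisfied as $D_R(n)$ is concentrated in two consecutive degrees with identity connecting map.

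With this in hand the proof is essentially a translation. For the forward direction, given $y\in N^n$, build $\varphi:D_R(n)\rightarrow N$ corresponding to $y$ under the bijection above; the existence of a lift $\widetilde\varphi$ yields $x:=\widetilde\varphi^n(1)\in M^n$ with $p^n(x)=y$, so $p^n$ is surjective. For the converse, if $p$ is degreewise surjective and $\varphi$ corresponds to $y\in N^n$, pick any $x\in M^n$ with $p^n(x)=y$ and define $\widetilde\varphi:D_R(n)\rightarrow M$ via $x$; the equality $p\circ\widetilde\varphi=\varphi$ then holds degreewise, in degree $n+1$ because $p^{n+1}(d_M(x))=d_N(p^n(x))=d_N(y)=\varphi^{n+1}(1)$.

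There is essentially no obstacle here: once one notices that $D_R(n)$ co-represents the functor ``take the degree $n$ part,'' both directions are immediate, and the entire argument is a direct degreewise inspection. The analogous dual statement for chain complexes is established in \cite{Ho} Section 2.3, and the above is simply its cochain transcription.
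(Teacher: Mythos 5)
Your proof is correct and takes essentially the same approach as the paper: both arguments reduce the lifting problem against $0\to D_R(n)$ to the observation that $\mathrm{Hom}_{\mathfrak{dgMod}_R}(D_R(n),X)\cong X^n$, so a lift exists precisely when $p^n$ is surjective. You spell out the co-representability of the degree-$n$ functor by $D_R(n)$ in slightly more detail than the paper, but the content is identical.
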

\begin{proof}
We want to characterise diagrams
\begin{equation} \label{dg D^n lift}
\tiny{\xymatrix{0\ar[r]\ar[d] & M\ar[d]^p \\
D_R\left(n\right)\ar[r] & N} }
\end{equation}
in $\mathfrak{dgMod}_R$ admitting a lifting. A diagram like \eqref{dg D^n lift} is equivalent to choosing an element $y$ in $N^n$, while a lifting is equivalent to a pair $\left(x,y\right)\in M^n\times N^n$ such that $p^n\left(x\right)=y$: it follows that $p\in J_{\mathfrak{dgMod}_R}$-inj if and only if $p^n$ is surjective for all $n\in\mathbb Z$.
\end{proof}
\begin{prop} \label{DG tr fib}
$p\in\mathrm{Hom}_{\mathfrak{dgMod}_R}\left(M,N\right)$ is a trivial fibration if and only if it is in $I_{\mathfrak{dgMod}_R}$-inj; in particular $W_{\mathfrak{dgMod}_R}\cap J_{\mathfrak{dgMod}_R}\text{-inj}=I_{\mathfrak{dgMod}_R}\text{-inj}$.
\end{prop}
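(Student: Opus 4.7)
The strategy is to translate membership in $I_{\mathfrak{dgMod}_R}\text{-inj}$ into an explicit elementwise condition, identify that condition with ``degreewise surjection with acyclic kernel'', and then recognise the latter as exactly what ``surjective quasi-isomorphism'' means.

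\emph{Step 1: unpack the lifting data.} A chain map $S_R(n+1)\to M$ is the same data as a cocycle $x\in Z^{n+1}(M)$, while a chain map $D_R(n)\to N$ is the same as an element $y\in N^n$ (the element in degree $n+1$ is then forced to be $dy$). A lift $D_R(n)\to M$ corresponds to $z\in M^n$ with $dz=x$ and $p(z)=y$, and the square commutes precisely when $p(x)=dy$. Thus $p\in I_{\mathfrak{dgMod}_R}\text{-inj}$ if and only if for every $n$, every $y\in N^n$ and every cocycle $x\in Z^{n+1}(M)$ satisfying $p(x)=dy$, there exists $z\in M^n$ with $p(z)=y$ and $dz=x$. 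Call this condition $(\star)$.

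\emph{Step 2: $(\star)$ implies $p$ is a surjection with acyclic kernel.} Applying $(\star)$ with $y=0$ and an arbitrary cocycle $x\in Z^{n+1}(M)\cap\ker p$ produces $w\in\ker p^{\,n}$ with $dw=x$, so $\ker p$ is acyclic. Next, taking $x=0$ and $y\in Z^n(N)$ gives a cocycle lift, so $p$ is surjective on cocycles; for general $y\in N^n$, choose a cocycle lift $x\in Z^{n+1}(M)$ of the cocycle $dy\in Z^{n+1}(N)$ and apply $(\star)$ to obtain $z\in M^n$ with $p(z)=y$. Hence $p$ is degreewise surjective. From the short exact sequence $0\to\ker p\to M\to N\to 0$ and the long exact sequence in cohomology we deduce that $p\in W_{\mathfrak{dgMod}_R}$, and by Proposition \ref{DG fib} it is a fibration; so $p$ is a trivial fibration.

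\emph{Step 3: conversely, a trivial fibration satisfies $(\star)$.} Suppose $p$ is surjective in each degree and a quasi-isomorphism; then $\ker p$ is acyclic (again by the long exact sequence). Given $y\in N^n$ and a cocycle $x\in Z^{n+1}(M)$ with $p(x)=dy$, pick $z_0\in M^n$ with $p(z_0)=y$. Then $x-dz_0\in\ker p^{\,n+1}$ is a cocycle, so by acyclicity there is $w\in\ker p^{\,n}$ with $dw=x-dz_0$; setting $z:=z_0+w$ gives $p(z)=y$ and $dz=x$. Hence $p\in I_{\mathfrak{dgMod}_R}\text{-inj}$.

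\emph{Step 4: the final identity.} Combining Steps 2 and 3, a map is in $I_{\mathfrak{dgMod}_R}\text{-inj}$ iff it is a quasi-isomorphism that is degreewise surjective; by Proposition \ref{DG fib} the latter characterises $W_{\mathfrak{dgMod}_R}\cap J_{\mathfrak{dgMod}_R}\text{-inj}$, which yields the claimed equality.

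The only delicate point is the bootstrapping in Step 2: the lifting property directly produces only preimages along $p$ that fit into a prescribed cocycle condition, so one must first extract surjectivity on cocycles before promoting it to unrestricted degreewise surjectivity. Everything else is a routine diagram chase together with the cohomology long exact sequence.
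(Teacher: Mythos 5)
Your proof is correct and follows essentially the same route as the paper's: unpack the lifting problem into the elementwise condition $(\star)$, bootstrap surjectivity on cocycles up to degreewise surjectivity, and in the converse correct an arbitrary degreewise lift by an element of the acyclic kernel. The only cosmetic difference is that in the forward direction you extract acyclicity of $\ker p$ directly (via the $y=0$ case) and then invoke the long exact sequence, whereas the paper chases surjectivity and injectivity of $H^n(p)$ by hand; both are fine.
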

\begin{proof}
First of all, observe that any diagram in $\mathfrak{dgMod}_R$ of the form
\begin{equation} \label{dg S^n lift}
\tiny{\xymatrix{S_R\left(n+1\right)\ar[r]\ar[d] & M\ar[d]^p \\
D_R\left(n\right)\ar[r] & N}}
\end{equation}
is uniquely determined by an element in 
\begin{equation*}
X:=\left\{\left(x,y\right)\in N^n\oplus Z^{n+1}\left(M\right)|p^{n+1}\left(y\right)=d^n\left(x\right)\right\}.
\end{equation*}
Moreover, there is a bijection between the set of diagrams like \eqref{dg S^n lift} admitting a lifting and 
\begin{equation*}
X':=\left\{\left(x,z,y\right)\in N^n\oplus M^n\oplus Z^{n+1}\left(M\right)|p^n\left(z\right)=x,d^n\left(z\right)=y,p^{n+1}\left(y\right)=d^n\left(x\right)\right\}.
\end{equation*}
Now suppose that $p\in I$-inj: we want to prove that it is degreewise surjective (because of Proposition \ref{DG fib}) and a cohomology isomorphism. For any cocycle $y\in Z^n\left(N\right)$, the pair $\left(y,0\right)$ defines a diagram like \eqref{dg S^n lift}, therefore, as $p\in I$-inj, $\exists z\in M^n$ such that $p^n\left(z\right)=y$ and $d^n\left(z\right)=0$, so the induced map $Z^n\left(p\right):Z^n\left(M\right)\rightarrow Z^n\left(N\right)$ is surjective; in particular the map $H^n\left(p\right):H^n\left(M\right)\rightarrow H^n\left(N\right)$ is surjective as well. We now show that $p^n$ itself is surjective: fix $x\in N^n$ and consider $d^n\left(x\right)\in Z^{n+1}\left(N\right)$; as the map $Z^{n+1}\left(p\right)$ is surjective, $\exists y\in Z^{n+1}\left(M\right)$ such that $p^{n+1}\left(y\right)=d^n\left(x\right)$, thus by the assumption $\exists z\in M^n$ such that $p^n\left(z\right)=x$, hence $p$ is a degreewise surjection. It remains to prove that $H^n\left(p\right)$ is injective: fix $x\in N^{n-1}$ and consider $d^{n-1}\left(x\right)\in B^n\left(N\right)\leq Z^n\left(N\right)$; by the surjectivity of $Z^n\left(p\right)$ $\exists y\in Z^{n}\left(M\right)$ such that $p^n\left(y\right)=d^{n-1}\left(x\right)$, so $\left[y\right]\in\ker\left(H^n\left(p\right)\right)$. The pair $\left(x,y\right)$ defines a diagram of the form \eqref{dg S^n lift}, so the assumption on $p$ implies the existence of $z\in M^{n-1}$ such that $d^{n-1}\left(z\right)=y$ and $p^{n-1}\left(z\right)=x$; in particular, we have that $\ker\left(H^n\left(p\right)\right)=0$, so $H^n\left(p\right)$ is injective. \\
Now assume that $p$ is a trivial fibration, i.e. a degreewise surjection with acyclic kernel; consider $\left(x,y\right)\in X$: we want to find $z\in M^n$ such that $\left(x,z,y\right)\in X'$. The hypotheses on $p$ are equivalent to the existence of a short exact sequence in $\mathfrak{dgMod}_R$
\begin{equation*}
\xymatrix{0\ar[r] & K\ar[r] & M\ar[r]^p & N\ar[r] & 0}
\end{equation*}
such that $H^n\left(K\right)=0$ $\forall n\in\mathbb Z$. Take any $w\in M^n$ such that $p^n\left(w\right)=x$; an immediate computation shows that $dw-y\in Z^{n+1}\left(K\right)$ and, as $K$ is acyclic, $\exists v\in K^n$ such that $dv=dw-y$. Now define $z:=w-v$ and the result follows.
\end{proof}
The next step is describing cofibrations and trivial cofibrations generated by the sets \eqref{dg model}, but we need to understand cofibrant objects in order to do that; in the following for any $R$-module $P$ call $D_R\left(n,P\right)$ the cochain complex defined by
\begin{equation*}
D_R\left(n,P\right):=\begin{cases}
                 P & \text{if }k=n,n+1\\
                 0 & \text{otherwise}
                 \end{cases}
\end{equation*}
and in which the only non-trivial connecting map is the identity.
\begin{prop} \label{DG cofibrant}
If $A\in\mathfrak{dgMod}_R$ is cofibrant, then $A^n$ is projective for all $n$. Conversely, any bounded above cochain complex of projective $R$-modules is cofibrant.
\end{prop}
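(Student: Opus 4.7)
The plan is to treat the two implications separately. For the direct implication, I would exploit the functor $X \mapsto D_R(n-1,X)$: a cochain map $\phi\colon A\to D_R(n-1,X)$ corresponds naturally and bijectively to an $R$-linear map $A^n\to X$, because the vanishing of $D_R(n-1,X)$ outside degrees $n-1,n$ forces $\phi^k=0$ for $k\notin\{n-1,n\}$, while the compatibility with the differential (which is the identity between degrees $n-1$ and $n$) forces $\phi^{n-1}=\phi^n\circ d_A^{n-1}$, leaving $\phi^n$ free. Given now a surjection $\pi\colon P\twoheadrightarrow Q$ of $R$-modules, the induced map $D_R(n-1,P)\to D_R(n-1,Q)$ is a degreewise surjection whose kernel $D_R(n-1,\ker\pi)$ is contractible, hence acyclic; by Proposition \ref{DG tr fib} it is a trivial fibration. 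Cofibrancy of $A$ then provides a lift of any cochain map $A\to D_R(n-1,Q)$, which by the bijection above is exactly a lift of any given $R$-linear map $A^n\to Q$ through $\pi$, proving $A^n$ is projective.

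For the converse, I would not appeal to a cellular decomposition but would construct the required lifts by hand via downward induction. Fix a bounded-above complex $A$ of projectives, with $A^k=0$ for $k>N$, and a trivial fibration $p\colon M\twoheadrightarrow N'$ with a cochain map $f\colon A\to N'$. Set $g^k:=0$ for $k>N$, and suppose $g^{k}$ has been defined for $k>n$ so that $p^{k}g^{k}=f^{k}$ and $d_M^{k-1}g^{k-1}=g^{k}d_A^{k-1}$. Since $A^n$ is projective and $p^n$ is surjective, pick any preliminary lift $\tilde g^n\colon A^n\to M^n$ of $f^n$. The defect $\theta:=d_M^n\tilde g^n-g^{n+1}d_A^n\colon A^n\to M^{n+1}$ composed with $p^{n+1}$ vanishes, so $\theta$ factors through $K^{n+1}$, where $K:=\ker p$; moreover $d_M^{n+1}\theta=0$, so $\theta$ takes values in $Z^{n+1}(K)$. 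Acyclicity of $K$ (forced by Proposition \ref{DG tr fib}) gives $Z^{n+1}(K)=B^{n+1}(K)$, and a second appeal to projectivity of $A^n$ lifts $\theta$ along $d_K^n\colon K^n\twoheadrightarrow B^{n+1}(K)$ to some $\psi\colon A^n\to K^n\subseteq M^n$. Setting $g^n:=\tilde g^n-\psi$ simultaneously preserves $p^n g^n=f^n$ (since $\psi$ lands in $K$) and achieves $d_M^n g^n=g^{n+1}d_A^n$.

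The main obstacle is the second direction, and within it the real content is the correction step: a naive lift of $f^n$ degreewise is immediate from projectivity, but such a lift generally fails to be a cochain map. Reconciling the two conditions demands exactly two successive uses of projectivity mediated by acyclicity of $K$, so it is essential that $p$ be a \emph{trivial} fibration rather than merely a fibration. The bounded-above hypothesis plays only the role of providing a starting point for the downward recursion; once that is in place, each inductive step depends solely on the immediately preceding one, so ordinary induction suffices and no transfinite machinery is required.
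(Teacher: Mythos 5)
Your proof is correct and follows essentially the same strategy as the paper's: for the forward direction you probe cofibrancy against the trivial fibrations $D_R(\cdot,M)\to D_R(\cdot,N)$, using the bijection between cochain maps into $D_R(m,P)$ and $R$-linear maps out of a single degree of $A$; for the converse you build the lift by downward induction, correcting a preliminary degreewise lift by a term landing in the acyclic kernel $K$. The only difference is cosmetic — you use $D_R(n-1,X)$ where the paper writes $D_R(n,N)$, and in fact your indexing is the consistent one, as the paper's formulas (sending $f\colon A^n\to N$ to a cochain map described as ``$f$ in degree $n+1$'') are off by one.
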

\begin{proof}
Suppose $A$ is a cofibrant object in $\mathfrak{dgMod}_R$ and let $p:M\twoheadrightarrow N$ be a surjection between two $R$-modules; the $R$-linear map $p:M\twoheadrightarrow N$ induces a morphism $\dot p:D_R\left(n,M\right)\rightarrow D_R\left(n,N\right)$ (given by $p$ itself in degree $n$ and $n-1$ and by the zero map elsewhere), which is immediately seen to be degreewise surjective with acyclic kernel, hence a trivial fibration by Proposition \ref{DG tr fib}. Moreover any $R$-linear map $f:A^n\rightarrow N$ defines a cochain morphism $\dot f:A\rightarrow D_R\left(n,N\right)$ which is given by $f$ in degree $n+1$, $fd$ in degree $n$ and $0$ elsewhere. By assumption the diagram in $\mathfrak{dgMod}_R$
\begin{equation*} \
\tiny{\xymatrix{& D_R\left(n,M\right)\ar[d]^{\dot p} \\
A\ar[r]^{\dot f}\ar[r]\ar@{-->}[ur]^g & D_R\left(n,N\right)}}
\end{equation*}
admits a lifting $g$: now it suffices to look at the above diagram in degree $n$ to see that $A^n$ is projective. \\
Now suppose $A$ is a bounded above cochain complex of projective $R$-modules (i.e. $A^n=0$ for $n\gg 0$) and fix a trivial fibration in $\mathfrak{dgMod}_R$ $p:M\rightarrow N$ and a cochain map $g:A\rightarrow N$: we want to prove that $g$ lifts to a morphism $h\in\mathrm{Hom}_{\mathfrak{dgMod}_R}\left(A,M\right)$, so we construct $h^n$ by (reverse) induction. The base of the induction is guaranteed by the fact that $A$ is assumed to be bounded above, so suppose that $h^k$ has been defined for all $k>n$; by Proposition \ref{DG tr fib} $p^n$ is surjective and has an acyclic kernel $K$, so since $A^n$ is projective $\exists f\in\mathrm{Hom}_{\mathfrak{Mod}_R}\left(A^n,M^n\right)$ lifting $g^n$. Consider the $R$-linear map $F:A^n\rightarrow M^{n+1}$ defined as $F:=d^{n}f-h^{n+1}d^n$, which measures how far $f$ is to fit into a cochain map: an easy computation shows that $p^{n+1}F=d^{n+1}F=0$, so $F:A^n\rightarrow Z^{n+1}\left(K\right)$, but, as $K$ is acyclic, we get that $F:A^n\rightarrow B^n\left(K\right)$. Of course the map $d^{n+1}$ gives a surjective $R$-linear morphism from $K^n$ to $B^{n+1}\left(K\right)$, so by the projectiveness of $A^n$ $F$ lifts to a map $G\in\mathrm{Hom}_{\mathfrak{dgMod}_R}\left(A^n,K^n\right)$. Now define $h^n:=f-G$ and the result follows.
\end{proof}
\begin{rem}
A complex of projective $R$-modules is not necessarily cofibrant (get a counterexample by adapting \cite{Ho} Remark 2.3.7); it is possible to give a complete characterisation of cofibrant objects in $\mathfrak{dgMod}_R$ in terms of \emph{dg-projective complexes} (see \cite{AFH}).
\end{rem}
\begin{prop} \label{DG cofib}
$i\in\mathrm{Hom}_{\mathfrak{dgMod}_R}\left(M,N\right)$ is a cofibration if and only if it is a degreewise injection with cofibrant cokernel.
\end{prop}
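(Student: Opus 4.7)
The plan is to prove the two implications separately, invoking the characterisation of cofibrations as retracts of relative $I_{\mathfrak{dgMod}_R}$-cell complexes recorded just before Theorem \ref{criterion model}.

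For the forward direction, I would analyse an elementary cell attachment: the pushout of $S_R(n+1)\to D_R(n)$ along a cocycle $z\in Z^{n+1}(M)$ formally adjoins a single free generator $x$ in degree $n$ with $dx=z$, which is plainly a degreewise injection with cokernel $S_R(n)$ --- a bounded-above complex of projectives, hence cofibrant by Proposition \ref{DG cofibrant}. A transfinite composition of such attachments remains a degreewise injection (filtered colimits of injections of modules are injections) and has cofibrant cokernel (itself a relative cell complex over $0$). Both properties pass through retracts in the arrow category: monomorphisms are retract-stable in any category, and the cokernel functor preserves retracts, combined with the standard fact that retracts of cofibrant objects are cofibrant.

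For the converse, set $Q:=N/i(M)$. Since each $Q^n$ is projective by Proposition \ref{DG cofibrant}, the short exact sequence $0\to M\to N\to Q\to 0$ splits degreewise; a chosen graded section identifies $N$ with $M\oplus Q$ as a graded module, forcing $d_N$ into block form $\left(\begin{smallmatrix} d_M & h \\ 0 & d_Q \end{smallmatrix}\right)$ for some degree-$+1$ graded map $h\colon Q\to M$, and the identity $d_N^2=0$ yields the Maurer--Cartan-type relation $d_M h + h d_Q = 0$.

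I would first treat the key sub-case in which $Q$ is itself a relative $I_{\mathfrak{dgMod}_R}$-cell complex over $0$, say $Q=\mathrm{colim}_\alpha Q_\alpha$. Setting $N_\alpha := M\oplus Q_\alpha$ yields a sub-complex filtration $M=N_0\hookrightarrow N_1\hookrightarrow\cdots$ of $N$, and each step $N_\alpha\hookrightarrow N_{\alpha+1}$ adjoins the single generator arising from the $\alpha$-th cell attachment of $Q$; its differential in $N_\alpha$ --- computed from the block form --- is a cocycle precisely because of $d_M h + h d_Q = 0$, so this step is a pushout of a generating cofibration. Thus $i$ is a relative $I_{\mathfrak{dgMod}_R}$-cell complex in this sub-case. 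For general cofibrant $Q$, I would write it as a retract of a cell complex $\tilde Q$ with section $s$ and retraction $r$, and set $\tilde N := M\oplus\tilde Q$ with twisted differential $\left(\begin{smallmatrix} d_M & hr \\ 0 & d_{\tilde Q} \end{smallmatrix}\right)$ --- well-defined as a complex because $r$ is a chain map and $h$ satisfies the Maurer--Cartan relation. By the sub-case, $M\hookrightarrow\tilde N$ is a cofibration, and the maps $\mathrm{id}_M\oplus s\colon N\to\tilde N$ and $\mathrm{id}_M\oplus r\colon\tilde N\to N$ --- both easily checked to be chain maps that compose to $\mathrm{id}_N$ --- exhibit $i$ as a retract of it in the arrow category, finishing the argument by retract-stability of cofibrations.

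The main obstacle lies in the cell-complex sub-case of the converse: one must verify that the inherited filtration $\{N_\alpha\}$ genuinely presents $i$ as a relative $I_{\mathfrak{dgMod}_R}$-cell complex, and this rests crucially on the identity $d_M h + h d_Q = 0$ ensuring cocyclicity of the prescribed new differential at each stage, so that each step is truly a pushout along $S_R(n_\alpha+1)\to D_R(n_\alpha)$.
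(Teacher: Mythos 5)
Your proof is correct, but it takes a genuinely different route from the paper's. For the forward implication the paper lifts $i$ against the explicit trivial fibrations $D_R\left(n-1,M^n\right)\rightarrow 0$ to obtain degreewise injectivity and then invokes stability of cofibrations under pushout to see that $0\rightarrow\mathrm{coker}\left(i\right)$ is a cofibration; you instead read both properties off a single cell attachment (a split monomorphism with cokernel $S_R\left(n\right)$) and propagate them through transfinite compositions and retracts. That is fine, but note that it leans on the identification of $I_{\mathfrak{dgMod}_R}$-cof with retracts of relative $I_{\mathfrak{dgMod}_R}$-cell complexes, i.e.\ on the small object argument; this is legitimate and non-circular here because the domains $S_R\left(n+1\right)$ are compact and that identification (\cite{Ho} Corollary 2.1.15) does not presuppose the model structure one is in the middle of constructing. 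The real divergence is in the converse. The paper verifies the left lifting property against an arbitrary trivial fibration directly: it splits $N\simeq M\oplus C$ degreewise, lifts the map $\sigma$ on $C$ using projectivity, measures the failure of the lift to be a chain map by a map $s:C\rightarrow\Sigma K$ into the suspended acyclic kernel, and corrects the lift using the null-homotopy of $s$ (\cite{Ho} Lemma 2.3.8). You use the same degreewise splitting only to extract the twisting map $h$ with $d_Mh+hd_Q=0$, and then exhibit $M\rightarrow N$ literally as a relative $I_{\mathfrak{dgMod}_R}$-cell complex when $Q$ is one — the filtration $N_\alpha=M\oplus Q_\alpha$ works because the differential of each new generator is a cocycle in $N_\alpha$ by the Maurer--Cartan identity — reducing the general cofibrant $Q$ to this case by the retract trick with the twisted extension $M\oplus\tilde Q$ (whose differential squares to zero precisely because $r$ is a chain map). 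Your route avoids the null-homotopy lemma entirely and makes the cell structure of such cofibrations explicit, which is pleasant; the paper's route avoids the small object argument and produces the required lift by hand. Both arguments are complete.
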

\begin{proof}
Suppose $i$ is a cofibration, i.e. a map having the left lifting property with respect to degreewise surjections with acyclic kernel, by Proposition \ref{DG tr fib}; there is an obvious morphism $M\rightarrow D_R\left(n-1,M^n\right)$ given by $d^{n-1}$ in degree $n-1$ and the identity in degree $n$, while the canonical map $D_R\left(n-1,M^n\right)\rightarrow 0$ is a trivial fibration, as $D_R\left(n-1,M^n\right)$ is clearly acyclic. As a consequence we get a diagram in $\mathfrak{dgMod}_R$
\begin{equation*} 
\tiny{\xymatrix{M\ar[d]_i\ar[r] & D_R\left(n-1,M^n\right)\ar[d] \\
N\ar[r] & 0}}
\end{equation*}
which admits a lifting as $i$ is a cofibration; in particular this implies that $i^n$ is an injection. At last recall that the class of cofibration in a model category is closed under pushouts: in particular $0\rightarrow\mathrm{coker}\left(i\right)$ is a cofibration as it is the pushout of $i$, thus $\mathrm{coker}\left(i\right)$ is cofibrant. \\
Now suppose that $i$ is a degreewise injection with cofibrant cokernel $C$ and we are given a diagram of cochain complexes 
\begin{equation} \label{diag cofib}
\tiny{\xymatrix{M\ar[r]^f\ar[d]_i & X\ar[d]^p \\
N\ar[r]^g & Y}}
\end{equation}
where $p$ is a degreewise surjection with acyclic kernel $K$ (let $j:K\rightarrow X$ be the kernel morphism): we want to construct a lifting in such a diagram. First of all notice that $N^n\simeq M^n\oplus C^n$, as $C^n$ is projective by Proposition \ref{DG cofibrant}, so we have
\begin{eqnarray*}
\xymatrix{d:N^n\ar[rr] & & N^{n+1}}\qquad\qquad\quad \\
\qquad\qquad\left(x,z\right)\longmapsto\left(d^n\left(x\right)+\tau^n\left(z\right),d^n\left(y\right)\right)
\end{eqnarray*} 
where $\tau^n:C^n\rightarrow A^{n+1}$ is some $R$-linear map such that $d^n\tau^n=\tau^nd^n$, and
\begin{eqnarray*}
\xymatrix{g^n:N^n\ar[rr] & & Y^n}\qquad\quad \\
\qquad\qquad\left(x,z\right)\longmapsto p^nf^n\left(x\right)+\sigma^n\left(z\right)
\end{eqnarray*}
where $\sigma^n:C^n\rightarrow Y^n$ satisfies the relation $d^n\sigma^n=p^nf^n\tau^n+\sigma^nd^n$. A lifting in the diagram \eqref{diag cofib} then consists of a collection $\left\{\nu^n\right\}_{n\in\mathbb Z}$ of $R$-linear morphisms such that $p^n\nu^n=\sigma^n$ and $d^n\nu^n=\nu^nd^n+f^n\tau^n$. As $C^n$ is projective, fix $G^n\in\mathrm{Hom}_{\mathfrak{Mod}_R}\left(C^n,X^n\right)$ lifting $\sigma^n$ and consider the map $F^n:C^n\rightarrow X^{n+1}$ defined as $F^n:=d^nG-Gd^n-f^n\tau^n$. It is easily seen that $p^{n+1}F^n=0$ and $d^{n+1}F^n=-d^{n+1}G^nd^n+f^{n+1}\tau^nd^{n-1}$, so there is an induced cochain map $s:C\rightarrow\Sigma K$, where $\Sigma K$ is the \emph{suspension complex} defined by the relations $\left(\Sigma K\right)^n=K^{n+1}$ and $d_{\Sigma K}=-d_K$. As $K$ is acyclic, observe that $s$ is cochain homotopic to $0$ (see \cite{Ho} Lemma 2.3.8 for details), thus there is $h^n\in\mathrm{Hom}_{\mathfrak{dgMod}_R}\left(C^n,K^n\right)$ such that $s=-d^nh^n+h^{n+1}d^n$; define $\nu^n:=G^n+j^nh^n$ and the result follows.
\end{proof}
\begin{prop} \label{DG tr cofib}
$i\in\mathrm{Hom}_{\mathfrak{dgMod}_R}\left(M,N\right)$ is in $J_{\mathfrak{dgMod}_R}$-cof if and only if it is a degreewise injection with projective\footnote{Here projective means projective as a cochain complex.} cokernel; in particular $J_{\mathfrak{dgMod}_R}\text{-cof}\subseteq W_{\mathfrak{dgMod}_R}\cap I_{\mathfrak{dgMod}_R}\text{-cof}$.
\end{prop}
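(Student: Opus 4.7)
The plan is to prove the two implications of the biconditional separately, and then deduce the inclusion $J_{\mathfrak{dgMod}_R}\text{-cof}\subseteq W_{\mathfrak{dgMod}_R}\cap I_{\mathfrak{dgMod}_R}\text{-cof}$ as a direct consequence. The main input is Proposition \ref{DG fib}, which tells us that $J_{\mathfrak{dgMod}_R}$-inj is exactly the class of degreewise surjective cochain maps; hence a map lies in $J_{\mathfrak{dgMod}_R}$-cof if and only if it has the left lifting property against every such surjection.

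For the ``only if'' direction, suppose $i\in J_{\mathfrak{dgMod}_R}$-cof. To see that $i^n$ is injective for each $n$, I would reuse the trick from the first half of Proposition \ref{DG cofib}: the obvious cochain map $M\to D_R\left(n-1,M^n\right)$, equal to the identity in degree $n$ and to $d^{n-1}$ in degree $n-1$, together with the trivially degreewise surjective map $D_R\left(n-1,M^n\right)\to 0$, produces by the lifting hypothesis a cochain map $N\to D_R\left(n-1,M^n\right)$ whose degree-$n$ component is a retraction of $i^n$. Next, observe that $0\to\mathrm{coker}\left(i\right)$ is the pushout of $i$ along the zero map $M\to 0$, so it itself lies in $J_{\mathfrak{dgMod}_R}$-cof, and the small object argument machinery recalled at the start of the section exhibits it as a retract of a relative $J_{\mathfrak{dgMod}_R}$-cell complex starting from $0$, i.e. of some coproduct $\bigoplus_\alpha D_R\left(n_\alpha\right)$. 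Since each $D_R\left(n\right)$ is a projective object of the abelian category $\mathfrak{dgMod}_R$ (lifting a cochain map out of $D_R\left(n\right)$ against a degreewise surjection boils down to lifting a single element in degree $n$ and taking its differential), the coproduct is projective and so is its retract $\mathrm{coker}\left(i\right)$.

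For the converse, suppose $i$ is a degreewise injection with projective cokernel $C$. Then the short exact sequence $0\to M\to N\to C\to 0$ splits by projectivity of $C$, so $i$ is isomorphic to the canonical inclusion $M\hookrightarrow M\oplus C$, which is the pushout of $0\to C$ along $0\to M$. Writing $C$ as a retract of some $\bigoplus_\alpha D_R\left(n_\alpha\right)$ shows that $0\to C$ is a retract of a relative $J_{\mathfrak{dgMod}_R}$-cell complex, hence lies in $J_{\mathfrak{dgMod}_R}$-cof; since the latter class is closed under pushouts, $i$ does too.

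Finally, for the inclusion $J_{\mathfrak{dgMod}_R}\text{-cof}\subseteq W_{\mathfrak{dgMod}_R}\cap I_{\mathfrak{dgMod}_R}\text{-cof}$, I would note that each $D_R\left(n\right)$ is contractible, so any retract of a coproduct of such complexes is acyclic; thus a projective cokernel is automatically acyclic, and the long exact sequence attached to $0\to M\to N\to C\to 0$ then forces $i$ to be a quasi-isomorphism, i.e. an element of $W_{\mathfrak{dgMod}_R}$. Membership in $I_{\mathfrak{dgMod}_R}$-cof follows from Proposition \ref{DG cofib} once one observes, using Proposition \ref{DG cofibrant} or a direct lifting argument, that a projective cochain complex is cofibrant. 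The main technical point I expect is the clean identification of ``projective as a cochain complex'' with ``retract of a coproduct of $D_R\left(n\right)$'s'', since everything else is then a routine juggling of lifting properties.
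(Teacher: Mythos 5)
Your proof is correct, but it takes a genuinely different route from the paper's. The paper argues by direct manipulation of lifting squares: for the forward direction it puts the zero map on top of a square against an arbitrary fibration and factors the resulting lift $h$ (which satisfies $hi=0$) through the cokernel $C$, exhibiting the lifting property that defines projectivity of $C$; for the converse it takes the splitting $r:N\rightarrow M$, observes that $\left(pfr-g\right)i=0$ so that $pfr-g$ descends to $s:C\rightarrow Y$, lifts $s$ to $t:C\rightarrow X$ by projectivity, and checks by hand that $fr-tc$ solves the original lifting problem. You instead route everything through the structural description of projective objects of $\mathfrak{dgMod}_R$ as retracts of coproducts of disks $D_R\left(n\right)$, combined with the small object argument and the closure of $J_{\mathfrak{dgMod}_R}$-cof under pushouts, coproducts and retracts. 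Both work; yours makes the role of the generating set more transparent and gives the acyclicity of the cokernel (hence the ``in particular'' clause) essentially for free, since each $D_R\left(n\right)$ is contractible, while the paper's argument is more elementary in that it never invokes the small object argument. Two points worth making explicit in your version. First, the fact that every map in $J_{\mathfrak{dgMod}_R}$-cof is a retract of a relative $J_{\mathfrak{dgMod}_R}$-cell complex is \cite{Ho} Corollary 2.1.15, which only needs compactness of the domains of $J_{\mathfrak{dgMod}_R}$ (they are all $0$) and not the model structure currently under construction, so there is no circularity -- but you should cite the small object argument itself rather than the proposition about cofibrantly generated model categories. Second, the step you defer -- that a projective cochain complex is a retract of a coproduct of $D_R\left(n\right)$'s -- is the only substantive point in your converse direction, and it is immediate: since $\mathrm{Hom}_{\mathfrak{dgMod}_R}\left(D_R\left(n\right),Y\right)\simeq Y^n$, choosing surjections $R^{\left(\Lambda_n\right)}\twoheadrightarrow C^n$ yields a map $\bigoplus_n D_R\left(n\right)^{\left(\Lambda_n\right)}\rightarrow C$ which is surjective in every degree, hence an epimorphism of $\mathfrak{dgMod}_R$, and projectivity of $C$ splits it.
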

\begin{proof}
Suppose $i\in J_{\mathfrak{dgMod}_R}$-cof, i.e. it has the left lifting property with respect to all fibrations; in particular it is a cofibration so by Proposition \ref{DG cofib} it is a degreewise injection with cofibrant cokernel $C$ and let $c:N\rightarrow C$ be the cokernel morphism: we want to show that $C$ is projective as a cochain complex. Fix a fibration $p:X\rightarrow Y$ and consider the diagram
\begin{equation} \label{DG tr cofib diag}
\tiny{\xymatrix{M\ar[r]^0\ar[d]_i & X\ar[d]^p & \\
N\ar[r]^{fc}\ar[r] & Y}}
\end{equation}
where $0:M\rightarrow 0\rightarrow X$ is the zero morphism and $f\in\mathrm{Hom}_{\mathfrak{dgMod}_R}\left(C,N\right)$ is an arbitrary cochain map. By assumption diagram \eqref{DG tr cofib diag} admits a lifting, which is a cochain map $h$ such that $hi=0$ and $ph=fc$; it follows that $h$ factors through a map $g\in\mathrm{Hom}_{\mathfrak{dgMod}_R}\left(C,M\right)$ lifting $f$, so $C$ is a projective cochain complex. \\
Now assume $i$ is a degreewise injection with projective cokernel $C$: again, let $c:N\rightarrow C$ denote the cokernel morphism and consider a diagram
\begin{equation*}
\tiny{\xymatrix{M\ar[r]^f\ar[d]_i & X\ar[d]^p & \\
N\ar[r]^{g}\ar[r] & Y}}
\end{equation*}
where $p$ is a fibration, i.e. a degreewise surjection because of Proposition \ref{DG fib}. Since $C$ is projective, there is a retraction $r:N\rightarrow M$ and it is easily seen that $\left(pfr-g\right)i=0$, so the map $pfr-g$ lifts to a map $s\in\mathrm{Hom}_{\mathfrak{dgMod}_R}\left(C,Y\right)$. Again, the projectiveness of $C$ implies that there is a map $t\in\mathrm{Hom}_{\mathfrak{dgMod}_R}\left(C,X\right)$ lifting $s$; now the map $fr-tc$ gives a lifting in diagram \eqref{DG tr cofib diag}. \\
The last claim of the statement follows immediately by the fact that any projective cochain complex is also acyclic (see for example \cite{We} or \cite{Ho}).
\end{proof}
\begin{prop} \label{DG equiv}
The set $W_{\mathfrak{dgMod}_R}$ of quasi-isomorphisms in $\mathfrak{dgMod}_R$ has the $2$-out-of-$3$ property and is closed under retracts.
\end{prop}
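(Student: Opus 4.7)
The plan is to reduce everything to the corresponding statements for the class of isomorphisms in the category $\mathfrak{Mod}_R$, via the cohomology functors $H^n:\mathfrak{dgMod}_R\rightarrow\mathfrak{Mod}_R$. Recall that by definition a cochain map $f$ belongs to $W_{\mathfrak{dgMod}_R}$ if and only if $H^n(f)$ is an isomorphism of $R$-modules for every $n\in\mathbb Z$, so it suffices to know that in any category the class of isomorphisms enjoys the two-out-of-three property and is stable under retracts (both are routine).

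For the two-out-of-three property, given composable cochain maps $f:M\rightarrow N$ and $g:N\rightarrow P$ such that two out of $f$, $g$, $gf$ are quasi-isomorphisms, I would fix $n\in\mathbb Z$ and apply the functor $H^n$ to obtain $R$-linear morphisms $H^n(f)$, $H^n(g)$ and $H^n(gf)=H^n(g)\circ H^n(f)$; by the assumption two of these are isomorphisms of $R$-modules, so by the analogous property for isomorphisms the third is as well. Since this holds for every $n$, the third of the original maps is a quasi-isomorphism.

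For stability under retracts, suppose $f:M\rightarrow N$ is a retract in the arrow category of a quasi-isomorphism $g:X\rightarrow Y$, i.e.\ there exists a commutative diagram
\begin{equation*}
\tiny{\xymatrix{M\ar[r]\ar[d]_f & X\ar[d]^g\ar[r] & M\ar[d]^f \\ N\ar[r] & Y\ar[r] & N}}
\end{equation*}
in which the two horizontal composites are the identities of $M$ and $N$ respectively. Applying $H^n$ to the whole diagram yields a diagram in $\mathfrak{Mod}_R$ which exhibits $H^n(f)$ as a retract of $H^n(g)$; the latter is an isomorphism by hypothesis, and since a retract of an isomorphism in any category is again an isomorphism, $H^n(f)$ is an $R$-linear isomorphism for every $n$, so $f$ is a quasi-isomorphism.

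The only mild subtlety is checking functoriality of $H^n$ (so that identities go to identities and composites to composites), which however is standard homological algebra and can simply be cited from \cite{We}; no genuine obstacle is expected.
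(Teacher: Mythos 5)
Your proof is correct and is exactly the standard argument the paper has in mind: the paper simply cites \cite{HilSt} with the instruction to "apply it in cohomology," which amounts to the same reduction you carry out, namely that $W_{\mathfrak{dgMod}_R}$ is the preimage of the class of isomorphisms under the family of functors $H^n$, and isomorphisms in any category satisfy two-out-of-three and are closed under retracts. No gaps.
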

\begin{proof}
This is a classical result in Homological Algebra: for a detailed proof see \cite{HilSt} Lemma 1.1 (apply it in cohomology).
\end{proof}
The above results (especially Proposition \ref{DG tr fib}, Proposition \ref{DG tr cofib} and Proposition \ref{DG equiv}) say that the category $\mathfrak{dgMod}_R$ endowed with the structure \eqref{dg model} fits into the hypotheses of Theorem \ref{criterion model}, so Theorem \ref{dg model thm} has been proved. \\
Now assume $R$ is a (possibly differential graded) commutative $k$-algebra, where $k$ is a field of characteristic $0$: under such hypothesis there is also a canonical simplicial enrichment on $\mathfrak{dgMod}_R$ (all the rest of the section is adapted from \cite{Pr1}). \\
For all $R$-modules in complexes $M$, $N$ consider the chain complex $\left(\mathrm{HOM}_{\mathfrak{dgMod}_R}\left(M,N\right),\delta\right)$ defined by the relations
\begin{eqnarray} \label{dg Hom DG}
\mathrm{HOM}_{\mathfrak{dgMod}_R}\left(M,N\right)_n:=\mathrm{Hom}\left(M,N[-n]\right)\qquad\qquad\qquad\qquad \nonumber \\
\forall f\in {\mathrm{HOM}_{\mathfrak{dgMod}_R}\left(M,N\right)_n}\qquad\delta_n\left(f\right):=\bar d^n\circ f-\left(-1\right)^nf\circ d^n\in {\mathrm{HOM}_{\mathfrak{dgMod}_R}\left(M,N\right)_{n-1}}.
\end{eqnarray}
Formulae \eqref{dg Hom DG} make $\mathfrak{dgMod}_R$ into a differential graded category over $k$, thus the simplicial structure on $\mathfrak{dgMod}_R$ will be given by setting
\begin{equation} \label{simpl DG}
\underline{\mathrm{Hom}}_{\mathfrak{dgMod}_R}\left(M,N\right):=\mathbf K\left(\tau_{\geq 0}\mathrm{HOM}_{\mathfrak{dgMod}_R}\left(M,N\right)\right)
\end{equation}
where $\mathbf K$ is the simplicial denormalisation functor giving the Dold-Kan correspondence (see Section 1.1) and $\tau_{\geq 0}$ is good truncation.
\subsection{Homotopy Theory of Filtered Cochain Complexes}
Let $R$ be any commutative unital ring: in this section we will endow the category of filtered cochain complexes with a model structure which turns to be compatible (in a sense which will be clarified in Section 1.6) with the projective model structure on $\mathfrak{dgMod}_R$.\\
Recall that a \emph{filtered cochain complex of $R$-modules} (also referred as \emph{filtered $R$-module in complexes}) consists of a pair $\left(M,F\right)$, where $M\in\mathfrak{dgMod}_R$ and $F$ is a decreasing filtration on it, i.e. a collection $\left\{F^kM\right\}_{k\in\mathbb N}$ of subcomplexes of $M$ such that $F^{k+1}M\subseteq F^kM$ and $F^0M=M$; as a consequence an object $\left(M,F\right)\in\mathfrak{FdgMod}_R$ looks like a diagram of the form
\begin{equation*}
\tiny{\xymatrix{\cdots\ar[r] & M^{n-1}\ar[r] & M^n\ar[r] & M^{n+1}\ar[r] & \cdots \\
\cdots\ar[r] & F^1M^{n-1}\ar[r]\ar@{^{(}->}[u] & F^1M^n\ar[r]\ar@{^{(}->}[u] & F^1M^{n+1}\ar[r]\ar@{^{(}->}[u] & \cdots \\
\cdots\ar[r] & F^2M^{n-1}\ar[r]\ar@{^{(}->}[u] & F^2M^n\ar[r]\ar@{^{(}->}[u] & F^2M^{n+1}\ar[r]\ar@{^{(}->}[u] & \cdots \\
& \vdots\ar@{^{(}->}[u] & \vdots\ar@{^{(}->}[u] & \vdots\ar@{^{(}->}[u] & &} }
\end{equation*}
A morphism of filtered complexes is a cochain map preserving filtrations\footnote{There are more general notions of filtrations in the literature, but we are not caring about them in this paper.}, so denote by $\mathfrak{FdgMod}_R$ the category made of filtered $R$-modules in complexes and their morphisms. \\
The category $\mathfrak{FdgMod}_R$ is both complete and cocomplete: as a matter of fact let $\left(M_{\alpha},F_{\alpha}\right)_{\alpha\in I}$ and $\left(N_{\beta},F_{\beta}\right)$ be respectively an inverse system and a direct system in $\mathfrak{FdgMod}_R$: we have that
\begin{eqnarray*}
\underset{\underset{\alpha}{\longrightarrow}}{\mathrm{lim}}\left(M_{\alpha},F_{\alpha}\right)=\left(M,F\right)\text{ where }F^kM:=\underset{\underset{\alpha}{\longrightarrow}}{\mathrm{lim}}\,F_{\alpha}^kM_{\alpha} \\
\underset{\underset{\beta}{\longleftarrow}}{\mathrm{lim}}\left(N_{\beta},F_{\beta}\right)=\left(N,F\right)\text{ where }F^kN:=\underset{\underset{\beta}{\longleftarrow}}{\mathrm{lim}}\,F_{\beta}^kN_{\beta}.
\end{eqnarray*}
In particular the filtered complex $\left(0,T\right)$, where $0$ is the zero cochain complex and $T$ is the trivial filtration over it, is the zero object of the category $\mathfrak{FdgMod}_R$. \\
Define the filtered complexes
\begin{eqnarray*}
\left(D_R\left(n,p\right),F\right)\quad\text{where}\quad F^kD_R\left(n,p\right):=\begin{cases}
                                                                    D_R\left(n\right) & \text{if }k\leq p\\
                                                                    0 & \text{otherwise}
                                                                    \end{cases}
\\
\left(S_R\left(n,p\right),F\right)\quad\text{where}\quad F^kS_R\left(n,p\right):=\begin{cases}
                                                                    S_R\left(n\right) & \text{if }k\leq p\\
                                                                    0 & \text{otherwise.}
                                                                    \end{cases}\;
\end{eqnarray*}
\begin{rem}
Observe that $\left(D_R\left(n,p\right),F\right)$ and $\left(S_R\left(n,p\right),F\right)$ are compact for all $n$ and all $p$.
\end{rem}
In the following we will sometimes drop explicit references to filtrations if the context makes them clear.
\begin{thm} \label{fdg model thm}
Consider the sets
\begin{eqnarray} \label{fdg model}
&I_{\mathfrak{FdgMod}_R}:=\left\{S_R\left(n+1,p\right)\rightarrow D_R\left(n,p\right)\right\}_{n\in\mathbb Z}\quad\,&\nonumber \\
&J_{\mathfrak{FdgMod}_R}:=\left\{0\rightarrow D_R\left(n,p\right)\right\}_{n\in\mathbb Z}\qquad\qquad\quad& \nonumber \\
&W_{\mathfrak{FdgMod}_R}:=\left\{f:\left(M,F\right)\rightarrow \left(N,F\right)|H^n\left(F^pf\right)\text{ }\mathrm{is}\text{ }\mathrm{an}\text{ }\mathrm{isomorphism}\text{ }\forall n\in\mathbb Z,\forall p\in\mathbb N\right\}.&
\end{eqnarray}
The classes \eqref{fdg model} define a cofibrantly generated model structure on $\mathfrak{FdgMod}_{R}$, where $I_{\mathfrak{FdgMod}_{R}}$ is the set of generating cofibrations, $J_{\mathfrak{FdgMod}_{R}}$ is the set of generating trivial cofibrations and $W_{\mathfrak{FdgMod}_{R}}$ is the set of weak equivalences. 
\end{thm}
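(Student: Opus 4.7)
The overall plan is to invoke the Kan-Quillen recognition criterion (Theorem \ref{criterion model}) in direct analogy with the proof of Theorem \ref{dg model thm}, carrying out the characterisations of $I_{\mathfrak{FdgMod}_R}$-inj, $J_{\mathfrak{FdgMod}_R}$-inj, $I_{\mathfrak{FdgMod}_R}$-cof, and $J_{\mathfrak{FdgMod}_R}$-cof ``one filtration level at a time.'' The key conceptual point is that all three sets in \eqref{fdg model} are indexed over pairs $(n,p)$ where $p$ records the filtration level at which the generator sits, so the lifting problems they produce are detected independently on each subcomplex $F^p M$. This mirrors the way the generators in \eqref{dg model} detect everything degreewise.

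First I would verify compactness of the domains of $I_{\mathfrak{FdgMod}_R}$ and $J_{\mathfrak{FdgMod}_R}$, which is precisely the Remark preceding the statement, together with the observation that sequential colimits in $\mathfrak{FdgMod}_R$ are computed degreewise on each filtration piece. Next, by inspecting the lifting problem against $0 \to D_R(n,p)$ exactly as in Proposition \ref{DG fib}, a map $f:(M,F)\to (N,F)$ lies in $J_{\mathfrak{FdgMod}_R}$-inj if and only if $F^p f: F^p M \to F^p N$ is degreewise surjective for every $p\in\mathbb N$. Similarly, imitating the analysis of Proposition \ref{DG tr fib} on the diagram generated by $S_R(n+1,p)\to D_R(n,p)$, a map lies in $I_{\mathfrak{FdgMod}_R}$-inj if and only if each $F^p f$ is a degreewise surjection with acyclic kernel, i.e.\ a trivial fibration in $\mathfrak{dgMod}_R$ (in the sense of Theorem \ref{dg model thm}) at every filtration level. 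This identification immediately yields the inclusion $I_{\mathfrak{FdgMod}_R}\text{-inj}\subseteq W_{\mathfrak{FdgMod}_R}\cap J_{\mathfrak{FdgMod}_R}\text{-inj}$.

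For the remaining two axioms, I would first check that $W_{\mathfrak{FdgMod}_R}$ satisfies the two-out-of-three property and is closed under retracts: this follows at once from Proposition \ref{DG equiv} applied separately at each $F^p$, since the conditions are imposed level-wise. To establish $J_{\mathfrak{FdgMod}_R}\text{-cell}\subseteq W_{\mathfrak{FdgMod}_R}\cap I_{\mathfrak{FdgMod}_R}\text{-cof}$, note that a pushout of $0\to D_R(n,p)$ along an arrow with source $0$ adjoins a copy of $D_R(n)$ inside $F^p$ (and zero inside $F^{p+1}$); since $D_R(n)$ is acyclic, this pushout is a filtration-wise quasi-isomorphism. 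Transfinite composition preserves these properties level by level, so $J_{\mathfrak{FdgMod}_R}\text{-cell}\subseteq W_{\mathfrak{FdgMod}_R}$, and $J_{\mathfrak{FdgMod}_R}\text{-cell}\subseteq I_{\mathfrak{FdgMod}_R}\text{-cof}$ follows from the general inclusion $J\subseteq J\text{-cof}=\left(I\text{-cof}\right)\cap\cdots$ once one observes $J_{\mathfrak{FdgMod}_R}\subseteq I_{\mathfrak{FdgMod}_R}\text{-cof}$ by exhibiting the map $0\to D_R(n,p)$ as a composition of maps built from the generating cofibrations. Finally, for the sixth axiom I would verify $W_{\mathfrak{FdgMod}_R}\cap J_{\mathfrak{FdgMod}_R}\text{-inj}\subseteq I_{\mathfrak{FdgMod}_R}\text{-inj}$: by the characterisations obtained above, both conditions together say that each $F^p f$ is a degreewise surjection whose kernel is acyclic, which is exactly $I_{\mathfrak{FdgMod}_R}\text{-inj}$.

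The main obstacle I expect is bookkeeping the filtration levels in the acyclicity argument for cell complexes: one must be sure that the acyclic kernel of the attached $D_R(n)$ sits coherently inside $F^p$ without polluting $F^{p+1}$, and that this coherence is maintained through arbitrary pushouts and transfinite compositions. Once that is cleanly set up, the rest of the proof is a mechanical translation of the arguments for Propositions \ref{DG fib}--\ref{DG tr cofib} from $\mathfrak{dgMod}_R$ to each layer $F^p$, and Theorem \ref{criterion model} delivers the desired model structure.
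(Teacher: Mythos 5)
Your proposal is correct and follows the paper's overall strategy: invoke the Kan--Quillen recognition criterion (Theorem \ref{criterion model}) by reducing each lifting problem against the generators of \eqref{fdg model} to a sequence of lifting problems against the generators of \eqref{dg model}, one for each filtration level $F^p$, exactly as the paper does in Propositions \ref{FDG fib}, \ref{FDG tr fib} and \ref{FDG equiv}. The one place where you take a genuinely different route is in verifying $J_{\mathfrak{FdgMod}_R}\text{-cell}\subseteq W_{\mathfrak{FdgMod}_R}\cap I_{\mathfrak{FdgMod}_R}\text{-cof}$: the paper's Proposition \ref{FDG tr cofib} proves the stronger inclusion $J_{\mathfrak{FdgMod}_R}\text{-cof}\subseteq W_{\mathfrak{FdgMod}_R}\cap I_{\mathfrak{FdgMod}_R}\text{-cof}$ by a lifting argument asserting that, for any $J$-cofibration $i$, the map $F^k i$ has the LLP with respect to \emph{all} degreewise surjections in $\mathfrak{dgMod}_R$; you instead work directly with $J\text{-cell}$, observing that since the source of each generator is the zero object, every pushout of $0\to D_R(n,p)$ simply adjoins a direct summand which is $D_R(n)$ in filtration levels $\le p$ and $0$ above, hence is a filtration-levelwise degreewise split injection with acyclic cokernel, and these are stable under transfinite composition because cohomology commutes with filtered colimits along injections. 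Your route is actually the more careful one here: the paper's claim that every lifting problem for $F^k i$ against an arbitrary degreewise surjection arises from a lifting problem for $i$ against a fibration in $\mathfrak{FdgMod}_R$ requires the given maps on $F^k M$ and $F^k N$ to extend to all filtration levels, which is not automatic, whereas the $J\text{-cell}$ argument (together with closure of $W$ under retracts, supplied by Proposition \ref{FDG equiv}) recovers $J\text{-cof}\subseteq W$ a posteriori via the small object argument once compactness of the domains of $J$ is in place. One small slip: in your final paragraph you say ``acyclic kernel of the attached $D_R(n)$'' where you mean \emph{cokernel}, and the adjoined copy of $D_R(n)$ sits in every level $F^j$ for $j\le p$, not just in $F^p$; neither affects the argument.
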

As done in Section 1.1, proving Theorem \ref{fdg model thm} amounts to provide a precise description of fibrations, trivial fibrations, cofibrations and trivial cofibrations determined by the sets \eqref{fdg model}, which we do in the following propositions.
\begin{prop} \label{FDG fib}
$p\in\mathrm{Hom}_{\mathfrak{FdgMod}_R}\left(\left(M,F\right),\left(N,F\right)\right)$ is a fibration if and only if $F^kp^n$ is surjective $\forall k\in\mathbb N,\forall n\in\mathbb Z$.
\end{prop}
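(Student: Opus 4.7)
The strategy mirrors the proof of Proposition \ref{DG fib}, adapted to the filtered setting. Since fibrations are by definition $J_{\mathfrak{FdgMod}_R}$-injective maps, I would reduce the statement to understanding lifting problems of the shape
\begin{equation*}
\tiny{\xymatrix{0\ar[r]\ar[d] & (M,F)\ar[d]^p \\ (D_R(n,p),F)\ar[r] & (N,F)}}
\end{equation*}
for every pair $(n,p)\in\mathbb{Z}\times\mathbb{N}$.

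The key observation is to unravel what a morphism $(D_R(n,p),F)\to (N,F)$ in $\mathfrak{FdgMod}_R$ actually encodes. Since the filtration on $D_R(n,p)$ is constant equal to $D_R(n)$ for indices $\leq p$ and zero afterwards, the filtration-preservation condition forces the image of such a morphism to lie in $F^pN$. Combined with the fact that a cochain map $D_R(n)\to F^pN$ is uniquely determined by the image of the generator in degree $n$ (the connecting map being the identity automatically handles degree $n+1$), the data of the bottom horizontal arrow reduces to a single element $y\in F^pN^n$. By the same token, a lift $(D_R(n,p),F)\to (M,F)$ is exactly an element $x\in F^pM^n$, and commutativity of the diagram translates to $p^n(x)=y$.

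Putting these observations together, $p$ is $J_{\mathfrak{FdgMod}_R}$-injective if and only if for every $n\in\mathbb{Z}$, every $p\in\mathbb{N}$, and every $y\in F^pN^n$ there exists $x\in F^pM^n$ with $p^n(x)=y$; this is exactly the surjectivity of $F^kp^n$ for all $k$ and $n$. There is no genuine obstacle here: the only minor subtlety is checking that the filtration-preservation condition forces the image into $F^pN$, which follows directly from the definition of the filtration on $D_R(n,p)$. The proof is entirely parallel to the unfiltered case and requires no appeal to the small object argument or to Theorem \ref{criterion model}.
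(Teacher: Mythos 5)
Your proof is correct and follows essentially the same route as the paper: both reduce to the lifting problem against $0\rightarrow D_R\left(n,p\right)$, identify a filtered morphism out of $D_R\left(n,p\right)$ with a single element of $F^pN^n$ (the paper phrases this as a sequence of unfiltered diagrams in which the level-$p$ datum determines all lower levels), and conclude that a lift exists precisely when $F^pp^n$ is surjective, letting $n$ and $p$ vary. No gaps.
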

\begin{proof}
We want to characterise diagrams 
\begin{equation} \label{fdg D^n lift}
\tiny{\xymatrix{0\ar[r]\ar[d] & \left(M,F\right)\ar[d]^p \\
D_R\left(n,p\right)\ar[r] & \left(N,F\right)}}
\end{equation}
in $\mathfrak{FdgMod}_R$ admitting a lifting. A diagram like \eqref{fdg D^n lift} corresponds to the sequence of diagrams in $\mathfrak{dgMod}_R$
\begin{equation} \label{fdg D^n sequence}
\tiny{\xymatrix{0\ar[r]\ar[d] & M\ar[d]^p \\
D_R\left(n\right)\ar[r] & N} \quad
\xymatrix{0\ar[r]\ar[d] & F^1M\ar[d]^{F^1p} \\
D_R\left(n\right)\ar[r] & F^1N} \qquad \cdots \qquad
\xymatrix{0\ar[r]\ar[d] & F^p M\ar[d]^{F^pp} \\
D_R\left(n\right)\ar[r] & F^p N} \quad
\xymatrix{0\ar[r]\ar[d] & F^{p+1}M\ar[d]^{F^{p+1}p} \\
0\ar[r] & F^{p+1}N} }
\end{equation}
and -- as we did in the proof of Proposition \ref{DG fib} -- we see that the sequence \eqref{fdg D^n sequence} corresponds bijectively to an element $\left(x^0,x^1,\ldots,x^p\right)\in N^n\times F^1N^n\times\cdots F^pN^n$, where $x^p\in F^pN^n$ determines $x^k\in F^kN^n$ for all $k\leq p$ through the inclusion maps defining the filtration $F$. Again, Proposition \ref{DG fib} ensures that a diagram like \eqref{fdg D^n lift} admits a lifting if and only if maps $F^kp^n$ are surjective $\forall k\leq p$ if and only if the map $F^pp^n$ is surjective (as observed above, what happens in level $p$ determines the picture in lower levels), so the result follows letting $n$ and $p$ vary.
\end{proof}
\begin{prop} \label{FDG tr fib}
$p\in\mathrm{Hom}_{\mathfrak{DG}_R}\left(\left(M,F\right),\left(N,F\right)\right)$ is a trivial fibration if and only if $F^kp$ is degreewise surjective with acyclic kernel; in particular $W_{\mathfrak{FdgMod}_R}\cap J_{\mathfrak{FdgMod}_R}\text{-inj}=I_{\mathfrak{FdgMod}_R}\text{-inj}$.
\end{prop}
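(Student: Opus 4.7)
The plan is to mimic the proof of Proposition \ref{DG tr fib} one filtration level at a time, exploiting the fact that the generating (trivial) cofibrations in $I_{\mathfrak{FdgMod}_R}$ and $J_{\mathfrak{FdgMod}_R}$ are concentrated above filtration level $p$, so lifting problems stratify.

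First I would characterise $I_{\mathfrak{FdgMod}_R}$-injections. A diagram
\begin{equation*}
\tiny{\xymatrix{S_R(n+1,p)\ar[r]\ar[d] & (M,F)\ar[d]^{p} \\ D_R(n,p)\ar[r] & (N,F)}}
\end{equation*}
translates, just as in the proof of Proposition \ref{FDG fib}, into a compatible sequence of diagrams in $\mathfrak{dgMod}_R$ of the form $S_R(n+1)\to F^kM$, $D_R(n)\to F^kN$ for $k\leq p$ (and trivial ones for $k>p$). Because the filtration maps are inclusions, the diagram at level $p$ determines all those at levels $k<p$, and a global lifting exists iff the level-$p$ diagram admits a lifting. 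Letting $n\in\mathbb Z$ and $p\in\mathbb N$ vary, we obtain that $p\in I_{\mathfrak{FdgMod}_R}\text{-inj}$ if and only if, for every $p\in\mathbb N$, the map $F^pp: F^pM\to F^pN$ lies in $I_{\mathfrak{dgMod}_R}\text{-inj}$. By Proposition \ref{DG tr fib} this last condition is exactly the requirement that each $F^pp$ be degreewise surjective with acyclic kernel.

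For the second part (the identity $W_{\mathfrak{FdgMod}_R}\cap J_{\mathfrak{FdgMod}_R}\text{-inj}=I_{\mathfrak{FdgMod}_R}\text{-inj}$), I would combine the characterisation just obtained with Proposition \ref{FDG fib} and the long exact cohomology sequence. On one hand, if $p\in I_{\mathfrak{FdgMod}_R}\text{-inj}$ then each $F^pp$ is a surjection with acyclic kernel, so every $F^pp$ is a fibration (hence $p\in J_{\mathfrak{FdgMod}_R}\text{-inj}$ by Proposition \ref{FDG fib}) and a quasi-isomorphism (hence $p\in W_{\mathfrak{FdgMod}_R}$). Conversely, if $p$ is a fibration and a weak equivalence, then each $F^pp$ is surjective; the short exact sequence $0\to\ker(F^pp)\to F^pM\to F^pN\to 0$ together with the assumption that $H^n(F^pp)$ is an isomorphism for all $n,p$ forces $\ker(F^pp)$ to be acyclic, giving $p\in I_{\mathfrak{FdgMod}_R}\text{-inj}$.

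I expect no serious obstacle: the only subtle point is the observation that lifting problems at a fixed pair $(n,p)$ reduce to the top filtration level only because the filtration consists of inclusions, so one does \emph{not} need to solve independent lifting problems level by level. Once this reduction is in place, the proof essentially delegates everything to Proposition \ref{DG tr fib}, and the second claim follows from a purely formal long-exact-sequence argument applied levelwise.
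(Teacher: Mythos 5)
Your proof is correct and follows essentially the same route as the paper: unfold the filtered lifting problem into its levelwise constituents, observe that the inclusions defining the filtration reduce everything to the top level $p$, and then delegate to Proposition \ref{DG tr fib}. The extra long-exact-sequence argument you give for the ``in particular'' clause is a slightly more explicit version of what the paper leaves implicit, but it is the same reasoning.
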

\begin{proof}
We want to characterise diagrams 
\begin{equation} \label{fdg S^n lift}
\tiny{\xymatrix{S_R\left(n+1,p\right)\ar[r]\ar[d] & \left(M,F\right)\ar[d]^p \\
D_R\left(n,p\right)\ar[r] & \left(N,F\right)} }
\end{equation}
in $\mathfrak{FdgMod}_R$ admitting a lifting. A diagram like \eqref{fdg D^n lift} corresponds to the sequence of diagrams in $\mathfrak{dgMod}_R$
\begin{equation} \label{fdg S^n sequence}
\tiny{\xymatrix{S_R\left(n+1\right)\ar[r]\ar[d] & M\ar[d]^p \\
D_R\left(n\right)\ar[r] & N} \quad
\xymatrix{S_R\left(n+1\right)\ar[r]\ar[d] & F^1M\ar[d]^{F^1p} \\
D_R\left(n\right)\ar[r] & F^1N} \quad \cdots \quad
\xymatrix{S_R\left(n+1\right)\ar[r]\ar[d] & F^p M\ar[d]^{F^pp} \\
D_R\left(n\right)\ar[r] & F^p N} \quad
\xymatrix{0\ar[r]\ar[d] & F^{p+1}M\ar[d]^{F^{p+1}p} \\
0\ar[r] & F^{p+1}N} }
\end{equation}
and -- as we did in the proof of Proposition \ref{DG tr fib} -- we see that the sequence \eqref{fdg S^n sequence} corresponds bijectively to an element $\left(\left(x_0,y_0\right),\left(x_1,y_1\right),\ldots,\left(x_p,y_p\right)\right)\in X_0\times X_1\times\cdots\times X_p$, where 
\begin{equation*}
X_k:=\left\{\left(x_k,y_k\right)\in F^kN^n\oplus F^kZ^{n+1}\left(M\right)|F^kp^{n+1}\left(y_k\right)=F^kd^n\left(x_k\right)\right\}
\end{equation*}
and moreover the pair $\left(x_p,y_p\right)$ determines all the previous ones through the inclusion maps defining the filtration $F$. Now by Proposition \ref{DG tr fib} a diagram like \eqref{fdg S^n lift} admits a lifting if and only if $F^pp$ is degreewise surjective and induces an isomorphism in cohomology, thus the result follows letting $n$ and $p$ vary.
\end{proof}
As done in Section 1.2, we study cofibrant objects defined by the structure \eqref{fdg model}.
\begin{prop} \label{fdg cofibrant}
Let $\left(A,F\right)$ be a filtered complex of $R$-modules. If $\left(A,F\right)$ is cofibrant then $F^kA^n$ is a projective $R$-module $\forall n\in\mathbb Z,\forall k\in\mathbb N$; conversely if $F^kA$ is cofibrant as an object in $\mathfrak{dgMod}_R$ and the filtration $F$ is bounded above then $\left(A,F\right)$ is cofibrant.
\end{prop}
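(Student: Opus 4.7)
The strategy is to mirror the proof of Proposition \ref{DG cofibrant}, replacing the test complexes $D_R(n,M)$ by filtered analogues that remember the filtration level at which a lifting is being tested, and exploiting the levelwise characterisation of (trivial) fibrations provided by Propositions \ref{FDG fib} and \ref{FDG tr fib}.

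For the forward implication, introduce for every $R$-module $P$ and every pair $(n,k)\in\mathbb Z\times\mathbb N$ the filtered complex $(D_R(n,k,P),F)$ with underlying complex $D_R(n,P)$ and filtration
\begin{equation*}
F^jD_R(n,k,P):=\begin{cases} D_R(n,P) & \text{if }j\leq k, \\ 0 & \text{if }j>k. \end{cases}
\end{equation*}
An $R$-surjection $P\twoheadrightarrow Q$ induces a filtered map between these objects which, at every level, is either the trivial fibration $D_R(n,P)\twoheadrightarrow D_R(n,Q)$ of the proof of Proposition \ref{DG cofibrant} or the zero map, hence is a trivial fibration in $\mathfrak{FdgMod}_R$ by Proposition \ref{FDG tr fib}. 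Given an $R$-linear map $f:F^kA^n\to Q$, the assignment given by $f$ in degree $n+1$ and $fd$ in degree $n$ at filtration level $k$, extended to lower levels via the inclusions and by zero above, defines a filtered cochain map $\dot f:(A,F)\to(D_R(n,k,Q),F)$. Cofibrancy of $(A,F)$ produces a filtered lift to $(D_R(n,k,P),F)$, and its degree-$n$, level-$k$ component gives the required section of $P\twoheadrightarrow Q$ on $F^kA^n$.

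For the converse, fix $p_0$ with $F^{p_0}A=0$. Given a trivial fibration $p:(M,F)\twoheadrightarrow(N,F)$ and a filtered cochain map $g:(A,F)\to(N,F)$, I would build a compatible family $\{h^k:F^kA\to F^kM\}$ with $p\circ h^k=F^kg$ and $h^k|_{F^{k+1}A}=h^{k+1}$ by descending induction on $k=p_0,\ldots,0$. The base case is the zero map. At the inductive step, Proposition \ref{FDG tr fib} says $F^kp$ is a trivial fibration in $\mathfrak{dgMod}_R$ and the hypothesis gives that $F^kA$ is cofibrant there, so some unconstrained lift $\tilde h^k:F^kA\to F^kM$ of $F^kg$ exists. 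The discrepancy $\phi:=\tilde h^k|_{F^{k+1}A}-h^{k+1}$ takes values in the acyclic kernel $F^{k+1}K$ of $F^{k+1}p$ and is a chain map; acyclicity of $F^{k+1}K$ together with cofibrancy of $F^{k+1}A$ provide a nullhomotopy $\sigma:F^{k+1}A\to F^{k+1}K$ of degree $-1$ with $\phi=d\sigma+\sigma d$.

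The hard part will be extending $\sigma$ to a degree-$(-1)$ graded $R$-linear map $\tilde\sigma:F^kA\to F^kK$; once this is in hand, the correction $h^k:=\tilde h^k-(d\tilde\sigma+\tilde\sigma d)$ is a chain map extending $h^{k+1}$ and lifting $F^kg$ (since $\tilde\sigma$ lands in $\ker p$), and iterating down to $k=0$ produces the desired filtered lift $h^0:A\to M$. The extension is a purely modular problem degreewise obstructed in $\mathrm{Ext}^1_R(F^kA^n/F^{k+1}A^n,F^kK^{n-1})$; the hypothesis that every $F^kA$ is cofibrant in $\mathfrak{dgMod}_R$ forces each $F^kA^n$ to be projective, and the finiteness of the filtration lets one organise the extensions coherently by working upward in $k$ after fixing $n$, so that at each stage the relevant obstruction is controlled by the already-established projectivity data. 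Boundedness-above is essential precisely to guarantee that this descending induction terminates and that no transfinite issues arise in assembling the filtered lift from its levelwise components.
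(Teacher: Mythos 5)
Your forward direction is fine, and in fact cleaner than the paper's: testing $(A,F)$ against the filtered discs $D_R(n,k,P)\twoheadrightarrow D_R(n,k,Q)$ makes explicit what the paper leaves implicit, and the indexing can be tidied up just as in Proposition \ref{DG cofibrant}. The converse, however, has a genuine gap, and you have located it precisely: extending the nullhomotopy $\sigma:F^{k+1}A\to F^kK$ (note: the discrepancy $\phi$ lands in $F^kK$, not $F^{k+1}K$, since $\tilde h^k$ need not respect the $(k+1)$st level of the filtration) to a graded map $\tilde\sigma:F^kA\to F^kK$. The degreewise obstruction lies in $\mathrm{Ext}^1_R(F^kA^n/F^{k+1}A^n,F^kK^{n-1})$, but projectivity of $F^kA^n$ and $F^{k+1}A^n$ does \emph{not} make this vanish -- a quotient of nested projectives need not be projective, and the inclusion $F^{k+1}A^n\hookrightarrow F^kA^n$ need not split. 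Your suggestion to ``organise the extensions coherently by working upward in $k$'' does not change the obstruction group, which depends only on the quotient $F^kA^n/F^{k+1}A^n$; no amount of reordering produces the needed splitting. The paper's own proof hides exactly this gap behind the unjustified sentence ``since $F^{k-1}A$ is projective in each degree we are allowed to choose $f$ such that $f|_{F^kA}=F^kh$''.

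In fact the converse is false as stated. Take $R=\mathbb Z$, $A=\mathbb Z$ concentrated in degree $0$, with $F^1A=2\mathbb Z$ and $F^2A=0$: each $F^kA$ is a bounded complex of projectives, hence cofibrant, and $F$ is bounded above. Now take $(N,F)=(A,F)$, $g=\mathrm{id}$, and $(M,F)$ with $M^{-1}=\mathbb Z$, $M^0=\mathbb Z^2$, $d^{-1}(c)=(c,0)$, $p^0(a,b)=b$, and filtration $F^1M^{-1}=2\mathbb Z$, $F^1M^0=\langle(1,2),(2,0)\rangle\subset\mathbb Z^2$, $F^2M=0$. One checks directly that $F^0p$ and $F^1p$ are degreewise surjective with acyclic kernels, so $p$ is a trivial fibration in $\mathfrak{FdgMod}_{\mathbb Z}$; but any chain lift $h^0$ of $\mathrm{id}_A$ satisfies $h^0(1)=(a,1)$, so $h^0(2)=(2a,2)$, which never lies in $F^1M^0$ since $2m=2$ forces $m=1$ and $1+2n=2a$ is unsolvable. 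So $(A,F)$ is not cofibrant. The hypothesis that actually makes the induction work is the stronger one that each inclusion $F^{k+1}A\hookrightarrow F^kA$ is a cofibration in $\mathfrak{dgMod}_R$, equivalently that the graded pieces $F^kA/F^{k+1}A$ are cofibrant; under that hypothesis the extension of $\sigma$ exists because $F^kA^n/F^{k+1}A^n$ is then projective, and your argument (and the paper's) goes through.
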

\begin{proof}
Suppose $\left(A,F\right)$ is cofibrant and consider a trivial fibration $p\in\mathrm{Hom}_{\mathfrak{FdgMod}_R}\left(\left(M,F\right),\left(N,F\right)\right)$ and any morphism $g\in\mathrm{Hom}_{\mathfrak{FdgMod}_R}\left(\left(A,F\right),\left(N,F\right)\right)$. By assumption, there exists a morphism $h$ lifting $g$, so the diagram
\begin{equation*}
\tiny{\xymatrix{ & \left(M,F\right)\ar[d]^p \\
\left(A,F\right)\ar[r]^g\ar[ur]^h & \left(N,F\right)} }
\end{equation*}
commutes. In particular this means that the big diagram 
\begin{equation*}
\tiny{\xymatrix{ & & & M\ar[ddd]^p \\ 
& & F^1M\ar[d]^{F^1p}\ar@{^{(}->}[ur]\ar@{}[dl] |{\cdots\;\cdots} &\\
& F^1 A\ar[r]^{F^1g}\ar@/^1pc/[ur]^{F^1h}\ar@{^{(}->}[dl] & F^1N\ar@{^{(}->}[dr] &\\
A\ar[rrr]^g\ar@/^3pc/[uuurrr]^h & & & N}}
\end{equation*}
in $\mathfrak{dgMod}_R$ commutes; now it suffices to apply Proposition \ref{DG cofibrant} to show that $F^kA^n$ is a projective $R$-module $\forall n\in\mathbb Z,\forall k\in\mathbb N$.\\
Now assume that $F^kA$ is a cofibrant cochain complex (which in particular implies that $F^kA^n$ is a projective $R$-module $\forall n\in\mathbb Z$ by Proposition \ref{DG cofibrant}) and $F$ is bounded above: we want to prove that $\left(A,F\right)$ is cofibrant as a filtered module in complexes. Let $p\in\mathrm{Hom}_{\mathfrak{FdgMod}_R}\left(\left(M,F\right),\left(N,F\right)\right)$ be a trivial fibration and pick a morphism $g\in\mathrm{Hom}_{\mathfrak{FdgMod}_R}\left(A,M\right)$: we want to show that there is a morphism $h$ lifting $g$. By reverse induction, assume that $F^ph:F^pA\rightarrow F^pM$ has been defined for all $p\geq k$ (the boundedness of $F$ ensures that we can get started): we want to construct a lifting in level $k-1$. Consider the diagram 
\begin{equation*}
\tiny{\xymatrix{ & & & F^{k-1}M\ar[ddd]^p \\ 
& & F^kM\ar[d]^{F^1p}\ar@{^{(}->}[ur] &\\
& F^k A\ar[r]^{F^kg}\ar@/^1pc/[ur]^{F^kh}\ar@{^{(}->}[dl] & F^kN\ar@{^{(}->}[dr] &\\
F^{k-1}A\ar[rrr]^g\ar@{-->}@/^3pc/[uuurrr]^f & & & F^{k-1}N}}
\end{equation*}
and observe that a lifting $f\in\mathrm{Hom}_{\mathfrak{dgMod}_R}\left(F^{k-1}A,F^{k-1}M\right)$ does exist because $F^{k-1}A$ is cofibrant as an object in $\mathfrak{dgMod}_R$; moreover since $F^{k-1}A$ is projective in each degree we are allowed to choose $f$ such that $f|_{F^kA}=F^kh$, thus the result follows.
\end{proof}
\begin{rem}
The assumption on the filtration in Proposition \ref{fdg cofibrant} is probably too strong: it can be substituted with any hypothesis giving the base of the above inductive argument.
\end{rem}
\begin{prop} \label{FDG tr cofib}
There is an inclusion $J_{\mathfrak{FdgMod}_R}\text{-cof}\subseteq W_{\mathfrak{FdgMod}_R}\cap I_{\mathfrak{FdgMod}_R}\text{-cof}$.
\end{prop}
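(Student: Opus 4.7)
The plan is to split the inclusion into two separate claims, namely $J_{\mathfrak{FdgMod}_R}\text{-cof}\subseteq I_{\mathfrak{FdgMod}_R}\text{-cof}$ and $J_{\mathfrak{FdgMod}_R}\text{-cof}\subseteq W_{\mathfrak{FdgMod}_R}$, and dispatch each one using the explicit descriptions already established.

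For the first inclusion, I would argue by comparing right-lifting classes. By Proposition \ref{FDG tr fib}, $I_{\mathfrak{FdgMod}_R}\text{-inj}$ consists of maps $p$ such that $F^kp$ is a degreewise surjection with acyclic kernel for every $k$, whereas by Proposition \ref{FDG fib}, $J_{\mathfrak{FdgMod}_R}\text{-inj}$ consists of maps $p$ such that $F^kp^n$ is surjective for every $k$ and $n$. The former condition is evidently stronger than the latter, so $I_{\mathfrak{FdgMod}_R}\text{-inj}\subseteq J_{\mathfrak{FdgMod}_R}\text{-inj}$. Taking left-lifting classes and using the bullet points recalled in Section 1.1, this reverses to $J_{\mathfrak{FdgMod}_R}\text{-cof}=(J_{\mathfrak{FdgMod}_R}\text{-inj})\text{-proj}\subseteq(I_{\mathfrak{FdgMod}_R}\text{-inj})\text{-proj}=I_{\mathfrak{FdgMod}_R}\text{-cof}$.

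For the second inclusion, I would first analyse the building blocks of $J_{\mathfrak{FdgMod}_R}\text{-cell}$. A pushout of a generator $0\to D_R(n,p)$ along the unique map $0\to(X,F)$ is nothing but the canonical inclusion $(X,F)\hookrightarrow(X,F)\oplus D_R(n,p)$. Applying $F^k$ produces the inclusion $F^kX\hookrightarrow F^kX\oplus F^kD_R(n,p)$, where $F^kD_R(n,p)$ is either the acyclic complex $D_R(n)$ (when $k\leq p$) or zero; in both cases this inclusion is a quasi-isomorphism. Since cohomology and the fixed-filtration-level functor both commute with filtered colimits of $R$-modules, the transfinite composition of such pushouts remains a quasi-isomorphism at every filtration level, hence lies in $W_{\mathfrak{FdgMod}_R}$. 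Therefore $J_{\mathfrak{FdgMod}_R}\text{-cell}\subseteq W_{\mathfrak{FdgMod}_R}$.

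It then remains to upgrade this from $J_{\mathfrak{FdgMod}_R}\text{-cell}$ to $J_{\mathfrak{FdgMod}_R}\text{-cof}$. The domain of every element of $J_{\mathfrak{FdgMod}_R}$ is the zero object, which is trivially compact relative to any class of morphisms, so Hovey's version of the small object argument identifies $J_{\mathfrak{FdgMod}_R}\text{-cof}$ with the class of retracts of maps in $J_{\mathfrak{FdgMod}_R}\text{-cell}$; as weak equivalences in $\mathfrak{FdgMod}_R$ are closed under retracts (cohomology isomorphisms are, level by level), this yields $J_{\mathfrak{FdgMod}_R}\text{-cof}\subseteq W_{\mathfrak{FdgMod}_R}$, completing the proof. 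The most delicate point of the argument is really verifying that the transfinite composition step preserves the quasi-isomorphism property at each filtered piece, but this reduces to the standard exactness of filtered colimits over $R$.
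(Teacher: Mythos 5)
Your argument is correct, but its second half takes a genuinely different route from the paper's. The first inclusion $J_{\mathfrak{FdgMod}_R}\text{-cof}\subseteq I_{\mathfrak{FdgMod}_R}\text{-cof}$ is handled the same way in both proofs (trivial fibrations are fibrations, so $I_{\mathfrak{FdgMod}_R}\text{-inj}\subseteq J_{\mathfrak{FdgMod}_R}\text{-inj}$ and the lifting classes reverse). For the inclusion into $W_{\mathfrak{FdgMod}_R}$, the paper instead fixes a $J_{\mathfrak{FdgMod}_R}$-cofibration $i$ and shows, by unfolding filtered lifting squares level by level, that each $F^ki$ has the left lifting property against all degreewise surjections of unfiltered complexes; it then invokes Proposition \ref{DG tr cofib} to conclude that $F^ki$ is a trivial cofibration in $\mathfrak{dgMod}_R$, hence a quasi-isomorphism. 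Your route --- observing that each pushout of a generator $0\to D_R\left(n,p\right)$ is a filtration-levelwise quasi-isomorphism (since $F^kD_R\left(n,p\right)$ is either $D_R\left(n\right)$ or $0$, both acyclic), that transfinite composition preserves this by exactness of filtered colimits, and then passing from $J_{\mathfrak{FdgMod}_R}$-cell to $J_{\mathfrak{FdgMod}_R}$-cof via the small object argument together with the retract-closure of $W_{\mathfrak{FdgMod}_R}$ from Proposition \ref{FDG equiv} --- is the standard ``cell plus retract'' argument. It is arguably cleaner: the paper's reduction tacitly requires that an arbitrary degreewise surjection in $\mathfrak{dgMod}_R$ be realised as the level-$k$ piece of a filtered fibration, and that the unfiltered lifting problem at level $k$ extend to a filtered one, points your argument sidesteps entirely. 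What the paper's approach buys in exchange is slightly more information, namely an explicit identification of each $F^ki$ as a trivial cofibration of unfiltered complexes rather than merely a quasi-isomorphism.
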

\begin{proof}
Suppose $i\in\mathrm{Hom}_{\mathfrak{FdgMod}_R}\left(\left(M,F\right),\left(N,F\right)\right)$ is a $J_{\mathfrak{FdgMod}_R}\text{-cofibration}$, i.e. it has the left lifting property with respect to fibrations; in particular it lies in $I_{\mathfrak{FdgMod}_R}\text{-cof}$, so we only need to prove that $H^n\left(F^ki\right)$ is an isomorphism $\forall n\in\mathbb Z$, $\forall k\in\mathbb N$. Let $p\in\mathrm{Hom}_{\mathfrak{FdgMod}_R}\left(\left(X,F\right),\left(Y,F\right)\right)$ be any fibration, so by Proposition \ref{FDG fib} $F^kp^n$ is surjective $\forall n\in\mathbb Z$, $\forall k\in\mathbb N$: by assumption the diagram 
\begin{equation*}
\tiny{\xymatrix{\left(M,F\right)\ar[r]\ar[d]_i & \left(X,F\right)\ar[d]^p \\
\left(N,F\right)\ar[r] & \left(Y,F\right)}}
\end{equation*}
admits a lifting and, unfolding it, we get that the diagram in $\mathfrak{dgMod}_R$
\begin{equation*}
\tiny{\xymatrix{F^kM\ar[r]\ar[d]_{F^ki} & X\ar[d]^{F^kp} \\
F^kN\ar[r] & F^kY}}
\end{equation*}
lifts as well. Letting $p$ vary among all fibrations in $\mathfrak{FdgMod}_R$ we see that $F^ki$ has the right lifting property with respect to all degreewise surjections in $\mathfrak{dgMod}_R$, so by Proposition \ref{DG fib} and Proposition \ref{DG tr cofib} it is a trivial cofibration in $\mathfrak{dgMod}_R$; in particular this means that $H^n\left(F^ki\right)$ is an isomorphism $\forall n\in\mathbb Z$, $\forall k\in\mathbb N$, so the result follows.
\end{proof}
\begin{prop} \label{FDG equiv}
The set $W_{\mathfrak{FdgMod}_R}$ has the $2$-out-of-$3$ property and is closed under retracts.
\end{prop}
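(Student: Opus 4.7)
The plan is to reduce both claims to the corresponding statement for $W_{\mathfrak{dgMod}_R}$, namely Proposition \ref{DG equiv}, by observing that the definition of $W_{\mathfrak{FdgMod}_R}$ is precisely that $F^p f \in W_{\mathfrak{dgMod}_R}$ for every $p \in \mathbb N$. The key structural fact I will use is that, for each fixed $p$, taking the $p$-th piece of the filtration is a functor $F^p : \mathfrak{FdgMod}_R \to \mathfrak{dgMod}_R$, hence preserves composition, identity morphisms, and retract diagrams.

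For the two-out-of-three property, I would fix composable filtered morphisms $f : (M,F) \to (N,F)$ and $g : (N,F) \to (P,F)$ and suppose two of $f$, $g$, $gf$ lie in $W_{\mathfrak{FdgMod}_R}$. For each $p \in \mathbb N$ the functoriality of $F^p$ gives $F^p(gf) = (F^p g)\circ(F^p f)$, and by hypothesis two of the three maps $F^p f$, $F^p g$, $F^p(gf)$ are quasi-isomorphisms in $\mathfrak{dgMod}_R$. Proposition \ref{DG equiv} then forces the third to be a quasi-isomorphism, and since $p$ was arbitrary the remaining filtered map lies in $W_{\mathfrak{FdgMod}_R}$.

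For closure under retracts, suppose $f$ is a retract of $g$ in the arrow category of $\mathfrak{FdgMod}_R$, with $g \in W_{\mathfrak{FdgMod}_R}$. Applying $F^p$ to the retract diagram produces a retract diagram in $\mathfrak{dgMod}_R$ exhibiting $F^p f$ as a retract of $F^p g$; the latter is a quasi-isomorphism by hypothesis, so Proposition \ref{DG equiv} gives that $F^p f$ is a quasi-isomorphism too. This holds for every $p$, whence $f \in W_{\mathfrak{FdgMod}_R}$.

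There is essentially no obstacle here: both properties are inherited one filtration level at a time from the analogous known facts for cochain complexes, and the proof is entirely formal once one notes that the filtration pieces $F^p$ assemble into functors on the arrow category that preserve composition, identities, and retracts.
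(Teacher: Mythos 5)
Your proof is correct and is exactly the argument the paper intends: the paper's one-line proof says to apply Proposition \ref{DG equiv} levelwise in the filtration, and your write-up simply makes explicit the functoriality of each $F^p$ on compositions and retract diagrams that this reduction relies on. No further comment is needed.
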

\begin{proof}
The result follows immediately by applying Proposition \ref{DG equiv} levelwise in the filtration.
\end{proof}
The above results (especially Proposition \ref{FDG tr fib}, Proposition \ref{FDG tr cofib} and Proposition \ref{FDG equiv}) say that the category $\mathfrak{FdgMod}_R$ endowed with the structure \eqref{fdg model} fits into the hypotheses of Theorem \ref{criterion model}, so Theorem \ref{fdg model thm} has been proved. 
\begin{rem}
We have not provided a complete description of cofibrations as this is not really needed in order to establish that data \eqref{fdg model} endow $\mathfrak{FdgMod}_R$ with a model structure; clearly all morphism $f:\left(M,F\right)\rightarrow\left(N,F\right)$ for which $F^kf:F^kM\rightarrow F^kN$ is a cofibration in $\mathfrak{dgMod}_R$ for all $k$ are cofibrations for such model structure, but it is not clear (nor expected) that these are all of them. Actually we believe that a careful characterisation of cofibrations should be quite complicated.
\end{rem}
Now assume $R$ is a $k$-algebra, where $k$ is a field of characteristic $0$: we now endow $\mathfrak{FdgMod}_R$ with the structure of a simplicially enriched category.\\
For all $\left(M,F\right),\left(N,F\right)\in\mathfrak{FdgMod}_R$ consider the chain complex $\left(\mathrm{HOM}\left(\left(M,F\right),\left(N,F\right)\right),\delta\right)$ defined as
\begin{eqnarray} \label{dg stru FDG}
&\mathrm{HOM}_{\mathfrak{FdgMod}_R}\left(\left(M,F\right),\left(N,F\right)\right)_n:=\mathrm{Hom}\left(\left(M,F\right),\left(N[-n],F\right)\right)& \nonumber \\
&\forall\left(f,F\right)\in {\mathrm{HOM}_{\mathfrak{FdgMod}_R}\left(\left(M,F\right),\left(N,F\right)\right)_n}\;\;\delta_n\left(\left(f,F\right)\right)\in {\mathrm{HOM}_{\mathfrak{FdgMod}_R}\left(\left(M,F\right),\left(N,F\right)\right)_{n-1}}& \nonumber\\
&\text{defined by } F^p\left(\delta_n\left(\left(f,F\right)\right)\right):=F^p\bar d^n\circ F^pf-\left(-1\right)^nF^pf\circ F^pd^n&
\end{eqnarray} 
where, by a slight abuse of notation, we mean that $F^pM\left[k\right]:=\left(F^pM\right)\left[k\right]$. \\
Formulae \eqref{dg stru FDG} make $\mathfrak{FdgMod}_R$ into a differential graded category over $k$, so we can naturally endow it with a simplicial structure by taking denormalisation, i.e. by setting
\begin{equation} \label{simpl FDG}
\underline{\mathrm{Hom}}_{\mathfrak{FdgMod}_R}\left(\left(M,F\right),\left(N,F\right)\right):=\mathbf K\left(\tau_{\geq 0}\mathrm{HOM}_{\mathfrak{FdgMod}_R}\left(\left(M,F\right),\left(N,F\right)\right)\right).
\end{equation}
\subsection{The Rees Functor}
Let $R$ be a commutative unital ring; the model structure over $\mathfrak{FdgMod}_R$ given by Theorem \ref{fdg model thm} is really modelled on the unfiltered situation: unsurprisingly, the homotopy theories of filtered modules in complexes and unfiltered ones are closely related, and the functor connecting them is given by the classical Rees construction. \\
Recall that the \emph{Rees module} associated to a filtered $R$-module $\left(M,F\right)$ is defined to be the graded $R\left[t\right]$-module given by
\begin{equation} \label{Rees defn}
\mathrm{Rees}\left(\left(M,F\right)\right):=\underset{p=0}{\overset{\infty}{\bigoplus}}F^pM\cdot t^{-p}
\end{equation}
so the Rees construction transforms filtrations into grading with respect to the polynomial algebra $R\left[t\right]$. Also, it is quite evident from formula \eqref{Rees defn} that the Rees construction is functorial, so there is a functor
\begin{equation*}
\mathrm{Rees}:\mathfrak{FMod}_R\longrightarrow\mathfrak{gMod}_{R\left[t\right]}\footnote{There is some abuse of notation in this formula.}
\end{equation*}
at our disposal, which in turn induces a functor
\begin{equation} \label{dg Rees}
\mathrm{Rees}:\mathfrak{FdgMod}_R\longrightarrow\mathfrak{gdgMod}_{R\left[t\right]}\footnote{There is some abuse of notation in this formula.}
\end{equation}
to the category of graded dg-modules over $R\left[t\right]$; in particular we like to view the latter as the category $\mathbb G_m$-$\mathfrak{dgMod}_{R\left[t\right]}$ of $R\left[t\right]$-modules in complexes equipped with an extra action of the multiplicative group compatible with the canonical action
\begin{eqnarray} \label{Gm action A1}
\mathbb G_m\times\mathbb A^1_R&\xrightarrow{\hspace*{1cm}}&\;\;\mathbb A^1_R \nonumber \\
\left(\lambda,s\right)\quad&\longmapsto&\lambda^{-1}s
\end{eqnarray}
The projective model structure on $\mathfrak{dgMod}_{R\left[t\right]}$ admits a natural $\mathbb G_m$-equivariant version. 
\begin{thm} \label{Gm dg model thm}
Consider the sets
\begin{eqnarray} \label{Gm dg model}
&I_{\text{$\mathbb G_m$-$\mathfrak{dgMod}_{R\left[t\right]}$}}:=\left\{f:t^iS_{R\left[t\right]}\left(n+1\right)\rightarrow t^iD_{R\left[t\right]}\left(n\right)\right\}_{i,n\in\mathbb Z}\quad\,&\nonumber \\
&J_{\text{$\mathbb G_m$-$\mathfrak{dgMod}_{R\left[t\right]}$}}:=\left\{f:0\rightarrow t^iD_{R\left[t\right]}\left(n\right)\right\}_{i,n\in\mathbb Z}\qquad\;& \nonumber \\
&W_{\text{$\mathbb G_m$-$\mathfrak{dgMod}_{R\left[t\right]}$}}:=\left\{f:M\rightarrow N|f \text{ is a $\mathbb G_m$-equivariant quasi-isomorphism}\right\}.&
\end{eqnarray}
The classes \eqref{Gm dg model} determine a cofibrantly generated model structure over $\mathbb G_m$-$\mathfrak{dgMod}_{R\left[t\right]}$, in which $I_{\text{$\mathbb G_m$-$\mathfrak{dgMod}_{R\left[t\right]}$}}$ is the set of generating cofibrations, $J_{\text{$\mathbb G_m$-$\mathfrak{dgMod}_{R\left[t\right]}$}}$ is the set of generating trivial cofibrations and $W_{\text{$\mathbb G_m$-$\mathfrak{dgMod}_{R\left[t\right]}$}}$ is the set of weak equivalences. 
\end{thm}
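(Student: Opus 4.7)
The plan is to verify the hypotheses of the Kan--Quillen criterion (Theorem \ref{criterion model}) by mimicking, almost step by step, the proof of Theorem \ref{dg model thm}. The decisive structural observation is that a $\mathbb G_m$-equivariant $R\left[t\right]$-module in complexes decomposes as a weight-graded object $M=\bigoplus_{i\in\mathbb Z}M_i$, with the action of $t$ shifting weights by a fixed amount and with $\mathbb G_m$-equivariant morphisms preserving weights. Consequently every limit, colimit, and lifting problem in $\mathbb G_m$-$\mathfrak{dgMod}_{R\left[t\right]}$ decomposes weight by weight, and each generator $t^iS_{R\left[t\right]}\left(n+1\right)$, $t^iD_{R\left[t\right]}\left(n\right)$ is concentrated in weight $i$.

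First, I would record that $\mathbb G_m$-$\mathfrak{dgMod}_{R\left[t\right]}$ is complete and cocomplete (both taken in $\mathfrak{dgMod}_{R\left[t\right]}$, with the induced weight grading), and that the compactness of the domains of $I_{\mathbb G_m\text{-}\mathfrak{dgMod}_{R\left[t\right]}}$ and $J_{\mathbb G_m\text{-}\mathfrak{dgMod}_{R\left[t\right]}}$ relative to their respective cell complexes is inherited from the compactness of $S_R\left(n\right)$ and $D_R\left(n\right)$ established earlier, because a map from $t^iS_{R\left[t\right]}\left(n+1\right)$ is completely determined by a single element of weight $i$ in degree $n+1$. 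Next, running the diagram chases of Proposition \ref{DG fib} and Proposition \ref{DG tr fib} on each weight component, I would show that a map $p$ lies in $J_{\mathbb G_m\text{-}\mathfrak{dgMod}_{R\left[t\right]}}$-inj if and only if it is a $\mathbb G_m$-equivariant degreewise surjection, and that $p$ lies in $I_{\mathbb G_m\text{-}\mathfrak{dgMod}_{R\left[t\right]}}$-inj if and only if each weight piece $p_i$ is a trivial fibration in $\mathfrak{dgMod}_R$ (equivalently, $p$ is a degreewise surjection with acyclic kernel). This immediately yields the identity $W_{\mathbb G_m\text{-}\mathfrak{dgMod}_{R\left[t\right]}}\cap J_{\mathbb G_m\text{-}\mathfrak{dgMod}_{R\left[t\right]}}\text{-inj}=I_{\mathbb G_m\text{-}\mathfrak{dgMod}_{R\left[t\right]}}\text{-inj}$, since a $\mathbb G_m$-equivariant quasi-isomorphism is simply a map inducing a cohomology isomorphism on every weight piece, and the short-exact-sequence argument of Proposition \ref{DG tr fib} then applies weight by weight.

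For the inclusion $J_{\mathbb G_m\text{-}\mathfrak{dgMod}_{R\left[t\right]}}\text{-cell}\subseteq W_{\mathbb G_m\text{-}\mathfrak{dgMod}_{R\left[t\right]}}\cap I_{\mathbb G_m\text{-}\mathfrak{dgMod}_{R\left[t\right]}}\text{-cof}$, I would observe that each generating trivial cofibration $0\to t^iD_{R\left[t\right]}\left(n\right)$ has acyclic target (since $D_{R\left[t\right]}\left(n\right)$ is acyclic), and that the class of weight-preserving degreewise injections with acyclic cokernel is closed under pushouts and transfinite compositions in $\mathbb G_m$-$\mathfrak{dgMod}_{R\left[t\right]}$; this follows because both the formation of pushouts and the relevant long exact sequences respect the weight decomposition. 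The 2-out-of-3 property and closure under retracts for $W_{\mathbb G_m\text{-}\mathfrak{dgMod}_{R\left[t\right]}}$ reduce weight by weight to Proposition \ref{DG equiv}. Putting these pieces together, all six hypotheses of Theorem \ref{criterion model} are in force, and the model structure follows.

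The main technical obstacle, exactly as in Proposition \ref{DG tr cofib}, is controlling $J$-cofibrations: one needs that a $J_{\mathbb G_m\text{-}\mathfrak{dgMod}_{R\left[t\right]}}$-cofibration has the left lifting property against every $\mathbb G_m$-equivariant fibration, and this requires showing that the relevant $\mathbb G_m$-equivariant cokernels remain projective \emph{as cochain complexes} in each weight. The compatibility of the $\mathbb G_m$-action with the $R\left[t\right]$-action through the convention \eqref{Gm action A1} is used here to guarantee that the weight-$i$ summand of any cell complex built from the generators is, in a $\mathbb G_m$-equivariantly splittable way, a coproduct of shifts of $D_R\left(n\right)$'s, so that the argument of Proposition \ref{DG tr cofib} applies verbatim in each weight.
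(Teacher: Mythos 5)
Your proposal is correct and follows essentially the same route as the paper, which simply asserts that the arguments of Section 1.2 carry over upon restricting to $\mathbb G_m$-equivariant objects and maps; your weight-by-weight decomposition is precisely the mechanism that makes this transfer work. The only remark worth adding is that, having established $W\cap J\text{-inj}=I\text{-inj}$, the final alternative in Theorem \ref{criterion model} is already satisfied, so the closing discussion of $J$-cofibrant cokernels is not strictly needed.
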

Arguments and lemmas discussed in Section 1.2 to prove Theorem \ref{dg model} carry over to this context once we restrict to $\mathbb G_m$-equivariant objects and maps. 
\begin{rem}
Notice that maps in $I_{\text{$\mathbb G_m$-$\mathfrak{dgMod}_{R\left[t\right]}$}}$ and $J_{\text{$\mathbb G_m$-$\mathfrak{dgMod}_{R\left[t\right]}$}}$ are $\mathbb G_m$-equivariant, therefore all cofibrations are $\mathbb G_m$-equivariant.
\end{rem}
Fibrations in the model structure determined by Theorem \ref{Gm dg model} are very nicely described: this is the content of the next proposition.
\begin{prop} \label{Gm DG fib}
$p\in\mathrm{Hom}_{\text{$\mathbb G_m$-$\mathfrak{dgMod}_{R\left[t\right]}$}}\left(M,N\right)$ is a fibration if and only if it is a $\mathbb G_m$-equivariant degreewise surjection.
\end{prop}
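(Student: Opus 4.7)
The plan is to mimic the argument of Proposition \ref{DG fib} in the $\mathbb G_m$-equivariant setting, using the fact that fibrations are by definition characterised by the right lifting property against the generating trivial cofibrations $J_{\text{$\mathbb G_m$-$\mathfrak{dgMod}_{R\left[t\right]}$}}=\left\{0\rightarrow t^iD_{R\left[t\right]}\left(n\right)\right\}_{i,n\in\mathbb Z}$. So I would start by analysing diagrams of the form
\begin{equation*}
\tiny{\xymatrix{0\ar[r]\ar[d] & M\ar[d]^p \\ t^iD_{R\left[t\right]}\left(n\right)\ar[r] & N}}
\end{equation*}
in $\mathbb G_m$-$\mathfrak{dgMod}_{R\left[t\right]}$.

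The first observation I would make is that, because $t^iD_{R\left[t\right]}\left(n\right)$ is freely generated as an equivariant dg-module by a single element sitting in cohomological degree $n$ and $\mathbb G_m$-weight $i$, specifying the bottom horizontal arrow is equivalent to choosing an element $y$ in the weight-$i$ part $N^n_{(i)}$ of $N^n$, while a lifting amounts to picking $x\in M^n_{(i)}$ with $p^n(x)=y$. It follows that $p\in J_{\text{$\mathbb G_m$-$\mathfrak{dgMod}_{R\left[t\right]}$}}$-inj if and only if $p^n:M^n_{(i)}\rightarrow N^n_{(i)}$ is surjective for every $n\in\mathbb Z$ and every $i\in\mathbb Z$.

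Then I would conclude by translating this condition into the statement of the proposition: since every $\mathbb G_m$-equivariant map $M\to N$ decomposes as a direct sum of its weight components, being surjective in every weight and in every cohomological degree is exactly the same as being a $\mathbb G_m$-equivariant degreewise surjection. This gives both implications at once. I do not expect any real obstacle here, as the argument is a direct weight-by-weight bookkeeping upgrade of Proposition \ref{DG fib}; the only subtle point is just to remember that morphisms out of the shifted generator $t^iD_{R\left[t\right]}\left(n\right)$ detect precisely the weight-$i$ part, which is why the freeness of this generator as an equivariant object is the crucial input.
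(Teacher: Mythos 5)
Your proposal is correct and is essentially the paper's own argument: the paper simply states that the proof of Proposition \ref{DG fib} adapts to the $\mathbb G_m$-equivariant context, and your weight-by-weight analysis of lifting against $0\rightarrow t^iD_{R\left[t\right]}\left(n\right)$ is exactly the intended adaptation, with the correct observation that maps out of $t^iD_{R\left[t\right]}\left(n\right)$ detect the weight-$i$, degree-$n$ component.
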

\begin{proof}
The proof of Proposition \ref{DG fib} adapts to the $\mathbb G_m$-equivariant context. 
\end{proof}
The following result collects various properties of functor \eqref{dg Rees}: all claims are well-known, we only state them in homotopy-theoretical terms.
\begin{thm} \label{Rprops}
The Rees functor 
\begin{equation*}
\mathrm{Rees}:\mathfrak{FdgMod}_R\longrightarrow\mathbb G_m\text{-}\mathfrak{dgMod}_{R\left[t\right]}.
\end{equation*}
has the following properties:
\begin{enumerate}
\item it has a left adjoint functor, given by 
\begin{eqnarray*}
&\varphi:\mathbb G_m\text{-}\mathfrak{dgMod}_{R\left[t\right]}\xrightarrow{\hspace*{0.7cm}}\mathfrak{FdgMod}_R\qquad\qquad\qquad\qquad\qquad& \\
&M\xmapsto{\hspace*{1.5cm}}\left(M_{\varphi},F_{\varphi}\right)\qquad\qquad\qquad& \\
&\qquad\qquad\qquad M_{\varphi}:=\nicefrac{M}{\left(1-t\right)M}& \\
&\qquad\qquad\qquad\qquad\qquad\quad F^n M_{\varphi}:=\mathrm{Im}\left(M^{\bullet,n}\rightarrow M^{\bullet}_{\varphi}\right)&
\end{eqnarray*}
where the complex $M$ is seen as a bigraded $R\left[t\right]$-module;
\item for all pairs $\left(M,F\right)$, $\left(N,F\right)$ there is bijection
\begin{equation} \label{G_m invariance}
\mathrm{Hom}_R\left(\left(M,F\right),\left(N,F\right)\right)\simeq\mathrm{Hom}_{R\left[t\right]}\left(\mathrm{Rees}\left(\left(M,F\right)\right),\mathrm{Rees}\left(\left(N,F\right)\right)\right)^{\mathbb G_m}\footnote{Here $\mathrm{Hom}_{R\left[t\right]}\left(\mathrm{Rees}\left(\left(M,F\right)\right),\mathrm{Rees}\left(\left(N,F\right)\right)\right)^{\mathbb G_m}$ stands for the set of $\mathbb G_m$-equivariant morphisms of $R\left[t\right]$-modules in complexes between $\mathrm{Rees}\left(\left(M,F\right)\right)$ and $\mathrm{Rees}\left(\left(N,F\right)\right)$.}
\end{equation}
which is natural in all variables;
\item its essential image consists of the full subcategory of $t$-torsion-free $R\left[t\right]$-modules in complexes;
\item it induces an equivalence on the homotopy categories;
\item it preserves compact objects;\footnote{In particular this means that the Rees functor maps filtered perfect complexes to perfect complexes: we will be more precise about this in Section 2.2 and Section 2.3.}
\item it maps fibrations to fibrations.
\end{enumerate}
In particular the Rees construction provides a Quillen equivalence between the categories $\mathfrak{FdgMod}_R$ and $\mathbb G_m\text{-}\mathfrak{dgMod}_{R\left[t\right]}$, both endowed with the projective model structure.
\end{thm}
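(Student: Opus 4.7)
The proof naturally splits along the six itemised properties, with the Quillen equivalence statement being a consequence. My strategy would be to unpack the definitions carefully so that the required $\mathbb{G}_m$-equivariance and $R[t]$-linearity force the adjunction and the essential image description, then verify the homotopical statements by a combination of levelwise arguments and cofibrant replacement.

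For property (1), the plan is to construct the unit and counit directly. A $\mathbb{G}_m$-equivariant map $f:M\to \mathrm{Rees}(N,F)$ decomposes into components $f_p:M^{\bullet,p}\to F^pN\cdot t^{-p}$ on weight pieces, and $R[t]$-linearity forces compatibility with the filtration inclusions $F^{p+1}N\hookrightarrow F^pN$ (through the action of $t$). The quotient by $(1-t)M$ identifies all weight pieces of $M$, so the datum $\{f_p\}_p$ is equivalent to a filtered map $\varphi(M)\to (N,F)$; naturality is transparent. Property (2) then follows from (1) once one checks that the unit $(M,F)\to \varphi\,\mathrm{Rees}(M,F)$ is an isomorphism, which is immediate: the quotient of $\bigoplus_p F^pM\cdot t^{-p}$ by $(1-t)$ reproduces $M$, and the image of each weight piece recovers $F^pM$. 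For property (3), Rees modules are manifestly $t$-torsion-free since multiplication by $t$ is the inclusion $F^{p+1}M\cdot t^{-p-1}\hookrightarrow F^pM\cdot t^{-p}$; conversely given a $t$-torsion-free $\mathbb{G}_m$-equivariant $R[t]$-module $P$, the maps $\cdot t:P^{p+1}\hookrightarrow P^p$ are injective and define a filtration on $\varphi(P)$ whose Rees module is $P$.

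Properties (5) and (6) are direct. For (6), if $F^kp^n$ is surjective for all $k,n$ then $\mathrm{Rees}(p)$ is by definition the $\mathbb{G}_m$-equivariant map $\bigoplus_p F^pp\cdot t^{-p}$, which is degreewise surjective, so a fibration by Proposition \ref{Gm DG fib}. For (5), because the Rees construction is a direct sum over filtration levels, it commutes with filtered colimits, and the generating compact objects $S_R(n,p)$, $D_R(n,p)$ are sent to $t^pS_{R[t]}(n)$, $t^pD_{R[t]}(n)$, which are compact in the $\mathbb{G}_m$-equivariant category; a finite-generation/boundedness argument on filtrations then transports compactness in general.

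The main substance is property (4) and the Quillen equivalence, which I would establish together. The goal is to show that $(\varphi,\mathrm{Rees})$ is a Quillen equivalence. That $\mathrm{Rees}$ is a right Quillen functor follows from (6) together with the analogous check that it preserves trivial fibrations: by Proposition \ref{FDG tr fib}, trivial fibrations in $\mathfrak{FdgMod}_R$ are characterised by $F^kp$ being degreewise surjective with acyclic kernel for every $k$, and $\mathrm{Rees}$ simply reassembles these summands $\mathbb{G}_m$-equivariantly, producing a $\mathbb{G}_m$-equivariant quasi-isomorphic degreewise surjection, i.e.\ a trivial fibration in $\mathbb{G}_m$-$\mathfrak{dgMod}_{R[t]}$. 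The derived unit $(M,F)\to \varphi\,\mathrm{Rees}(M,F)$ is already an isomorphism by (2), so no fibrant replacement issues arise. The delicate part, which I expect to be the principal obstacle, is the derived counit $\mathrm{Rees}\,\varphi(Q)\to Q$ for cofibrant $Q$: this is an isomorphism if and only if $Q$ is $t$-torsion-free (by (3)). One must therefore argue that every cofibrant object in $\mathbb{G}_m$-$\mathfrak{dgMod}_{R[t]}$ is $t$-torsion-free. This follows by transfinite induction on the construction of relative cell complexes built from the generators $t^iS_{R[t]}(n+1)\to t^iD_{R[t]}(n)$: each generator has free, hence $t$-torsion-free, domain and codomain, pushouts along such degreewise split injections preserve $t$-torsion-freeness, transfinite compositions do as well, and retracts preserve it trivially. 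Hence the derived counit is a weak equivalence on all cofibrant objects, so the adjunction is a Quillen equivalence, and (4) follows as the induced equivalence on homotopy categories.
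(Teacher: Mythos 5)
Your proposal is correct in substance but is organised quite differently from the paper's proof, which for Claims (1)--(5) consists almost entirely of citations (Hinich for the adjunction and the essential image, Pridham for the $\mathbb G_m$-invariance of Hom sets, Schapira--Schneiders and Bressler--Nest--Tsygan for the homotopy-category equivalence and compactness); only Claim (6) is proved directly there, by exactly the computation you give. You instead supply self-contained arguments: the weight-by-weight decomposition of a $\mathbb G_m$-equivariant $R[t]$-linear map establishing the adjunction, the observation that the (co)unit $\varphi\,\mathrm{Rees}(M,F)\to (M,F)$ is an isomorphism, and the identification of Rees modules with $t$-torsion-free objects. Most notably, the logical flow for Claim (4) is reversed: the paper deduces the Quillen equivalence from Claims (1), (4) and (6), treating the homotopy-category equivalence as an input from the literature, whereas you prove the Quillen equivalence first (via the cell-complex induction showing cofibrant objects are $t$-torsion-free, which is the genuinely new idea in your write-up) and obtain (4) as a corollary. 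Your route is arguably more informative and better adapted to the model structures actually constructed in Section 1.

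Two small points. First, your terminology for unit and counit is swapped (since $\varphi\dashv\mathrm{Rees}$, the unit is $Q\to\mathrm{Rees}\,\varphi(Q)$ and the counit is $\varphi\,\mathrm{Rees}(M,F)\to(M,F)$), which is cosmetic. Second, and more substantively, verifying that the derived unit is a weak equivalence (indeed an isomorphism) on cofibrant objects is not by itself enough to conclude a Quillen equivalence: you must also handle the derived counit, or equivalently invoke the standard criterion that it suffices to add that the right adjoint reflects weak equivalences between fibrant objects. Here that extra ingredient is immediate --- every object of $\mathfrak{FdgMod}_R$ is fibrant, and $\mathrm{Rees}(f)$ is a weight-wise quasi-isomorphism if and only if each $F^pf$ is a quasi-isomorphism, which is the definition of $W_{\mathfrak{FdgMod}_R}$ --- but it should be stated, since as written your final sentence draws the conclusion from only half of the required data. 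With that line added, and granting the same imprecision the paper itself commits in Claim (3) about non-negative weights, your argument is complete.
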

\begin{proof}
We give references for most of the claims enunciated: the language we are using might be somehow different from the one therein, but the results and arguments we quote definitely apply to our statements.
\begin{enumerate}
\item Claim $\left(1\right)$ follows from \cite{Hin2} Section 4.3: more specifically it is given by Comment 4.3.3;
\item Claim $\left(2\right)$ follows from \cite{Pr2} Lemma 1.6;
\item Claim $\left(3\right)$ follows from \cite{Hin2} Section 4.3: more specifically it is given again by Comment 4.3.3;
\item Claim $\left(4\right)$ follows from \cite{SchaSchn} Theorem 3.16 and \cite{SchaSchn} Theorem 4.20: for a naiver explanation see \cite{BrNT} Section 3.1;
\item Claim $\left(5\right)$ follows from \cite{BrNT} Section 3.1
\item In order to prove Claim $\left(6\right)$, let $f:\left(M,F\right)\rightarrow\left(N,F\right)$ be a fibration in $\mathfrak{FdgMod}_R$, i.e. by Proposition \ref{FDG fib} assume that $F^kf:F^kM\rightarrow F^kN$ is degreewise surjective for all $k\in\mathbb N$; this in turn implies that
\begin{eqnarray*}
\mathrm{Rees}\left(f\right):\qquad\;\mathrm{Rees}\left(\left(M,F\right)\right)\qquad\quad&\rightarrow&\qquad\qquad\qquad\mathrm{Rees}\left(\left(N,F\right)\right) \\
\underline x^0\oplus\underline x^1\cdot t^{-1}\oplus\underline x^2\cdot t^{-2}\oplus\cdots&\mapsto&\underline f\left(x^0\right)\oplus F^1f\left(\underline x^1\right)\cdot t^{-1}\oplus F^2f\left(\underline x^2\right)\cdot t^{-2}\oplus\cdots
\end{eqnarray*}
is degreewise surjective as a map of $\mathbb G_m$-equivariant $R\left[t\right]$-modules in complexes, thus the statement follows because of Proposition \ref{Gm DG fib}.
\end{enumerate}
In particular Claim $\left(1\right)$, Claim $\left(4\right)$ and Claim $\left(6\right)$ can be rephrased by saying that 
\begin{equation*}
\mathrm{Rees}:\mathfrak{FdgMod}_R\longrightarrow\mathbb G_m\text{-}\mathfrak{dgMod}_{R\left[t\right]}.
\end{equation*}
is a right Quillen equivalence.
\end{proof}
\begin{rem}
We can say that the model structure on $\mathfrak{FdgMod}_R$ defined by Theorem \ref{fdg model thm} is precisely the one making the Rees functor into a right Quillen functor; more formally consider the pair given by the Rees functor and its left adjoint described in Theorem \ref{Rprops}.1: than such a pair satisfies the assumption of Theorem \ref{Hirschhorn} and moreover the model structure induced on $\mathfrak{FdgMod}_R$ through the latter criterion is the one determined by Theorem \ref{fdg model thm}.
\end{rem}
\begin{rem}
Relation \eqref{G_m invariance} descends to $\mathrm{Ext}$ groups: $\forall i\in\mathbb Z$, $\forall\left(M,F\right),\left(N,F\right)\in\mathfrak{FdgMod}_R$  we have that
\begin{equation*}
\mathrm{Ext}_R^i\left(\left(M,F\right),\left(N,F\right)\right)\simeq\mathrm{Ext}^i_{R\left[t\right]}\left(\mathrm{Rees}\left(\left(M,F\right)\right),\mathrm{Rees}\left(\left(N,F\right)\right)\right)^{\mathbb G_m}
\end{equation*}
where the object on the left-hand side is the $\mathrm{Ext}$ group in the category $\mathfrak{FdgMod}_R$, i.e.
\begin{equation*}
\mathrm{Ext}_R^{n-i}\left(\left(M,F\right),\left(N,F\right)\right):=\pi_i\underline{\mathrm{Hom}}_{\mathfrak{FdgMod}_R}\left(\left(M,F\right),\left(N\left[-n\right],F\right)\right).
\end{equation*}
\end{rem}

\section{Derived Moduli of Filtered Complexes}

From now on $k$ will always denote a field of characteristic $0$ and $R$ a (possibly differential graded) commutative algebra over $k$; let $X$ be a smooth proper scheme over $k$: the main goal of this chapter is to study derived moduli of filtered perfect complexes of $\mathscr O_X$-modules. In order to do this we will first recall some generalities about representability of derived stacks -- following the work of Lurie and Pridham -- and then we will use these tools to construct derived geometric stacks classifying perfect complexes and filtered perfect complexes. Such stacks are related by a canonical forgetful map: as we will see in the last section of the chapter, the homotopy fibre of this map will help us define a coherent derived version of the Grassmannian.
\subsection{Background on Derived Stacks and Representability}
This section is devoted to collect some miscellaneous background material on derived geometric stacks which will be largely used in the other sections of this chapter: in particular we will review a few representability results -- due to Lurie and Pridham -- giving conditions for a simplicial presheaf on $\mathfrak{dgAlg}^{\leq 0}_R$ to give rise to a (truncated) derived geometric stack. We will assume that the reader is familiar with the notion of derived geometric $n$-stack and the basic tools of Derived Algebraic Geometry as they appear in the work of Lurie, To\" en and Vezzosi: foundational references on this subject include \cite{Lu1}, \cite{Lu2}, \cite{Toe1} and \cite{TVe}; in any case along most of the paper it will be enough to think of a derived geometric stack as a functor $\mathbf F:\mathfrak{dgAlg}^{\leq 0}_R\rightarrow\mathfrak{sSet}$ satisfying hyperdescent and some technical geometricity assumption -- i.e. the existence of some sort of higher atlas -- with respect to affine hypercovers. These two conditions are precisely those turning a completely abstract functor to some kind of {} ``geometric space'', where the usual tools of Algebraic Geometry -- such as  quasi-coherent modules, formalism of the six operations, Intersection Theory -- make sense. Also note that the case of derived schemes is much easier to figure out: as a matter of fact by \cite{Pr4} Theorem 6.42 a derived scheme $\mathcal X$ over $k$ can be seen as a pair $\left(\pi^0\mathcal X,\mathscr O_{\mathcal X,*}\right)$, where $\pi^0\mathcal X$ is an honest $k$-scheme and $\mathscr O_{\mathcal X,*}$ is a presheaf of differential graded commutative algebras in non-positive degrees on the site of affine opens of $\pi^0\mathcal X$ such that:
\begin{itemize}
\item the (cohomology) presheaf $\mathcal H^0\left(\mathscr O_{\mathcal X,*}\right)\simeq\mathscr O_{\pi^0\mathcal X}$;
\item the (cohomology) presheaves $\mathcal H^n\left(\mathscr O_{\mathcal X,*}\right)$ are quasi-coherent $\mathscr O_{\pi^0\mathcal X}$-modules.
\end{itemize}
\begin{war}
Be aware that there are some small differences between the definition of derived geometric stack given in \cite{Lu1} -- which is the one we refer to in this paper -- and the one given in \cite{TVe}: for a comparison see the explanation provided in \cite{Pr4} and \cite{Toe1}.
\end{war}
Now we are to recall representability for derived geometric stacks: all contents herein are adapted from \cite{Pr5} and \cite{Pr1}. \\ 
Recall that a functor $\mathbf F:\mathfrak{dgAlg}^{\leq 0}_R\rightarrow\mathfrak{sSet}$ is said to be \emph{homotopic} or \emph{homotopy-preserving} if it maps quasi-isomorphisms in $\mathfrak{dgAlg}^{\leq 0}_R$ to weak equivalences in $\mathfrak{sSet}$, while it is called \emph{homotopy-homogeneous} if for any morphism $C\rightarrow B$ and any square-zero extension
\begin{equation*}
0\longrightarrow I\longrightarrow A\longrightarrow B\longrightarrow 0
\end{equation*}
in $\mathfrak{dgAlg}^{\leq 0}$ the natural map of simplicial sets
\begin{equation*}
\mathbf F\left(A\times_BC\right)\longrightarrow\mathbf F\left(A\right)\times^h_{\mathbf F\left(B\right)}\mathbf F\left(B\right)\,\footnote{The symbol $-\times_-^h-$ denotes the homotopy fibre product in $\mathfrak{sSet}$.}
\end{equation*}
is a weak equivalence. \\
Let $\mathbf F:\mathfrak{dgAlg}^{\leq 0}_R\rightarrow\mathfrak{sSet}$ be a homotopy-preserving homotopy-homogeneous functor and take a point $x\in\mathbf F\left(A\right)$, where $A\in\mathfrak{dgAlg}_R^{\leq 0}$; recall from \cite{Pr5} that the \emph{tangent space} to $\mathbf F$ at $x$ is defined to be the functor
\begin{eqnarray*}
T_x\mathbf F:&\mathfrak{dgMod}^{\leq 0}_A&\xrightarrow{\hspace*{1.5cm}}\mathfrak{sSet} \\
&M&\longmapsto\mathbf F\left(M\oplus A\right)\times^h_{\mathbf F\left(A\right)}\left\{x\right\}
\end{eqnarray*}
and define for any differential graded $A$-module $M$ and for all $i>0$ the groups
\begin{equation*} \label{coho the}
\mathrm D^{n-i}_x\left(\mathbf F,M\right):=\pi_{i}\left(T_x\mathbf F\left(M\left[-n\right]\right)\right).
\end{equation*}
\begin{prop} \label{coho} \emph{(Pridham)}
In the notations of formula \eqref{coho the} we have that:
\begin{enumerate}
\item $\pi_{i}\left(T_x\mathbf F\left(M\right)\right)\simeq\pi_{i+1}\left(T_x\mathbf F\left(M\left[-1\right]\right)\right)$, so $\mathrm D^j_x\left(\mathbf F,M\right)$ is well-defined for all $m$;
\item $\mathrm D^j_x\left(\mathbf F,M\right)$ is an abelian group and the abelian structure is natural in $M$ and $\mathbf F$;
\item there is a local coefficient system $\mathrm D^*\left(\mathbf F,M\right)$ on $\mathbf F\left(A\right)$ whose stalk at $x$ is $\mathrm D^*_x\left(\mathbf F,M\right)$;
\item for any map $f:A\rightarrow B$ in $\mathfrak{dgAlg}^{\leq 0}_R$ and any $P\in\mathfrak{dgMod}^{\leq 0}_B$ there is a natural isomorphism $\mathrm D^j_x\left(\mathbf F,f_*P\right)\simeq\mathrm D^j_{f_*x}\left(\mathbf F,P\right)$;
\item let 
\begin{equation*}
0\longrightarrow I\overset{e}{\longrightarrow} A\overset{f}{\longrightarrow}B\longrightarrow 0
\end{equation*}
be a square-zero extension in $\mathfrak{dgAlg}^{\leq 0}_R$ and set $y:=f_*x$: there is a long exact sequence of groups and sets
\begin{eqnarray*}
&\cdots\overset{e_*}{\longrightarrow}\pi_n\left(\mathbf F\left(A\right),x\right)\overset{f_*}{\longrightarrow}\pi_n\left(\mathbf F\left(B\right),y\right)\overset{o_e}{\longrightarrow}\mathrm D^{1-n}_x\left(\mathbf F,I\right)\overset{e_*}{\longrightarrow}\pi_{n-1}\left(\mathbf F\left(A\right),x\right)\overset{f_*}{\longrightarrow}\cdots& \\
&\cdots\overset{f^*}{\longrightarrow}\pi_1\left(\mathbf F\left(B\right),y\right)\overset{o_e}{\longrightarrow}\mathrm D^0_x\left(\mathbf F,I\right)\overset{-*x}{\longrightarrow}\pi_0\left(\mathbf F\left(A\right)\right)\overset{f_*}{\longrightarrow}\pi_0\left(\mathbf F\left(B\right)\right)\overset{o_e}{\longrightarrow}\Gamma\left(\mathbf F\left(B\right),\mathrm D^1\left(\mathbf F,I\right)\right).&
\end{eqnarray*}
\end{enumerate}
\end{prop}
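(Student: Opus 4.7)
The master tool is homotopy-homogeneity: any square-zero extension in $\mathfrak{dgAlg}^{\leq 0}_R$ produces, after applying $\mathbf{F}$, a homotopy pullback square of simplicial sets, and every claim will be deduced by feeding a cleverly chosen square-zero extension into this principle. For the shift isomorphism in (1), I exploit the identity $A \times^h_{M \oplus A} A \simeq M[-1] \oplus A$ in connective dg-algebras (both copies of $A$ mapping to $M \oplus A$ via the zero section), so that homotopy-homogeneity identifies $\mathbf{F}(M[-1] \oplus A)$ with $\mathbf{F}(A) \times^h_{\mathbf{F}(M \oplus A)} \mathbf{F}(A)$. Taking the fibre at $x$, the latter is exactly the loop space of $T_x\mathbf{F}(M)$, which gives $\pi_i\bigl(T_x\mathbf{F}(M)\bigr) \simeq \pi_{i+1}\bigl(T_x\mathbf{F}(M[-1])\bigr)$ and makes $\mathrm{D}^j_x(\mathbf{F},M)$ well-defined. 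For (2) I apply the same principle to the strict pullback $(M \oplus A) \times_A (M \oplus A) \simeq (M \oplus M) \oplus A$; this endows $T_x\mathbf{F}(M)$ with a coherent $H$-monoid structure, and upgrading via iteration plus one application of (1) produces a genuine abelian group structure on each $\pi_i$. Naturality in $M$ and in $\mathbf{F}$ is immediate from the construction.

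Part (3) follows by recognising
$$T_x\mathbf{F}(M) \longrightarrow \mathbf{F}(M \oplus A) \longrightarrow \mathbf{F}(A)$$
as a homotopy fibre sequence of simplicial sets based at $x$: the long exact sequence of a fibration encodes the variation of $\mathrm{D}^*_x$ as $x$ moves through $\mathbf{F}(A)$, yielding the advertised local coefficient system. Part (4) is a direct computation: given $f : A \to B$ and $P \in \mathfrak{dgMod}^{\leq 0}_B$, the identity $f_*P \oplus A \simeq (P \oplus B) \times_B A$ of connective dg-$A$-algebras, combined with homotopy-homogeneity and homotopy-invariance, translates directly into $T_x\mathbf{F}(f_*P) \simeq T_{f_*x}\mathbf{F}(P)$.

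For (5), I realise the square-zero extension $0 \to I \to A \to B \to 0$ as a homotopy pullback $A \simeq B \times^h_{I[1] \oplus B} B$, in which one copy of $B \to I[1] \oplus B$ is the zero section and the other is the classifying map of the extension (the shift to $I[1]$ is forced by staying in non-positive cohomological degrees). Homotopy-homogeneity then gives $\mathbf{F}(A) \simeq \mathbf{F}(B) \times^h_{\mathbf{F}(I[1] \oplus B)} \mathbf{F}(B)$, and the associated long exact sequence of a homotopy pullback of pointed simplicial sets --- after rewriting $\pi_i\bigl(T_y\mathbf{F}(I[1])\bigr) \simeq \mathrm{D}^{1-i}_y(\mathbf{F}, I)$ via the shift from (1) --- is precisely the claimed sequence; the low-dimensional tail, where $\pi_0$ is only a pointed set, is handled by the standard fibre-sequence-of-pointed-spaces formalism. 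The chief technical obstacle is calibrating the shifts consistently throughout: because $\mathbf{F}$ is only defined on connective objects, every desuspension has to be engineered through homotopy loop constructions and every suspension through classifying maps into square-zero extensions, and keeping these two bookkeeping conventions coherent is the main source of care. Once the homotopy pullback presentations are written down correctly, the remainder is formal manipulation of homotopy fibre sequences.
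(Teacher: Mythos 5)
The paper's ``proof'' consists entirely of references to Pridham's paper [Pr5] (Lemma 1.12, 1.15, 1.16 and Proposition 1.17), so you are reconstructing an actual argument rather than comparing to one in the text. Your overall strategy --- turning square-zero extensions into homotopy pullbacks via homotopy-homogeneity, using homotopy-preservation to collapse acyclic cones, and reading off loop-space and long-exact-sequence structure --- is indeed the right one and is presumably close to Pridham's own proof; parts (2), (3) and (4) are fine as sketched.

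There is, however, a genuine gap in part (1). The homotopy pullback $A \times^h_{M\oplus A} A$ of two copies of the zero section has $H^0 \simeq A$ and $H^1 \simeq M$: whichever shift convention one adopts, $M$ is pushed into strictly \emph{positive} cohomological degree, so the resulting trivial square-zero extension leaves $\mathfrak{dgAlg}^{\leq 0}_R$ (already when $M$ is supported near degree $0$), and homotopy-homogeneity --- a property of presheaves on \emph{connective} dg-algebras --- cannot be applied to it. Even setting connectivity aside, the equivalence you write identifies $T_x\mathbf{F}(M[-1])$ (the fibre at $x$ of the left side) with $\Omega T_x\mathbf{F}(M)$ (the fibre at $x$ of the right side), that is $\pi_i\bigl(T_x\mathbf{F}(M[-1])\bigr)\simeq\pi_{i+1}\bigl(T_x\mathbf{F}(M)\bigr)$; this is the \emph{inverse} of the stated $\pi_i\bigl(T_x\mathbf{F}(M)\bigr)\simeq\pi_{i+1}\bigl(T_x\mathbf{F}(M[-1])\bigr)$, and the two are not interchangeable. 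The correct construction runs the other way. Let $CM$ be the acyclic cone of $\mathrm{id}_M$ and $N := CM/M$ its cokernel, which is $M$ shifted towards \emph{more negative} cohomological degree, hence still connective. Then $CM\oplus A \twoheadrightarrow N\oplus A$ is a square-zero extension inside $\mathfrak{dgAlg}^{\leq 0}_R$ with kernel $M$, and its strict pullback along the zero section $A\to N\oplus A$ is exactly $M\oplus A$. Homotopy-homogeneity together with homotopy-preservation (so that $\mathbf{F}(CM\oplus A)\simeq\mathbf{F}(A)$) now gives $\mathbf{F}(M\oplus A)\simeq\mathbf{F}(A)\times^h_{\mathbf{F}(N\oplus A)}\mathbf{F}(A)$ and hence $T_x\mathbf{F}(M)\simeq\Omega T_x\mathbf{F}(N)$, entirely inside connective objects and with the loop on the correct side. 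The same cone factorisation is the step left tacit when you present $A$ as $B\times^h_{I[1]\oplus B}B$ in part (5) before invoking homotopy-homogeneity. You flag shift-bookkeeping as the chief technical obstacle, and that is indeed exactly where the sketch goes wrong.
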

\begin{proof}
Claim 1 and Claim 2 correspond to \cite{Pr5} Lemma 1.12, Claim 3 to \cite{Pr5} Lemma 1.16, Claim 4 to \cite{Pr5} Lemma 1.15 and Claim 5 to \cite{Pr5} Proposition 1.17. 
\end{proof}
\begin{rem}
Proposition \ref{coho} says that the sequence of abelian groups $\mathrm D^*_x\left(\mathbf F,M\right)$ should be thought morally as some sort of pointwise cohomology theory for the functor $\mathbf F$; such a statement is actually true -- in a rigorous mathematical sense -- whenever $\mathbf F$ is a derived geometric $n$-stack over $R$ and $x:\mathbb R\mathrm{Spec}\left(A\right)\rightarrow\mathbf F$ is a point on it: as a matter of fact in this case
\begin{equation*}
\mathrm D^j_x\left(\mathbf F,M\right)=\mathrm{Ext}^j_A\left(x^*\mathbb L^{\mathbf F/R},M\right).
\end{equation*}
\end{rem}
At last, recall that a simplicial presheaf on $\mathfrak{dgAlg}^{\leq 0}_R$ is said to be \emph{nilcomplete} if for all $A\in\mathfrak{dgAlg}^{\leq 0}_R$ the natural map 
\begin{equation*}
\mathbf F\left(A\right)\longrightarrow\underset{\underset{r}{\longleftarrow}}{\mathrm{holim}}\,\mathbf F\left(P^rA\right)
\end{equation*}
is a weak equivalence, where $\left\{P^rA\right\}_{r>0}$ stands for the Moore-Postnikov tower of $A$ (see \cite{GJ} for a definition). \\
Now we are ready to state \emph{Lurie-Pridham Representability Theorem} for derived geometric stacks.
\begin{thm} \label{L-P Rep} \emph{(Lurie, Pridham)}
A functor $\mathbf F:\mathfrak{dgAlg}^{\leq 0}_R\rightarrow\mathfrak{sSet}$ is a derived geometric $n$-stack almost of finite presentation if and only if the following conditions hold:
\begin{enumerate}
\item $\mathbf F$ is $n$-truncated;
\item $\mathbf F$ is homotopy-preserving;
\item $\mathbf F$ is homotopy-homogeneous;
\item $\mathbf F$ is nilcomplete;
\item $\pi^0\mathbf F$ is a hypersheaf (for the \' etale topology);
\item $\pi^0\mathbf F$ preserves filtered colimits;
\item for finitely generated integral domains $A\in H^0\left(R\right)$ and all $x\in\mathbf F\left(A\right)$, the groups $\mathrm D^j_x\left(\mathbf F,A\right)$ are finitely generated $A$-modules;
\item for finitely generated integral domains $A\in H^0\left(R\right)$, all $x\in\mathbf F\left(A\right)$ and all \' etale morphisms $f:A\rightarrow A'$, the maps
\begin{equation*}
\mathrm D^*_x\left(\mathbf F,A\right)\otimes_AA'\longrightarrow\mathrm D^*_{f_*x}\left(\mathbf F,A'\right)
\end{equation*}
are isomorphisms;
\item for all finitely generated integral domains $A\in\mathfrak{Alg}_{H^0\left(R\right)}$ and all $x\in\mathbf F\left(A\right)$ the functors $\mathrm D^j\left(\mathbf F,-\right)$ preserve filtered colimits for all $j>0$;
\item for all complete discrete local Noetherian $H^0\left(R\right)$-algebras $A$ the map
\begin{equation*}
\mathbf F\left(A\right)\longrightarrow\underset{\underset{r}{\longleftarrow}}{\mathrm{holim}}\,\mathbf F\left(\nicefrac{A}{\mathfrak m_A^r}\right)
\end{equation*}
is a weak equivalence.
\end{enumerate}
\end{thm}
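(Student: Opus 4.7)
The plan is to follow the strategy of \cite{Pr5}. The forward direction, that any derived geometric $n$-stack almost of finite presentation satisfies conditions (1)--(10), is the more routine one: each listed condition unpacks a known property of such stacks. Truncation and the hypersheaf property are built into the definition; homotopy-homogeneity follows from the existence of a cotangent complex together with the gluing behaviour under square-zero extensions; nilcompleteness reflects the fact that the stack is determined by its values on truncated algebras; and the finiteness, \'etale-base-change and filtered-colimit conditions on $\mathrm{D}^j_x$ come from the perfectness and functoriality of the cotangent complex together with almost finite presentation. I would verify each of these in turn by reducing to the affine case and then spreading out along a smooth atlas.

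The reverse implication is the substantial one, and I would proceed by induction on $n$. The base of the induction reduces essentially to the derived version of Schlessinger's criterion plus Artin approximation: conditions (7)--(9) supply the required pro-representability and finiteness, (10) provides effectivity of formal deformations, and nilcompleteness together with homotopy-homogeneity let one descend from formal to \'etale-local data. At the inductive step, assuming the theorem for $(n-1)$-stacks, I would fix a point $x:\mathbb R\mathrm{Spec}(A)\to\mathbf F$ over a finitely generated integral domain $A$ and use the obstruction theory of Proposition \ref{coho}(5) to lift $x$ to a formally smooth map from an affine derived scheme. Artin approximation, in the derived form licensed by (6) and (9), would then promote this to an actual smooth map from an atlas, while (4) and (10) guarantee that this local data suffices to recover $\mathbf F$ globally.

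The hardest step by a wide margin is the construction of this smooth atlas, or equivalently the verification that the diagonal of $\mathbf F$ is $(n-1)$-representable. One must first check that the diagonal inherits conditions (1)--(10) from $\mathbf F$, and then use the tangent theory $\mathrm{D}^*_x(\mathbf F,-)$ to produce enough locally defined \'etale maps from affine derived schemes whose disjoint union is formally smooth and covers $\pi^0\mathbf F$. In Pridham's treatment this is handled by a careful Postnikov-tower argument, extending the classical Artin atlas for $\pi^0\mathbf F$, itself produced from condition (5), one simplicial degree at a time, with the deformation-theoretic conditions (3), (7) and (8) controlling obstructions at each stage and (4) ensuring that the inverse limit reconstructs $\mathbf F$. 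Once the atlas is in place, matching the intrinsically defined cotangent complex with the tangent cohomology $\mathrm{D}^*_x$ is essentially formal, and concludes the argument.
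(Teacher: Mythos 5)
The paper does not prove this theorem: it simply cites \cite{Pr5} Corollary 1.36 (which in turn rests on \cite{Lu1} Theorem 7.5.1). Your outline is a faithful pr\'ecis of the strategy of those references --- induction on $n$, Artin-approximation and formal effectivity at the base, obstruction-theoretic atlas construction and Postnikov descent at the inductive step --- so it takes essentially the same route the paper defers to, though like the paper it leaves all the substantive work to Pridham and Lurie.
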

\begin{proof}
See \cite{Pr5} Corollary 1.36 and lemmas therewith, which largely rely on \cite{Lu1} Theorem 7.5.1.
\end{proof}
\begin{rem} \label{expl}
As we have already mentioned, a derived geometric $n$-stack roughly corresponds to a $n$-truncated homotopy-preserving simplicial presheaf on $\mathfrak{dgAlg}^{\leq 0}_R$ which is a hypersheaf for the (homotopy) \' etale topology and which is obtained from an affine hypercover by taking successive smooth quotients. Theorem \ref{L-P Rep} says that in order to ensure that some given homotopy-homogeneous functor $\mathbf F:\mathfrak{dgAlg}^{\leq 0}_R\rightarrow\mathfrak{sSet}$ is a derived geometric stack it suffices to verify that its underived truncation $\pi^0\mathbf F:\mathfrak{Alg}_{H^0\left(R\right)}\rightarrow\mathfrak{sSet}$ is a $n$-truncated stack (in the sense of \cite{HirSi} and \cite{Sim1}) and that for all $x\in\mathbf F\left(A\right)$ the cohomology theories $\mathrm D^*_x\left(\mathbf F,-\right)$ satisfy some mild finiteness conditions. 
\end{rem}
The most technical assumption in Theorem \ref{L-P Rep} is probably Condition (4), i.e. nilcompleteness: this is actually avoided when working with nilpotent algebras. Consider the full subcategory $\mathfrak{dg}_b\mathfrak{Nil}^{\leq 0}_R$ of $\mathfrak{dgAlg}^{\leq 0}_R$ made of bounded below differential graded commutative $R$-algebras in non-positive degrees such that the canonical map $A\rightarrow H^0\left(A\right)$ is nilpotent: the following result is \emph{Pridham Nilpotent Representability Criterion}.
\begin{thm} \label{Nilp Rep} \emph{(Pridham)}
A functor $\mathbf F:\mathfrak{dg}_b\mathfrak{Nil}^{\leq 0}_R\rightarrow\mathfrak{sSet}$ is the restriction of an almost finitely presented derived geometric $n$-stack $\mathcal F:\mathfrak{dgAlg}^{\leq 0}_R\rightarrow\mathfrak{sSet}$ if and only if the following conditions hold:
\begin{enumerate}
\item $\mathbf F$ is $n$-truncated;
\item $\mathbf F$ is homotopy-preserving\footnote{When dealing with functors defined on $\mathfrak{dg}_b\mathfrak{Nil}^{\leq 0}_R$ actually it suffices to check that tiny acyclic extensions are mapped to weak equivalences.};
\item $\mathbf F$ is homotopy-homogeneous;
\item $\pi^0\mathbf F$ is a hypersheaf (for the \' etale topology);
\item $\pi^0\mathbf F$ preserves filtered colimits;
\item for finitely generated integral domains $A\in H^0\left(R\right)$ and all $x\in\mathbf F\left(A\right)$, the groups $\mathrm D^j_x\left(\mathbf F,A\right)$ are finitely generated $A$-modules;
\item for finitely generated integral domains $A\in H^0\left(R\right)$, all $x\in\mathbf F\left(A\right)$ and all \' etale morphisms $f:A\rightarrow A'$, the maps
\begin{equation*}
\mathrm D^*_x\left(F,A\right)\otimes_AA'\longrightarrow\mathrm D^*_{f_*x}\left(\mathbf F,A'\right)
\end{equation*}
are isomorphisms;
\item for all finitely generated integral domains $A\in\mathfrak{Alg}_{H^0\left(R\right)}$ and all $x\in\mathbf F\left(A\right)$ the functors $\mathrm D^j\left(\mathbf F,-\right)$ preserve filtered colimits for all $j>0$;
\item for all complete discrete local Noetherian $H^0\left(R\right)$-algebras $A$ the map
\begin{equation*}
\mathbf F\left(A\right)\longrightarrow\underset{\underset{r}{\longleftarrow}}{\mathrm{holim}}\,\mathbf F\left(\nicefrac{A}{\mathfrak m_A^r}\right)
\end{equation*}
is a weak equivalence.
\end{enumerate}
Moreover $\mathcal F$ is uniquely determined by $\mathbf F$ up to weak equivalence.
\end{thm}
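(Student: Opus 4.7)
The plan is to reduce Theorem \ref{Nilp Rep} to Theorem \ref{L-P Rep} by canonically extending the given functor from $\mathfrak{dg}_b\mathfrak{Nil}^{\leq 0}_R$ to the full category $\mathfrak{dgAlg}^{\leq 0}_R$. The ``only if'' direction is essentially bookkeeping: restricting an almost finitely presented derived geometric $n$-stack $\mathcal F$ to $\mathfrak{dg}_b\mathfrak{Nil}^{\leq 0}_R$ preserves conditions (1)--(9) of Theorem \ref{Nilp Rep} automatically, and nilcompleteness simply drops out because it has no meaningful counterpart once one is restricted to bounded nilpotent algebras.

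For the substantive ``if'' direction, I would build $\mathcal F$ by forcing nilcompleteness. Given $A\in\mathfrak{dgAlg}^{\leq 0}_R$, the strategy is first to work over $H^0(A)$: cover it étale-locally by finitely generated integral domains, pass to adic completions along maximal ideals (so that nilpotency obtains), and then write each such completion as a homotopy limit of its quotients by powers of the maximal ideal. Combining this with the Moore--Postnikov tower $\{P^rA\}_{r>0}$ of $A$ over $H^0(A)$ leads to the definition
\begin{equation*}
\mathcal F(A):=\underset{\underset{r,n}{\longleftarrow}}{\mathrm{holim}}\,\mathbf F\bigl(P^rA/\mathfrak m^n\bigr),
\end{equation*}
where each stage lies in $\mathfrak{dg}_b\mathfrak{Nil}^{\leq 0}_R$, followed by étale sheafification over $H^0(A)$ using hypothesis (4). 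Condition (2) guarantees that this is independent of the choice of Postnikov--adic presentation up to weak equivalence, and on objects already in $\mathfrak{dg}_b\mathfrak{Nil}^{\leq 0}_R$ the homotopy limit collapses, giving an honest extension of $\mathbf F$.

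The bulk of the work -- and the main obstacle -- is verifying that the extended $\mathcal F$ satisfies all ten hypotheses of Theorem \ref{L-P Rep}. Homotopy-preservation and homotopy-homogeneity require showing that the above homotopy limit commutes with the relevant fibre-product diagrams; this uses the fact that square-zero extensions of bounded nilpotent algebras remain bounded and nilpotent, so the homogeneity of $\mathbf F$ propagates cleanly. Nilcompleteness (condition (4) of Theorem \ref{L-P Rep}) is built into the construction. The cohomology theories $\mathrm D^*_x(\mathcal F,-)$ are computed via square-zero extensions of a finitely generated integral domain, which already live in $\mathfrak{dg}_b\mathfrak{Nil}^{\leq 0}_R$, so the finiteness, étale base-change, and filtered-colimit conditions (7)--(9) of Theorem \ref{L-P Rep} transfer directly from hypotheses (6)--(8); similarly the completion condition (10) of Theorem \ref{L-P Rep} is essentially the $n$-adic part of our homotopy limit, matched against hypothesis (9). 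Finally, uniqueness up to weak equivalence follows because any two nilcomplete extensions satisfying Theorem \ref{L-P Rep} must agree with $\mathbf F$ on $\mathfrak{dg}_b\mathfrak{Nil}^{\leq 0}_R$ and hence coincide on Postnikov--adic homotopy limits.
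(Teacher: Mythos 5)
First, a point of reference: the paper does not prove this statement at all --- it is quoted from Pridham, and the entire ``proof'' in the text is the citation ``See \cite{Pr5} Theorem 2.17''. So there is no in-paper argument to compare against; what can be judged is whether your sketch would stand on its own. Your high-level idea --- extend $\mathbf F$ to all of $\mathfrak{dgAlg}^{\leq 0}_R$ in a way that builds in nilcompleteness and then invoke Theorem \ref{L-P Rep} --- is the right shape of argument, but the concrete construction you propose does not work.

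The central defect is the formula $\mathcal F(A):=\mathrm{holim}_{r,n}\,\mathbf F(P^rA/\mathfrak m^n)$. For a general $A$ the ring $H^0(A)$ is not local, so $\mathfrak m$ is undefined, and your proposed remedy (cover \'etale-locally by finitely generated domains, complete at maximal ideals, sheafify) is exactly where all the content lies: completion is not an \'etale-local operation, and assembling such local data into a functorial simplicial presheaf is the theorem, not a preliminary reduction. Worse, even where the formula is defined it fails to extend $\mathbf F$: for $A\in\mathfrak{dg}_b\mathfrak{Nil}^{\leq 0}_R$ with $H^0(A)$ local but not complete, $\mathrm{holim}_n\,\mathbf F(A/\mathfrak m^n)$ need not agree with $\mathbf F(A)$ --- hypothesis (9) guarantees this only for \emph{complete} local Noetherian algebras --- so the homotopy limit does not ``collapse'' as you claim, and your $\mathcal F$ would not restrict back to $\mathbf F$. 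The adic tower should not enter the definition of the extension at all; condition (9) is an effectivity (Grothendieck-existence type) hypothesis to be verified afterwards. Relatedly, condition (5), which is what makes $\mathcal F$ almost of finite presentation by propagating values from finitely generated algebras along filtered colimits, is never used in your construction, and you must also justify that the Postnikov stages $P^rA$ actually lie in $\mathfrak{dg}_b\mathfrak{Nil}^{\leq 0}_R$ (boundedness is clear; nilpotence of $P^rA\to H^0(A)$ requires exhibiting it as a finite composite of square-zero extensions). Repairing these points essentially amounts to reproducing Pridham's proof, which is why the paper simply cites it.
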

\begin{proof}
See \cite{Pr5} Theorem 2.17.
\end{proof}
In the last part of this section we will recall from \cite{Pr1} a few criteria ensuring homotopicity, homogeneity and underived hyperdescent of a functor $\mathbf F:\mathfrak{dgAlg}_R^{\leq 0}\rightarrow\mathfrak{sSet}$, which from now on will always be thought of as an abstract derived moduli problem. Most definitions and results below will involve $\mathfrak{sCat}$-valued derived moduli functors rather than honest simplicial presheaves on $\mathfrak{dgAlg}_R^{\leq 0}$: the reason for this lies in the fact that it is often easier to tackle a derived moduli problem by considering a suitable $\mathfrak{sCat}$-valued functor $\mathbf F:\mathfrak{dg}_b\mathfrak{Nil}_R^{\leq 0}\rightarrow\mathfrak{sCat}$ and then use Theorem \ref{Nilp Rep} to prove that the diagonal of its simplicial nerve $\mathrm{diag}\left(B\mathbf F\right):\mathfrak{dg}_b\mathfrak{Nil}_R^{\leq 0}\rightarrow\mathfrak{sSet}$ gives rise to a honest truncated derived geometric stack; we will see instances of such a procedure in Section 2.2 and Section 2.3, for more examples see \cite{Pr1} Section 3 and Section 4. Moreover Cegarra and Remedios showed in \cite{CR} that the diagonal of the simplicial nerve is weakly equivalent to the functor $\bar W$ obtained as the right adjoint of Illusie's total d\' ecalage functor (see \cite{GJ} or \cite{Il} for a definition), so we can substitute $\mathrm{diag}\left(B\mathbf F\right)$ with $\bar W\mathbf F$ in the above considerations: for more details see \cite{Pr1}. \\
Let
\begin{equation*}
\mathfrak C\overset{\mathbf F}{\longrightarrow}\mathfrak B\overset{\mathbf G}{\longleftarrow}\mathfrak D
\end{equation*}
be a diagram of simplicial categories; recall that the \emph{2-fibre product} $\mathfrak C\times_{\mathfrak B}^{\left(2\right)}\mathfrak D$ is defined to be the simplicial category for which
\begin{eqnarray}
\mathrm{Ob}\left(\mathfrak C\times_{\mathfrak B}^{\left(2\right)}\mathfrak D\right):=\left\{\left(c,\theta,d\right)\big|c\in\mathfrak C,d\in\mathfrak D,\theta:\mathbf F\left(c\right)\rightarrow\mathbf G\left(d\right)\text{ is an isomorphism in $\mathfrak B_0$}\right\}\quad\nonumber \\
\qquad\underline{\mathrm{Hom}}_{\mathfrak C\times_{\mathfrak B}^{\left(2\right)}\mathfrak D}\left(\left(c_1,\theta_1,d_1\right),\left(c_2,\theta_2,d_2\right)\right):=\left\{\left(f_1,f_2\right)\in\underline{\mathrm{Hom}}_{\mathfrak C}\times\underline{\mathrm{Hom}}_{\mathfrak D}\big|\mathbf Gf_2\circ\theta_1=\theta_2\circ\mathbf Ff_1\right\}. \nonumber
\end{eqnarray}
\begin{defn} \label{2-FIB}
A morphism $\mathbf F:\mathfrak C\rightarrow\mathfrak D$ of simplicial categories is said to be a \emph{$2$-fibration} if the following conditions hold:
\begin{enumerate}
\item $\forall c_1,c_2\in\mathfrak C$, the induced map $\underline{\mathrm{Hom}}_{\mathfrak C}\left(c_1,c_2\right)\rightarrow\underline{\mathrm{Hom}}_{\mathfrak D}\left(\mathbf F\left(c_1\right),\mathbf F\left(c_2\right)\right)$ is a fibration in $\mathfrak{sSet}$;
\item for any $c_1\in\mathfrak C$, $d\in\mathfrak D$ and homotopy equivalence $h:\mathbf F\left(c_1\right)\rightarrow d$ in $\mathfrak C$ there exist $c_2\in\mathfrak C$, a homotopy equivalence $k:c_1\rightarrow c_2$ in $\mathfrak C$ and an isomorphism $\theta:\mathbf F\left(c_2\right)\rightarrow d$ such that $\theta\circ \mathbf Fk=h$. 
\end{enumerate}
\end{defn}
\begin{defn} \label{trivial 2-FIB}
A morphism $\mathbf F:\mathfrak C\rightarrow\mathfrak D$ of simplicial categories is said to be a \emph{trivial $2$-fibration} if the following conditions hold:
\begin{enumerate}
\item $\forall c_1,c_2\in\mathfrak C$, the induced map $\underline{\mathrm{Hom}}_{\mathfrak C}\left(c_1,c_2\right)\rightarrow\underline{\mathrm{Hom}}_{\mathfrak D}\left(\mathbf F\left(c_1\right),\mathbf F\left(c_2\right)\right)$ is a trivial fibration in $\mathfrak{sSet}$;
\item $\mathbf F_0:\mathfrak C_0\rightarrow\mathfrak D_0$ is essentially surjective. 
\end{enumerate}
\end{defn}
\begin{defn}
Fix two functors $\mathbf F,\mathbf G:\mathfrak{dg}_b\mathfrak{Nil}_R^{\leq 0}\rightarrow\mathfrak{sCat}$; a natural transformation $\eta:\mathbf F\rightarrow\mathbf G$ is said to be \emph{$2$-homotopic} if for all tiny acyclic extensions $A\rightarrow B$, the natural map
\begin{equation*}
\mathbf F\left(A\right)\longrightarrow\mathbf F\left(B\right)\times_{\mathbf G\left(B\right)}^{\left(2\right)}\mathbf G\left(A\right)
\end{equation*}
is a trivial $2$-fibration. The functor $\mathbf F$ is said to be $2$-homotopic if so is the morphism $\mathbf F\rightarrow\bullet$.
\end{defn}
\begin{defn}
Fix two functors $\mathbf F,\mathbf G:\mathfrak{dg}_b\mathfrak{Nil}_R^{\leq 0}\rightarrow\mathfrak{sCat}$; a natural transformation $\eta:\mathbf F\rightarrow\mathbf G$ is said to be \emph{formally $2$-quasi-presmooth} if for all square-zero extensions $A\rightarrow B$, the natural map
\begin{equation*}
\mathbf F\left(A\right)\longrightarrow\mathbf F\left(B\right)\times_{\mathbf G\left(B\right)}^{\left(2\right)}\mathbf G\left(A\right)
\end{equation*}
is a $2$-fibration. If $\eta$ is also $2$-homotopic, it is said to be \emph{formally $2$-quasi-smooth}. \\
The functor $\mathbf F$ is said to be formally $2$-quasi-(pre)smooth if so is the morphism $\mathbf F\rightarrow\bullet$.
\end{defn}
\begin{defn}
A functor $\mathbf F:\mathfrak{dg}_b\mathfrak{Nil}_R^{\leq 0}\rightarrow\mathfrak{sCat}$ is said to be \emph{$2$-homogeneous} if for all square-zero extensions $A\rightarrow B$ and all morphisms $C\rightarrow B$ the natural map
\begin{equation*}
\mathbf F\left(A\times_BC\right)\longrightarrow\mathbf F\left(A\right)\times_{\mathbf F\left(B\right)}^{(2)}\mathbf F\left(C\right)
\end{equation*}
is essentially surjective on objects and an isomorphism on $\underline{\mathrm{Hom}}$ spaces.
\end{defn}
Now given a simplicial category $\mathfrak C$, denote by $\mathcal W\left(\mathfrak C\right)$ the full simplicial subcategory of $\mathfrak C$ in which morphisms are maps whose image in $\pi_0\mathfrak C$ is invertible (in particular this means that $\pi_0\mathcal W\left(\mathfrak C\right)$ is the core of $\pi_0\mathfrak C$). Also denote by $c\left(\pi_0\mathfrak C\right)$ the set of isomorphism classes of the (honest) category $\pi_0\mathfrak C$.\\
The following result relates quasi-smoothness to homogeneity and will be very useful in the rest of the paper.
\begin{prop} \label{qsmooth homog}
Let $\mathbf F:\mathfrak{dg}_b\mathfrak{Nil}_R^{\leq 0}\rightarrow\mathfrak{sCat}$ be $2$-homogeneous and formally $2$-quasi-smooth; then
\begin{enumerate}
\item $\mathrm{diag}\left(B\mathbf F\right)$ is homotopy-preserving;
\item $\mathrm{diag}\left(B\mathbf F\right)$ is homotopy-homogeneous;
\item the map $\mathcal W\left(\mathbf F\right)\rightarrow\mathbf F$ is formally \' etale, meaning that for any square-zero extension $A\rightarrow B$ the induced map 
\begin{equation*}
\mathcal W\left(\mathbf F\left(A\right)\right)\longrightarrow\mathbf F\left(A\right)\times_{\mathbf F\left(B\right)}\mathcal W\left(\mathbf F\left(B\right)\right) 
\end{equation*}
is an isomorphism;
\item $\mathcal W\left(\mathbf F\right)$ is $2$-homogeneous and formally $2$-quasi smooth, as well.
\end{enumerate}
\end{prop}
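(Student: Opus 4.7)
The plan is to establish the four claims in order, exploiting the interplay between $2$-homogeneity, formal $2$-quasi-smoothness, and the homotopical properties of the functor $\mathrm{diag}(B-)\colon \mathfrak{sCat}\to\mathfrak{sSet}$ (equivalently $\bar W$ by \cite{CR}). I expect $(1)$ and $(2)$ to be essentially formal consequences of the hypotheses, while the technical heart of the proof is $(3)$, from which $(4)$ will then follow cleanly.

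For $(1)$, by footnote~8 after Theorem \ref{Nilp Rep} it suffices to check that tiny acyclic extensions $A\twoheadrightarrow B$ are sent to weak equivalences. But $2$-homotopy of $\mathbf F$ says precisely that $\mathbf F(A)\to \mathbf F(B)$ is a trivial $2$-fibration of simplicial categories for such $A\to B$; since $\mathrm{diag}(B-)$ turns trivial $2$-fibrations into weak equivalences of simplicial sets, $(1)$ follows at once. For $(2)$, fix a square-zero extension $A\to B$ and an auxiliary map $C\to B$. The $2$-homogeneity hypothesis gives that
\[ \mathbf F(A\times_B C)\longrightarrow \mathbf F(A)\times^{(2)}_{\mathbf F(B)}\mathbf F(C) \]
is essentially surjective on objects and a levelwise isomorphism on Hom-spaces, hence a Dwyer--Kan equivalence. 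Simultaneously, formal $2$-quasi-smoothness (applied with $\mathbf G=\bullet$) says $\mathbf F(A)\to \mathbf F(B)$ is a $2$-fibration, so the strict $2$-fibre product already models the homotopy $2$-fibre product. Because $\mathrm{diag}(B-)$ sends homotopy $2$-fibre products along $2$-fibrations to honest homotopy fibre products in $\mathfrak{sSet}$, applying it yields the weak equivalence required for homotopy-homogeneity.

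For $(3)$, fix a square-zero extension $A\to B$ with kernel $I$. Both $\mathcal W(\mathbf F(A))$ and $\mathbf F(A)\times_{\mathbf F(B)}\mathcal W(\mathbf F(B))$ are full simplicial subcategories of $\mathbf F(A)$ with the same underlying objects, so the claim reduces to checking that a morphism $f\in \mathbf F(A)$ is a homotopy equivalence exactly when its reduction $\bar f$ in $\mathbf F(B)$ is one. The forward implication is functoriality; the reverse is where the work lies. Given $\bar f$ a homotopy equivalence with quasi-inverse $\bar g$, Condition $(2)$ of Definition \ref{2-FIB} for the $2$-fibration $\mathbf F(A)\to \mathbf F(B)$ lifts $\bar g$ to a homotopy equivalence in $\mathbf F(A)$ ending at some $y'$ with $\bar{y'}\cong \bar x$, while Condition $(1)$ together with $2$-homogeneity (identifying the obstructions and automorphism spaces of lifts with derivations into $I$, which is square-zero) allows one to correct this lift so that $gf$ and $fg$ become homotopic to the identities on $x$ and $y$ respectively. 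This nilpotent-invariance step is the one I expect to be most delicate and where the interplay between the two $2$-fibration conditions and the square-zero structure is essential.

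Finally $(4)$ follows from $(3)$. The étaleness statement implies that for any square-zero extension $A\to B$ and map $C\to B$ the pullback description
\[ \mathcal W(\mathbf F(A\times_B C))=\mathbf F(A\times_B C)\times_{\mathbf F(B)}\mathcal W(\mathbf F(B)) \]
transports the $2$-homogeneity comparison for $\mathbf F$ to one for $\mathcal W(\mathbf F)$ (since $\mathcal W$ is a full simplicial subcategory, it preserves Hom-spaces), and likewise restricts the $2$-fibration $\mathbf F(A)\to \mathbf F(B)$ to a $2$-fibration $\mathcal W(\mathbf F(A))\to \mathcal W(\mathbf F(B))$, together with the $2$-homotopy condition when $A\to B$ is a tiny acyclic extension. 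Hence $\mathcal W(\mathbf F)$ inherits both $2$-homogeneity and formal $2$-quasi-smoothness, completing the proof.
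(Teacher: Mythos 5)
The paper does not actually prove this proposition; it simply cites Pridham's \cite{Pr1}, Section~2.3. So there is no authorial proof to compare against, and what you have done is attempt to reconstruct that argument. Your outline hits the right targets: $(1)$ and $(2)$ are reductions to formal properties of $\mathrm{diag}(B-)\simeq\bar W$, while $(3)$ carries the real content and $(4)$ is transported from it. Two points deserve attention.

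First, in $(2)$ you assert that $\mathrm{diag}(B-)$ sends strict $2$-fibre products along $2$-fibrations to homotopy fibre products in $\mathfrak{sSet}$. This is correct and is exactly the lemma Pridham proves, but it is a genuine statement, not a formality: the iso-comma construction $-\times^{(2)}_{-}-$ uses honest isomorphisms in degree zero, and it is Condition~$(2)$ of Definition~\ref{2-FIB} (the isofibration-like lifting of homotopy equivalences to isomorphisms) that makes the strict $2$-pullback model the homotopy pullback. Your sketch would benefit from naming this explicitly rather than folding it into a single sentence.

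Second, and more seriously, your argument for $(3)$ leaves the decisive step unjustified. You reduce correctly to showing that a morphism $f$ in $\mathbf F(A)$ is invertible in $\pi_0\mathbf F(A)$ as soon as $\bar f$ is invertible in $\pi_0\mathbf F(B)$, and you invoke Condition~$(2)$ of Definition~\ref{2-FIB} to produce a candidate inverse $g$ upstairs. But lifting $\bar g$ only guarantees that $gf$ and $fg$ reduce to identities modulo $I$; you still must explain why an endomorphism of $x$ congruent to $\mathrm{id}_x$ modulo the square-zero ideal $I$ is automatically invertible in $\pi_0\underline{\mathrm{Hom}}_{\mathbf F(A)}(x,x)$, and the vague appeal to ``correcting the lift'' via conditions $(1)$ and $(2)$ does not establish this. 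The missing ingredient is $2$-homogeneity applied to the diagonal cospan $A\to B\leftarrow A$, giving $A\times_B A\simeq A\oplus I$: this identifies the fibre of the fibration $\underline{\mathrm{Hom}}_{\mathbf F(A)}(x,x)\to\underline{\mathrm{Hom}}_{\mathbf F(B)}(\bar x,\bar x)$ over $\mathrm{id}_{\bar x}$ with a simplicial abelian group (the tangent space $T_x\mathbf F(I)$), whose every $\pi_0$-element is therefore invertible under composition. Without this group structure the invertibility does not follow -- $\pi_0\underline{\mathrm{Hom}}$ is only a monoid, so one cannot simply subtract $\mathrm{id}$ and square to zero. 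Once this gap is filled, $(3)$ is solid and $(4)$ follows as you describe, since $\mathcal W$ is compatible with the relevant strict pullbacks.
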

\begin{proof}
See \cite{Pr1}, Section 2.3.
\end{proof}
At last, let us recall for future reference the notions of openness and (homotopy \' etaleness) for a $\mathfrak{sCat}$-valued presheaf.
\begin{defn} \label{ffss}
Fix a presheaf $\mathbf C:\mathfrak{Alg}_{H^0\left(R\right)}\rightarrow\mathfrak{sCat}$ and a subfunctor $\mathbf M\subset\mathbf C$; $M$ is said to be a \emph{functorial full simplicial subcategory} of $\mathbf C$ if $\forall A\in\mathfrak{Alg}_{H^0\left(R\right)},\forall X,Y\in\mathbf M\left(A\right)$, the map 
\begin{equation} \label{hom ffss}
\underline{\mathrm{Hom}}_{\mathbf M\left(A\right)}\left(X,Y\right)\longrightarrow\underline{\mathrm{Hom}}_{\mathbf C\left(A\right)}\left(X,Y\right)
\end{equation}
is a weak equivalence.
\end{defn}
\begin{rem}
In the notations of Definition \ref{ffss}, denote $M:=\bar W\mathcal W\left(\mathbf M\right)$ and $C:=\bar W\mathcal W\left(\mathbf C\right)$; formula \eqref{hom ffss} implies that the induced morphism $M\rightarrow C$ is injective on $\pi_0$ and bijective on all homotopy groups.
\end{rem}
\begin{defn}
Given a functor $\mathbf C:\mathfrak{Alg}_{H^0\left(R\right)}\rightarrow\mathfrak{sCat}$ and a functorial simplicial subcategory $\mathbf M\subset\mathbf C$, say that $\mathbf M$ is an \emph{open} simplicial subcategory of $\mathbf C$ if
\begin{enumerate}
\item $\mathbf M$ is a full simplicial subcategory;
\item the map $\mathbf M\rightarrow \mathbf C$ is \emph{homotopy formally \' etale}, meaning that for any square-zero extension $A\rightarrow B$, the map 
\begin{equation*}
\pi_0\mathbf M\left(A\right)\longrightarrow\pi_0\mathbf C\left(A\right)\times^{\left(2\right)}_{\pi_0\mathbf C\left(B\right)}\pi_0\mathbf M\left(B\right)
\end{equation*}
is essentially surjective on objects.
\end{enumerate}
\end{defn}
\begin{prop} \label{hypersheaf} \emph{(Pridham)}
Let $\mathbf C:\mathfrak{Alg}_{H^0\left(R\right)}\rightarrow\mathfrak{sCat}$ be a functor for which 
\begin{equation*}
\bar W\mathcal W\left(\mathbf C\right):\mathfrak{Alg}_{H^0\left(R\right)}\rightarrow\mathfrak{sSet}
\end{equation*} 
is an \' etale hypersheaf and let $\mathbf M\subset\mathbf C$ be functorial full simplicial subcategory. Then $\bar W\mathcal W\left(\mathbf M\right)$ is an \' etale hypersheaf if and only if for any $A\in\mathfrak{Alg}_{H^0\left(R\right)}$ and any \' etale cover $\left\{f_{\alpha}:A\rightarrow B_{\alpha}\right\}_{\alpha}$ the map
\begin{equation*}
c\left(\pi_0\mathbf M\left(A\right)\right)\longrightarrow c\left(\pi_0\mathbf C\left(A\right)\right)\times_{\underset{\alpha}{\prod}c\left(\pi_0\mathbf C\left(B_{\alpha}\right)\right)}\underset{\alpha}{\prod}c\left(\pi_0\mathbf M\left(B_{\alpha}\right)\right)
\end{equation*}
is surjective.
\end{prop}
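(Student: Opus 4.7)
Write $M := \bar W \mathcal W(\mathbf M)$ and $C := \bar W \mathcal W(\mathbf C)$ as in the remark immediately preceding the statement. The key preliminary observation is that the fullness of $\mathbf M \subset \mathbf C$ forces $M \to C$ to be injective on $\pi_0$ and bijective on $\pi_i$ for $i \geq 1$ at every basepoint in $M$; so $M$ behaves like an ``inclusion of a union of connected components'' of $C$, and the descent problem for $M$ decouples into descent for $C$ (already supplied by hypothesis) together with descent for the set-valued presheaf $c(\pi_0 \mathbf M) = \pi_0 M$.

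\emph{Only if.} Suppose $M$ is an \'etale hypersheaf, so that $\pi_0 M = c(\pi_0 \mathbf M)$ is an \'etale sheaf of sets. Given a compatible datum $([x], ([y_\alpha])_\alpha)$ in the fibre product on the right-hand side, the family $([y_\alpha])_\alpha$ is automatically a \v Cech descent datum for $\pi_0 M$: on each double overlap $B_\alpha \times_A B_\beta$, both $[y_\alpha]$ and $[y_\beta]$ restrict to $[x]$ inside $c(\pi_0 \mathbf C(B_\alpha \times_A B_\beta))$, hence agree inside $c(\pi_0 \mathbf M(B_\alpha \times_A B_\beta))$ by the injectivity of $\pi_0 M \to \pi_0 C$. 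Gluing in $\pi_0 M$ yields a class $[z] \in c(\pi_0 \mathbf M(A))$ whose image in $c(\pi_0 \mathbf C(A))$ must equal $[x]$ by \v Cech descent for $\pi_0 C$.

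\emph{If.} Conversely, assume the surjectivity condition; we must show that $M(A) \to \mathrm{holim}\, M(B^\bullet)$ is a weak equivalence for every \'etale hypercover $B^\bullet \to A$, and we compare with the analogous map for $C$ which is already a weak equivalence. Fix a basepoint $x \in M(A)$ and propagate it through the cosimplicial diagram. For $i \geq 1$, fullness gives $\pi_i(M(A), x) \cong \pi_i(C(A), x)$, and the Bousfield--Kan spectral sequence applied to the cosimplicial map $M(B^\bullet) \to C(B^\bullet)$ shows that $\pi_i(\mathrm{holim}\, M(B^\bullet), x) \cong \pi_i(\mathrm{holim}\, C(B^\bullet), x)$: the only $E_2$-terms $H^p(\pi_q(-)(B^\bullet), x)$ contributing to $\pi_i$ with $i \geq 1$ have $q \geq 1$, and such terms agree for $M$ and $C$. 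Combined with descent for $C$ this yields the isomorphism on all higher homotopy groups. On $\pi_0$, injectivity follows by chasing the commutative square with $C$ and using the injectivity of $\pi_0 M \to \pi_0 C$, while surjectivity is exactly the hypothesis: a class in $\pi_0 \mathrm{holim}\, M(B^\bullet)$ unpacks via the equalizer formula for (fibrant) cosimplicial spaces into an equalizer datum $([y_\alpha])_\alpha$, which together with its image $[x] \in \pi_0 C(A) \cong \pi_0 \mathrm{holim}\, C(B^\bullet)$ forms precisely a compatible datum as on the right-hand side, hence is hit by some $[z] \in c(\pi_0 \mathbf M(A))$.

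The main obstacle is the higher-$\pi_i$ comparison through the homotopy limit, since the levelwise maps $M(B^n) \to C(B^n)$ are only weak equivalences onto unions of connected components rather than honest equivalences; this is exactly where one must appeal to the Bousfield--Kan spectral sequence at a coherently chosen basepoint $x \in M(A)$ instead of a naive levelwise argument. A subordinate point is the passage from \v Cech covers (as in the statement) to arbitrary \'etale hypercovers: for $\pi_0$, \v Cech descent of the set-valued presheaf $\pi_0 M$ already forces it to be a sheaf in the ordinary sense and hence a hypersheaf, while for higher $\pi_i$ the Bousfield--Kan argument applies verbatim to hypercovers.
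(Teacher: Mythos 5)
The paper itself gives no proof here beyond the citation of \cite{Pr1} Proposition 2.32, so I am judging your argument on its own terms. Your \emph{if} direction is essentially the right argument, and it is the one the cited proof is built on: the levelwise maps $\mathbf M\left(B^n\right)\rightarrow\mathbf C\left(B^n\right)$ are (up to weak equivalence) inclusions of unions of connected components -- injective on $\pi_0$, isomorphisms on higher homotopy -- so the comparison of homotopy limits reduces to a $\pi_0$ statement, which is exactly the stated surjectivity; the fringed Bousfield--Kan spectral sequence handles $\pi_i$ for $i\geq 1$, and the passage from \v Cech covers to general hypercovers is handled by applying the hypothesis to the level-zero cover $A\rightarrow B^0$. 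You should add the closing step that the class $\left[z\right]$ produced by the hypothesis actually maps to the given class in $\pi_0\,\mathrm{holim}\,M\left(B^{\bullet}\right)$ -- this follows from the injectivity of $\pi_0\,\mathrm{holim}\,M\left(B^{\bullet}\right)\rightarrow\pi_0\,\mathrm{holim}\,C\left(B^{\bullet}\right)$ -- but that is routine.

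The \emph{only if} direction as written contains a genuine error. You assert that if $M=\bar W\mathcal W\left(\mathbf M\right)$ is an \'etale hypersheaf then $\pi_0M=c\left(\pi_0\mathbf M\right)$ is an \'etale sheaf of sets, and you then ``glue'' the classes $\left[y_{\alpha}\right]$ inside $\pi_0M$. This is false: $\pi_0$ of a hypersheaf of spaces need not be a sheaf of sets (take $\mathbf M=\mathbf C$ the groupoid of line bundles: isomorphism classes do not glue, because a descent datum consists of objects \emph{together with} isomorphisms on overlaps satisfying a cocycle condition, not merely matching classes). In particular $\pi_0\,\mathrm{holim}$ is not the equalizer of the $\pi_0$'s, so the gluing step has no justification, and the subsequent appeal to ``\v Cech descent for $\pi_0C$'' rests on the same false premise. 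The correct -- and shorter -- argument does not glue the $\left[y_{\alpha}\right]$ at all: the class $\left[x\right]$ already supplies the global object. Concretely, $x$ determines a point of $C\left(A\right)\simeq\mathrm{holim}\,C\left(B^{\bullet}\right)$ whose level-zero component lies, up to isomorphism, in $\mathbf M\left(B^0\right)$ because each $\left[y_{\alpha}\right]$ does; since $\mathrm{holim}\,M\left(B^{\bullet}\right)\rightarrow\mathrm{holim}\,C\left(B^{\bullet}\right)$ is again an inclusion of connected components, the component of $x$ lifts to $\pi_0\,\mathrm{holim}\,M\left(B^{\bullet}\right)$, and hyperdescent for $M$ produces $z\in\mathbf M\left(A\right)$ with $\left[z\right]=\left[x\right]$ in $c\left(\pi_0\mathbf C\left(A\right)\right)$; the injectivity of $c\left(\pi_0\mathbf M\left(B_{\alpha}\right)\right)\rightarrow c\left(\pi_0\mathbf C\left(B_{\alpha}\right)\right)$ then gives $\left[z|_{B_{\alpha}}\right]=\left[y_{\alpha}\right]$. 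The same misstatement about $\pi_0M$ being a sheaf reappears at the end of your \emph{if} direction, but there it is harmless since your actual surjectivity argument does not rely on it.
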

\begin{proof}
See \cite{Pr1} Proposition 2.32.
\end{proof}
\subsection{Derived Moduli of Perfect Complexes}
Let $X$ be a smooth proper scheme over $k$ and recall that a complex $\mathcal E$ of $\mathscr O_X$-modules is said to be \emph{perfect} if it is compact as an object in the derived category $\mathrm D\left(X\right)$; in simpler terms $\mathcal E$ is perfect if it is locally quasi-isomorphic to a bounded complex of vector bundles. A key example of perfect complex is given by the derived push-forward of the relative De Rham complex associated to a morphism of schemes: more clearly, if $f:Y\rightarrow Z$ is a proper morphism of (semi-separated quasi-compact) $k$-schemes, then $\mathbb Rf_*\Omega_{Y/Z}$ is perfect as an object in $\mathrm D\left(Z\right)$. Perfect complexes play a very important role in several parts of Algebraic Geometry -- such as Hodge Theory, Deformation Theory, Enumerative Geometry, Symplectic Algebraic Geometry and Homological Mirror Symmetry -- so it is very natural to ask whether they can be classified by some moduli stack; for this reason consider the functor
\begin{eqnarray} \label{Perf}
&\mathcal Perf_X^{\geq 0}:\mathfrak{Alg}_k\xrightarrow{\hspace*{5cm}}\mathfrak{Grpd}\qquad\qquad\qquad\qquad\qquad\qquad\qquad\qquad\qquad& \nonumber \\
&\qquad A\longmapsto\mathcal Perf_X^{\geq 0}\left(A\right):=\text{groupoid of perfect $\left(\mathscr O_X\otimes A\right)$-modules $\mathscr E$ in complexes}& \nonumber \\ 
&\,\,\qquad\qquad\text{such that $\mathrm{Ext}^i\left(\mathscr E,\mathscr E\right)=0$ for all $i<0$.}&
\end{eqnarray}
\begin{thm} \label{Lieb} \emph{(Lieblich)}
Functor \eqref{Perf} is an (underived) Artin stack over $k$ locally of finite presentation.
\end{thm}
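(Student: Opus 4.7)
The plan is to verify the classical criteria of Artin applied to the prestack $\mathcal{P}erf_X^{\geq 0}$. The descent property is routine: perfect complexes together with their isomorphisms satisfy fppf (indeed étale) descent on the base, since this reduces to descent for quasi-coherent complexes, while perfectness and vanishing of negative $\mathrm{Ext}$ groups are local conditions. Hence $\mathcal{P}erf_X^{\geq 0}$ is already a stack, and what remains is to produce a smooth atlas and to control the diagonal.

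I would then establish that the diagonal is representable by an algebraic space locally of finite presentation. Given two $A$-flat objects $\mathcal{E}, \mathcal{F}$ of $\mathcal{P}erf_X^{\geq 0}(A)$, the properness of $X$ combined with the perfectness of $\mathcal{E}$ and $\mathcal{F}$ forces $\mathbb{R}\mathrm{Hom}_{X_A}(\mathcal{E}, \mathcal{F})$ to be a perfect complex on $\mathrm{Spec}(A)$; the hypothesis $\mathrm{Ext}^{<0} = 0$ concentrates the relevant sheaf cohomology in non-negative degrees, so the functor $B \mapsto \mathrm{Hom}_{X_B}(\mathcal{E}_B, \mathcal{F}_B)$ is represented by a linear scheme, and $\mathrm{Isom}(\mathcal{E}, \mathcal{F})$ sits inside as an open subfunctor. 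The same analysis furnishes the infinitesimal deformation theory at an object $\mathcal{E}_0 \in \mathcal{P}erf_X^{\geq 0}(k)$: the tangent space is $\mathrm{Ext}^1_X(\mathcal{E}_0, \mathcal{E}_0)$, obstructions lie in $\mathrm{Ext}^2_X(\mathcal{E}_0, \mathcal{E}_0)$, and infinitesimal automorphisms are controlled by $\mathrm{Ext}^0_X(\mathcal{E}_0, \mathcal{E}_0)$. All these are finite-dimensional because $X$ is proper and $\mathcal{E}_0$ is perfect, so Artin's finiteness conditions are met. Schlessinger homogeneity and compatibility with filtered colimits then reduce respectively to the pullback behaviour of perfect complexes under square-zero extensions and to the compactness of perfect complexes in $\mathrm{D}(X_A)$, both of which are formal.

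The main obstacle, and the technically demanding step, is the \emph{effectivity of formal objects}: given a complete local Noetherian $k$-algebra $(A, \mathfrak{m})$ and a compatible system $\{\mathcal{E}_n \in \mathcal{P}erf_X^{\geq 0}(A/\mathfrak{m}^{n+1})\}_{n \geq 0}$, one must produce a perfect complex on $X_A$ inducing the system. This demands a derived counterpart of Grothendieck's formal existence theorem: the compatible system assembles into an object of the derived category of the formal scheme $X_{\widehat{A}}$, which must then be algebraized. The argument proceeds by a careful truncation, using the vanishing $\mathrm{Ext}^{<0}(\mathcal{E}_0, \mathcal{E}_0) = 0$ to rigidify the gluing data at each cohomological level and to control the Mittag-Leffler failure of the inverse limit, followed by term-by-term application of the usual coherent Grothendieck existence theorem. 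Once effectivity is in hand, Artin's theorem delivers the desired smooth atlas and the stack is locally of finite presentation, completing the proof.
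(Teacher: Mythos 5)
The paper offers no proof of its own for this statement: it simply cites Lieblich (\cite{Lie}, Theorem 4.2.1), and only remarks afterwards that the argument there rests on Artin's representability criterion. Your proposal faithfully reconstructs the shape of Lieblich's argument --- étale descent, representability of the diagonal via the perfectness of $\mathbb{R}\mathrm{Hom}$ when $X$ is proper, the deformation theory with tangent space $\mathrm{Ext}^1$, obstructions in $\mathrm{Ext}^2$, and infinitesimal automorphisms in $\mathrm{Ext}^0$ (all finite-dimensional by properness), Schlessinger homogeneity, preservation of filtered colimits, and finally effectivity via a Grothendieck existence theorem for perfect complexes. You also correctly single out effectivity as the genuinely hard step, which Lieblich devotes a dedicated section to; your description of it as a Mittag-Leffler/truncation argument controlled by $\mathrm{Ext}^{<0}=0$ is of course only a sketch, but since the paper itself presents this theorem purely by citation, the level of detail you give is more than what the paper demands. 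In short, your route is the same as the one the paper attributes to Lieblich, merely expanded where the paper is silent.
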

\begin{proof}
See \cite{Lie} Theorem 4.2.1 and results therein.
\end{proof}
The assumptions on the base scheme $X$ in Theorem \ref{Lieb} -- whose proof relies on \emph{Artin Representability Theorem} (see \cite{Art}) -- can be relaxed, but the key condition of Lieblich's result remains the vanishing of all negative $\mathrm{Ext}$ groups\footnote{In \cite{Lie} a perfect complex $\mathcal E\in\mathrm D\left(X\right)$ such that $\mathrm{Ext}^i\left(\mathscr E,\mathscr E\right)=0$ for all $i<0$ is called \emph{universally gluable}; also in that paper the stack $\mathcal Perf_X^{\geq 0}$ is denoted by $\mathcal D^b_{\mathrm{pug}}\left(X/k\right)$.}; in particular observe that such a condition ensures that $\mathcal Perf_X^{\geq 0}$ is a well-defined groupoid-valued functor: as a matter of fact the group $\mathrm{Ext}^i\left(\mathcal E,\mathcal E\right)$, where $\mathcal E\in\mathrm D\left(X\right)$ and $i<0$, parametrises $i^{\mathrm{th}}$-order autoequivalences of $\mathcal E$, thus perfect complexes with trivial negative $\mathrm{Ext}$ groups do not carry any higher homotopy, but only usual automorphisms. \\
By means of Derived Algebraic Geometry it is possible to outstandingly generalise Lieblich's result: indeed consider the functor
\begin{eqnarray} \label{RPerf}
\mathbb R\mathcal Perf_X:&\mathfrak{dgAlg}_k^{\leq 0}&\xrightarrow{\hspace*{2cm}}\mathfrak{sSet} \nonumber \\
&A&\longmapsto Map\left(\mathfrak{Perf}\left(X\right)^{\mathrm{op}},\hat A_{\mathrm{pe}}\right)
\end{eqnarray}
where $\mathfrak{Perf}\left(X\right)$ stands for the dg-category of perfect complexes on $X$, $\hat A_{\mathrm{pe}}$ for the dg-category of \emph{perfect $A$-modules} (see \cite{TVa} for more details) and $Map$ for the mapping space of the model category of dg-categories (see \cite{Tab1}, \cite{Toe3} and \cite{Toe2} for more details).
\begin{thm} \label{Toe-Vaq} \emph{(To\" en-Vaqui\' e)}
Functor \eqref{RPerf} is a locally geometric\footnote{Recall that a derived stack $\mathcal F$ is said to be \emph{locally geometric} if it is the union of open truncated derived geometric substacks.} derived stack over $k$ locally of finite presentation.
\end{thm}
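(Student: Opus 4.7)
The plan is to prove Theorem \ref{Toe-Vaq} by applying Pridham's nilpotent representability criterion (Theorem \ref{Nilp Rep}) to a $\mathfrak{sCat}$-valued reformulation of $\mathbb R\mathcal Perf_X$. First I would define a functor $\mathbf{Perf}(X):\mathfrak{dg}_b\mathfrak{Nil}_k^{\leq 0}\to\mathfrak{sCat}$ sending $A$ to the simplicial category obtained by denormalising (via the functor $\mathbf K$ from Section 1.1) the dg-category of perfect $(\mathscr O_X\otimes^{\mathbb L}A)$-modules in complexes, and check that $\bar W\mathcal W(\mathbf{Perf}(X))$ agrees with the restriction to nilpotent dg-algebras of the mapping-space functor \eqref{RPerf}; this comparison essentially follows from the universal property of the dg-nerve together with the description of $\hat A_{\mathrm{pe}}$ as a $\mathrm{Perf}(X)^{\mathrm{op}}$-module.

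Next I would exhibit $\mathbb R\mathcal Perf_X$ as the union of the open substacks $\mathbb R\mathcal Perf_X^{[a,b]}$ classifying perfect complexes of Tor-amplitude contained in $[a,b]$. Each such substack is $n$-truncated (with $n$ controlled by the amplitude), so local geometricity follows once each $\mathbb R\mathcal Perf_X^{[a,b]}$ is shown to be a truncated derived geometric stack almost of finite presentation. The strategy is to apply Theorem \ref{Nilp Rep} to $\bar W\mathcal W(\mathbf{Perf}(X)^{[a,b]})$, using Proposition \ref{qsmooth homog} to reduce conditions (2) and (3) of that theorem to verifying that $\mathbf{Perf}(X)^{[a,b]}$ is $2$-homogeneous and formally $2$-quasi-smooth.

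Then I would check the remaining hypotheses one by one: $2$-homogeneity reduces to the classical statement that perfect dg-modules over a fibre product $A\times_BC$ form a $2$-fibre product of module categories, which is a standard descent fact for square-zero extensions; formal $2$-quasi-smoothness is governed by the derived mapping space $\mathbb R\mathrm{Hom}_{X_B}(\mathcal E,\mathcal E\otimes^{\mathbb L}I)$ for a square-zero extension $A\to B$ with kernel $I$, and its fibrancy follows from perfectness. The underived truncation $\pi^0\mathbb R\mathcal Perf_X^{[a,b]}$ is identified with (an open piece of) Lieblich's stack from Theorem \ref{Lieb}, hence is an \'etale hypersheaf preserving filtered colimits. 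Finally, the tangent cohomology groups satisfy $\mathrm D^j_{\mathcal E}(\mathbb R\mathcal Perf_X,M)\simeq\mathrm{Ext}^j_{X_A}(\mathcal E,\mathcal E\otimes^{\mathbb L}_AM)$; properness of $X$ together with perfectness of $\mathcal E$ yields finite generation, flat base change along \'etale maps, and compatibility with filtered colimits, while the Schlessinger condition (10) follows from Grothendieck existence for perfect complexes on the proper scheme $X$.

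The main obstacle I anticipate is the verification of formal $2$-quasi-smoothness of $\mathbf{Perf}(X)^{[a,b]}$: it is not enough to produce deformations up to quasi-isomorphism and control obstructions in $\mathrm{Ext}^2$, one must arrange the deformation and automorphism data into a genuine $2$-fibration in the sense of Definition \ref{2-FIB}, which requires carefully choosing strict models (e.g.\ semi-free resolutions) so that lifts along square-zero extensions are produced functorially and morphism spaces form honest Kan fibrations. Closely related is the careful check that the zeroth-homotopy truncation recovers the \emph{universally gluable} perfect complexes of Theorem \ref{Lieb}, ensuring that the underived $\pi^0$-stack is actually Lieblich's Artin stack rather than a larger $2$-stack; this is where the amplitude restriction becomes essential and must be kept consistent throughout the argument.
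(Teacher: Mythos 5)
Your strategy --- reduce to Pridham's nilpotent representability criterion (Theorem \ref{Nilp Rep}) via a $\mathfrak{sCat}$-valued functor and an exhaustion by truncated open substacks --- is essentially the route the paper takes via Theorem \ref{Perf repr}, but there are two material differences in how it is carried out. First, you exhaust by Tor-amplitude $[a,b]$, following To\"en--Vaqui\'e, whereas the paper exhausts by the vanishing threshold for negative $\mathrm{Ext}$ groups, taking $\mathbf M^n$ to be the subcategory of perfect complexes with $\mathrm{Ext}^i_{X\otimes^{\mathbb L}_RA}(\mathcal E,\mathcal E)=0$ for $i<-n$. Both are open exhaustions and both bound the truncation level, but the paper's cut-off makes condition $(1)$ of Theorem \ref{Nilp Rep} definitional rather than something inferred from the amplitude. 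Second, the paper does not work directly with the dg-category of perfect modules: it instead introduces the ambient functor $d\mathrm{CART}_X$ of all cofibrant derived quasi-coherent modules, proves $2$-homogeneity and formal $2$-quasi-smoothness once and for all at that level (Proposition \ref{dCART homog smooth}), packages Pridham's criterion into a sheaf-specific statement (Corollary \ref{DMQCS}), and then reduces Theorem \ref{Perf repr} to verifying openness, \'etale descent, finiteness and Mittag--Leffler conditions for the full subcategory of perfect complexes. This buys a clean separation of the homotopy-theoretic hard work (done once in the ambient functor) from the moduli-specific checks, and it is exactly what disposes of the obstacle you correctly anticipate: the strict $2$-fibration data are produced by restricting to cofibrant objects, where degreewise projectivity yields the required strictness and makes the $\underline{\mathrm{Hom}}$-spaces honest Kan fibrations after Dold--Kan. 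One point you raise that the paper glosses over is the explicit comparison between $\bar W\mathcal W(\mathbf{Perf}(X))$ and the dg-category mapping-space formulation \eqref{RPerf}: the paper simply asserts that $\mathbb R\mathcal Perf_X^n$ is an open substack of \eqref{RPerf} and that they union up, so you are right to flag this as a step requiring justification. A minor indexing slip in your outline: by formula \eqref{coho descr}, the tangent cohomology is $\mathrm D^j_{\mathcal E}\simeq\mathrm{Ext}^{j+1}_{X\otimes^{\mathbb L}_RA}(\mathcal E,\mathcal E\otimes^{\mathbb L}_AM)$, with a shift by one, not $\mathrm{Ext}^j$.
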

\begin{proof}
See \cite{TVa} Section 3; see also \cite{Toe1} Section 3.2.4 and Section 4.3.5 for a quicker explanation.
\end{proof}
It is easily seen that there is a derived geometric $1$-substack of $\mathbb R\mathcal Perf_X$ whose underived truncation is equivalent to $\mathcal Perf_X^{\geq 0}$, so Theorem \ref{Lieb} is recovered as a corollary of To\" en and Vaqui\' e's work. \\
Theorem \ref{Toe-Vaq} is a very powerful and elegant result, which has been highly inspiring in recent research: just to mention a few significant instances, it is one of the key ingredients in \cite{Sim2} where Simpson constructed a locally geometric stack of perfect complexes equipped with a $\lambda$-connection, \cite{STVa} where Sch\" urg, To\" en and Vaqui\' e constructed a derived determinant map from the derived stack of perfect complexes to the derived Picard stack and studied various applications to Deformation Theory and Enumerative Geometry, \cite{PTVV} where Pantev, To\" en, Vaqui\' e and Vezzosi set Derived Symplectic Geometry. However the proof provided in \cite{TVa} is quite abstract and complicated: as a matter of fact To\" en and Vaqui\' e actually constructed a derived stack parametrising \emph{pseudo-perfect} objects (see \cite{TVa} for a definition) in a fixed dg-category of finite type (again see \cite{TVa} for more details) and then proved by hand -- i.e. without applying any representability result, but rather using just the definitions from \cite{TVe} -- that this is locally geometric and locally of finite type. Theorem \ref{Toe-Vaq} is then obtained just as an interesting application.\\
In this section we will apply the representability and smoothness results discussed in Section 2.1 to obtain a simpler and more concrete proof of Theorem \ref{Toe-Vaq}; actually we will follow the path marked by Pridham in \cite{Pr1}, where he develops general methods to study derived moduli of schemes and sheaves. In a way the approach we propose is the derived counterpart of Lieblich's one, as the latter is based on Artin Representability Theorem rather than the definition of (underived) Artin stack. Moreover we will give a rather explicit description of the derived geometric stacks determining the local geometricity of $\mathbb R\mathcal Perf_X$: again, such a picture is certainly present in To\" en and Vaqui\' e's work, but unravelling the language in order to clearly write down the relevant substacks might be non-trivial. Halpern-Leistner and Preygel have recently studied the stack $\mathbb R\mathcal Perf_X$ via representability as well, though their approach does not make use of Pridham's theory: for more details see \cite{Ha-LePr} Section 2.5. Other related work has been carried by Pandit, who showed in \cite{Pa} that the derived moduli stack of compact objects in a perfect symmetric monoidal infinity-category is locally geometric and locally of finite type, and Lowrey, who studied in \cite{Low} the derived moduli stack of pseudo-coherent complexes on a proper scheme.\\
Let $\mathcal X$ be a (possibly) derived scheme over $R$ and recall that the \emph{C\v ech nerve} of $\mathcal X$ associated to a fixed affine open cover $U:=\underset{\alpha}{\coprod}U_{\alpha}$ is defined to be the simplicial affine scheme
\begin{equation*}
\check{\mathcal X}:\qquad \xymatrix{\check{\mathcal X}_0\ar[r] & \check{\mathcal X}_1\ar@<2.5pt>[l]\ar@<-2.5pt>[l]\ar@<2.5pt>[r]\ar@<-2.5pt>[r] & \check{\mathcal X}_2 \ar[l]\ar@<5pt>[l]\ar@<-5pt>[l]\ar[r]\ar@<5pt>[r]\ar@<-5pt>[r] & \cdots\ar@<-7.5pt>[l]\ar@<-2.5pt>[l]\ar@<2.5pt>[l]\ar@<7.5pt>[l]}
\end{equation*}
where
\begin{equation*}
\check{\mathcal X}_m:=\underbrace{U\times^h_XU\times^h_X\cdots\times^h_XU}_{m+1\text{ times}}
\end{equation*}
while faces and degenerations are induced naturally by canonical projections and diagonal embeddings respectively.
Consider also the cosimplicial differential graded commutative $R$-algebra $O\left(\mathcal X\right)$ defined in level $m$ by
\begin{equation} \label{OUps}
O\left(\mathcal X\right)^m:=\Gamma\left(\check{\mathcal X}_m,\mathscr O_{\check{\mathcal X}_m}\right)
\end{equation}
whose cosimplicial structure maps are induced through the global section functor by the ones determining the simplicial structure of $\check X$. 
\begin{defn} \label{der mod}
Define a \emph{derived module} over $\mathcal X$ to be a cosimplicial $O\left(\mathcal X\right)$-module in complexes.
\end{defn}
We will denote by $\mathfrak{dgMod}\left(\mathcal X\right)$ the category of derived modules over $\mathcal X$; just unravelling Definition \ref{der mod} we see that an object $\mathcal M\in\mathfrak{dgMod}\left(\mathcal X\right)$ is made of cochain complexes $\mathcal M^m$ of $O\left(\mathcal X\right)^m$-modules related by maps
\begin{eqnarray*}
&\partial^i:\mathcal M^m\otimes^{\mathbb L}_{O\left(\mathcal X\right)^m}O\left(\mathcal X\right)^{m+1}\longrightarrow\mathcal M^{m+1}& \\
&\sigma^i:\mathcal M^m\otimes^{\mathbb L}_{O\left(\mathcal X\right)^m}O\left(\mathcal X\right)^{m-1}\longrightarrow\mathcal M^{m-1}&
\end{eqnarray*}
satisfying the usual cosimplicial identities. Observe that the projective model structures on cochain complexes we discussed in Section 1.2 induces a model structure on $\mathfrak{dgMod}\left(X\right)$, which we will still refer to as a projective model structure: in particular a morphism $f:\mathcal M\rightarrow\mathcal N$ in $\mathfrak{dgMod}\left(\mathcal X\right)$ is 
\begin{itemize}
\item a weak equivalence if $f^m:\mathcal M^m\rightarrow\mathcal N^m$ is a quasi-isomorphism; 
\item a fibration if $f^m:\mathcal M^m\rightarrow\mathcal N^m$ is degreewise surjective;
\item a cofibration if it has the left lifting property with respect to all fibrations (see \cite{Pr1} Section 4.1 for a rather explicit characterisation of them).
\end{itemize}
In the same way, the category $\mathfrak{dgMod}\left(\mathcal X\right)$ inherits a simplicial structure from the category of $R$-modules in complexes: more clearly for any $\mathcal M,\mathcal N\in\mathfrak{dgMod}\left(\mathcal X\right)$ consider the chain complex $\left(\mathrm{HOM}_{\mathcal X}\left(\mathcal M,\mathcal N\right),\delta\right)$ defined by the relations
\begin{eqnarray*} 
\mathrm{HOM}_{\mathcal X}\left(\mathcal M,\mathcal N\right)_n:=\mathrm{Hom}_{O\left(\mathcal X\right)}\left(\mathcal M,\mathcal N[-n]\right)\qquad\qquad\qquad\qquad \nonumber \\
\forall f\in {\mathrm{HOM}_{\mathcal X}\left(\mathcal M,\mathcal N\right)_n}\qquad\delta_n\left(f\right):=\bar d^n\circ f-\left(-1\right)^nf\circ d^n\in {\mathrm{HOM}_{\mathcal X}\left(\mathcal M,\mathcal N\right)_{n-1}}
\end{eqnarray*}
and define the Hom spaces just by taking good truncation and denormalisation, i.e. set
\begin{equation*}
\underline{\mathrm{Hom}}_{\mathfrak{dgMod}\left(\mathcal X\right)}\left(\mathcal M,\mathcal N\right):=\mathbf K\left(\tau_{\geq 0}\mathrm{HOM}_{\mathcal X}\left(\mathcal M,\mathcal N\right)\right).
\end{equation*}
\begin{defn}
A \emph{derived quasi-coherent sheaf} over $\mathcal X$ is a derived module $\mathcal M$ for which all face maps $\partial^i$ are weak equivalences.
\end{defn}
Let $\mathfrak{dgMod}\left(\mathcal X\right)_{\mathrm{cart}}$ to be the full subcategory of $\mathfrak{dgMod}\left(\mathcal X\right)$ consisting of derived quasi-coherent sheaves: this inherits a simplicial structure from the larger category and -- even if it has not enough limits and thus cannot be a model category -- it also inherits a reasonably well behaved subcategory of weak equivalences, so there is a homotopy category $\mathrm{Ho}\left(\mathfrak{dgMod}\left(\mathcal X\right)_{\mathrm{cart}}\right)$ of quasi-coherent modules over $\mathcal X$ simply obtained by localising $\mathfrak{dgMod}\left(\mathcal X\right)_{\mathrm{cart}}$ at weak equivalences.
\begin{rem}
The constructions above make sense in a much wider generality: as a matter of fact in \cite{Pr1} Pridham defined derived quasi-coherent modules over any \emph{homotopy derived Artin hypergroupoid} (see \cite{Pr4}) and through these objects he recovered the notion of homotopy-Cartesian module over a derived geometric stack which had previously been investigated by To\" en and Vezzosi in \cite{TVe}; also Corollary \ref{DMQCS} -- which is the main tool to deal with derived moduli of sheaves -- holds in this much vaster generality. We have chosen to discuss derived quasi-coherent modules only for derived schemes since our goal is to study perfect complexes on a proper scheme, for which the full power of Pridham's theory of Artin hypergroupoids is not really needed. In particular bear in mind that the C\v ech nerve of a derived scheme associated to an affine open cover is an example of homotopy Zariski $1$-hypergroupoid.
\end{rem}
From now on fix $R$ to be an ordinary (underived) $k$-algebra and $X$ to be a quasi-compact semi-separated scheme over $R$; note that in in \cite{Hut} H\" utterman showed that
\begin{equation*} 
\mathrm{Ho}\left(\mathfrak{dgMod}_{\mathrm{cart}}\left(X\right)\right)\simeq\mathrm D\left(\mathfrak{QCoh}\left(X\right)\right)
\end{equation*}
so in this case derived quasi-coherent modules are precisely what one would like them to be. \\
Now define the functor
\begin{eqnarray} \label{dCART}
d\mathrm{CART}_{X}:&\mathfrak{dg}_b\mathfrak{Nil}^{\leq 0}_R&\xrightarrow{\hspace*{1.75cm}}\mathfrak{sCat} \nonumber \\
&A&\longmapsto\left(\mathfrak{dgMod}_{\mathrm{cart}}\left(X\otimes^{\mathbb L}_RA\right)\right)^c
\end{eqnarray}
where $\left(\mathfrak{dgMod}_{\mathrm{cart}}\left(X\otimes^{\mathbb L}_RA\right)\right)^c$ is the full simplicial subcategory of $\mathfrak{dgMod}_{\mathrm{cart}}\left(X\otimes^{\mathbb L}_RA\right)$ on cofibrant objects, i.e. it is the (simplicial) category of cofibrant derived quasi-coherent modules on the derived scheme $X\otimes^{\mathbb L}_RA$. 
\begin{prop} \label{dCART homog smooth} \emph{(Pridham)}
Functor \eqref{dCART} is $2$-homogeneous and formally $2$-quasi-smooth.
\end{prop}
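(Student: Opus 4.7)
The plan is to verify the two structural properties directly from the projective model structure on $\mathfrak{dgMod}\left(\mathcal X\right)$, leveraging the fact that the derived base change $X\otimes^{\mathbb L}_R-$ preserves the pullbacks and square-zero extensions that appear in the conditions. Throughout, fix the affine cover once and for all, and describe an object of $d\mathrm{CART}_X\left(A\right)$ as a cosimplicial cofibrant $O\left(X\otimes^{\mathbb L}_RA\right)$-module in complexes whose coface maps are all quasi-isomorphisms after derived extension of scalars.

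\textbf{Step 1: $2$-homogeneity.} Given a square-zero extension $A\to B$ and a morphism $C\to B$ in $\mathfrak{dg}_b\mathfrak{Nil}^{\leq 0}_R$, the key input is that derived tensor product is compatible with such pullbacks, so that the cosimplicial algebra satisfies $O\left(X\otimes^{\mathbb L}_R\left(A\times_BC\right)\right)\simeq O\left(X\otimes^{\mathbb L}_RA\right)\times_{O\left(X\otimes^{\mathbb L}_RB\right)}O\left(X\otimes^{\mathbb L}_RC\right)$ in each cosimplicial level. Given cofibrant derived cartesian modules $\mathcal M_A$, $\mathcal M_C$ together with an isomorphism $\theta:\mathcal M_A\otimes^{\mathbb L}_AB\to\mathcal M_C\otimes^{\mathbb L}_CB$, the honest pullback $\mathcal M_A\times_{\mathcal M_B}\mathcal M_C$ (formed levelwise in the cosimplicial direction) is a cofibrant derived module over $X\otimes^{\mathbb L}_R\left(A\times_BC\right)$, and its coface maps are quasi-isomorphisms because the original ones are — this gives essential surjectivity on objects. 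The $\underline{\mathrm{Hom}}$-space statement reduces, after Dold-Kan and good truncation, to the equality $\mathrm{HOM}\left(\mathcal M_A\times\mathcal M_C,\mathcal N_A\times\mathcal N_C\right)=\mathrm{HOM}\left(\mathcal M_A,\mathcal N_A\right)\times_{\mathrm{HOM}\left(\mathcal M_B,\mathcal N_B\right)}\mathrm{HOM}\left(\mathcal M_C,\mathcal N_C\right)$, which is immediate once we note that for cofibrant $\mathcal M$ we may replace $\otimes^{\mathbb L}$ by $\otimes$ in the relevant formula and $\mathrm{Hom}_{R\otimes S}\left(M\otimes N,-\right)$ commutes with limits.

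\textbf{Step 2: Formal $2$-quasi-presmoothness.} Fix a square-zero extension $A\twoheadrightarrow B$ with kernel $I$ satisfying $I\cdot\ker\left(A\to H^0A\right)=0$. We must check the two clauses of Definition \ref{2-FIB}. For the first, for cofibrant cartesian $\mathcal M,\mathcal N$ over $A$ the sequence of cochain complexes
\begin{equation*}
0\longrightarrow\mathrm{HOM}\left(\mathcal M,\mathcal N\otimes^{\mathbb L}_AI\right)\longrightarrow\mathrm{HOM}\left(\mathcal M,\mathcal N\right)\longrightarrow\mathrm{HOM}\left(\mathcal M\otimes_AB,\mathcal N\otimes_AB\right)\longrightarrow 0
\end{equation*}
is (levelwise) short exact, so after good truncation and Dold-Kan we get a Kan fibration of simplicial sets. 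For the second clause, a homotopy equivalence $h:\mathcal M\otimes_AB\to\mathcal N'$ upstairs allows us to form the pullback $\mathcal N:=\mathcal M\times_{\mathcal M\otimes_AB}\mathcal N'$ (using a cofibrant replacement of $h$), which is cofibrant cartesian over $A$ since $I$ is nilpotent and equivalences are detected on $H^0$-coefficients, and then $\mathcal N\otimes_AB\simeq\mathcal N'$ with lift of $h$ coming from the projection.

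\textbf{Step 3: $2$-homotopy for tiny acyclic extensions.} Here $I$ is additionally acyclic, concentrated in a single cohomological degree. Then $\mathcal N\otimes^{\mathbb L}_AI$ is acyclic for every cofibrant $\mathcal N$, so $\mathrm{HOM}\left(\mathcal M,\mathcal N\otimes^{\mathbb L}_AI\right)$ is acyclic and the fibration of Step 2 is a trivial Kan fibration. Essential surjectivity on $\mathfrak C_0$ likewise holds: given a cofibrant cartesian module $\mathcal N'$ over $B$, its obstruction to lifting lies in a group vanishing on an acyclic complex, so a lift exists and is automatically cartesian since the coface quasi-isomorphisms persist under square-zero deformation by an acyclic ideal.

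\textbf{Main obstacle.} The delicate point is Step 1: one must verify carefully that the pullback $\mathcal M_A\times_{\mathcal M_B}\mathcal M_C$ is still \emph{cofibrant} in the projective model structure on cosimplicial cartesian modules, and that its cartesian property (quasi-isomorphism of coface maps after derived extension) is preserved under gluing. This is where the precise interaction between cofibrancy, the cosimplicial structure on $O\left(\mathcal X\right)$, and derived base change must be controlled; all other items reduce either to standard model-categorical manipulations of the projective structure from Section 1.2 or to formal consequences of nilpotence of $I$.
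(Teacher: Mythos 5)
Your overall strategy coincides with the paper's: the Cartesian square of $\mathrm{HOM}$ complexes plus the gluing construction for $2$-homogeneity, the levelwise short exact sequence of $\mathrm{HOM}$ complexes passed through good truncation and Dold--Kan for the (trivial) fibration of mapping spaces, and an obstruction-group argument (vanishing for acyclic $I$) for essential surjectivity of base change. Steps 1 and 3 are essentially what the paper does, although you have misplaced the ``main obstacle'': the cofibrancy of the glued object in Step 1 is nearly immediate from degreewise projectivity (Proposition \ref{DG cofibrant}), whereas the real technical content lives in the quasi-smoothness half, via the obstruction computations the paper imports from \cite{Pr4} Section 7.

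The genuine gap is in the second clause of Definition \ref{2-FIB} for the base-change map $d\mathrm{CART}_X\left(A\right)\rightarrow d\mathrm{CART}_X\left(B\right)$. You propose to lift a homotopy equivalence $h:\mathcal M\otimes_AB\rightarrow\mathcal N'$ by forming the pullback $\mathcal M\times_{\mathcal M\otimes_AB}\mathcal N'$. As written this does not typecheck: $h$ points out of $\mathcal M\otimes_AB$, so there is no structure map $\mathcal N'\rightarrow\mathcal M\otimes_AB$ along which to form the pullback, and replacing $h$ by a homotopy inverse only identifies the reduction of the resulting object with $\mathcal N'$ up to weak equivalence. Definition \ref{2-FIB} demands a genuine \emph{isomorphism} $\theta:\mathbf F\left(c_2\right)\rightarrow d$ compatible with $h$, so a lift that is merely weakly equivalent to $\mathcal N'$ after reduction does not verify the condition. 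The paper's argument avoids this by exploiting cofibrancy more strictly: the underlying cosimplicial \emph{graded} module of $\mathcal N'$ lifts uniquely to $A$, as does $h$ as a graded morphism, and the obstruction to lifting the differential together with the chain-map condition is shown to live in $H^2$ of the cone of $\theta^*:\mathrm{HOM}\left(\mathscr G,\mathscr G\otimes_BI\right)\rightarrow\mathrm{HOM}\left(\tilde{\mathscr F},\mathscr G\otimes_BI\right)$, which vanishes precisely because $\theta$ is a homotopy equivalence. You need this (or an equivalent rigidification) to close the argument.
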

\begin{proof}
This is \cite{Pr1} Proposition 4.11, which relies on the arguments of \cite{Pr1} Proposition 3.7; we will discuss Pridham's proof here for the reader's convenience. \\
We first prove that $d\mathrm{CART}_{X}$ is $2$-homogeneous; let $A\rightarrow B$ be a square-zero extension and $C\rightarrow B$ a morphism in $\mathfrak{dg}_b\mathfrak{Nil}^{\leq 0}_R$ and fix $\mathscr F,\mathscr F'\in d\mathrm{CART}_{X}\left(A\times_BC\right)$. Since by definition $\mathscr F$ and $\mathscr F'$ are cofibrant (i.e. degreewise projective by Proposition \ref{DG cofibrant}) we immediately have that the commutative square of simplicial sets
\begin{equation*}
\tiny{\xymatrix{ \underline{\mathrm{Hom}}_{d\mathrm{CART}\left(A\times_BC\right)}\left(\mathscr F,\mathscr F'\right)\ar[r]\ar[d] & \underline{\mathrm{Hom}}_{d\mathrm{CART}\left(A\right)}\left(\mathscr F\otimes_{A\times_BC}A,\mathscr F'\otimes_{A\times_BC}A\right)\ar[d] \\
\underline{\mathrm{Hom}}_{d\mathrm{CART}\left(C\right)}\left(\mathscr F\otimes_{A\times_BC}C,\mathscr F'\otimes_{A\times_BC}C\right)\ar[r] & \underline{\mathrm{Hom}}_{d\mathrm{CART}\left(B\right)}\left(\mathscr F\otimes_{A\times_BC}B,\mathscr F'\otimes_{A\times_BC}B\right) }}
\end{equation*}
is actually a Cartesian diagram. Moreover fix $\mathscr F_A\in d\mathrm{CART}_{X}\left(A\right)$ and $\mathscr F_C\in d\mathrm{CART}_{X}\left(C\right)$ and let $\alpha:\mathscr F_A\otimes_AB\rightarrow\mathscr F_C\otimes_C B$ be an isomorphism; now define 
\begin{equation*}
\mathscr F:=\mathscr F_A\otimes_{\alpha,\mathscr F_C\otimes_CB}\mathscr F_C\simeq\mathscr F_C\otimes_{\alpha,\mathscr F_A\otimes B}\mathscr F_A
\end{equation*}
which is a derived quasi-coherent module over $X\otimes_R\left(A\otimes_BC\right)$. Clearly we have that 
\begin{equation*}
\mathscr F\otimes_{A\times_BC}A\simeq\mathscr F_A\qquad\qquad\mathscr F\otimes_{A\times_BC}C\simeq\mathscr F_C
\end{equation*}
and also observe that $\mathscr F$ is cofibrant, i.e. $\mathscr F\in d\mathrm{CART}_{X}\left(A\times_BC\right)$. This shows that $\underline{\mathrm{Hom}}_{d\mathrm{CART}_{X}}$ is homogeneous, which means that $d\mathrm{CART}_{X}$ is a $2$-homogeneous functor. \\
Now we prove that $d\mathrm{CART}_{X}$ is formally $2$-quasi-smooth; again let $I\hookrightarrow A\twoheadrightarrow B$ be a square-zero extension and pick $\mathscr F,\mathscr F'\in d\mathrm{CART}_{X}\left(A\right)$. Observe that, since $\mathscr F'$ is cofibrant as a quasi-coherent module over $X\otimes^{\mathbb L}_R A$, we have that the induced map $\mathscr F'\rightarrow \mathscr F'\otimes_AB$ is still a square-zero extension; furthermore if $A\rightarrow B$ is also a quasi-isomorphism, then so is $\mathscr F'\rightarrow \mathscr F'\otimes_AB$: as a matter of fact notice, as a consequence of Proposition \ref{DG cofibrant}, that 
\begin{equation*}
\mathrm{ker}\left(\mathscr F'\rightarrow \mathscr F'\otimes_AB\right)=\mathscr F'\otimes_AI. 
\end{equation*}
Now it follows that the natural chain map
\begin{equation*}
\mathrm{HOM}_{d\mathrm{CART}_{X}\left(A\right)}\left(\mathscr F,\mathscr F'\right)\longrightarrow\mathrm{HOM}_{d\mathrm{CART}_{X}\left(B\right)}\left(\mathscr F\otimes_A B,\mathscr F'\otimes_A B\right)
\end{equation*}
is degreewise surjective and a quasi-isomorphism whenever so is $A\rightarrow B$. Now, by just applying truncation and Dold-Kan denormalisation, we get that the morphism of simplicial sets
\begin{equation*}
\underline{\mathrm{Hom}}_{d\mathrm{CART}_{X}\left(A\right)}\left(\mathscr F,\mathscr F'\right)\longrightarrow\underline{\mathrm{Hom}}_{d\mathrm{CART}_{X}\left(B\right)}\left(\mathscr F\otimes_A B,\mathscr F'\otimes_A B\right)
\end{equation*}
is a fibration, which is trivial in case the square-zero extension $A\rightarrow B$ is a quasi-isomorphism. This shows that $\underline{\mathrm{Hom}}_{d\mathrm{CART}_{X}}$ is formally quasi-smooth, so in order to finish the proof we only need to prove that the base-change morphism
\begin{equation} \label{map 2-fib}
d\mathrm{CART}_{X}\left(A\right)\longrightarrow d\mathrm{CART}_{X}\left(B\right)
\end{equation}
is a $2$-fibration, which is trivial whenever the extension $A\rightarrow B$ is acyclic. The computations in \cite{Pr4} Section $7$ imply that obstructions to lifting a quasi-coherent module $\mathscr F\in d\mathrm{CART}_{X}\left(B\right)$ to $d\mathrm{CART}_{X}\left(A\right)$ lie in the group
\begin{equation*}
\mathrm{Ext}^2_{X\otimes_R^{\mathbb L}B}\left(\mathscr F,\mathscr F\otimes_B I\right).
\end{equation*}
so in particular if $H^*\left(I\right)=0$ then map \eqref{map 2-fib} is a trivial $2$-fibration. Now fix $\mathscr F\in d\mathrm{CART}_{X}\left(A\right)$, denote $\tilde{\mathscr F}:=\mathscr F\otimes_A B$ and let $\theta:\tilde{\mathscr F}\rightarrow\mathscr G$ be a homotopy equivalence in $d\mathrm{CART}_{X}\left(B\right)$. By cofibrancy, there exist a unique lift $\mathring{\mathscr G}$ of $\mathscr G$ to $A$ as a cosimplicial graded module and, in the same fashion, we can lift $\theta$ to a graded morphism $\mathring{\theta}:\mathscr F\rightarrow\mathring{\mathscr G}$: we want to prove that there also exist compatible lifts of the differential. The obstruction to lift the differential $d$ of $\mathscr G$ to a differential $\delta$ on $\mathring{\mathscr G}$ is given by a pair
\begin{equation*}
\left(u,v\right)\in\mathrm{HOM}^2_{X\otimes^{\mathbb L}_RB}\left(\mathscr G,\mathscr G\otimes_BI\right)\times\mathrm{HOM}^1_{X\otimes^{\mathbb L}_RB}\left(\tilde{\mathscr F},\mathscr G\otimes_BI\right)
\end{equation*}
satisfying $d\left(u\right)=0$ and $d\left(v\right)=u\circ\theta$. A different choice for $\left(\delta,\tilde{\theta}\right)$ would be of the form $\left(\delta+a,\tilde{\theta}+b\right)$, with
\begin{equation*}
\left(a,b\right)\in\mathrm{HOM}^1_{X\otimes^{\mathbb L}_RB}\left(\mathscr G,\mathscr G\otimes_BI\right)\times\mathrm{HOM}^0_{X\otimes^{\mathbb L}_RB}\left(\tilde{\mathscr F},\mathscr G\otimes_BI\right)
\end{equation*}
so that the pair $\left(u,v\right)$ is sent to $\left(u+d\left(a\right),v+d\left(b\right)+a\circ\theta\right)$. It follows that the obstruction to lifting $\theta$ and $\mathscr G$ lies in 
\begin{equation} \label{big obstr}
H^2\left(\mathrm{cone}\left(
\mathrm{HOM}_{X\otimes^{\mathbb L}_RB}\left(\mathscr G,\mathscr G\otimes_BI\right)\overset{\theta^*}{\xrightarrow{\hspace*{1cm}}}\mathrm{HOM}_{X\otimes^{\mathbb L}_RB}\left(\tilde{\mathscr F},\mathscr G\otimes_BI\right)
\right)\right).
\end{equation}
Since $\theta$ is a homotopy equivalence we have that $\theta^*$ is a quasi-isomorphism: in particular the cohomology group \eqref{big obstr} is $0$, which means that suitable lifts exist. This completes the proof.
\end{proof}
Proposition \ref{dCART homog smooth} is the key ingredient to build upon Pridham Nilpotent Representability Criterion an ad-hoc result to deal with moduli of sheaves.
\begin{cor} \label{DMQCS} (Pridham) 
Let 
\begin{equation*}
\mathbf M:\mathfrak{Alg}_{H^0\left(R\right)}\rightarrow\mathfrak{sCat}
\end{equation*}
be a presheaf satisfying the following conditions:
\begin{enumerate}
\item $\mathbf M$ is $n$-truncated;
\item $\mathbf M$ is open in the functor 
\begin{equation*}
A\longmapsto\pi^0\mathcal W\left(\mathfrak{dgMod}\left(X\otimes^{\mathbb L}_R A\right)_{\mathrm{cart}}\right)\qquad A\in\mathfrak{Alg}_{H^0\left(R\right)};
\end{equation*}
\item If $\left\{f_{\alpha}:A\rightarrow B_{\alpha}\right\}_{\alpha}$ is an \' etale cover in $\mathfrak{Alg}_{H^0\left(R\right)}$, then $\mathscr E\in\pi_0\pi^0\mathcal W\left(\mathfrak{dgMod}\left(X\otimes^{\mathbb L}_R A\right)_{\mathrm{cart}}\right)$ lies in the essential image of $\pi_0\mathbf M\left(A\right)$ whenever $\left(f_{\alpha}\right)^*\mathscr E$ is in the essential image of $\pi_0\mathbf M\left(B_{\alpha}\right)$ for all $\alpha$;
\item For all finitely generated $A\in\mathfrak{Alg}_{H^0\left(R\right)}$ and all $\mathscr E\in\mathbf M\left(A\right)$, the functors 
\begin{equation*}
\mathrm{Ext}^i_{X\otimes^{\mathbb L}_RA}\left(\mathscr E,\mathscr E\otimes^{\mathbb L}_A-\right):\mathfrak{Mod}_A\longrightarrow\mathfrak{Ab}
\end{equation*}
preserve filtered colimits $\forall i\neq 1$;
\item For all finitely generated integral domains $A\in\mathfrak{Alg}_{H^0\left(R\right)}$ and all $\mathscr E\in\mathbf M\left(A\right)$, the groups $\mathrm{Ext}^i_{X\otimes_R^{\mathbb L}A}\left(\mathscr E,\mathscr E\right)$ are finitely generated $A$-modules;
\item The functor 
\begin{equation*}
c(\pi_0\mathbf M):\mathfrak{Alg}_{H^0\left(R\right)}\longrightarrow\mathfrak{Set}
\end{equation*}
of components of the groupoid $\pi_0\mathbf M$ preserves filtered colimits;
\item For all complete discrete local Noetherian normal $H^0\left(R\right)$-algebras $A$, for all $\mathscr E\in\mathbf M\left(A\right)$ and for all $i>0$ the canonical maps
\begin{eqnarray*}
&c(\pi_0\mathbf M\left(A\right))\longrightarrow\underset{\underset{r}{\longleftarrow}}{\mathrm{lim}}\;c(\pi_0\mathbf M\left(\nicefrac{A}{\mathfrak m_A^r}\right))& \\
&\mathrm{Ext}^i_{X\otimes_R^{\mathbb L}A}\left(\mathscr E,\mathscr E\right)\longrightarrow\underset{\underset{r}{\longleftarrow}}{\mathrm{lim}}\;\mathrm{Ext}^i_{X\otimes_R^{\mathbb L}A}\left(\mathscr E,\nicefrac{\mathscr E}{\mathfrak m_A^r}\right)& \forall i<0
\end{eqnarray*}
are isomorphisms.
\end{enumerate}
Let
\begin{equation*}
\breve{\mathbf M}:\mathfrak{dg}_b\mathfrak{Nil}^{\leq 0}_R\longrightarrow\mathfrak{sCat}
\end{equation*}
be the full simplicial subcategory of $\mathcal W\left(d\mathrm{CART}_{X}\left(A\right)\right)$ consisting of objects $\mathscr F$ such that the complex $\mathscr F\otimes_A H^0\left(A\right)$ is weakly equivalent in $\mathfrak{dgMod}_{\mathrm{cart}}\left(X\otimes_R^{\mathbb L} H^0\left(A\right)\right)$ to an object of $\mathbf M\left(H^0\left(A\right)\right)$. Then the functor $\bar W\breve{\mathbf M}$ is (the restriction to $\mathfrak{dg}_b\mathfrak{Nil}^{\leq 0}_R$ of) a derived geometric $n$-stack.
\end{cor}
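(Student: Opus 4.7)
The plan is to verify the hypotheses of Pridham's Nilpotent Representability Criterion (Theorem \ref{Nilp Rep}) for the functor $\bar W \breve{\mathbf M}$. First I would handle the homotopy-theoretic axioms $(1)$--$(3)$ of that theorem by showing that $\breve{\mathbf M}$ inherits $2$-homogeneity and formal $2$-quasi-smoothness from an ambient functor. Proposition \ref{dCART homog smooth} supplies these properties for $d\mathrm{CART}_X$ and Proposition \ref{qsmooth homog}$(4)$ transfers them to $\mathcal W(d\mathrm{CART}_X)$. Since $\breve{\mathbf M}$ is full in $\mathcal W(d\mathrm{CART}_X)$, $2$-homogeneity descends automatically at the level of $\underline{\mathrm{Hom}}$-spaces; to obtain the $2$-fibration property on objects, I would use the openness hypothesis $(2)$ together with the fact that the defining condition of $\breve{\mathbf M}(A)$ depends only on $H^0(A)$, noting that any square-zero extension $A \to B$ in $\mathfrak{dg}_b\mathfrak{Nil}^{\leq 0}_R$ induces a nilpotent surjection $H^0(A) \twoheadrightarrow H^0(B)$, along which underived openness propagates membership in $\mathbf M$. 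Proposition \ref{qsmooth homog}$(1)$--$(2)$ then yields homotopy-preservation and homotopy-homogeneity of $\bar W \breve{\mathbf M}$, and $n$-truncation is hypothesis $(1)$.

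Next I would address the underived descent condition (Theorem \ref{Nilp Rep}$(4)$) by invoking Proposition \ref{hypersheaf}: the ambient $\bar W \mathcal W(d\mathrm{CART}_X)$ is an \'etale hypersheaf by classical faithfully flat descent of (derived) quasi-coherent sheaves on $X$, and hypothesis $(3)$ of the corollary is exactly the surjectivity criterion required by that proposition. Hypothesis $(6)$ of the corollary then gives filtered-colimit preservation (Theorem \ref{Nilp Rep}$(5)$).

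It remains to handle the cohomology-theoretic conditions. The key step is to identify the tangent groups $\mathrm D^j_{\mathscr E}(\bar W \breve{\mathbf M}, M)$ with appropriately shifted $\mathrm{Ext}$ groups of the form $\mathrm{Ext}^*_{X \otimes^{\mathbb L}_R A}(\mathscr E, \mathscr E \otimes^{\mathbb L}_A M)$; this follows from the same deformation-theoretic analysis of quasi-coherent modules which underlies the proof of Proposition \ref{dCART homog smooth}, since $\bar W \breve{\mathbf M}$ and $\bar W \mathcal W(d\mathrm{CART}_X)$ share tangent spaces by the formal \'etaleness of Proposition \ref{qsmooth homog}$(3)$. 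Once this dictionary is set up, hypothesis $(5)$ supplies finite generation (Theorem \ref{Nilp Rep}$(6)$); \'etale base change for $\mathrm{Ext}$ on the quasi-compact semi-separated scheme $X \otimes^{\mathbb L}_R A$ yields Theorem \ref{Nilp Rep}$(7)$; hypothesis $(4)$ yields filtered-colimit preservation of the tangent functors for $j > 0$ (Theorem \ref{Nilp Rep}$(8)$), the excluded $\mathrm{Ext}^1$ falling outside the range needed here; and hypothesis $(7)$ is the Schlessinger-type completeness assertion (Theorem \ref{Nilp Rep}$(9)$). The main obstacle I anticipate lies in the first paragraph: one must carefully verify that the openness of $\mathbf M$, which is formulated entirely in the underived category, correctly induces formal $2$-quasi-smoothness of $\breve{\mathbf M}$ across possibly non-acyclic square-zero extensions in the nilpotent dg setting, where objects acquire extra higher homotopical structure that is invisible to the underived openness condition.
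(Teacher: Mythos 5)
Your proposal follows essentially the same strategy as the paper: verify Pridham's Nilpotent Representability Criterion (Theorem \ref{Nilp Rep}) by appealing to Proposition \ref{dCART homog smooth}, Proposition \ref{qsmooth homog}, Proposition \ref{hypersheaf}, and the $\mathrm{Ext}$-group identification of the tangent cohomology. The worry you raise at the end — that underived openness must be shown to control derived behaviour across non-acyclic square-zero extensions — is the right thing to be uneasy about, and your sketch (membership in $\breve{\mathbf M}(A)$ depends only on $H^0(A)$, and $H^0(A)\twoheadrightarrow H^0(B)$ is a square-zero nilpotent surjection) is the correct resolution; the paper packages this precisely in the homotopy fibre-product description
\begin{equation*}
\breve{\mathbf M}\left(A\right)\approx\mathbf M\left(H^0\left(A\right)\right)\times^h_{\mathcal W\left(\mathfrak{dgMod}\left(X\otimes^{\mathbb L}_R H^0\left(A\right)\right)_{\mathrm{cart}}\right)} \mathcal W\left(\mathfrak{dgMod}\left(X\otimes^{\mathbb L}_R A\right)_{\mathrm{cart}}\right),
\end{equation*}
from which $n$-truncation, homogeneity and quasi-smoothness all descend cleanly. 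The one point of imprecision is your appeal to ``classical faithfully flat descent'' for the hypersheaf property of the ambient $\bar W\mathcal W(d\mathrm{CART}_X)$: the paper instead adapts Pridham's left Quillen hypersheaf machinery (\cite{Pr4} Lemma 5.23 and Proposition 5.9), which is the correct technical input in the simplicial-category framework, though your heuristic points at the same phenomenon.
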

\begin{proof}
This is \cite{Pr1} Theorem 4.12; we just sketch the main ideas of the proof for the reader's convenience. We basically need to verify that the various conditions in the statement imply that the simplicial presheaf $\bar W\breve{\mathbf M}$ satisfies Pridham Nilpotent Representability Criterion (Theorem \ref{Nilp Rep}). \\
First observe that by Condition $(2)$ we have that
\begin{equation} \label{openness reps}
\breve{\mathbf M}\left(A\right)\approx\mathbf M\left(H^0\left(A\right)\right)\times^h_{\mathcal W\left(\mathfrak{dgMod}\left(X\otimes^{\mathbb L}_R H^0\left(A\right)\right)_{\mathrm{cart}}\right)} \mathcal W\left(\mathfrak{dgMod}\left(X\otimes^{\mathbb L}_R A\right)_{\mathrm{cart}}\right).\footnote{The symbol $\approx$ stands for {} ``weakly equivalent''.}
\end{equation}
As a matter of fact, the openness of $\mathbf M$ inside $\pi^0\mathcal W\left(\mathfrak{dgMod}\left(X\otimes^{\mathbb L}_R -\right)_{\mathrm{cart}}\right)$ says that the inclusion
\begin{equation*}
\mathbf M\hookrightarrow \pi^0\mathcal W\left(\mathfrak{dgMod}\left(X\otimes^{\mathbb L}_R -\right)_{\mathrm{cart}}\right)
\end{equation*}
is homotopy formally \' etale, thus we have the representation given by formula \eqref{openness reps}. \\
Then note that the proof of \cite{Pr4} Lemma 5.23 adapts to $\mathscr O_X$-modules, i.e. the assignment 
\begin{equation*}
A\longmapsto\mathfrak{dgMod}_{X\otimes^{\mathbb L}_RA}
\end{equation*}
provides us with a left Quillen hypersheaf, thus \cite{Pr4} Proposition 5.9 implies that $\bar W\mathcal W\left(d\mathrm{CART}_{X}\right)$ is an \' etale hypersheaf; now Condition $\left(3\right)$ and Proposition \ref{hypersheaf} ensure that $\bar W\mathcal W\left(\mathbf M\right)$ is a hypersheaf for the \' etale topology. \\ 
Also recall that the computations in \cite{Pr4} Section $7$ imply that
\begin{equation} \label{coho descr}
\mathrm D^i_{\mathscr E}\left(\bar W\breve{\mathbf M},M\right)\simeq\mathrm{Ext}^{i+1}_{X\otimes_A^{\mathbb L} M}\left(\mathscr E,\mathscr E\otimes^{\mathbb L}_AM\right)
\end{equation}
for all nilpotent dgca's $A\in\mathfrak{dg}_b\mathfrak{Nil}^{\leq 0}_R$, all complexes $\mathscr E\in\bar W\breve{\mathbf M}\left(A\right)$ and dg $A$-modules $M$. \\
Now Proposition \ref{dCART homog smooth} and Proposition \ref{qsmooth homog} tell us that Condition $\left(4\right)$ and Condition $\left(5\right)$ imply the homotopy-theoretic properties required by Pridham Nilpotent Representability Criterion, while the description of cohomology theories given by \eqref{coho descr} ensures the compatibility of such modules with filtered colimits and base-change. In the end the weak completeness condition given by Condition $\left(9\right)$ of Theorem \ref{Nilp Rep} follows from Condition $\left(7\right)$ through a few standard Mittag-Leffler computations: for more details see \cite{Pr1} Theorem 4.12 or the proof of Theorem \ref{Perf repr}, where similar calculations will be explicitly developed. 
\end{proof}
Now we are ready to study derived moduli of perfect complexes by means of Lurie-Pridham representability; consider the functor
\begin{eqnarray} \label{und prf cmplx}
&\mathbf M^n:\mathfrak{Alg}_R\longrightarrow\mathfrak{sCat}\qquad\qquad\qquad\qquad\qquad\qquad\qquad\qquad\qquad\qquad\qquad& \nonumber \\
&A\longmapsto\mathbf M^n\left(A\right):=\text{full simplicial subcategory}\qquad\qquad\qquad& \nonumber \\ 
&\qquad\qquad\qquad\qquad\qquad\;\;\;\text{of perfect complexes $\mathcal E$ of $\left(\mathscr O_X\otimes^{\mathbb L}_R A\right)$-modules}& \nonumber \\ 
&\qquad\qquad\qquad\qquad\;\text{such that $\mathrm{Ext}_{X\otimes^{\mathbb L}_R A}^i\left(\mathcal E,\mathcal E\right)=0$ for $i<-n$}&
\end{eqnarray}
which classifies perfect $\mathscr O_X$-modules in complexes with trivial $\mathrm{Ext}$ groups in higher negative degrees.
\begin{thm} \label{Perf repr}
In the above notations, assume that the scheme $X$ is also proper; then functor \eqref{und prf cmplx} induces a derived geometric $n$-stack $\mathbb R\mathcal Perf_X^n$.
\end{thm}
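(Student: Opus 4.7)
The plan is to apply Corollary \ref{DMQCS} to the functor $\mathbf M^n$ defined in \eqref{und prf cmplx}, which reduces the problem to checking the seven conditions listed in the statement of that corollary. Condition (1), $n$-truncation, is built into the definition of $\mathbf M^n$: the vanishing $\mathrm{Ext}^i_{X\otimes^{\mathbb L}_RA}(\mathcal E,\mathcal E)=0$ for $i<-n$ translates directly, via the standard identification of higher simplicial homotopy of the mapping spaces with negative Ext groups, into the fact that $\mathbf M^n(A)$ is equivalent to an $n$-groupoid. Condition (2), openness in the functor of cartesian modules, follows because perfectness is a Zariski-local condition on a quasi-compact semi-separated scheme and the Ext-vanishing condition is upper semi-continuous on a locally Noetherian base. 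Condition (3) (étale-local nature of the essential image) is the standard étale descent for perfect complexes.

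For Conditions (4)--(6), the properness of $X$ is the key input. Since $X$ is proper over $k$, pushforward preserves perfect complexes, and for $\mathcal E \in \mathbf M^n(A)$ perfect we have $R\mathrm{Hom}_{X\otimes^{\mathbb L}_R A}(\mathcal E, \mathcal E)$ itself perfect over $A$. This gives finite generation of $\mathrm{Ext}^i_{X\otimes_R^{\mathbb L}A}(\mathcal E,\mathcal E)$ for $A$ a finitely generated integral domain (Condition (5)) and commutation with filtered colimits in the second variable (Condition (4)), since perfect $A$-complexes corepresent functors that commute with filtered colimits. Condition (6), that the set of isomorphism classes of objects preserves filtered colimits, follows from the fact that a perfect complex is defined by a finite amount of data on a finite affine cover, hence descends to some finitely generated subalgebra of any filtered colimit.

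The main obstacle is Condition (7), weak completeness for complete discrete local Noetherian normal $H^0(R)$-algebras $A$. The Ext part follows from Grothendieck's formal function theorem applied to the proper morphism $X\otimes_R^{\mathbb L} A \to \mathrm{Spec}(A)$ together with the perfectness of the relative internal $R\mathrm{Hom}$. The essentially surjective part, i.e.\ the bijection
\[
c(\pi_0\mathbf M^n(A))\longrightarrow\underset{\underset{r}{\longleftarrow}}{\mathrm{lim}}\,c(\pi_0\mathbf M^n(A/\mathfrak m_A^r)),
\]
is the derived version of Grothendieck's existence theorem: a compatible formal system of perfect complexes on $X\otimes_A A/\mathfrak m_A^r$ algebraizes uniquely to a perfect complex on $X\otimes_R A$. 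This can be proved by inductively solving the obstruction to lifting at each order, where the obstructions and ambiguities live in the groups $\mathrm{Ext}^2$ and $\mathrm{Ext}^1$ respectively of $\mathcal E$ with coefficients in $\mathfrak m_A^r/\mathfrak m_A^{r+1}$-modules, and these are controlled by the Mittag-Leffler property which in turn follows from the finite generation of the Ext modules over the Noetherian ring $A$.

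Once the seven conditions are verified, Corollary \ref{DMQCS} produces the simplicial subcategory $\breve{\mathbf M^n}\subset \mathcal W(d\mathrm{CART}_X)$ and guarantees that $\bar W\breve{\mathbf M^n}$ is (the restriction to $\mathfrak{dg}_b\mathfrak{Nil}_R^{\leq 0}$ of) a derived geometric $n$-stack, which we denote $\mathbb R\mathcal Perf_X^n$. The uniqueness clause of Theorem \ref{Nilp Rep} then extends this canonically to a functor on all of $\mathfrak{dgAlg}_R^{\leq 0}$, completing the proof.
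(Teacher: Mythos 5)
Your proposal follows the same overall strategy as the paper: reduce to Corollary~\ref{DMQCS} and verify the seven conditions, with properness as the input for finiteness. Conditions~(1), (3), (4)--(6) are handled in essentially the same spirit, though the paper is more explicit (e.g.\ for Condition~(6) it cuts out $c(\pi_0\mathbf M^n)$ locally as the functor of points of a finite-type affine scheme of differentials $D$ with $D^2=0$ and uses this to see compatibility with filtered colimits, whereas you gesture at {}``finite data''). For Condition~(2) the paper does not argue via semi-continuity: since openness here means the homotopy formally \'etale property of Proposition~\ref{hypersheaf}, i.e.\ essential surjectivity of $\pi_0\mathbf M^n(A)\to\pi_0\mathbf C(A)\times^{(2)}_{\pi_0\mathbf C(B)}\pi_0\mathbf M^n(B)$ over square-zero extensions, the paper shows directly that the inclusion into $\pi^0\mathcal W(\mathfrak{dgMod}(X\otimes^{\mathbb L}_R-)_{\mathrm{cart}})$ induces the \emph{identity} on tangent spaces $\mathrm{Ext}^1$ and obstruction spaces $\mathrm{Ext}^2$, using Lieblich's deformation theory. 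Your semi-continuity argument is about the wrong kind of openness; it does not obviously produce the required formal \'etaleness, so you should replace it with the tangent/obstruction comparison.

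The substantive gap is in Condition~(7). You correctly identify that the surjectivity of
$c(\pi_0\mathbf M^n(A))\to\varprojlim_r c(\pi_0\mathbf M^n(A/\mathfrak m_A^r))$
is the derived Grothendieck existence theorem, but the sketch you give --- inductive obstruction theory with obstructions in $\mathrm{Ext}^2$ and ambiguities in $\mathrm{Ext}^1$, controlled by Mittag--Leffler --- only produces the \emph{formal} object (i.e.\ a compatible family on the $X\otimes_R A/\mathfrak m_A^r$, which is exactly an element of the target set). It does not address \emph{algebraization}: showing that the formal perfect complex on the completion of $X_A$ along the closed fibre actually arises from a genuine perfect complex on $X_A$. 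That is the real content of Grothendieck existence and requires properness in an essential, non-formal way. The paper is correct to treat this as a black box and cite Lurie's DAG~XII Theorem~3.2.2 or Halpern-Leistner--Preygel; you should do the same rather than claim it follows from the ML computation, which the paper reserves for the $\mathrm{Ext}$-compatibility part of Condition~(7).
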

\begin{proof}
We have to prove that functor \eqref{und prf cmplx} satisfies the conditions of Corollary \ref{DMQCS}. \\
First of all, notice that the vanishing condition on higher negative $\mathrm{Ext}$ groups guarantees that the simplicial presheaf $\mathbf M^n$ is $n$-truncated, which is exactly Condition $\left(1\right)$. \\
Now we look at Condition $\left(2\right)$, hence we need to prove the openness of $\mathbf M^n$ as a subfunctor of $\pi^0\mathcal W\left(\mathfrak{dgMod}\left(X\otimes^{\mathbb L}_R-\right)_{\mathrm{cart}}\right)$; it is immediate to see that $\mathbf M^n\left(A\right)$ is a full simplicial subcategory of $\pi^0\mathcal W\left(\mathfrak{dgMod}\left(X\otimes^{\mathbb L}_RA\right)_{\mathrm{cart}}\right)$, so we only need to check that the map
\begin{equation} \label{formal} 
\mathbf M^n\hookrightarrow\pi^0\mathcal W\left(\mathfrak{dgMod}\left(X\otimes^{\mathbb L}_R-\right)_{\mathrm{cart}}\right) 
\end{equation}
is homotopy formally \' etale, i.e. that the morphism of formal groupoids\footnote{Notice that (homotopy) formal \' etaleness is a local property, so we can restrict map \eqref{formal} to formal objects.}
\begin{equation*} 
\pi_0\mathbf M^n\hookrightarrow\pi_0\pi^0\mathcal W\left(\mathfrak{dgMod}\left(X\otimes^{\mathbb L}_R-\right)_{\mathrm{cart}}\right) 
\end{equation*}
is formally \' etale. By classical Formal Deformation Theory this amounts to check that the map induced by morphism \eqref{formal} is an isomorphism on tangent spaces and an injection on obstruction spaces (see for example \cite{Ser} Section 2.1 and \cite{Man2} Section V.8), so fix a square-zero extension $I\hookrightarrow A\twoheadrightarrow B$ and a perfect complex $\mathscr E\in\mathbf M^n\left(B\right)$. By Lieblich's work (see \cite{Lie} Section 3) we have that
\begin{itemize}
\item the tangent space to the functor $\pi_0\mathbf M^n$ at $\mathscr E$ is given by the group $\mathrm{Ext}_{X\otimes^{\mathbb L}_RA}^1\left(\mathscr E,\mathscr E\otimes^{\mathbb L}_B I\right)$;
\item a functorial obstruction space for $\pi_0\mathbf M^n$ at $\mathscr E$ is given by the group $\mathrm{Ext}_{X\otimes^{\mathbb L}_RA}^2\left(\mathscr E,\mathscr E\otimes^{\mathbb L}_B I\right)$.
\end{itemize}
On the other hand, it is well known (for instance see the proof of \cite{Pr1} Theorem 4.12) that
\begin{itemize}
\item the tangent space to the functor $\pi_0\pi^0\mathcal W\left(\mathfrak{dgMod}\left(X\otimes^{\mathbb L}_R-\right)_{\mathrm{cart}}\right)$ at $\mathscr E$ is given by the group $\mathrm{Ext}_{X\otimes^{\mathbb L}_RA}^1\left(\mathscr E,\mathscr E\otimes^{\mathbb L}_B I\right)$;
\item a functorial obstruction space for $\pi_0\pi^0\mathcal W\left(\mathfrak{dgMod}\left(X\otimes^{\mathbb L}_R-\right)_{\mathrm{cart}}\right)$ at $\mathscr E$ is given by the group $\mathrm{Ext}_{X\otimes^{\mathbb L}_RA}^2\left(\mathscr E,\mathscr E\otimes^{\mathbb L}_B I\right)$.
\end{itemize}
It follows that the group homomorphism induced by map \eqref{formal} on first-order deformations and obstruction theories is just the identity, so Condition $\left(2\right)$ holds. \\
Now let us look at Condition $\left(3\right)$: take an \' etale cover $\left\{f_{\alpha}:A\rightarrow B_{\alpha}\right\}_{\alpha}$ in $\mathfrak{Alg}_{R}$ and let $\mathscr E$ be an object in $\pi_0\pi^0\mathcal W\left(\mathfrak{dgMod}\left(X\otimes^{\mathbb L}_RA\right)_{\mathrm{cart}}\right)$ such that the derived modules $\left(f_{\alpha}\right)^*\mathscr E$ over $X\otimes^{\mathbb L}_RB_{\alpha}$ are perfect; then the derived quasi-coherent module $\mathscr E$ has to be perfect as well, because perfectness is a local property which is preserved under pull-back. It follows that Condition $\left(3\right)$ holds. \\
In order to check Condition $\left(4\right)$, fix a finitely generated $R$-algebra $A$ and a perfect complex $\mathscr E$ of $\left(\mathscr O_X\otimes_R^{\mathbb L}A\right)$-modules and consider an inductive system $\left\{B_{\alpha}\right\}_{\alpha}$ of $A$-algebras. The perfectness assumption on $\mathscr E$ allows us to substitute this with a bounded complex $\mathscr F$ of flat $\left(\mathscr O_X\otimes^{\mathbb L}_RA\right)$-modules, so we get that $\mathrm{Ext}^i_{X\otimes^{\mathbb L}_RA}\left(\mathscr E,\mathscr E\otimes_A^{\mathbb L}-\right)$ preserves filtered colimits if and only if so does $\mathrm{Ext}^i_{X\otimes^{\mathbb L}_RA}\left(\mathscr E,\mathscr F\otimes_A-\right)$, which is just the classical $\mathrm{Ext}$ functor. Now a few standard results in Homological Algebra imply the following canonical isomorphisms $\forall i\geq 0$
\begin{equation*}
\mathrm{Ext}^i_{X\otimes^{\mathbb L}_RA}\left(\mathscr E,\mathscr F\otimes_A\underset{\underset{\alpha}{\longrightarrow}}{\lim}\;B_{\alpha}\right)\simeq\mathrm{Ext}^i_{X\otimes^{\mathbb L}_RA}\left(\mathscr E,\underset{\underset{\alpha}{\longrightarrow}}{\lim}\;\mathscr F\otimes_AB_{\alpha}\right)\simeq\underset{\underset{\alpha}{\longrightarrow}}{\lim}\;\mathrm{Ext}^i_{X\otimes^{\mathbb L}_RA}\left(\mathscr E,\mathscr F\otimes_AB_{\alpha}\right).
\end{equation*}
In particular in the first isomorphism we are using the fact that filtered colimits commute with exact functors (and so is the tensor product as $\mathscr F$ is flat in each degree), while in the second one we are using the fact that filtered colimits commute with all $\mathrm{Ext}$ functors, since $\mathscr E$ is a finitely presented object as by perfectness this is locally quasi-isomorphic to a bounded complex of vector bundles. Ultimately the key idea in this argument is that the assumptions on the complexes we are classifying allow us to compute the $\mathrm{Ext}$ groups by choosing a {} ``projective resolution'' in the first variable and a {} ``flat resolution'' in the second one, so that all necessary finiteness conditions to make $\mathrm{Ext}^i_{X\otimes^{\mathbb L}_RA}$ and $\underset{\underset{\alpha}{\longrightarrow}}{\mathrm{lim}}$ commute are verified (see \cite{We} Section 2.6). It follows that Condition $\left(4\right)$ holds.\\
In order to check Condition $\left(5\right)$, fix a finitely generated $R$-algebra $A$ and a perfect complex $\mathscr E$ of $\left(\mathscr O_X\otimes_R^{\mathbb L}A\right)$-modules and again choose $\mathscr F$ to be a bounded complex of flat $\left(\mathscr O_X\otimes^{\mathbb L}_RA\right)$-modules being quasi-isomorphic to $\mathscr E$. Consider the derived endomorphism complex of $\mathscr E$ over $X\otimes^{\mathbb L}_RA$: we have that
\begin{equation*}
\mathbb R\mathcal Hom_{\mathscr O_X\otimes_R^{\mathbb L}A}\left(\mathscr E,\mathscr E\right)\approx\left(\mathscr E\right)^{\vee}\otimes_{\mathscr O_X\otimes^{\mathbb L}_R A}\mathscr F
\end{equation*}
where 
\begin{equation*}
\mathscr E^{\vee}:=\mathbb R\mathcal Hom_{\mathscr O_X\otimes_R^{\mathbb L}A}\left(\mathscr E,\mathscr O_X\right).
\end{equation*}
Notice that, again, we have computed the complex $\mathbb R\mathcal Hom_{\mathscr O_X\otimes_R^{\mathbb L}A}\left(\mathscr E,\mathscr E\right)$ by choosing a {} ``flat resolution'' in the second entry and a {} ``projective resolution'' in the first one; now consider the cohomology sheaves
\begin{equation*}
\mathcal Ext^i_{\mathscr O_X\otimes_R^{\mathbb L}A}\left(\mathscr E,\mathscr E\right):=\mathscr H^i\left(\mathbb R\mathcal Hom_{\mathscr O_X\otimes_R^{\mathbb L}A}\left(\mathscr E,\mathscr E\right)\right)
\end{equation*}
and note that these are coherent $\mathscr O_X\otimes_R^{\mathbb L}A$-modules. The local-to-global spectral sequence
\begin{equation} \label{spectral}
H^p\left(X,\mathcal Ext^q_{\mathscr O_X\otimes_R^{\mathbb L}A}\left(\mathscr E,\mathscr E\right)\right)\Longrightarrow\mathrm{Ext}_{X\otimes_R^{\mathbb L}A}^{p+q}\left(\mathscr E,\mathscr E\right)
\end{equation}
relates the cohomology of the $\mathcal Ext$ sheaves to the $\mathrm{Ext}$ groups and is well-known to converge: since the sheaves $\mathcal Ext^i_{\mathscr O_X\otimes_R^{\mathbb L}A}\left(\mathscr E,\mathscr E\right)$ are coherent and finitely many, formula \eqref{spectral} implies that the groups $\mathrm{Ext}_{X\otimes_R^{\mathbb L}A}^{p+q}\left(\mathscr E,\mathscr E\right)$ are finitely generated as $A$-modules, thus Condition $\left(5\right)$ holds. \\
Now we look at Condition $\left(6\right)$; fix an inductive system $\left\{A_{\alpha}\right\}_{\alpha}$ of $R$-algebras and let $A:=\underset{\underset{\alpha}{\longrightarrow}}{\mathrm{lim}}\;A_{\alpha}$: we need to show that 
\begin{equation} \label{components}
c\left(\pi_0\mathbf M^n\left(A\right)\right)=\underset{\underset{\alpha}{\longrightarrow}}{\mathrm{lim}}\;c\left(\pi_0\mathbf M^n\left(A_{\alpha}\right)\right)
\end{equation}
where for any $R$-algebra $B$
\begin{equation*}
c\left(\pi_0\mathbf M^n\left(B\right)\right):=\left\{\text{isomorphism classes of perfect complexes of $\left(\mathscr O_X\otimes_R^{\mathbb L}B\right)$-modules}\right\}.
\end{equation*}
Because being a perfect complex is local property, it suffices to show that formula \eqref{components} holds locally, i.e replacing $X$ with an open affine subscheme $U$; in particular, as flat modules are locally free, observe that a class $\left[M\right]\in c\left(\pi_0\mathbf M^n\left(B\right)\right)$ is locally determined by an equivalence class of bounded complexes
\begin{equation} \label{sequence}
\xymatrix{ M_1\ar[r]^d & M_2\ar[r]^d & \cdots\ar[r]^d & M_s}
\end{equation}
where $s$ is some natural number and $M_i$ is a free $\mathscr O_X\left(U\right)\otimes_R^{\mathbb L}B$-module for all $i$; again we have used the property that perfect complexes are quasi-isomorphic to bounded and degreewise flat ones. Now denote by $i_k$ the rank of the module $M_k$ in representative \eqref{sequence} and consider the scheme defined for all $B\in\mathfrak{Alg}_R$ through the functor of points
\begin{equation} \label{subscheme}
S\left(B\right):=\left\{\left(D_i\right)\in\underset{k=1,\ldots,s-1}{\prod}\mathrm{Mat}_{i_k,i_{k+1}}\left(\mathscr O_X\left(U\right)\otimes_R^{\mathbb L}B\right)\text{ s.t. }D_i^2=0\right\}.
\end{equation}
Formula \eqref{subscheme} determines a closed subscheme of 
\begin{equation*}
\underset{k=1,\ldots,s-1}{\prod}\mathrm{Mat}_{i_k,i_{k+1}}\left(\mathscr O_X\left(U\right)\otimes_R^{\mathbb L}B\right)
\end{equation*}
and provides a local description of $c\left(\pi_0\mathbf M^n\left(B\right)\right)$; clearly
\begin{equation} \label{commute}
\underset{k=1,\ldots,s-1}{\prod}\mathrm{Mat}_{i_k,i_{k+1}}\left(\mathscr O_X\left(U\right)\otimes_R^{\mathbb L}A\right)\simeq\underset{\underset{\alpha}{\longrightarrow}}{\mathrm{lim}}\;\underset{k=1,\ldots,s-1}{\prod}\mathrm{Mat}_{i_k,i_{k+1}}\left(\mathscr O_X\left(U\right)\otimes_R^{\mathbb L}A_{\alpha}\right)
\end{equation}
and since the subscheme $S\hookrightarrow\underset{k=1,\ldots,s-1}{\prod}\mathrm{Mat}_{i_k,i_{k+1}}$ is defined by finitely many equations, formula \eqref{commute} descends to $S\left(A\right)$, meaning that
\begin{equation} \label{final}
S\left(A\right)\simeq\underset{\underset{\alpha}{\longrightarrow}}{\mathrm{lim}}\;S\left(A_{\alpha}\right).
\end{equation}
Formula \eqref{final} implies formula \eqref{components}, so Condition $\left(6\right)$ holds. \\
Lastly, we have to check Condition $\left(7\right)$, so fix a complete discrete local Noetherian $R$-algebra $A$ and a perfect complex $\mathscr E$ of $\mathscr O_X\otimes_R^{\mathbb L}A$; again the assumptions on $\mathscr E$ allow us to substitute it with a bounded complex $\mathscr F$ of flat $\mathscr O_X\otimes^{\mathbb L}_RA$-modules. \\
We first prove the compatibility of the $\mathrm{Ext}$ functors; the properties of $A$ imply that the canonical morphism $A\longrightarrow \hat A$ to the pronilpotent completion
\begin{equation}
\hat A:=\underset{\underset{r}{\longleftarrow}}{\lim}\;\nicefrac{A}{\mathfrak m_A^r}
\end{equation}
is an isomorphism, which we can use to induce $\forall i>0$ a canonical isomorphism 
\begin{equation}
\mathrm{Ext}^i_{X\otimes^{\mathbb L}_RA}\left(\mathscr E,\mathscr F\right)\tilde{\longrightarrow}\mathrm{Ext}^i_{X\otimes^{\mathbb L}_RA}\left(\mathscr E,\underset{\underset{r}{\longleftarrow}}{\lim}\;\nicefrac{\mathscr F}{\mathfrak m_A^r}\right).
\end{equation}
Again, we compute the $\mathrm{Ext}$ groups by using $\mathscr E$ (which is degreewise projective) in the first variable and $\mathscr F$ (which is degreewise flat) in the second variable. The obstruction for the functors $\mathrm{Ext}^i_{X\otimes^{\mathbb L}_RA}$ to commute with the inverse limit $\underset{\underset{r}{\longleftarrow}}{\mathrm{lim}}$ lies in the derived functor $\underset{\underset{r}{\longleftarrow}}{\mathrm{lim}}^1$; however notice that the completeness assumption on $A$ ensures that the tower $\nicefrac{A}{\mathfrak m_A^r}\rightarrow\nicefrac{A}{\mathfrak m_A^{r+1}}$ satisfies the Mittag-Leffler condition (see \cite{We} Section 3.5), and so does the induced tower $\nicefrac{\mathscr F}{\mathfrak m_A^r}\rightarrow\nicefrac{\mathscr F}{\mathfrak m_A^{r+1}}$ (see \cite{Pr1} Section 4.2 for details). In particular we get
\begin{equation}
\underset{\underset{r}{\longleftarrow}}{\mathrm{lim}}^1\;\mathrm{Ext}_{X\otimes_RA}^{i-1}\left(\mathscr F,\nicefrac{\mathscr F}{\mathfrak m_A^r}\right)=0
\end{equation}
which implies
\begin{equation}
\mathrm{Ext}^i_{X\otimes_RA}\left(\mathscr F,\mathscr F\right)\tilde{\longrightarrow}\underset{\underset{r}{\longleftarrow}}{\lim}\; \mathrm{Ext}^i_{X\otimes_RA}\left(\mathscr F,\nicefrac{\mathscr F}{\mathfrak m_A^r}\right)\qquad\qquad\forall i\neq 1.
\end{equation}
At last, we show the compatibility condition on components, i.e. we want to prove that the push-forward map
\begin{equation} \label{push comp}
c\left(\pi_0\mathbf M^n\left(A\right)\right)\longrightarrow\underset{\underset{r}{\longleftarrow}}{\mathrm{lim}}\,c\left(\pi_0\mathbf M^n\left(\nicefrac{A}{\mathbf m_A^r}\right)\right)
\end{equation}
is bijective. This basically means to show that any inverse system 
\begin{equation*}
\left\{\mathscr E_r\text{ s.t. }\mathscr E_r\text{ perfect complex of }\mathscr O_X\otimes^{\mathbb L}_R\nicefrac{A}{\mathfrak m_A^r\text{-modules}}\right\}_{r\in\mathbb N}
\end{equation*}
determines uniquely a perfect $\mathscr O_X\otimes^{\mathbb L}_RA$-module in complexes via map \eqref{push comp}; such a statement is precisely the version of Grothendieck Existence Theorem for perfect complexes: for a proof see \cite{Lu3} Theorem 3.2.2 or \cite{Ha-LePr} Section 3.\\
It follows that Condition $\left(7\right)$ holds as well, so this completes the proof. 
\end{proof}
\begin{rem}
Consider the derived stack $\mathbb R\mathcal Perf_X$ defined by formula \eqref{RPerf}; clearly for all $n\geq 0$ the derived geometric stack $\mathbb R\mathcal Perf_X^n$ is an open substack of $\mathbb R\mathcal Perf_X$ and moreover
\begin{equation*}
\mathbb R\mathcal Perf_X\simeq\underset{n}{\bigcup}\;\mathbb R\mathcal Perf_X^n
\end{equation*}
so we recover the local geometricity of the stack $\mathbb R\mathcal Perf_X$ studied by To\" en and Vaqui\' e in \cite{TVa}.
\end{rem}
\subsection{Derived Moduli of Filtered Perfect Complexes}
This section is devoted to the main result of this paper, that is the construction of a derived moduli stack $\mathbb R\mathcal Filt_X$ classifying filtered perfect complexes of $\mathscr O_X$-modules over some reasonable $k$-scheme $X$; (local) geometricity of such a stack will be ensured by some quite natural cohomological finiteness conditions given in terms of the Rees construction (see Section 1.4): actually the very homotopy-theoretical features of the Rees functor collected in Theorem \ref{Rprops} will allow us to mimic most of the results and arguments of Section 2.2, which deal with the corresponding unfiltered situation. \\
In full analogy with what we did in Section 2.2, associate to any given derived scheme $\mathcal X$ over $R$ the cosimplicial differential graded commutative $R$-algebra $O\left(\mathcal X\right)$ defined by formula \eqref{OUps}.
\begin{defn} \label{filtmod}
Define a \emph{filtered derived module} over $\mathcal X$ to be a cosimplicial filtered $O\left(\mathcal X\right)$-module in complexes. 
\end{defn}
More concretely Definition \ref{filtmod} says that a filtered derived module over $\mathcal X$ is a pair $\left(\mathcal M,F\right)$ made of filtered cochain complexes $\left(\mathcal M^m,F\right)$ of $O\left(\mathcal X\right)^m$-modules related by maps
\begin{eqnarray*}
&\partial^i:F^p\mathcal M^m\otimes^{\mathbb L}_{O\left(\mathcal X\right)^m}O\left(\mathcal X\right)^{m+1}\longrightarrow F^p\mathcal M^{m+1}& \\
&\sigma^i:F^p\mathcal M^m\otimes^{\mathbb L}_{O\left(\mathcal X\right)^m}O\left(\mathcal X\right)^{m-1}\longrightarrow F^p\mathcal M^{m-1}&
\end{eqnarray*}
satisfying the usual cosimplicial identities and such that the diagrams 
\begin{equation*}
\tiny{\xymatrix{\mathcal M_m\otimes^{\mathbb L}_{O\left(\mathcal X\right)^m}O\left(\mathcal X\right)^{m+1}\ar[r]\ar@<5pt>[r]\ar@<-5pt>[r] & \mathcal M_{m+1} \\
F^1\mathcal M_m\otimes^{\mathbb L}_{O\left(\mathcal X\right)^m}O\left(\mathcal X\right)^{m+1}\ar@{^{(}->}[u]\ar[r]\ar@<5pt>[r]\ar@<-5pt>[r] & F^1\mathcal M_{m+1}\ar@{^{(}->}[u] \\
F^2\mathcal M_m\otimes^{\mathbb L}_{O\left(\mathcal X\right)^m}O\left(\mathcal X\right)^{m+1}\ar@{^{(}->}[u]\ar[r]\ar@<5pt>[r]\ar@<-5pt>[r] & F^2\mathcal M_{m+1}\ar@{^{(}->}[u] \\
\vdots\ar@{^{(}->}[u] & \vdots\ar@{^{(}->}[u]}\qquad\qquad
\xymatrix{\mathcal M_{m-1} & \mathcal M_m\otimes^{\mathbb L}_{O\left(\mathcal X\right)^m}O\left(\mathcal X\right)^{m-1}\ar@<2.5pt>[l]\ar@<-2.5pt>[l] \\
F^1\mathcal M_{m-1}\ar@{^{(}->}[u] & F^1\mathcal M_m\otimes^{\mathbb L}_{O\left(\mathcal X\right)^m}O\left(\mathcal X\right)^{m-1}\ar@{^{(}->}[u]\ar@<2.5pt>[l]\ar@<-2.5pt>[l] \\
F^2\mathcal M_{m-1}\ar@{^{(}->}[u] & F^2\mathcal M_m\otimes^{\mathbb L}_{O\left(\mathcal X\right)^m}O\left(\mathcal X\right)^{m-1}\ar@{^{(}->}[u]\ar@<2.5pt>[l]\ar@<-2.5pt>[l] \\
\vdots\ar@{^{(}->}[u] & \vdots\ar@{^{(}->}[u]} }
\end{equation*}
commute; in other words a derived filtered module $\left(\mathcal M,F\right)$ is just a nested sequence
\begin{equation*}
\xymatrix{\cdots\ar@{^{(}->}[r] & F^{p+1}\mathcal M\ar@{^{(}->}[r] & F^p\mathcal M\ar@{^{(}->}[r] & \cdots\ar@{^{(}->}[r] & F^2\mathcal M\ar@{^{(}->}[r] & F^1\mathcal M\ar@{^{(}->}[r] & F^0\mathcal M=:\mathcal M}
\end{equation*}
in $\mathfrak{dgMod}\left(\mathcal X\right)$. Notice that a filtered derived module is equipped with three different indexings, one coming from the filtration, one from the differential graded structure and the last one from the cosimplicial structure: a morphism of derived filtered modules will be an arrow preserving all of them, so there is a category of derived filtered modules on $\mathcal X$, which we will denote by $\mathfrak{FdgMod}\left(\mathcal X\right)$. \\
Just like the unfiltered situation analysed in Section 2.2, observe that the projective model structure on filtered cochain complexes given by Theorem \ref{fdg model thm} induces a projective model structure on $\mathfrak{FdgMod}\left(\mathcal X\right)$; in particular a morphism $f:\left(\mathcal M,F\right)\rightarrow\left(\mathcal N,F\right)$ in $\mathfrak{FdgMod}\left(\mathcal X\right)$ is 
\begin{itemize}
\item a weak equivalence if $F^pf^m:F^p\mathcal M^m\rightarrow F^p\mathcal N^m$ is a quasi-isomorphism; 
\item a fibration if $F^pf^m:F^p\mathcal M^m\rightarrow F^p\mathcal N^m$ is degreewise surjective;
\item a cofibration if it has the left lifting property with respect to all fibrations.
\end{itemize}
There is also a natural simplicial structure on the category $\mathfrak{FdgMod}\left(\mathcal X\right)$ again coming from the simplicial structure on $\mathfrak{FdgMod}_R$: more clearly for any $\left(\mathcal M,F\right),\left(\mathcal N,F\right)\in\mathfrak{FdgMod}\left(\mathcal X\right)$ consider the chain complex $\left(\mathrm{HOM}_{\mathcal X}\left(\left(\mathcal M,F\right),\left(\mathcal N,F\right)\right),\delta\right)$ defined by the relations
\begin{eqnarray} \label{filt HOM} 
&\mathrm{HOM}_{\mathcal X}\left(\left(\mathcal M,F\right),\left(\mathcal N,F\right)\right)_n:=\mathrm{Hom}_{O\left(\mathcal X\right)}\left(\left(\mathcal M,F\right),\left(\mathcal N[-n],F\right)\right)& \nonumber \\
&\forall\left(f,F\right)\in {\mathrm{HOM}_{\mathcal X}\left(\left(\mathcal M,F\right),\left(\mathcal N,F\right)\right)_n}\qquad\quad\delta_n\left(\left(f,F\right)\right)\in {\mathrm{HOM}_{\mathcal X}\left(\left(\mathcal M,F\right),\left(\mathcal N,F\right)\right)_{n-1}}& \nonumber\\
&\text{defined by } F^p\left(\delta_n\left(\left(f,F\right)\right)\right):=F^p\bar d^n\circ F^pf-\left(-1\right)^nF^pf\circ F^pd^n&
\end{eqnarray}
and define the Hom spaces just by taking good truncation and denormalisation, i.e. set
\begin{equation*}
\underline{\mathrm{Hom}}_{\mathfrak{FdgMod}\left(\mathcal X\right)}\left(\left(\mathcal M,F\right),\left(\mathcal N,F\right)\right):=\mathbf K\left(\tau_{\geq 0}\mathrm{HOM}_{\mathcal X}\left(\left(\mathcal M,F\right),\left(\mathcal N,F\right)\right)\right).
\end{equation*}
In a similar way, notice that the $\mathrm{HOM}$ complex for filtered derived modules defined by formula \eqref{filt HOM} sheafifies, so we have a well-defined $\mathcal Hom$-sheaf bifunctor
\begin{equation*}
\mathcal Hom_{\mathscr O_{\mathcal X,\bullet}}:\mathfrak{FdgMod}^{op}\left(\mathcal X\right)\times\mathfrak{FdgMod}\left(\mathcal X\right)\longrightarrow\mathfrak{dgMod}\left(\mathcal X\right)
\end{equation*}
and consequently a derived $\mathcal Hom$ sheaf, given by the bifunctor
\begin{eqnarray*}
\mathbb R\mathcal Hom_{\mathscr O_{\mathcal X,\bullet}}:&\mathfrak{FdgMod}^{op}\left(\mathcal X\right)\times\mathfrak{FdgMod}\left(\mathcal X\right)&\longrightarrow\qquad\quad\mathfrak{dgMod}\left(\mathcal X\right) \\
&\left(\left(\mathcal M,F\right),\left(\mathcal N,F\right)\right)&\longmapsto\mathcal Hom\left(\mathbf Q\left(\left(\mathcal M,F\right)\right),\left(\mathcal N,F\right)\right)
\end{eqnarray*}
where $\mathbf Q\left(\left(\mathcal M,F\right)\right)$ is a functorial cofibrant replacement for $\left(\mathcal M,F\right)$. We can also define $\mathcal Ext$ sheaves for the category $\mathfrak{FdgMod}\left(\mathcal X\right)$ by denoting for all $\left(\mathcal M,F\right),\left(\mathcal N,F\right)\in\mathfrak{FdgMod}\left(\mathscr O_{\mathcal X,\bullet}\right)$
\begin{equation*}
\mathcal Ext^i_{\mathscr O_{\mathcal X,\bullet}}\left(\left(\mathcal M,F\right),\left(\mathcal N,F\right)\right):=\mathcal H^i\left(\pi^0\mathcal X,\mathbb R\mathcal Hom_{\mathscr O_{\mathcal X,\bullet}}\left(\left(\mathcal M,F\right),\left(\mathcal N,F\right)\right)\right).
\end{equation*}
The Rees construction given by formula \eqref{Rees defn} readily extends to this context, as well; more formally consider the derived scheme $\mathcal X\left[t\right]$ over $R\left[t\right]$ whose C\v ech nerve is defined in simplicial degree $m$ by the structure sheaf
\begin{equation*}
\mathscr O_{\check{\mathcal X}\left[t\right]_m}:=\mathscr O_{\check{\mathcal X}_m}\left[t\right] 
\end{equation*}
so that its cosimplicial differential graded commutative $R\left[t\right]$-algebra of global sections is given in cosimplicial level $m$ by
\begin{equation*}
O\left(\mathcal X\left[t\right]\right)^m:=\Gamma\left(\check{\mathcal X}_m,\mathscr O_{\check{\mathcal X}_m}\right)\left[t\right].
\end{equation*}
Again, there is a natural $\mathbb G_m$-action on the derived scheme $\mathcal X\left[t\right]$ defined on rings of functions in level $m$ as
\begin{eqnarray} \label{Gm sheaf}
\mathbb G_m\times O\left(\mathcal X\left[t\right]\right)^m&\xrightarrow{\hspace*{1cm}}&O\left(\mathcal X\left[t\right]\right)^m \nonumber \\
\left(\lambda,\varrho\left(t\right)\right)\quad\quad\;\;&\longmapsto&\varrho\left(\lambda^{-1}t\right)
\end{eqnarray} 
therefore there is a category $\mathbb G_m\text{-}\mathfrak{dgMod}\left(\mathcal X\left[t\right]\right)$ of graded derived modules over $\mathcal X\left[t\right]$, where the extra grading is induced by the action given by the formula \eqref{Gm sheaf}. Clearly the $\mathbb G_m$-equivariant projective model structure determined by Theorem \ref{Gm dg model thm} extends to the category $\mathbb G_m\text{-}\mathfrak{dgMod}\left(\mathcal X\left[t\right]\right)$: in particular a morphism $f:\mathcal M\rightarrow\mathcal N$ in $\mathbb G_m\text{-}\mathfrak{dgMod}\left(\mathcal X\left[t\right]\right)$ is 
\begin{itemize}
\item a weak equivalence if $f^m:\mathcal M^m\rightarrow\mathcal N^m$ is a $\mathbb G_m$-equivariant quasi-isomorphism; 
\item a fibration if $f^m:\mathcal M^m\rightarrow\mathcal N^m$ is a $\mathbb G_m$-equivariant degreewise surjection;
\item a cofibration if it has the left lifting property with respect to all fibrations.
\end{itemize}
Now define the Rees module associated to the filtered derived module $\left(\mathcal M,F\right)$ over $\mathcal X$ to be the derived module $\mathrm{Rees}\left(\left(\mathcal M,F\right)\right)$ over $\mathcal X\left[t\right]$ determined in cosimplicial level $m$ by the (bigraded) complex of $O\left(\mathcal X\left[t\right]\right)^m$-modules
\begin{equation} \label{Rees sheaf}
\mathrm{Rees}\left(\left(\mathcal M,F\right)\right):=\underset{p=0}{\overset{\infty}{\bigoplus}}F^p\mathcal M^m\cdot t^{-p}.
\end{equation}
The construction \eqref{Rees sheaf} is clearly natural in all entries, so we get a functor
\begin{equation} \label{Rees sheaf funct}
\mathrm{Rees}:\mathfrak{FdgMod}\left(\mathcal X\right)\longrightarrow\mathbb G_m\text{-}\mathfrak{dgMod}\left(\mathcal X\left[t\right]\right).
\end{equation}
which is immediately seen to have -- \emph{mutatis mutandis} -- all properties stated by Theorem \ref{Rprops}. In particular, for all $\left(\mathcal M,F\right),\left(N,F\right)\in\mathfrak{FdgMod}\left(\mathcal X\right)$ define the groups
\begin{equation} \label{filt ext}
\mathrm{Ext}^{n-i}_{\mathcal X}\left(\left(\mathcal M,F\right),\left(\mathcal N,F\right)\right):=\pi_i\underline{\mathrm{Hom}}_{\mathfrak{FdgMod}\left(\mathcal X\right)}\left(\left(\mathcal M,F\right),\left(\mathcal N\left[-n\right],F\right)\right)
\end{equation}
and observe that Theorem \ref{Rprops}.2 implies
\begin{equation} \label{rees ext}
\mathrm{Ext}^i_{\mathcal X}\left(\left(\mathcal M,F\right),\left(\mathcal N,F\right)\right)=\mathrm{Ext}^i_{\mathcal X\left[t\right]}\left(\mathrm{Rees}\left(\left(\mathcal M,F\right)\right),\mathrm{Rees}\left(\left(\mathcal N,F\right)\right)\right)^{\mathbb G_m}.
\end{equation}
\begin{rem} \label{filt loc-to-glob}
Because of formula \eqref{rees ext} and the exactness of the functor $\left(-\right)^{\mathbb G_m}$, we have that the local-to-global spectral sequence extends to the filtered context, i.e there is a convergent spectral sequence
\begin{equation*}
H^p\left(\pi^0\mathcal X,\mathcal Ext^q_{\mathscr O_{\mathcal X,\bullet}}\left(\left(\mathcal M,F\right),\left(\mathcal M,F\right)\right)\right)\Longrightarrow\mathrm{Ext}_{\mathcal X}^{p+q}\left(\left(\mathcal M,F\right),\left(\mathcal M,F\right)\right).
\end{equation*}
\end{rem}
\begin{defn}
Define a \emph{filtered derived quasi-coherent sheaf} over $\mathcal X$ to be a filtered derived module $\left(\mathcal M,F\right)$ for which 
and $F^p\mathcal M\in\mathfrak{dgMod}_{\mathrm{cart}}\left(\mathcal X\right)$ for all $p$.
\end{defn}
Denote by $\mathfrak{FdgMod}_{\mathrm{cart}}\left(\mathcal X\right)$ the full subcategory of $\mathfrak{FdgMod}\left(\mathcal X\right)$ consisting of filtered quasi-coherent derived sheaves: the homotopy-theoretic properties of $\mathfrak{FdgMod}\left(\mathcal X\right)$ induce a simplicial structure and a well-behaved subcategory of weak equivalences on it.
\begin{rem} \label{Rees FdCART}
The Rees functor \eqref{Rees sheaf funct} respects quasi-coherence, meaning that it restricts to a functor
\begin{equation*}
\mathrm{Rees}:\mathfrak{FdgMod}_{\mathrm{cart}}\left(\mathcal X\right)\longrightarrow\mathbb G_m\text{-}\mathfrak{dgMod}_{\mathrm{cart}}\left(\mathcal X\left[t\right]\right).
\end{equation*}
which obviously still maps weak equivalences to weak equivalences.
\end{rem} 
Now our goal is to study derived moduli of filtered derived quasi-coherent sheaves by means of Lurie-Pridham representability: in order to reach this we will literally follow the strategy described in Section 2.2 when tackling moduli of unfiltered complexes; in particular we will prove filtered analogues of Proposition \ref{dCART homog smooth}, Corollary \ref{DMQCS} and Theorem \ref{Perf repr}. In the following, given any filtered derived quasi-coherent sheaf $\left(\mathscr E,F\right)$ over some derived geometric stack denote by $\hat F$ the filtration induced by (derived) base-change and by $\tilde F$ the one induced on quotients.\\
From now on fix $R$ to be an ordinary (underived) $k$-algebra and $X$ to be a quasi-compact semi-separated scheme over $R$; define the functor
\begin{eqnarray} \label{FdCART}
Fd\mathrm{CART}_{X}:&\mathfrak{dg}_b\mathfrak{Nil}^{\leq 0}_R&\xrightarrow{\hspace*{1.75cm}}\mathfrak{sCat} \nonumber \\
&A&\longmapsto\left(\mathfrak{FdgMod}_{\mathrm{cart}}\left(X\otimes^{\mathbb L}_RA\right)\right)^c
\end{eqnarray}
where again $\left(\mathfrak{FdgMod}_{\mathrm{cart}}\left(X\otimes^{\mathbb L}_RA\right)\right)^c$ is the full simplicial subcategory of $\mathfrak{FdgMod}_{\mathrm{cart}}\left(X\otimes^{\mathbb L}_RA\right)$ on cofibrant objects.
\begin{lemma} \label{poly square-zero}
Let $f:A\rightarrow B$ a square-zero extension in $\mathfrak{dgAlg}^{\leq 0}_R$; then the induced morphism
\begin{eqnarray*}
&\mathsf f:\qquad A\left[t\right]&\longrightarrow \quad\quad B\left[t\right] \\
&A_n\left[t\right]\ni\underset{i}{\sum}a_it^i&\overset{\mathsf f_n}{\longmapsto}\underset{i}{\sum}f\left(a_i\right)t^i\in B_n\left[t\right]
\end{eqnarray*}
is a square-zero extension in $\mathfrak{dgAlg}^{\leq 0}_{R\left[t\right]}$. Moreover $\mathsf f$ is acyclic whenever so is $f$.
\end{lemma}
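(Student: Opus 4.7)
The proof should reduce to two essentially formal observations: that polynomial extension $(-)[t]$ commutes with kernels and preserves the square-zero condition on ideals, and that $(-)[t]$ is exact as a functor of cochain complexes. No deep machinery is needed — it is the kind of lemma one invokes so that the square-zero deformation theory developed for $\mathfrak{dgAlg}^{\leq 0}_R$ can be imported verbatim to the $\mathbb G_m$-equivariant setting over $R[t]$, where it will be used to analyse $Fd\mathrm{CART}_X$ via the Rees functor.

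For the square-zero claim, the plan is to identify $\ker(\mathsf f)$ explicitly. Writing $I := \ker(f)$, which is a differential graded ideal in $A$ with $I^2 = 0$ by hypothesis, one observes that $\mathsf f$ is just $f$ applied to each polynomial coefficient, so $\ker(\mathsf f) = I[t] \subset A[t]$ as a dg-$R[t]$-submodule. The square-zero property is then a one-line direct computation: for any $\sum_i a_i t^i,\sum_j b_j t^j \in I[t]$,
\begin{equation*}
\Bigl(\sum_i a_i t^i\Bigr)\Bigl(\sum_j b_j t^j\Bigr) = \sum_k \Bigl(\sum_{i+j=k} a_i b_j\Bigr) t^k = 0,
\end{equation*}
since every product $a_i b_j$ lies in $I^2 = 0$. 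This shows $\mathsf f$ is a square-zero extension in $\mathfrak{dgAlg}^{\leq 0}_{R[t]}$; its non-positivity in cohomological degree is immediate because $R[t]$ sits in degree $0$ and $A, B \in \mathfrak{dgAlg}^{\leq 0}_R$.

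For the acyclicity claim, the idea is that as a cochain complex $A[t]$ decomposes as the countable direct sum $\bigoplus_{n\geq 0} A \cdot t^n$, the differential acting only on coefficients, and similarly for $B[t]$; the morphism $\mathsf f$ respects this decomposition and restricts to $f$ on each summand. Since cohomology commutes with direct sums, one obtains canonical identifications $H^\ast(A[t]) \cong H^\ast(A)[t]$ and $H^\ast(B[t]) \cong H^\ast(B)[t]$, under which $H^\ast(\mathsf f)$ becomes $H^\ast(f)[t]$. Hence $H^\ast(\mathsf f)$ is an isomorphism whenever $H^\ast(f)$ is, completing the proof.

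There is no real obstacle; the only thing to be pedantic about is that the multiplicative convention on $A[t]$ is the usual polynomial one (so $t$ is a degree-zero central indeterminate commuting with everything), which ensures both that $I[t]$ is an ideal and that the Koszul sign rule plays no role in the product computation above.
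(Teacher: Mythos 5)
Your proof is correct and follows essentially the same route as the paper's: identify $\ker(\mathsf f)=I[t]$ where $I=\ker(f)$, note that $I[t]^2=0$ iff $I^2=0$, and use that $(-)[t]$ commutes with cohomology (since it is a direct sum of copies of the identity functor) to reduce acyclicity of $\mathsf f$ to that of $f$. The paper states these equivalences more tersely but with no difference in substance.
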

\begin{proof}
Denote $I:=\ker\left(f\right)$; then $\ker\left(\mathsf f\right)=I\left[t\right]$, where
\begin{equation*}
I\left[t\right]:\qquad\xymatrix{\cdots\ar[r]^{\mathsf d} & I_2\left[t\right]\ar[r]^{\mathsf d} & I_1\left[t\right]\ar[r]^{\mathsf d} & I_0\left[t\right]}.
\end{equation*}
In particular $I\left[t\right]^2=0\Leftrightarrow I^2=0$ and $H^i\left(I\left[t\right]\right)=0\Leftrightarrow H^i\left(I\right)=0$.
\end{proof}
\begin{prop} \label{FdCART homog smooth}
Functor \eqref{FdCART} is $2$-homogeneous and formally $2$-quasi-smooth.
\end{prop}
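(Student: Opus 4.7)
The plan is to mirror the proof of Proposition \ref{dCART homog smooth}, leveraging the Rees functor of Theorem \ref{Rprops} (extended to derived schemes in Remark \ref{Rees FdCART}) to transport filtered statements to the $\mathbb G_m$-equivariant unfiltered setting over $R[t]$. The point of the immediately preceding Lemma \ref{poly square-zero} is precisely to make this transport legitimate: a square-zero extension $A \to B$ in $\mathfrak{dg}_b\mathfrak{Nil}^{\leq 0}_R$ becomes a square-zero extension $A[t] \to B[t]$ in $\mathfrak{dg}_b\mathfrak{Nil}^{\leq 0}_{R[t]}$, with acyclicity preserved, so the arguments of \ref{dCART homog smooth} can be applied verbatim -- in their $\mathbb G_m$-equivariant form -- after applying Rees.

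For $2$-homogeneity, given cofibrant $(\mathscr F, F), (\mathscr F', F) \in Fd\mathrm{CART}_X(A \times_B C)$, Proposition \ref{fdg cofibrant} makes each $F^p\mathscr F$ and $F^p\mathscr F'$ cofibrant in the unfiltered category, so the cartesian-square argument of \ref{dCART homog smooth} holds at each filtration level; since the simplicial $\underline{\mathrm{Hom}}$-spaces in \eqref{filt HOM} are built filtration-wise from these $\mathrm{HOM}$-complexes via truncation and denormalisation, the cartesian property ascends. Essential surjectivity is handled by the same fibre-product construction as in \ref{dCART homog smooth}, carried out level-by-level and then assembled via the inclusions $F^{p+1}\mathscr F \hookrightarrow F^p\mathscr F$; equivalently, one can form the fibre product in $\mathbb G_m$-$\mathfrak{dgMod}_{\mathrm{cart}}(X[t] \otimes^{\mathbb L}_{R[t]} -)$ and pull back under Rees.

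For formal $2$-quasi-smoothness, the degreewise-surjectivity and quasi-isomorphism statements for $\mathrm{HOM}$-complexes under a square-zero extension pass directly to each filtration level, yielding the required (trivial) fibration of filtered $\underline{\mathrm{Hom}}$-spaces after truncation and denormalisation. For the $2$-fibration property -- lifting a filtered homotopy equivalence $\theta$ in $Fd\mathrm{CART}_X(B)$ to one in $Fd\mathrm{CART}_X(A)$ -- the plan is to apply Rees and invoke the $\mathbb G_m$-equivariant version of the obstruction argument from \cite{Pr4} Section 7, which places the obstruction to lifting $\theta$ in
$$H^2\!\bigl(\mathrm{cone}\,\theta^*\bigr)^{\mathbb G_m}$$
computed in the category of $\mathbb G_m$-equivariant modules over $(X \otimes^{\mathbb L}_R B)[t]$; this group vanishes because $\theta^*$ is a quasi-isomorphism, by Theorem \ref{Rprops}.4.

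The main obstacle is organising the obstruction-theoretic bookkeeping so that the compatible lifts at successive filtration levels -- or equivalently the $\mathbb G_m$-equivariant lift after Rees -- assemble into an honest morphism of filtered derived modules; the Rees detour is the cleanest way to inherit this coherence from the previously established unfiltered theory, with Lemma \ref{poly square-zero} certifying that the square-zero hypothesis survives the transport.
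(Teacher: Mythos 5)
Your proposal follows essentially the same route as the paper's proof: establish $2$-homogeneity directly from the cofibrancy of filtered objects (Proposition \ref{fdg cofibrant}) via the Cartesian square and explicit fibre product, and obtain formal $2$-quasi-smoothness by transporting everything through the Rees functor — using Lemma \ref{poly square-zero} to certify that square-zero extensions and acyclicity survive the passage to $A[t]\rightarrow B[t]$, Theorem \ref{Rprops}.2 together with exactness of $(-)^{\mathbb G_m}$ to identify the filtered $\underline{\mathrm{Hom}}$-spaces and obstruction groups with $\mathbb G_m$-invariants of their unfiltered counterparts, and Proposition \ref{dCART homog smooth} to conclude. You correctly identify that the Rees detour, rather than a naive level-by-level assembly, is what guarantees the coherence of lifts across filtration levels, which is exactly how the paper handles the lifting of homotopy equivalences.
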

\begin{proof}
The argument of Proposition \ref{dCART homog smooth} applies to this context as well, we sketch the main adjustments. \\
In order to verify that $Fd\mathrm{CART}_{X}$ is $2$-homogeneous take a square-zero extension $A\rightarrow B$ and a morphism $C\rightarrow B$  in $\mathfrak{dg}_b\mathfrak{Nil}^{\leq 0}_R$ and fix $\left(\mathscr E,F\right),\left(\mathscr E',F\right)\in Fd\mathrm{CART}_{X}\left(A\times_BC\right)$. Cofibrancy of such pairs -- which by Proposition \ref{fdg cofibrant} implies filtration-levelwise degreewise projectiveness -- ensures that the commutative square of simplicial sets
\begin{equation*}
\tiny{\xymatrix@C=11pt{\underline{\mathrm{Hom}}_{Fd\mathrm{CART}\left(A\times_BC\right)}\left(\left(\mathscr E,F\right),\left(\mathscr E',F\right)\right)\ar[r]\ar[d] & \underline{\mathrm{Hom}}_{Fd\mathrm{CART}\left(A\right)}\left(\left(\mathscr E\otimes_{A\times_BC}A,\hat F\right),\left(\mathscr E'\otimes_{A\times_BC}A,\hat F\right)\right)\ar[d] \\
\underline{\mathrm{Hom}}_{Fd\mathrm{CART}\left(C\right)}\left(\left(\mathscr E\otimes_{A\times_BC}C,\hat F\right),\left(\mathscr E'\otimes_{A\times_BC}C,\hat F\right)\right)\ar[r] & \underline{\mathrm{Hom}}_{Fd\mathrm{CART}\left(B\right)}\left(\left(\mathscr E\otimes_{A\times_BC}B,\hat F\right),\left(\mathscr E'\otimes_{A\times_BC}B,\hat F\right)\right) }}
\end{equation*}
is actually Cartesian. Then fix $\left(\mathscr E_A,F_A\right)\in Fd\mathrm{CART}_{X}\left(A\right)$ and $\left(\mathscr E_C,F_C\right)\in Fd\mathrm{CART}_{X}\left(C\right)$, let $\alpha:\left(\mathscr E_A\otimes_AB,\hat F_A\right)\rightarrow\left(\mathscr E_C\otimes_C B,\hat F_C\right)$ be a filtered isomorphism and define 
\begin{equation} \label{ess surj}
\left(\mathscr E,F\right):=\left(\mathscr E_A\otimes_{\alpha,\mathscr E_C\otimes_CB}\mathscr E_C,F_A\otimes F_C\right)\simeq\left(\mathscr E_C\otimes_{\alpha,\mathscr E_A\otimes B}\mathscr E_A,F_C\otimes F_A\right).
\end{equation}
The filtered derived module $\left(\mathscr E,F\right)$ is actually a cofibrant filtered derived quasi-coherent sheaf on $X\otimes_R\left(A\otimes_BC\right)$, namely $\left(\mathscr E,F\right)\in Fd\mathrm{CART}_X\left(A\times_BC\right)$; we also have that 
\begin{equation*}
\left(\mathscr E,F\right)\otimes_{A\times_BC}A\simeq\left(\mathscr E_A,F_A\right)\qquad\qquad\left(\mathscr E,F\right)\otimes_{A\times_BC}C\simeq\left(\mathscr E_C,F_C\right)
\end{equation*}
and this completes the proof that $Fd\mathrm{CART}_{X}$ is a $2$-homogeneous functor. \\
Now we want to prove that the functor $Fd\mathrm{CART}_X$ is formally $2$-quasi-smooth, so we start by showing that $\underline{\mathrm{Hom}}_{Fd\mathrm{CART}_X}$ is formally quasi-smooth; for this reason take a square-zero extension $A\rightarrow B$ in $\mathfrak{dg}_b\mathfrak{Nil}_R^{\leq 0}$ and consider the induced $R\left[t\right]$-linear morphism $A\rightarrow B$, as done in Lemma \ref{poly square-zero}. Let $\left(\mathscr E,F\right),\left(\mathscr E',F\right)\in Fd\mathrm{CART}\left(A\right)$ and look at the induced morphism of simplicial sets
\begin{equation} \label{filt hom homog}
\underline{\mathrm{Hom}}_{Fd\mathrm{CART}\left(A\right)}\left(\left(\mathscr E,F\right),\left(\mathscr E',F\right)\right)\longrightarrow\underline{\mathrm{Hom}}_{Fd\mathrm{CART}\left(B\right)}\left(\left(\mathscr E\otimes_AB,\hat F\right),\left(\mathscr E'\otimes_AB,\hat F\right)\right).
\end{equation}
By Theorem \ref{Rprops}.2, map \eqref{filt hom homog} is a (trivial) fibration if and only if the map
\begin{equation*}
\tiny{\xymatrix{\underline{\mathrm{Hom}}_{d\mathrm{CART}\left(A\left[t\right]\right)}\left(\mathrm{Rees}\left(\left(\mathscr E,F\right)\right),\mathrm{Rees}\left(\left(\mathscr E',F\right)\right)\right)^{\mathbb G_m}\ar[d] \\ 
\underline{\mathrm{Hom}}_{d\mathrm{CART}\left(B\left[t\right]\right)}\left(\mathrm{Rees}\left(\left(\mathscr E\otimes_AB,\hat F\right)\right),\mathrm{Rees}\left(\left(\mathscr E'\otimes_AB,\hat F\right)\right)\right)^{\mathbb G_m}}}
\end{equation*}
is a (trivial) fibration, which in turn is equivalent to say that
\begin{equation} \label{apply dCART}
\tiny{\xymatrix{\underline{\mathrm{Hom}}_{d\mathrm{CART}\left(A\left[t\right]\right)}\left(\mathrm{Rees}\left(\left(\mathscr E,F\right)\right),\mathrm{Rees}\left(\left(\mathscr E',F\right)\right)\right)\ar[d] \\ 
\underline{\mathrm{Hom}}_{d\mathrm{CART}\left(B\left[t\right]\right)}\left(\mathrm{Rees}\left(\left(\mathscr E\otimes_AB,\hat F\right)\right),\mathrm{Rees}\left(\left(\mathscr E'\otimes_AB,\hat F\right)\right)\right)}}
\end{equation}
is a (trivial) fibration, as functor $\left(-\right)^{\mathbb G_m}$ is exact. Now by Lemma \ref{poly square-zero} we have that the morphism of $R\left[t\right]$-algebras $A\left[t\right]\rightarrow B\left[t\right]$ is a square-zero extension that is acyclic whenever so is $A\rightarrow B$, while Proposition \ref{dCART homog smooth} ensures that map \eqref{apply dCART} is a fibration which is trivial if $A\left[t\right]\rightarrow B\left[t\right]$ is acyclic: these observations conclude the proof of the formal quasi-smoothness of $\underline{\mathrm{Hom}}_{Fd\mathrm{CART}_X}$. In order to finish the proof, it only remains to show that the base-change morphism
\begin{equation} \label{filt map 2-fib}
Fd\mathrm{CART}_{X}\left(A\right)\longrightarrow Fd\mathrm{CART}_{X}\left(B\right)
\end{equation}
is a $2$-fibration, which should be trivial whenever the square-zero extension $A\rightarrow B$ is acyclic. Note first that the computations in \cite{Pr4} Section $7$, together with the definition of Ext groups for filtered derived modules given by formula \eqref{filt ext} and the isomorphism provided by formula \eqref{rees ext}, imply that obstructions to lifting a filtered quasi-coherent module $\left(\mathscr E,F\right)\in Fd\mathrm{CART}_{X}\left(B\right)$ to $Fd\mathrm{CART}_{X}\left(A\right)$ lie in the group
\begin{equation*}
\mathrm{Ext}^2_{X\otimes_R^{\mathbb L}B}\left(\left(\mathscr E,F\right),\left(\mathscr E\otimes_B I,\hat F\right)\right)\simeq\mathrm{Ext}^2_{\left(X\otimes_R^{\mathbb L}B\right)\left[t\right]}\left(\mathrm{Rees}\left(\left(\mathscr E,F\right)\right),\mathrm{Rees}\left(\left(\mathscr E\otimes_B I,\hat F\right)\right)\right)^{\mathbb G_m}.
\end{equation*}
so if $H^*\left(I\right)=0$ then map \eqref{filt map 2-fib} is a trivial $2$-fibration. Now fix $\left(\mathscr E,F\right)\in Fd\mathrm{CART}_X\left(A\right)$, $\left(\mathscr H,G\right)\in Fd\mathrm{CART}_X\left(B\right)$ and let $\theta:\left(\mathscr E\otimes_AB,\hat F\right)\rightarrow\left(\mathscr H,G\right)$ be a homotopy equivalence in $ Fd\mathrm{CART}_X\left(B\right)$: we want to prove that $\theta$ lifts to a homotopy equivalence $\check{\theta}:\left(\mathscr E,F\right)\rightarrow{\left(\check{\mathscr H},\check G\right)}$ in $Fd\mathrm{CART}_X\left(A\right)$. Apply the Rees functor \eqref{Rees sheaf funct} to all data: by Theorem \ref{Rprops} we end up to be given a homotopy equivalence
\begin{equation*}
\mathrm{Rees}\left(\theta\right):\mathrm{Rees}\left(\left(\mathscr E\otimes_AB,\hat F\right)\right)\longrightarrow\mathrm{Rees}\left(\left(\mathscr H,G\right)\right)
\end{equation*}
in $d\mathrm{CART}_X\left(B\left[t\right]\right)$ which by Proposition \ref{dCART homog smooth} lifts to a homotopy equivalence in $d\mathrm{CART}_X\left(A\left[t\right]\right)$; in particular this ensures that suitable lifts $\check{\theta}$ of the homotopy equivalence $\theta$ do exist, again by Theorem \ref{Rprops}. This completes the proof.
\end{proof}
We can build upon Proposition \ref{FdCART homog smooth} a filtered version of Corollary \ref{DMQCS}. 
\begin{cor} \label{FDMQCS} 
Let 
\begin{equation*}
\mathbf M:\mathfrak{Alg}_{H^0\left(R\right)}\rightarrow\mathfrak{sCat}
\end{equation*}
be a presheaf satisfying the following conditions:
\begin{enumerate}
\item $\mathbf M$ is a $n$-truncated hypersheaf;
\item $\mathbf M$ is open in the functor 
\begin{equation*}
A\longmapsto\pi^0\mathcal W\left(\mathfrak{FdgMod}\left(X\otimes^{\mathbb L}_R A\right)_{\mathrm{cart}}\right)\qquad A\in\mathfrak{Alg}_{ H^0\left(R\right)};
\end{equation*}
\item If $\left\{f_{\alpha}:A\rightarrow B_{\alpha}\right\}_{\alpha}$ is an \' etale cover, then $\left(\mathscr E,F\right)\in\pi_0\pi^0\mathcal W\left(\mathfrak{FdgMod}\left(X\otimes^{\mathbb L}_R A\right)_{\mathrm{cart}}\right)$ lies in the essential image of $\pi_0\mathbf M\left(A\right)$ whenever $\left(\left(f_{\alpha}\right)^*\mathscr E,\hat F\right)$ is in the essential image of $\pi_0\mathbf M\left(B_{\alpha}\right)$ for all $\alpha$;
\item For all finitely generated $A\in\mathfrak{Alg}_{H^0\left(R\right)}$ and all $\left(\mathscr E,F\right)\in\mathbf M\left(A\right)$, the functors 
\begin{equation*}
\mathrm{Ext}^i_{X\otimes^{\mathbb L}_RA}\left(\left(\mathscr E,F\right),\left(\mathscr E\otimes^{\mathbb L}_A-,\hat F\right)\right):\mathfrak{Mod}_A\longrightarrow\mathfrak{Ab}
\end{equation*}
preserve filtered colimits $\forall i\neq 1$;
\item For all finitely generated integral domains $A\in\mathfrak{Alg}_{H^0\left(R\right)}$ and all $\left(\mathscr E,F\right)\in\mathbf M\left(A\right)$, the groups $\mathrm{Ext}^i_{X\otimes_R^{\mathbb L}A}\left(\left(\mathscr E,F\right),\left(\mathscr E,F\right)\right)$ are finitely generated $A$-modules;
\item The functor 
\begin{equation*}
c(\pi_0\mathbf M):\mathfrak{Alg}_{H^0\left(R\right)}\longrightarrow\mathfrak{Set}
\end{equation*}
of components of the groupoid $\pi_0\mathbf M$ preserves filtered colimits;
\item For all complete discrete local Noetherian normal $H^0\left(R\right)$-algebras $A$, all $\left(\mathscr E,F\right)\in\mathbf M\left(A\right)$ and all $i>0$ the canonical maps
\begin{eqnarray*}
&c(\pi_0\mathbf M\left(A\right))\longrightarrow\underset{\underset{r}{\longleftarrow}}{\mathrm{lim}}\;c(\pi_0\mathbf M\left(\nicefrac{A}{\mathfrak m_A^r}\right))& \\
&\mathrm{Ext}^i_{X\otimes_R^{\mathbb L}A}\left(\left(\mathscr E,F\right),\left(\mathscr E,F\right)\right)\longrightarrow\underset{\underset{r}{\longleftarrow}}{\mathrm{lim}}\;\mathrm{Ext}^i_{\mathcal F\otimes_R^{\mathbb L}A}\left(\left(\mathscr E,F\right),\left(\nicefrac{\mathscr E}{\mathfrak m_A^r},\tilde F\right)\right)&\forall i<0
\end{eqnarray*}
are isomorphisms.
\end{enumerate}
Let
\begin{equation*}
\breve{\mathbf M}:\mathfrak{dg}_b\mathfrak{Nil}^{\leq 0}_R\longrightarrow\mathfrak{sCat}
\end{equation*}
be the full simplicial subcategory of $\mathcal W\left(Fd\mathrm{CART}_{X}\left(A\right)\right)$ consisting of objects $\left(\mathscr F,F\right)$ for which the pair $\left(\mathscr F\otimes_A H^0\left(A\right),\hat F\right)$ is weakly equivalent in $\mathfrak{FdgMod}_{\mathrm{cart}}\left(X\otimes_R^{\mathbb L} H^0\left(A\right)\right)$ to an object of $\mathbf M\left(H^0\left(A\right)\right)$. Then the functor $\bar W\breve{\mathbf M}$ is (the restriction to $\mathfrak{dg}_b\mathfrak{Nil}^{\leq 0}_R$ of) a derived geometric $n$-stack.
\end{cor}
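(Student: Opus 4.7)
The plan is to verify the hypotheses of Pridham's Nilpotent Representability Criterion (Theorem \ref{Nilp Rep}) for $\bar W\breve{\mathbf M}$, closely mirroring the proof of Corollary \ref{DMQCS} but using Proposition \ref{FdCART homog smooth} in place of Proposition \ref{dCART homog smooth} and leveraging the Rees functor (Theorem \ref{Rprops}) whenever reductions to the unfiltered situation are needed.

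First I would exploit Condition $(2)$ in exactly the same manner as the unfiltered proof: the openness of $\mathbf M$ as a subfunctor of $\pi^0\mathcal W\left(\mathfrak{FdgMod}\left(X\otimes^{\mathbb L}_R-\right)_{\mathrm{cart}}\right)$ forces the inclusion of $\mathbf M$ to be homotopy formally \'etale, so that
\begin{equation*}
\breve{\mathbf M}\left(A\right)\approx\mathbf M\left(H^0\left(A\right)\right)\times^h_{\mathcal W\left(\mathfrak{FdgMod}\left(X\otimes^{\mathbb L}_R H^0\left(A\right)\right)_{\mathrm{cart}}\right)} \mathcal W\left(\mathfrak{FdgMod}\left(X\otimes^{\mathbb L}_R A\right)_{\mathrm{cart}}\right).
\end{equation*}
This reduces $n$-truncation (Condition $(1)$ of Theorem \ref{Nilp Rep}) to the corresponding hypothesis on $\mathbf M$. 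Combining Proposition \ref{FdCART homog smooth} with Proposition \ref{qsmooth homog} then yields homotopy-preservation and homotopy-homogeneity of $\bar W\breve{\mathbf M}$, taking care of Conditions $(2)$ and $(3)$ of Theorem \ref{Nilp Rep}.

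Next I would address the hypersheaf property (Condition $(4)$). The Rees functor $\mathrm{Rees}:\mathfrak{FdgMod}_{\mathrm{cart}}\left(-\right)\rightarrow\mathbb G_m\text{-}\mathfrak{dgMod}_{\mathrm{cart}}\left(-\left[t\right]\right)$ of Remark \ref{Rees FdCART} is an equivalence of homotopy categories by Theorem \ref{Rprops}.4, so the filtered analogue of \cite{Pr4} Lemma 5.23 follows from its unfiltered, $\mathbb G_m$-equivariant counterpart: the assignment $A\mapsto\mathfrak{FdgMod}\left(X\otimes^{\mathbb L}_RA\right)$ gives a left Quillen hypersheaf, so \cite{Pr4} Proposition 5.9 yields that $\bar W\mathcal W\left(Fd\mathrm{CART}_X\right)$ is an \'etale hypersheaf. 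Invoking Condition $(3)$ and Proposition \ref{hypersheaf} then shows that $\bar W\mathcal W\left(\mathbf M\right)$ is an \'etale hypersheaf, which combined with the pullback presentation of $\breve{\mathbf M}$ established above gives hypersheafness of $\pi^0\bar W\breve{\mathbf M}$. The preservation of filtered colimits by $\pi^0\bar W\breve{\mathbf M}$ (Condition $(5)$) follows from Condition $(6)$ via the same reasoning.

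For the cohomological hypotheses I would identify
\begin{equation*}
\mathrm D^i_{\left(\mathscr E,F\right)}\left(\bar W\breve{\mathbf M},M\right)\simeq\mathrm{Ext}^{i+1}_{X\otimes^{\mathbb L}_RA}\left(\left(\mathscr E,F\right),\left(\mathscr E\otimes^{\mathbb L}_AM,\hat F\right)\right)
\end{equation*}
for $A\in\mathfrak{dg}_b\mathfrak{Nil}^{\leq 0}_R$, $\left(\mathscr E,F\right)\in\bar W\breve{\mathbf M}\left(A\right)$ and $M\in\mathfrak{dgMod}_A$: this comes from repeating the obstruction computation of \cite{Pr4} Section 7, which translates through the Rees equivalence to the analogous unfiltered, $\mathbb G_m$-equivariant statement and then descends to filtered Ext groups via formula \eqref{rees ext}. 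With this description at hand, Conditions $(4)$, $(5)$ and $(7)$ of the hypothesis directly supply Conditions $(6)$, $(7)$, $(8)$ of Theorem \ref{Nilp Rep}, using that $\left(-\right)^{\mathbb G_m}$ is exact and commutes with filtered colimits and finite generation is preserved by base change from $R\left[t\right]$ back to $R$.

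Finally, weak completeness (Condition $(9)$ of Theorem \ref{Nilp Rep}) requires showing that for a complete discrete local Noetherian $H^0\left(R\right)$-algebra $A$ the map $\bar W\breve{\mathbf M}\left(A\right)\to\mathrm{holim}_r\bar W\breve{\mathbf M}\left(\nicefrac{A}{\mathfrak m_A^r}\right)$ is a weak equivalence. I would deduce this from Condition $(7)$ exactly as in the proof of Theorem \ref{Perf repr}: a standard Mittag-Leffler argument controls $\underset{\underset{r}{\longleftarrow}}{\mathrm{lim}}^1$ terms in the tower of Ext groups, and the bijectivity of the map on components follows from the hypothesis. The hardest part, and the main obstacle in the proof, is the correct identification of the cohomology theory $\mathrm D^*$ in the filtered setting together with checking that the obstruction calculus of \cite{Pr4} Section 7 transfers through the Rees equivalence; everything else is parallel to Corollary \ref{DMQCS} once Proposition \ref{FdCART homog smooth} is in place.
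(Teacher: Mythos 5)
Your proposal is correct and follows essentially the same route as the paper: both reduce everything to the argument of Corollary \ref{DMQCS} with Proposition \ref{FdCART homog smooth} replacing Proposition \ref{dCART homog smooth}, identify the cohomology theories $\mathrm D^*$ with filtered Ext groups via the Rees equivalence and formula \eqref{rees ext}, and transfer the \'etale hypersheaf property of $\bar W\mathcal W\left(Fd\mathrm{CART}_X\right)$ to the unfiltered $\mathbb G_m$-equivariant setting through the Rees functor. The only cosmetic difference is that the paper applies Rees directly to the hypercover descent map rather than to the underlying left Quillen presheaf, but the mechanism is the same.
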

\begin{proof}
The same arguments used to prove Corollary \ref{DMQCS} carry over to this context, using Proposition \ref{FdCART homog smooth} in place of Proposition \ref{dCART homog smooth} and observing -- as done in the proof of Proposition \ref{FdCART homog smooth} itself -- that
\begin{eqnarray*}
\mathrm D^i_{\left(\mathscr E,F\right)}\left(\bar W\breve{\mathbf M},M\right)&\simeq&\mathrm{Ext}^{i+1}_{X\otimes_A^{\mathbb L} M}\left(\left(\mathscr E,F\right),\left(\mathscr E\otimes^{\mathbb L}_AM,\hat F\right)\right) \\
&\simeq&\mathrm{Ext}_{\left(X\otimes_A^{\mathbb L} M\right)\left[t\right]}\left(\mathrm{Rees}\left(\left(\mathscr E,F\right)\right),\mathrm{Rees}\left(\left(\mathscr E\otimes^{\mathbb L}_AM,\hat F\right)\right)\right)^{\mathbb G_m}.
\end{eqnarray*}
Also Condition $\left(2\right)$ tells us that
\begin{equation*} 
\breve{\mathbf M}\left(A\right)\approx\mathbf M\left(H^0\left(A\right)\right)\times^h_{\mathcal W\left(\mathfrak{FdgMod}\left(X\otimes^{\mathbb L}_R H^0\left(A\right)\right)_{\mathrm{cart}}\right)} \mathcal W\left(\mathfrak{FdgMod}\left(X\otimes^{\mathbb L}_R A\right)_{\mathrm{cart}}\right)
\end{equation*}
which is the filtered analogue of formula \eqref{openness reps}. \\
The only claim which still needs to be verified is the one saying that $\bar W\mathcal W\left(\mathbf M\right)$ is an \' etale hypersheaf: observe that, by combining Condition $\left(3\right)$ and Proposition \ref{hypersheaf}, this amounts to check that $\bar W\mathcal W\left(Fd\mathrm{CART}_X\right)$ is a hypersheaf for the \' etale topology, thus fix an \' etale hypercover $B\rightarrow B^{\bullet}$ and consider the induced map
\begin{equation} \label{holim FdCART}
\bar W\mathcal W\left(Fd\mathrm{CART}_X\right)\left(B\right)\longrightarrow\underset{\underset{n}{\longleftarrow}}{\mathrm{holim}}\left(\bar W\mathcal W\left(Fd\mathrm{CART}_X\right)\left(B^{\bullet}\right)\right).
\end{equation}
Let us apply the Rees construction to map \eqref{holim FdCART}: by Remark \ref{Rees FdCART} the Rees functor \eqref{Rees sheaf funct} descends to quasi-coherent modules and as a consequence of Theorem \ref{Rprops} it preserves cofibrant objects, so map \eqref{holim FdCART} becomes
\begin{equation} \label{holim dCARTt}
\bar W\mathcal W\left(d\mathrm{CART}_{X\left[t\right]}\right)\left(B\left[t\right]\right)\longrightarrow\underset{\underset{n}{\longleftarrow}}{\mathrm{holim}}\left(\bar W\mathcal W\left(d\mathrm{CART}_{X\left[t\right]}\right)\left(B\left[t\right]^{\bullet}\right)\right)
\end{equation}
and map \eqref{holim dCARTt} is actually a weak equivalence because $\bar W\mathcal W\left(d\mathrm{CART}_{X\left[t\right]}\right)$ is a hypersheaf for the \' etale topology over $\mathfrak{dgAlg}_{R\left[t\right]}^{\leq 0}$, as observed in the proof of Corollary \ref{DMQCS}. Arguing backwards, this implies that $\bar W\mathcal W\left(Fd\mathrm{CART}_X\right)$ is itself an \' etale hypersheaf, and this completes the proof.
\end{proof}
Now we are ready to discuss derived moduli of filtered perfect complexes; for this reason consider the functor
\begin{eqnarray} \label{und filt prf cmplx} 
&\mathbf M^n_{\mathrm{filt}}:\mathfrak{Alg}_R\longrightarrow\mathfrak{sCat}\qquad\qquad\qquad\qquad\qquad\qquad\qquad\qquad\qquad\qquad\qquad\qquad\qquad\qquad& \nonumber \\
&A\longmapsto\mathbf M^n_{\mathrm{filt}}\left(A\right):=\text{full simplicial subcategory}\qquad\qquad\qquad\qquad\qquad& \nonumber \\ 
&\qquad\qquad\qquad\qquad\qquad\qquad\quad\text{of filtered complexes $\left(\mathscr E,F\right)$ of $\mathscr O_X\otimes^{\mathbb L}_RA$-modules such that:}& \nonumber \\ 
&a)\;\text{$F$ is bounded below}& \nonumber \\
&\quad\; b)\;\text{$F^p\mathscr E$ is perfect for all $p$}& \nonumber \\ 
&\qquad\qquad\qquad\quad\quad\;\;\;\, c)\;\text{$\mathrm{Ext}_{X\otimes_R^{\mathbb L}A}^i\left(\left(\mathscr E,F\right),\left(\mathscr E,F\right)\right)=0$ for $i<-n$}&
\end{eqnarray}
which classifies filtered perfect $\mathscr O_X$-modules in complexes with trivial $\mathrm{Ext}$ groups in higher negative degrees.
\begin{thm} \label{filt Perf repr}
In the above notations, assume that the scheme $X$ is also proper; then functor \eqref{und filt prf cmplx} induces a derived geometric $n$-stack $\mathbb R\mathcal Filt_X^n$.
\end{thm}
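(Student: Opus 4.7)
The plan is to verify the seven conditions of Corollary \ref{FDMQCS}, following very closely the argument used in Theorem \ref{Perf repr}. The unifying principle throughout is that whenever a question about $\mathbf{M}^n_{\mathrm{filt}}$ reduces to the computation of Ext groups, fibration data, or cohomological finiteness, one applies the Rees functor \eqref{Rees sheaf funct} together with the identification \eqref{rees ext} to translate the question into a $\mathbb{G}_m$-equivariant assertion on $X[t]$; from there the corresponding statement for the \emph{unfiltered} perfect derived moduli problem studied in Theorem \ref{Perf repr} (applied to the scheme $X[t]$) supplies the answer, and exactness of $(-)^{\mathbb{G}_m}$ carries it back. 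The formal Rees input is supplied by Theorem \ref{Rprops}, and the homogeneity/quasi-smoothness is already in hand via Proposition \ref{FdCART homog smooth}.

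Conditions (1)--(3) are handled essentially as in the proof of Theorem \ref{Perf repr}. Condition (1) is immediate from hypothesis (c) in \eqref{und filt prf cmplx}. For Condition (2) I would check that the inclusion $\mathbf{M}^n_{\mathrm{filt}} \hookrightarrow \pi^0\mathcal{W}(\mathfrak{FdgMod}(X\otimes^{\mathbb{L}}_R -)_{\mathrm{cart}})$ is homotopy formally \'etale by matching tangent and obstruction data: both sides are controlled by $\mathrm{Ext}^i_{X\otimes^{\mathbb{L}}_R A}((\mathscr{E},F),(\mathscr{E}\otimes^{\mathbb{L}}_B I,\hat{F}))$ for $i=1,2$, as follows by translating the computations of Pridham via \eqref{rees ext}, and the conditions (a), (b), (c) cutting out $\mathbf{M}^n_{\mathrm{filt}}$ are visibly stable under infinitesimal deformation. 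Condition (3) reduces to the locality of filtered perfectness: by (a) only finitely many filtration levels are relevant, and perfectness of each $F^p\mathscr{E}$ is a local property preserved by \'etale pullback.

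Conditions (4) and (5) are the cleanest applications of the Rees strategy. Via \eqref{rees ext} and the exactness of $(-)^{\mathbb{G}_m}$, compatibility of the filtered Ext groups with filtered colimits is reduced to the unfiltered version on $X[t]$, which is delivered by the argument in Theorem \ref{Perf repr} after noting that $\mathrm{Rees}(\mathscr{E},F)$ is a perfect complex on $X[t]$ (Theorem \ref{Rprops}.5). For finite generation I would invoke the filtered local-to-global spectral sequence of Remark \ref{filt loc-to-glob}, whose convergence together with coherence of the $\mathcal{Ext}^i_{\mathscr O_{\mathcal X,\bullet}}$ sheaves again follow by Rees-translation to the coherence statement proved inside Theorem \ref{Perf repr}.

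The principal obstacle will be Condition (7), the Grothendieck-existence requirement for filtered perfect complexes. My plan is to exploit the boundedness hypothesis (a): a compatible inverse system of filtered perfect complexes over $\mathscr{O}_X\otimes^{\mathbb{L}}_R A/\mathfrak{m}_A^r$ decomposes into finitely many inverse systems of ordinary perfect complexes, one per filtration level, together with compatible injections. Applying the unfiltered Grothendieck existence theorem of \cite{Lu3} level by level yields perfect complexes $F^p\mathscr{E}$ over $\mathscr{O}_X\otimes^{\mathbb{L}}_R A$, and functoriality assembles these into a filtered object in $\mathbf{M}^n_{\mathrm{filt}}(A)$. Compatibility of Ext groups with the inverse limit is again an application of Rees plus the Mittag--Leffler argument from Theorem \ref{Perf repr}, run on $X[t]$ with $\mathbb{G}_m$-equivariant coefficients. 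Condition (6) follows from a parameter-space argument parallel to the unfiltered one, now recording ranks in each of the finitely many filtration pieces allowed by (a). Once all seven conditions are verified, Corollary \ref{FDMQCS} produces $\mathbb{R}\mathcal{Filt}_X^n$ as the desired derived geometric $n$-stack.
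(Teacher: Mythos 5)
Your proposal is correct and follows essentially the same route as the paper: both verify the seven conditions of Corollary \ref{FDMQCS} by using the Rees functor and the identification \eqref{rees ext} (together with exactness of $\left(-\right)^{\mathbb G_m}$) to reduce each Ext-theoretic condition to the unfiltered case established in Theorem \ref{Perf repr}, handle openness by matching tangent and obstruction spaces, and treat Conditions (6) and (7) filtration-level by filtration-level using boundedness of $F$ and preservation of inclusions under (co)limits. No substantive differences to report.
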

\begin{proof}
We have to prove that functor \eqref{und filt prf cmplx} satisfies the conditions of Corollary \ref{FDMQCS}: again our strategy consists of adapting the proof of Theorem \ref{Perf repr} to the filtered case by means of the homotopy-theoretical properties of the Rees construction. \\
First of all, notice the vanishing assumption about the Ext groups given by Axiom $\left(c\right)$ corresponds exactly to the $n$-truncation of the presheaf $\mathbf M^n_{\mathrm{filt}}$, which gives us Condition $\left(1\right)$. \\
As regards Condition $\left(2\right)$, let us show the openness of $\mathbf M_{\mathrm{filt}}^n$ inside $\pi^0\mathcal W\left(\mathfrak{FdgMod}\left(X\otimes^{\mathbb L}_R-\right)_{\mathrm{cart}}\right)$, which essentially amounts to prove that the morphism of formal groupoids
\begin{equation} \label{filt form etale} 
\pi_0\mathbf M_{\mathrm{filt}}^n\hookrightarrow\pi_0\pi^0\mathcal W\left(\mathfrak{dgMod}\left(X\otimes^{\mathbb L}_R-\right)_{\mathrm{cart}}\right) 
\end{equation}
is formally \' etale. Fix a square-zero extension $I\hookrightarrow A\twoheadrightarrow B$ and an object $\left(\mathscr E,F\right)\in\mathbf M_{\mathrm{filt}}^n\left(B\right)$, then look at the maps which morphism \eqref{filt form etale} induces on tangent and obstruction spaces; by combining the results in \cite{Lie} Section 3 and \cite{Pr1} Theorem 4.12 about the Deformation Theory of perfect complexes and quasi-coherent modules respectively with the homotopy-theoretical features of the Rees functor established by Theorem \ref{Rprops} and formula \eqref{rees ext} we have that
\begin{itemize}
\item the tangent space to the functor $\pi_0\mathbf M_{\mathrm{filt}}^n$ at $\left(\mathscr E,F\right)$ is given by
\begin{equation*} 
\mathrm{Ext}_{X\otimes^{\mathbb L}_RA}^1\left(\left(\mathscr E,F\right),\left(\mathscr E\otimes^{\mathbb L}_B I,\hat F\right)\right)\simeq\mathrm{Ext}_{\left(X\otimes^{\mathbb L}_RA\right)\left[t\right]}^1\left(\mathrm{Rees}\left(\left(\mathscr E,F\right)\right),\mathrm{Rees}\left(\left(\mathscr E\otimes^{\mathbb L}_B I,\hat F\right)\right)\right)^{\mathbb G_m}
\end{equation*}
\item a functorial obstruction space for $\pi_0\mathbf M_{\mathrm{filt}}^n$ at $\left(\mathscr E,F\right)$ is given by 
\begin{equation*} 
\mathrm{Ext}_{X\otimes^{\mathbb L}_RA}^2\left(\left(\mathscr E,F\right),\left(\mathscr E\otimes^{\mathbb L}_B I,\hat F\right)\right)\simeq\mathrm{Ext}_{\left(X\otimes^{\mathbb L}_RA\right)\left[t\right]}^2\left(\mathrm{Rees}\left(\left(\mathscr E,F\right)\right),\mathrm{Rees}\left(\left(\mathscr E\otimes^{\mathbb L}_B I,\hat F\right)\right)\right)^{\mathbb G_m}
\end{equation*}
\item the tangent space to the functor $\pi_0\pi^0\mathcal W\left(\mathfrak{FdgMod}\left(X\otimes^{\mathbb L}_R-\right)_{\mathrm{cart}}\right)$ at $\left(\mathscr E,F\right)$ is given by the group 
\begin{equation*} 
\mathrm{Ext}_{X\otimes^{\mathbb L}_RA}^1\left(\left(\mathscr E,F\right),\left(\mathscr E\otimes^{\mathbb L}_B I,\hat F\right)\right)\simeq\mathrm{Ext}_{\left(X\otimes^{\mathbb L}_RA\right)\left[t\right]}^1\left(\mathrm{Rees}\left(\left(\mathscr E,F\right)\right),\mathrm{Rees}\left(\left(\mathscr E\otimes^{\mathbb L}_B I,\hat F\right)\right)\right)^{\mathbb G_m}
\end{equation*}
\item a functorial obstruction space for $\pi_0\pi^0\mathcal W\left(\mathfrak{FdgMod}\left(X\otimes^{\mathbb L}_R-\right)_{\mathrm{cart}}\right)$ at $\left(\mathscr E,F\right)$ is given by 
\begin{equation*} 
\mathrm{Ext}_{X\otimes^{\mathbb L}_RA}^2\left(\left(\mathscr E,F\right),\left(\mathscr E\otimes^{\mathbb L}_B I,\hat F\right)\right)\simeq\mathrm{Ext}_{\left(X\otimes^{\mathbb L}_RA\right)\left[t\right]}^2\left(\mathrm{Rees}\left(\left(\mathscr E,F\right)\right),\mathrm{Rees}\left(\left(\mathscr E\otimes^{\mathbb L}_B I,\hat F\right)\right)\right)^{\mathbb G_m}
\end{equation*}
\end{itemize}
so the group homomorphisms induced on first-order deformations and obstruction theories is just identities, therefore Condition $\left(2\right)$ holds. \\
In terms of Condition $\left(3\right)$, notice that the argument showing the analogous claim in the proof of Theorem \ref{Perf repr} also holds in this context, since the filtered complexes we are parametrising are perfect in each level of the filtration; thus Condition $\left(3\right)$ holds. \\
In order to check Condition $\left(4\right)$, fix a finitely generated $R$-algebra $A$ and a pair $\left(\mathscr E,F\right)\in\mathbf M^n_{\mathrm{filt}}\left(A\right)$ and consider an inductive system $\left\{B_{\alpha}\right\}_{\alpha}$ of $A$-algebras. Since $F^m\mathscr E$ is perfect for any $m$, we can choose a {} ``flat'' resolution (see Theorem \ref{Perf repr} for more explanation) $\left(\mathscr F,\dot F\right)$ for the filtered complex $\left(\mathscr E,F\right)$; therefore there is a chain of isomorphisms
\begin{eqnarray*}
&\mathrm{Ext}^i_{X\otimes_RA}\left(\left(\mathscr E,F\right),\left(\mathscr E\otimes^{\mathbb L}_A\underset{\underset{\alpha}{\longrightarrow}}{\mathrm{lim}}\;B_{\alpha},\hat F\right)\right)& \\
&\mathrm{Ext}^i_{X\otimes_RA}\left(\left(\mathscr E,F\right),\left(\mathscr F\otimes_A\underset{\underset{\alpha}{\longrightarrow}}{\mathrm{lim}}\;B_{\alpha},\hat{\dot F}\right)\right)& \\
&\simeq\mathrm{Ext}^i_{\left(X\otimes_RA\right)\left[t\right]}\left(\mathrm{Rees}\left(\mathscr E,F\right),\mathrm{Rees}\left(\mathscr F\otimes_A\underset{\underset{\alpha}{\longrightarrow}}{\mathrm{lim}}\;B_{\alpha},\hat{\dot F}\right)\right)^{\mathbb G_m} & \\
&\simeq\mathrm{Ext}^i_{\left(X\otimes_RA\right)\left[t\right]}\left(\mathrm{Rees}\left(\mathscr E,F\right),\mathrm{Rees}\left(\underset{\underset{\alpha}{\longrightarrow}}{\mathrm{lim}}\left(\mathscr F\otimes_AB_{\alpha},\hat{\dot F}_{\alpha}\right)\right)\right)^{\mathbb G_m} & \\
&\simeq\mathrm{Ext}^i_{\left(X\otimes_RA\right)\left[t\right]}\left(\mathrm{Rees}\left(\mathscr E,F\right),\underset{\underset{\alpha}{\longrightarrow}}{\mathrm{lim}}\;\mathrm{Rees}\left(\mathscr F\otimes_AB_{\alpha},\hat{\dot F}_{\alpha}\right)\right)^{\mathbb G_m} & \\
&\simeq\left(\underset{\underset{\alpha}{\longrightarrow}}{\mathrm{lim}}\;\mathrm{Ext}^i_{\left(X\otimes_RA\right)\left[t\right]}\left(\mathrm{Rees}\left(\mathscr E,F^{\bullet}\right),\mathrm{Rees}\left(\mathscr F\otimes_AB_{\alpha},\hat{\dot F}_{\alpha}\right)\right)\right)^{\mathbb G_m} & \\
&\simeq\underset{\underset{\alpha}{\longrightarrow}}{\mathrm{lim}}\;\mathrm{Ext}^i_{\left(X\otimes_RA\right)\left[t\right]}\left(\mathrm{Rees}\left(\mathscr E,F\right),\mathrm{Rees}\left(\mathscr F\otimes_AB_{\alpha},\hat{\dot F}_{\alpha}\right)\right)^{\mathbb G_m} & \\
&\simeq\underset{\underset{\alpha}{\longrightarrow}}{\mathrm{lim}}\;\mathrm{Ext}^i_{X\otimes_RA}\left(\left(\mathscr E,F\right),\left(\mathscr F\otimes_AB_{\alpha},\hat{\dot F}_{\alpha}\right)\right) & \\
&\simeq\underset{\underset{\alpha}{\longrightarrow}}{\mathrm{lim}}\;\mathrm{Ext}^i_{X\otimes_RA}\left(\left(\mathscr E,F\right),\left(\mathscr E\otimes^{\mathbb L}_AB_{\alpha},\hat F_{\alpha}\right)\right) &
\end{eqnarray*}
where we have used the various properties collected in Theorem \ref{Rprops}, the induced description of the Ext groups determined by formula \eqref{rees ext}, the exactness of the functor $\left(-\right)^{\mathbb G_m}$ and the filtration-levelwise degreewise flatness of the representative $\left(\mathscr F,\dot F\right)$. It follows that Condition $\left(4\right)$ holds.\\
The way we prove Condition $\left(5\right)$ is exactly the same utilised to show the corresponding claim in Theorem \ref{Perf repr}: indeed, note that such an argument carries over to this context, provided that we use the {} ``filtered version'' of the local-to-global spectral sequence given by Remark \ref{filt loc-to-glob} in place of the classical one; thus Condition $\left(5\right)$ holds. \\
Now we look at Condition $\left(6\right)$; fix an inductive system $\left\{A_{\alpha}\right\}_{\alpha}$ of $R$-algebras and let $A:=\underset{\underset{\alpha}{\longrightarrow}}{\mathrm{lim}}\;A_{\alpha}$: we need to show that 
\begin{equation} \label{filt components}
c\left(\pi_0\mathbf M_{\mathrm{filt}}^n\left(A\right)\right)=\underset{\underset{\alpha}{\longrightarrow}}{\mathrm{lim}}\;c\left(\pi_0\mathbf M_{\mathrm{filt}}^n\left(A_{\alpha}\right)\right)
\end{equation}
where for any $R$-algebra $B$
\begin{equation} \label{filt descr comp}
c\left(\pi_0\mathbf M_{\mathrm{filt}}^n\left(B\right)\right):=\left\{\text{isomorphism classes of filtered perfect complexes of $\left(\mathscr O_X\otimes_R^{\mathbb L}B\right)$-modules}\right\}.
\end{equation}
According to formula \eqref{filt descr comp} an element in $\underset{\underset{\alpha}{\longrightarrow}}{\mathrm{lim}}\;c\left(\pi_0\mathbf M_{\mathrm{filt}}^n\left(A_{\alpha}\right)\right)$ consists of a direct system of classes
\begin{equation} \label{filt class}
\tiny{\left\{\left[\vcenter{\xymatrix{\mathscr E_{\alpha} \\ 
F^1\mathscr E_{\alpha}\ar@{^{(}->}[u] \\
F^2\mathscr E_{\alpha}\ar@{^{(}->}[u] \\
\vdots\ar@{^{(}->}[u] }} \right]\right\}_{\alpha} }
\end{equation}
where for all $p$ and all $\alpha$ $F^p\mathscr E_{\alpha}$ is a perfect complex of $\mathscr O_X\otimes^{\mathbb L}_R A_{\alpha}$-modules. In the proof of Theorem \ref{Perf repr} we have shown that each system $\left\{\left[F^p\mathscr E_{\alpha}\right]\right\}_{\alpha}$ determines uniquely an isomorphism class of perfect $\mathscr O_X\otimes_R^{\mathbb L}A$-module in complexes and notice that inclusions are preserved under inductive limits, thus the object described by formula \eqref{filt class} determines a unique class in $c\left(\pi_0\mathbf M_{\mathrm{filt}}^n\left(A\right)\right)$, which means that formula \eqref{filt descr comp} is verified. It follows that Condition $\left(6\right)$ holds.\\
Lastly, we have to check Condition $\left(7\right)$, so fix a complete discrete local Noetherian $R$-algebra $A$ and a pair $\left(\mathscr E,F\right)\in\mathbf M^n_{\mathrm{filt}}\left(A\right)$
. \\
Consider for all $i<0$ the canonical map
\begin{equation} \label{filt ext comp}
\mathrm{Ext}^i_{X\otimes^{\mathbb L}_RA}\left(\left(\mathscr E,F\right),\left(\mathscr E,F\right)\right)\longrightarrow\underset{\underset{r}{\longleftarrow}}{\mathrm{lim}}\;\mathrm{Ext}^i_{\mathcal F\otimes_R^{\mathbb L}A}\left(\left(\mathscr E,F\right),\left(\nicefrac{\mathscr E}{\mathfrak m_A^r},\tilde F\right)\right)
\end{equation}
induced by the isomorphism
\begin{equation}
\hat A:=\underset{\underset{r}{\longleftarrow}}{\lim}\;\nicefrac{A}{\mathfrak m_A^r}
\end{equation}
to the pronilpotent completion of $A$. Now by formula \eqref{rees ext} we see that map \eqref{filt ext comp} is the same as the group morphism
\begin{equation*}
\tiny{\xymatrix{\mathrm{Ext}^i_{\left(X\otimes^{\mathbb L}_RA\right)\left[t\right]}\left(\mathrm{Rees}\left(\left(\mathscr E,F\right)\right),\mathrm{Rees}\left(\left(\mathscr E,F\right)\right)\right)^{\mathbb G_m}\ar[d] \\
\underset{\underset{r}{\longleftarrow}}{\mathrm{lim}}\;\mathrm{Ext}^i_{\left(\mathcal F\otimes_R^{\mathbb L}A\right)\left[t\right]}\left(\mathrm{Rees}\left(\left(\mathscr E,F\right)\right),\mathrm{Rees}\left(\left(\nicefrac{\mathscr E}{\mathfrak m_A^r},\tilde F\right)\right)\right)^{\mathbb G_m}}}
\end{equation*}
which is an isomorphism, as follows by combining the exactness of the functor $\left(-\right)^{\mathbb G_m}$ and the computations in the proof of Theorem \ref{Perf repr}.\\
At last, the compatibility condition on the components is easily checked by using techniques similar to the ones utilised to verify Condition $\left(6\right)$. As a matter of fact take any inverse system  
\begin{equation} \label{filt class}
\tiny{\left\{\left[\vcenter{\xymatrix{\mathscr E_{r} \\ 
F^1\mathscr E_{r}\ar@{^{(}->}[u] \\
F^2\mathscr E_{r}\ar@{^{(}->}[u] \\
\vdots\ar@{^{(}->}[u] }} \right]\right\}_r }
\end{equation}
of filtered perfect complexes of $\mathscr O_X\otimes^{\mathbb L}_R\nicefrac{A}{\mathfrak m^r_A}$-modules and note that the proof of the corresponding statement in Theorem \ref{Perf repr} allows us to lift each level $F^p\mathscr E_r$ to a perfect complex of $\mathscr O_X\otimes^{\mathbb L}_RA$-modules; moreover countable limits preserve inclusions: this concludes the verification of the claim. \\
It follows that Condition $\left(7\right)$ holds, so the proof is complete.
\end{proof}
Now define
\begin{equation*}
\mathbb R\mathcal Filt_X:=\underset{n}{\bigcup}\;\mathbb R\mathcal Filt_X^n
\end{equation*}
which is the simplicial presheaf parametrising filtered perfect complexes over $X$: Theorem \ref{filt Perf repr} ensures that $\mathbb R\mathcal Filt_X$ is a locally geometric derived stack over $R$; this comment provides the ultimate comparison between moduli of complexes and moduli of filtered complexes.
\begin{rem}
An interesting derived substack of $\mathbb R\mathcal Filt_X$ is the \emph{stack of submodules} over $X$, which we denote by $\mathbb R\mathcal Sub_X$; this is the simplicial presheaf over $\mathfrak{dgAlg}_k^{\leq 0}$ parametrising filtered perfect $\mathscr O_X$-modules in complexes $\left(\mathscr E,F\right)$ such that the filtration $F$ has length $2$: in other words the functor $\mathbb R\mathcal Sub_X$ classifies pairs made of a perfect complex $\mathscr E$ and a subcomplex $F^1\mathscr E$, which is perfect as well. $\mathbb R\mathcal Sub_X$ is clearly a derived substack of $\mathbb R\mathcal Filt_X$ and it is also locally geometric; as a matter of fact consider the simplicial presheaf $\mathbb R\mathcal Sub^n_X$ parametrising filtered $\mathscr O_X$-modules in complexes for which the filtration has length $2$ and the relevant higher Ext groups vanish: then the arguments and techniques explained in this section show that $\mathbb R\mathcal Sub^n_X$ is a derived geometric $n$-stack over $R$ and moreover we have that
\begin{equation*}
\mathbb R\mathcal Sub_X=\bigcup_n\mathbb R\mathcal Sub^n_X
\end{equation*}
thus $\mathbb R\mathcal Sub_X$ is locally geometric.
\end{rem}
\subsection{Homotopy Flag Varieties and Derived Grassmannians}
In this last section we will see how the ideas and notions discussed in Section 2.2 and Section 2.3 allow us to construct homotopy versions of Grassmannians and flag varieties.
Throughout all this section fix our base scheme $X$ to be the point $\mathrm{Spec}\left(k\right)$ and let $V$ be a bounded complex of finite-dimensional $k$-vector spaces; furthermore for all $n\in\mathbb N$ consider the derived stacks
\begin{eqnarray*}
&\mathbb R\mathcal Perf_k:=\mathbb R\mathcal Perf_{\mathrm{Spec}\left(k\right)} \qquad\mathbb R\mathcal Perf^n_k:=\mathbb R\mathcal Perf^n_{\mathrm{Spec}\left(k\right)}& \\
&\mathbb R\mathcal Filt_k:=\mathbb R\mathcal Filt_{\mathrm{Spec}\left(k\right)} \qquad \mathbb R\mathcal Filt^n_k:=\mathbb R\mathcal Filt^n_{\mathrm{Spec}\left(k\right)} \\
&\mathbb R\mathcal Sub_k:=\mathbb R\mathcal Sub_{\mathrm{Spec}\left(k\right)} \qquad \mathbb R\mathcal Sub^n_k:=\mathbb R\mathcal Sub^n_{\mathrm{Spec}\left(k\right)}
\end{eqnarray*}
which respectively parametrise:
\begin{itemize}
\item cochain complexes of $k$-vector spaces; 
\item filtered cochain complexes of $k$-vector spaces;
\item pairs made of a cochain complex of $k$-vector spaces and a subcomplex.
\end{itemize}
\begin{defn} \label{big Grass}
Define the \emph{big total derived Grassmannian} over $k$ associated to $V$ to be the derived stack given by the homotopy fibre
\begin{equation*}
\mathcal{DGRASS}_k\left(V\right):=\underset{\longleftarrow}{\mathrm{holim}}\left(\mathbb R\mathcal Sub_k\doublerightarrow{\left[F\hookrightarrow W\right]\mapsto W}{\mathrm{const}_V}\mathbb R\mathcal Perf_k\right)
\end{equation*}
where the top map is the natural forgetful morphism while {} ``$\mathrm{const}_V$'' stands for the constant morphism sending any pair $\left[F\hookrightarrow W\right]$ to $V$.
\end{defn}
\begin{rem}
The derived stack $\mathcal{DGRASS}_k\left(W\right)$ is locally geometric: as a matter of fact we have that
\begin{equation*}
\mathcal{DGRASS}_k\left(V\right)=\bigcup_n\mathcal{DG}rass^n_k\left(V\right)
\end{equation*}
where
\begin{equation} \label{DGrass loc geom}
\mathcal{DGRASS}^n_k\left(V\right):=\underset{\longleftarrow}{\mathrm{holim}}\left(\mathbb R\mathcal Sub^n_k\doublerightarrow{\left[F\hookrightarrow W\right]\mapsto W}{\mathrm{const}_V}\mathbb R\mathcal Perf^n_k\right)
\end{equation}
and formula \eqref{DGrass loc geom} shows in particular that $\mathcal{DGRASS}^n_k\left(V\right)$ is a derived geometric $n$-stack over $k$.
\end{rem}
There is a more concrete realisation of the big total derived Grassmannian associated to $V$: indeed consider the functorial simplicial category
\begin{eqnarray} \label{DGrass real}
\forall A\in\mathfrak{dgAlg}_k^{\leq0}\quad DGRASS_k\left(V\right)\left(A\right):=&\text{full simplicial subcategory made of sequences}& \nonumber \\
& U\hookrightarrow W\overset{\varphi}{\rightarrow} V\otimes A& \nonumber \\
&\text{of cofibrant $A$-modules in complexes}& \nonumber \\ 
&\text{where $\varphi$ is a quasi-isomorphism}&
\end{eqnarray}
and observe that $\mathcal{DGRASS}_k\left(V\right)=\bar WDGRASS_k\left(V\right)$. \\
Similarly, we can construct a preliminary derived notion of flag variety.
\begin{defn}
Define the \emph{big homotopy flag variety} over $k$ associated to $V$ to be the derived stack given by the homotopy fibre
\begin{equation*}
\mathcal{DFLAG}_k\left(V\right):=\underset{\longleftarrow}{\mathrm{holim}}\left(\mathbb R\mathcal Filt_k\doublerightarrow{\left(W,F\right)\mapsto W}{\mathrm{const}_V}\mathbb R\mathcal Perf_k\right)
\end{equation*}
where the top map is the natural forgetful morphism while {} ``$\mathrm{const}_V$'' denotes again the constant morphism sending any filtered complex to $V$.
\end{defn}
\begin{rem}
The derived stack $\mathcal{DFLAG}_k\left(V\right)$ is locally geometric: as a matter of fact we have that
\begin{equation*}
\mathcal{DFLAG}_k\left(V\right)=\bigcup_n\mathcal{DFLAG}^n_k\left(V\right)
\end{equation*}
where
\begin{equation} \label{DFlag loc geom}
\mathcal{DFLAG}^n_k\left(V\right):=\underset{\longleftarrow}{\mathrm{holim}}\left(\mathbb R\mathcal Filt^n_k\doublerightarrow{\left(W,F\right)\mapsto W}{\mathrm{const}_V}\mathbb R\mathcal Perf^n_k\right)
\end{equation}
and formula \eqref{DFlag loc geom} shows in particular that $\mathcal{DFLAG}^n_k\left(V\right)$ is a derived geometric $n$-stack over $k$.
\end{rem}
There is a concrete realisation of the big homotopy flag variety given by equations similar to the ones supplied in formula \eqref{DGrass real} in the case of the big derived Grassmannian; as a matter of fact define the functorial simplicial category
\begin{eqnarray} 
\forall A\in\mathfrak{dgAlg}_k^{\leq0}\;\;\; DFLAG_k\left(V\right)\left(A\right):=&\text{full simplicial subcategory made of pairs }\left(\left(W,F\right),\varphi\right),& \nonumber \\
&\text{with $\left(W,F\right)$ a filtered cofibrant $A$-module in complexes}& \nonumber \\ 
&\text{and $\varphi:W\rightarrow V\otimes A$ a quasi-isomorphism} \nonumber&
\end{eqnarray}
and observe that $\mathcal{DFLAG}_k\left(V\right)=\bar WDFLAG_k\left(V\right)$.
\begin{rem} \label{count to class grass}
Assume that $V$ is concentrated in degree $0$ and consider the classical total Grassmannian variety
\begin{eqnarray} \label{class grass defn}
&\mathrm{Grass}\left(V\right):=\underset{i=0}{\overset{\mathrm{dim}\,V}{\coprod}} \mathrm{Grass}\left(i,V\right)& \\
&\mathrm{Grass}\left(i,V\right):=\left\{W\subseteq V\text{ s.t. }\mathrm{dim}\left(W\right)=i\right\}.\nonumber&
\end{eqnarray}
We would like that the stack $\mathcal{DGRASS}_k\left(V\right)$ were a derived enhancement of the variety \eqref{class grass defn}, but unfortunately this is not the case as $\mathcal{DGRASS}_k\left(V\right)$ is far too large: for instance, we have that $\mathcal{DGRASS}_k\left(0\right)\approx\mathbb R\mathcal Perf_k$; analogous statements will hold for the flag variety $\mathrm{Flag}\left(V\right)$.
\end{rem}
Remark \ref{count to class grass} tells us that the big total derived Grassmannian and the big homotopy flag variety are not derived enhancements of $\mathrm{Grass}\left(V\right)$ and $\mathrm{Flag}\left(V\right)$; anyhow hereinafter we will show that the two latter varieties can be realised respectively as the underived truncations of natural open substacks of $\mathcal{DGRASS}_k\left(V\right)$ and $\mathcal{DFLAG}_k\left(V\right)$.\\ 
Consider the (underived) functorial simplicial category
\begin{eqnarray*}
\forall A\in\mathfrak{Alg}_k\quad DGrass_k\left(V\right)\left(A\right):=&\text{full simplicial subcategory of }DGRASS_k\left(V\right)\left(A\right)& \\
&\text{made of sequences }U\hookrightarrow W\overset{\phi}{\rightarrow} V\otimes A& \\
&\text{for which } H^i\left(U\right) \text{ is flat over } A\\
&\text{and the induced morphism}& \\
&H^i\left(U\right)\rightarrow H^i\left(V\right)\otimes A& \\
&\text{is injective for all $i$.}&
\end{eqnarray*}
as well as its enhancement
\begin{eqnarray*}
\forall A\in\mathfrak{dgAlg}_k^{\leq0}\quad\widetilde{DGrass}_k\left(V\right)\left(A\right):=&\scriptstyle{DGRASS_k\left(V\right)\left(A\right)\times^{\left(2\right)}_{DGRASS_k\left(V\right)\left(H^0\left(A\right)\right)}DGrass_k\left(V\right)\left(H^0\left(A\right)\right)}&\\
=&\text{full simplicial subcategory of } DGRASS_k\left(V\right)\left(A\right)& \\
&\text{made of sequences }U\hookrightarrow W\overset{\phi}{\rightarrow} V\otimes A& \\
&\text{weakly equivalent to an object in $DGrass_k\left(V\right)\left(H^0\left(A\right)\right)$}& \nonumber \\
&\text{after tensorisation with $H^0\left(A\right)$ over $A$}&
\end{eqnarray*}
\begin{defn} \label{actual Gr}
For any cochain complex $V$ define the \emph{derived total Grassmannian associated to $V$} to be
\begin{equation*}
\mathcal{DG}rass_k\left(V\right):=\bar W\widetilde{DGrass}_k\left(V\right).
\end{equation*}
\end{defn}
\begin{prop} \label{comp gras 3}
$\mathcal{DG}rass_k\left(V\right)$ is an open derived substack of $\mathcal{DGRASS}_k\left(V\right)$.
\end{prop}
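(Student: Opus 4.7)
The plan is to verify the two conditions in the definition of open simplicial subcategory (given earlier), applied to $\widetilde{DGrass}_k(V) \subset DGRASS_k(V)$ viewed as a functorial full simplicial subcategory over $\mathfrak{dgAlg}_k^{\leq 0}$. First, fullness is essentially built into the definition: $\widetilde{DGrass}_k(V)(A)$ was constructed precisely as a $2$-fibre product landing inside $DGRASS_k(V)(A)$, and the condition cutting it out (weak equivalence after $-\otimes_A H^0(A)$ to an object of $DGrass_k(V)(H^0(A))$) is invariant under the weak equivalences of $DGRASS_k(V)(A)$, so the $\underline{\mathrm{Hom}}$-spaces of $\widetilde{DGrass}_k(V)(A)$ agree with the corresponding $\underline{\mathrm{Hom}}$-spaces in $DGRASS_k(V)(A)$.

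The nontrivial point is homotopy formal \'etaleness. Fix a square-zero extension $A \twoheadrightarrow B$ in $\mathfrak{dgAlg}_k^{\leq 0}$ with square-zero kernel $I$, together with an object $X_A = (U \hookrightarrow W \xrightarrow{\varphi} V\otimes A) \in \pi_0 DGRASS_k(V)(A)$ whose image $X_A \otimes_A B$ in $\pi_0 DGRASS_k(V)(B)$ is isomorphic to some $Y_B \in \pi_0 \widetilde{DGrass}_k(V)(B)$. By fullness, essential surjectivity of the $2$-fibre product comparison reduces to showing that $X_A$ itself already lies in $\pi_0 \widetilde{DGrass}_k(V)(A)$, i.e. that $X_A \otimes_A H^0(A)$ is weakly equivalent to an object of $DGrass_k(V)(H^0(A))$.

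Since $I$ is concentrated in non-positive degrees, the induced map on zeroth cohomology $H^0(A) \twoheadrightarrow H^0(B)$ is a surjection of ordinary commutative rings whose kernel is nilpotent, hence $\mathrm{Spec}\, H^0(B) \to \mathrm{Spec}\, H^0(A)$ is a homeomorphism on underlying topological spaces. The hypothesis on $X_A \otimes_A B$ unwinds to the conditions defining $DGrass_k(V)$ on $H^0(B)$: namely, $H^i(U \otimes_A H^0(A)) \otimes_{H^0(A)} H^0(B)$ is flat over $H^0(B)$ and the natural map to $H^i(V)\otimes H^0(B)$ is injective, for every $i$. Because $U$ is part of a perfect sequence, each cohomology module $H^i(U \otimes_A H^0(A))$ is a finitely presented $H^0(A)$-module, and both flatness and injectivity are open conditions in the base for finitely presented data; a nilpotent thickening cannot enlarge an open locus, so flatness over $H^0(B)$ forces flatness over $H^0(A)$ (the local criterion of flatness across square-zero extensions) and the vanishing of a finitely presented kernel over $H^0(B)$ forces its vanishing over $H^0(A)$. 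Hence $X_A \otimes_A H^0(A) \in DGrass_k(V)(H^0(A))$, so $X_A \in \widetilde{DGrass}_k(V)(A)$.

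The main obstacle is the cohomological bookkeeping: one must ensure that $H^i(U)$ really is finitely presented at the $H^0(A)$-level and that the base-change identities $H^i(U \otimes_A H^0(A))\otimes_{H^0(A)} H^0(B) \cong H^i(U \otimes_A H^0(B))$ hold after passing to suitable flat models in the derived category. Both points follow from the perfectness assumption sitting inside $DGRASS_k(V)$ combined with the flatness of the reduction $H^i(U \otimes_A H^0(A))$ detected over $H^0(B)$, which guarantees the relevant $\mathrm{Tor}$-groups vanish and hence legitimises the cohomological base-change. Once this is settled, fullness together with the preceding argument yields the openness claim, so $\mathcal{DG}rass_k(V) = \bar W \widetilde{DGrass}_k(V)$ is an open derived substack of $\mathcal{DGRASS}_k(V) = \bar W DGRASS_k(V)$.
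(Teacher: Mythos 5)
Your overall strategy matches the paper's: both reduce openness to formal \'etaleness of the inclusion of formal groupoids across a square-zero extension, and both aim to show that an object of $DGRASS_k(V)(A)$ whose reduction lies in $DGrass_k(V)(B)$ already lies in $DGrass_k(V)(A)$. However, the way you verify the defining conditions has real gaps.

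The central problem is your treatment of the injectivity condition. You reduce it to the vanishing of a finitely presented kernel $K := \ker\bigl(H^i(U) \to H^i(V)\otimes A\bigr)$ via Nakayama, using that $K$ vanishes ``over $B$.'' But what vanishes over $B$ is $\ker\bigl(H^i(U_B) \to H^i(V)\otimes B\bigr)$, and $H^i(U_B)$ is \emph{not} $H^i(U)\otimes_A B$ in general: base change is only right exact, so the kernel does not commute with $-\otimes_A B$ without a flatness hypothesis on $H^i(U)$ --- which is precisely one of the things you are trying to establish, producing a circularity. Relatedly, the appeal to ``injectivity is an open condition'' is not correct: injectivity of a map of finitely presented modules is not open on the base (e.g.\ multiplication by a non-unit on $A$), so the ``nilpotent thickenings cannot enlarge an open locus'' heuristic does not apply here. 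The paper sidesteps all of this by a direct diagram chase: cofibrancy of $S'$ gives the short exact sequence $0 \to S\otimes_B I \to S' \to S \to 0$, flatness of $H^*(S)$ over $B$ (part of the definition of $DGrass_k(V)(B)$) legitimises the identification $H^i(S\otimes_B I) \simeq H^i(S)\otimes_B I$, and then the morphism of long exact sequences together with a five-lemma-style chase forces injectivity of $H^i(S') \to H^i(V)\otimes A$ --- with no circularity, because the flatness that is used is over $B$, where it is already known. Your flatness step has a similar (if less severe) softness: the local criterion of flatness across a nilpotent extension requires a $\mathrm{Tor}_1$-vanishing in addition to flatness of the reduction, and that condition is asserted rather than checked. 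To repair your argument you would essentially need to reintroduce the long-exact-sequence/UCT bookkeeping, at which point you would be reproducing the paper's proof.

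Minor point: the paper works with a square-zero extension in $\mathfrak{Alg}_k$ after passing to $\pi_{\leq 0}$, whereas you set up the extension in $\mathfrak{dgAlg}_k^{\leq 0}$ and then pass to $H^0$; the two are equivalent, but the paper's formulation makes the reduction to ordinary rings cleaner.
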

\begin{proof}
We want to show that the inclusion
\begin{equation*}
i:\mathcal{DG}rass_k\left(V\right)\hookrightarrow\mathcal{DGRASS}_k\left(V\right)
\end{equation*}
is \' etale, which in turn amounts to prove that the induced map of formal groupoids
\begin{equation*}
\pi_{\leq0}DGrass_k\left(V\right)\longrightarrow\pi_{\leq0}DGRASS_k\left(V\right)
\end{equation*}
is formally \' etale. \\
Let $I\hookrightarrow A\twoheadrightarrow B$ be a square-zero extension in $\mathfrak{Alg}_k$ and pick a triple $\left[S\hookrightarrow W\rightarrow V\otimes B\right]$ in $\pi_{\leq0}DGrass_k\left(V\right)\left(B\right)$ -- i.e. such that the induced morphism $H^i\left(S\right)\rightarrow H^i\left(V\right)\otimes B$ stays injective for all $i$ -- and take $\left[S'\hookrightarrow W'\rightarrow V\otimes A\right]$ in $DGRASS_k\left(V\right)\left(A\right)$ such that $S'\otimes_AB\approx S$ and $W'\otimes_A B\approx W$; we need to show that the cohomology map $H^i\left(S'\right)\rightarrow H^i\left(V\right)\otimes A$ is injective. By taking long exact sequence in cohomology we end up with a morphism of complexes
\begin{eqnarray} \label{diag final hope}
\xymatrix{\cdots\ar[r]& H^i\left(S\right)\otimes_BI\ar[r]\ar@{_{(}->}[d] & H^i\left(S'\right)\ar[r]\ar[d] & H^i\left(S\right)\ar[r]\ar@{_{(}->}[d] & \cdots \\
\cdots\ar[r] & H^i\left(V\right)\otimes I\ar[r] & H^i\left(V\right)\otimes A\ar[r] & H^i\left(V\right)\otimes B\ar[r] & \cdots} \nonumber \\
\end{eqnarray}
in which the horizontal arrows are exact. Let $v\in H^i\left(S'\right)$ an element mapping to $0$ in $H^i\left(V\right)\otimes A$ and hence to $0$ in $H^i\left(V\right)\otimes B$; the injectivity of the map $H^i\left(S\right)\rightarrow H^i\left(V\right)\otimes B$ implies that 
\begin{equation*}
v\in\mathrm{ker}\left(H^i\left(S'\right)\rightarrow H^i\left(S\right)\right)\simeq\mathrm{Im}\left(H^i\left(S\right)\otimes_BI\rightarrow H^i\left(S'\right)\right)
\end{equation*}
so let $w$ be a preimage of $v$ inside $H^i\left(S\right)\otimes_B I$. Now let us walk along the commutative square on the left-hand side of diagram \eqref{diag final hope}: we know that the (vertical) map 
\begin{equation*}
H^i\left(S\right)\otimes_BI\rightarrow H^i\left(V\right)\otimes B\otimes_BI\simeq H^i\left(V\right)\otimes I
\end{equation*}
is injective and notice furthermore that $H^i\left(V\right)$ is flat over $k$; as $I\hookrightarrow  A$, it follows then that the (horizontal) map
\begin{equation*}
H^i\left(V\right)\otimes I\simeq H^i\left(V\right)\otimes B\otimes_BI\rightarrow H^i\left(V\right)\otimes A 
\end{equation*}
is injective, as well. As a result, we have that $w$ is mapped to $0$ in $H^i\left(V\right)\otimes A$ via the composite of two injections, therefore $w=0$ and $v=0$. This means that the map $H^i\left(S'\right)\rightarrow H^i\left(V\right)\otimes A$ is injective, which concludes the proof.
\end{proof}
\begin{thm} \label{comp gras 4}
There is an isomorphism 
\begin{equation} \label{bingo}
\pi^0\pi_{\leq0}\mathcal{DG}rass_k\left(V\right)\simeq\mathrm{Grass}\left(H^*\left(V\right)\right)
\end{equation}
where the right-hand side in formula \eqref{bingo} is the product of the classical total Grassmannians associated to the vector spaces $H^i\left(V\right)$; in particular if $V$ is concentrated in degree $0$ than $\mathcal{DG}rass_k\left(V\right)$ is a derived enhancement of the classical total Grassmannian associated to $V$.
\end{thm}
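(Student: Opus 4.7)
The plan is to exhibit mutually-inverse natural transformations between $\pi^0\pi_{\leq 0}\mathcal{DG}rass_k(V)$ and $\mathrm{Grass}(H^*(V))$, the engine being the formality of $V$: since $V$ is a bounded complex of finite-dimensional $k$-vector spaces, one fixes a quasi-isomorphism $\sigma:\bigoplus_i H^i(V)[-i]\xrightarrow{\sim} V$ in $\mathfrak{dgMod}_k$, whose base change $\sigma_A:\bigoplus_i H^i(V)\otimes A[-i]\xrightarrow{\sim} V\otimes A$ is a quasi-isomorphism of $A$-modules in complexes for every ordinary $k$-algebra $A$. This rigidification is what reduces the derived moduli problem to a classical one.

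The forward map sends the class of a sequence $[U\hookrightarrow W\xrightarrow{\sim} V\otimes A]$ to the tuple of cohomology inclusions $(H^i(U)\hookrightarrow H^i(V)\otimes A)_i$. To check this lands in $\mathrm{Grass}(H^*(V))(A)$ I would first use perfectness of $U$ together with the flatness of $H^i(U)$ to conclude (via Lazard's theorem) that each $H^i(U)$ is finitely generated projective, and then analyse the homotopy cofibre $C$ of $U\to V\otimes A$ in $\mathrm{D}(A)$: the assumed injectivity on cohomology splits its long exact sequence into short exact sequences
\[
0\to H^i(U)\to H^i(V)\otimes A\to H^i(C)\to 0,
\]
and combining flatness of $H^i(U)$ with perfectness of $C$ upgrades this to the local direct-summand (i.e.\ subbundle) property required for a point of the classical Grassmannian. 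For the inverse, given a tuple $(K_i\hookrightarrow H^i(V)\otimes A)_i$ of subbundles, one sets $U:=\bigoplus_i K_i[-i]$ with zero differential and composes its termwise inclusion into $\bigoplus_i H^i(V)\otimes A[-i]$ with $\sigma_A$: this produces a representative of $\widetilde{DGrass}_k(V)(A)$ with the prescribed cohomology.

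The hard part will be verifying that these two assignments descend to mutually-inverse maps on $\pi_0$, which amounts to controlling the equivalence relation defining $\pi_0\widetilde{DGrass}_k(V)(A)$. The first step is the claim that any perfect complex over $A$ whose cohomology is degreewise projective is itself formal — a standard Postnikov-tower argument in the dg setting — so that every class admits a representative coming from the inverse construction. The second, more subtle step is to show that two maps $U\to V\otimes A$ of such formal perfect complexes that induce the same cohomological subbundles are connected by a quasi-isomorphism over $V\otimes A$; via the simplicial enrichment of $\mathfrak{dgMod}_A$ recalled in Section 1.2 and the formality of both source and target, this reduces to computing $\mathrm{Hom}$-spaces between shifted projective modules, where the higher Ext groups vanish. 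Once these two steps are established, the isomorphism of sheaves $\pi^0\pi_{\leq 0}\mathcal{DG}rass_k(V)\simeq\mathrm{Grass}(H^*(V))$ drops out, and specialising to $V$ concentrated in degree $0$ yields the promised derived enhancement of the classical total Grassmannian.
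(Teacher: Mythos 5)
Your overall strategy coincides with the paper's: the bijection is ``take cohomology'', and the key input is that a perfect complex with flat (hence finitely generated projective) cohomology is formal, so that quasi-isomorphism classes of sequences $U\hookrightarrow W\xrightarrow{\sim}V\otimes A$ are determined by the induced cohomological data. The paper's own proof is far terser --- it asserts existence and unicity of the lift ``directly from the observation that $T$ is perfect with flat cohomology, hence degreewise locally free'' --- and your formality splitting $\sigma$ of $V$, the explicit inverse construction, and the mapping-space computation between shifted projectives for injectivity on $\pi_0$ are exactly the details one would want written out; those parts are sound.

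There is, however, one step that is false as written: the claim that injectivity of $H^i\left(U\right)\to H^i\left(V\right)\otimes A$, flatness of $H^i\left(U\right)$ and perfectness of the cofibre $C$ together ``upgrade'' the inclusion to a local direct summand. Take $V=k$ in degree $0$, $A=k\left[x\right]$, $U=W=A$ concentrated in degree $0$, with $U\hookrightarrow W$ given by multiplication by $x$ and $\varphi=\mathrm{id}$: every defining condition of $DGrass_k\left(V\right)\left(A\right)$ holds and the cofibre is perfect with $H^0\left(C\right)=A/\left(x\right)$, yet $xA\subset A$ is not a direct summand, even locally; correspondingly $\mathrm{Tor}_1^A\left(H^0\left(C\right),-\right)\neq 0$ and the short exact sequence $0\to H^i\left(U\right)\to H^i\left(V\right)\otimes A\to H^i\left(C\right)\to 0$ does not split. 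So your forward map does not land in the subbundle (scheme-theoretic) Grassmannian. What it does land in --- and what the paper's proof actually compares with --- is the functor $A\mapsto\left\{W\hookrightarrow H^*\left(V\right)\otimes A\text{ with }W\text{ finitely generated projective}\right\}$, which is strictly larger than the functor of points of the classical Grassmannian scheme. You should therefore either drop the direct-summand claim and target the paper's (weaker) functor, or, if you want the honest $\mathrm{Grass}\left(H^*\left(V\right)\right)$, build the direct-summand condition into the definition of $DGrass_k\left(V\right)$; the implication you propose cannot supply it.
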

\begin{proof}
We want to show that $\pi^0\pi_{\leq0}\mathcal{DG}rass_k\left(V\right)$ is the same as the functor of points represented by the variety $\mathrm{Grass}\left(H^*\left(V\right)\right)$, which is
\begin{eqnarray*}
\mathbf{Grass}\left(H^*\left(V\right)\right)&:\mathfrak{Alg}_k&\xrightarrow{\hspace*{3cm}}\mathfrak{Set} \\
&A&\longmapsto\left\{W\hookrightarrow H^*\left(V\right)\otimes A\text{ s.t. } W\text{ cofibrant}\right\}.
\end{eqnarray*}
Notice that, for all $A\in\mathfrak{Alg}_k$ we have that
\footnotesize{\begin{equation*}
\pi^0\pi_{\leq0}\mathcal{DG}rass_k\left(V\right)\left(A\right):=\dfrac{\left\{T\hookrightarrow V\otimes A\text{ s.t. } T\;\text{perfect,}\, H^*\left(T\right)\,A\text{-flat,}\,H^*\left(T\right)\rightarrow H^*\left(V\right)\otimes A\;\text{injective}\right\}}{\left\{\text{quasi-isomorphism}\right\}}.
\end{equation*}}\normalsize
Taking cohomology induces a natural bijection between the sets $\pi^0\pi_{\leq0}\mathcal{DG}rass_k\left(V\right)\left(A\right)$ and $\mathbf{Grass}\left(H^*\left(V\right)\right)\left(A\right)$. Indeed consider $\left[W\hookrightarrow H^*\left(V\right)\otimes A\right]\in\mathbf{Grass}\left(H^*\left(V\right)\right)\left(A\right)$: all we need to show is the existence and unicity of a quasi-isomorphism class
\begin{equation*}
\left[T\hookrightarrow V\otimes A\right]\in\pi^0\pi_{\leq0}\mathcal{DG}rass_k\left(V\right)\left(A\right) 
\end{equation*}
whose cohomology is $\left[W\hookrightarrow H^*\left(V\right)\otimes A\right]$; now this follows directly from the observation that the complex $T$ is made of locally free modules in each degree, since it is perfect with flat cohomology.
\end{proof}
The constructions and results described by Definition \ref{actual Gr}, Proposition \ref{comp gras 3} and Theorem \ref{comp gras 4} for Grassmannians readily extend to the more general case of flag varieties. \\
Consider the (underived) functorial simplicial category
\begin{eqnarray*}
\forall A\in\mathfrak{Alg}_k\quad DFlag_k\left(V\right)\left(A\right):=&\text{full simplicial subcategory of }DFLAG_k\left(V\right)\left(A\right)& \\
&\text{made of pairs } \left(\left(W,F\right),\varphi\right)& \\
&\text{for which } H^j\left(F^iW\right) \text{ is flat over } A & \\ 
&\text{and the induced morphisms}& \\
&H^j\left(F^iW\right)\rightarrow H^j\left(F^{i-1}W\right)\rightarrow H^*\left(V\right)\otimes A& \\
&\text{are injective for all $i,j$.}&
\end{eqnarray*}
as well as its enhancement
\begin{eqnarray*}
\forall A\in\mathfrak{dgAlg}_k^{\leq0}\quad\widetilde{DFlag}_V\left(A\right):=&\scriptstyle{DFLAG_k\left(V\right)\left(A\right)\times^{\left(2\right)}_{DFLAG_k\left(V\right)\left(H^0\left(A\right)\right)}DFlag_k\left(V\right)\left(H^0\left(A\right)\right)}&\\
=&\text{full simplicial subcategory of }DFLAG_k\left(V\right)\left(A\right)& \\
&\text{made of pairs }\left(\left(W,F\right),\varphi\right)& \\
&\text{weakly equivalent to an object in $DFlag_k\left(V\right)\left(H^0\left(A\right)\right)$}& \nonumber \\
&\text{after tensorisation with $H^0\left(A\right)$ over $A$}&
\end{eqnarray*}
\begin{defn} \label{actual G}
Define the \emph{homotopy flag variety associated to $V$} to be
\begin{equation*}
\mathcal{DF}lag_k\left(V\right):=\bar WDFlag_k\left(V\right).
\end{equation*}
\end{defn}
\begin{prop} 
$\mathcal{DF}lag_k\left(V\right)$ is an open derived substack of $\mathcal{DFLAG}_k\left(V\right)$.
\end{prop}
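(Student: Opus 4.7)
The plan is to proceed in full analogy with Proposition \ref{comp gras 3}, reducing openness to the formal étaleness of the induced morphism of formal groupoids
\begin{equation*}
\pi_{\leq 0}DFlag_k(V)\longrightarrow \pi_{\leq 0}DFLAG_k(V)
\end{equation*}
over $\mathfrak{Alg}_k$, and then invoking the cohomological argument of Proposition \ref{comp gras 3} filtration-level by filtration-level. Fix a square-zero extension $I\hookrightarrow A\twoheadrightarrow B$ in $\mathfrak{Alg}_k$, pick an object $((W,F),\varphi)\in \pi_{\leq 0}DFlag_k(V)(B)$, and let $((W',F'),\varphi')\in\pi_{\leq 0}DFLAG_k(V)(A)$ be any lift, so that $(W',F')\otimes_AB\approx (W,F)$ and $\varphi'\otimes_AB\approx\varphi$.

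First I observe that the two injectivity requirements defining $DFlag_k(V)$ collapse into one: since $\varphi'$ is a quasi-isomorphism, $H^j(W')\to H^j(V)\otimes A$ is an isomorphism, and the composite
\begin{equation*}
H^j(F^iW')\hookrightarrow H^j(F^{i-1}W')\hookrightarrow\cdots\hookrightarrow H^j(W')\xrightarrow{\sim} H^j(V)\otimes A
\end{equation*}
is injective as soon as $H^j(F^iW')\to H^j(V)\otimes A$ is injective for every $i,j$. Hence for each fixed $i$, the triple $F^iW'\hookrightarrow W'\xrightarrow{\varphi'}V\otimes A$ sits in $DGRASS_k(V)(A)$ and reduces mod $I$ to an object of $DGrass_k(V)(B)$. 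The diagram-chase of Proposition \ref{comp gras 3} applies verbatim: using the commutative diagram of cohomology long exact sequences associated to $0\to F^iW\otimes_BI\to F^iW'\to F^iW\to 0$ (which is exact because the cofibrant filtered lift is degreewise split) and $0\to H^j(V)\otimes I\to H^j(V)\otimes A\to H^j(V)\otimes B\to 0$ (which is exact because $H^j(V)$ is $k$-flat), any element $v\in H^j(F^iW')$ going to zero in $H^j(V)\otimes A$ lifts to some $w\in H^j(F^iW)\otimes_B I$, and then two successive injections force $w=0$, hence $v=0$.

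The remaining verification is the $A$-flatness of each $H^j(F^iW')$. This is handled by the local criterion of flatness: $H^j(F^iW')\otimes_AB\cong H^j(F^iW)$ is $B$-flat by hypothesis, and the bottom row of the diagram above, together with the snake lemma, shows that the natural map $I\otimes_A H^j(F^iW')\to H^j(F^iW')$ is injective, i.e. $\mathrm{Tor}_1^A(B,H^j(F^iW'))=0$; since $I$ is nilpotent this suffices to conclude $A$-flatness.

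The main obstacle I expect is the simultaneous bookkeeping across filtration levels, in particular ensuring that the short exact sequence $0\to F^iW\otimes_BI\to F^iW'\to F^iW\to 0$ really is exact for every $i$; this requires pressing on the cofibrancy of $(W',F')$ as a filtered lift (Proposition \ref{fdg cofibrant}), so that each layer $F^iW'$ is degreewise projective and the base-change square decomposes accordingly. Once this is in place, the argument is an entirely levelwise application of the Grassmannian openness result, and no new analytic input beyond Proposition \ref{comp gras 3} is needed.
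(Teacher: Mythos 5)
Your proposal is correct and matches the paper's approach: the paper's proof is literally the one-line remark that the argument of Proposition \ref{comp gras 3} carries over, and you have simply spelled out that levelwise transfer (reduction to formal \'etaleness of the map of formal groupoids, then the same cohomological diagram chase applied to each $F^iW'$). Your additional verification of $A$-flatness of $H^j(F^iW')$ via the local criterion is a point the paper's Grassmannian proof itself leaves implicit, so it is a welcome extra rather than a divergence.
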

\begin{proof}
The proof of Proposition \ref{comp gras 3} carries over to this context.
\end{proof}
\begin{thm}
The homotopy flag variety associated to $V$ is a derived enhancement of the classical total flag variety attached to $H^*\left(V\right)$, i.e. 
\begin{equation*}
\pi^0\pi_{\leq0}\mathcal{DF}lag_k\left(V\right)\simeq\mathrm{Flag}\left(H^*\left(V\right)\right).
\end{equation*}
\end{thm}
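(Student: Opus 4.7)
The strategy is to mimic the proof of Theorem \ref{comp gras 4}, replacing single subcomplexes with entire filtrations; the homotopy-theoretic properties of filtered complexes established in Section 1.3 (in particular the projectivity of each level $F^pA$ of a cofibrant object, by Proposition \ref{fdg cofibrant}) will guarantee that nothing essentially new happens beyond bookkeeping.

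First I unravel the definitions. For $A \in \mathfrak{Alg}_k$ the set $\pi^0\pi_{\leq 0}\mathcal{DF}lag_k(V)(A)$ is given by the quotient
\[
\dfrac{\left\{\bigl((W,F),\varphi\bigr)\;\Big|\;F^iW \text{ perfect},\; H^*(F^iW)\ \text{$A$-flat},\; H^*(F^iW)\hookrightarrow H^*(F^{i-1}W)\hookrightarrow H^*(V)\otimes A\right\}}{\text{quasi-isomorphism}}
\]
while the functor of points of the classical flag variety is
\[
\mathbf{Flag}(H^*(V))(A)=\left\{\text{filtrations }\cdots\hookrightarrow G^iH\hookrightarrow G^{i-1}H\hookrightarrow\cdots\hookrightarrow H^*(V)\otimes A\text{ by flat $A$-submodules}\right\}.
\]
The cohomology functor $H^*$ induces a natural map $\Phi_A : \pi^0\pi_{\leq 0}\mathcal{DF}lag_k(V)(A)\to\mathbf{Flag}(H^*(V))(A)$ sending a class $\bigl[(W,F),\varphi\bigr]$ to the flag $\{H^*(F^iW)\hookrightarrow H^*(V)\otimes A\}_i$; functoriality and quasi-isomorphism invariance are immediate.

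Next I would show $\Phi_A$ is injective. If $((W,F),\varphi)$ and $((W',F'),\varphi')$ induce the same flag in cohomology, then the perfectness of each $F^iW,\, F^iW'$ together with the flatness of their cohomology allows us to replace each filtration level by a bounded complex of finitely generated projective $A$-modules (apply Proposition \ref{DG cofibrant} inside each filtration step, noting that cohomologically flat perfect complexes are strictly perfect up to quasi-isomorphism over a commutative ring). Building a compatible family of quasi-isomorphisms $F^iW\to F^iW'$ by reverse induction on $i$ (starting from some level where the filtration vanishes, which exists by Axiom $(a)$ of $\mathbf M^n_{\mathrm{filt}}$) and lifting through the inclusions gives a morphism of filtered complexes inducing the identity on cohomology, hence a weak equivalence by the definition of $W_{\mathfrak{FdgMod}_R}$.

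For surjectivity, given a flag $\{G^iH\hookrightarrow H^*(V)\otimes A\}_i$ of flat submodules I construct a representative as follows. Since each $G^iH$ is $A$-flat and $G^i/G^{i+1}$ is finitely presented, I can choose for each $i$ a bounded complex $T^i$ of finitely generated projective $A$-modules with $H^*(T^i)\simeq G^iH$, and arrange the choices to be compatible with the inclusions $G^{i+1}H\hookrightarrow G^iH$ so that the $T^i$ assemble into a filtration $T^{\bullet}\subseteq T^0$; here the projectivity of $T^i$ in each degree is what allows to realise a cohomology inclusion at the chain level, exactly as in the Grassmannian case. Finally, a quasi-isomorphism $\varphi:T^0\to V\otimes A$ compatible with the flag exists because $T^0$ is degreewise projective and $V\otimes A$ has cohomology $H^*(V)\otimes A$. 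The pair $((T^0,T^{\bullet}),\varphi)$ then maps to the original flag under $\Phi_A$.

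The main technical obstacle is the inductive lifting in the surjectivity step: one needs to choose the complexes $T^i$ compatibly so that the $T^{i+1}\hookrightarrow T^i$ are honest inclusions of complexes of projectives and not merely maps which are injective in cohomology. This is handled by starting from the deepest non-zero level (which exists by the boundedness assumption on the filtration) and then, at each step, choosing a projective resolution of the short exact sequence $0\to G^{i+1}H\to G^iH\to G^iH/G^{i+1}H\to 0$ and gluing it onto the filtration built so far --- the same trick already implicit in Theorem \ref{comp gras 4}, only iterated along the filtration.
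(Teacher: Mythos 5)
Your proof is correct and takes essentially the same route as the paper: the paper's own proof consists of the single sentence ``The proof of Theorem \ref{comp gras 4} carries over to this context,'' so you have done the work the paper leaves implicit — constructing the cohomology map $\Phi_A$, checking injectivity and surjectivity, and flagging the inductive compatibility of the chosen chain-level models along the filtration (using boundedness of $F$ to start the induction and degreewise projectivity from Proposition \ref{fdg cofibrant} to lift). This is the same blueprint as the Grassmannian case (Theorem \ref{comp gras 4}), iterated along the filtration, exactly as the paper asserts.
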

\begin{proof}
The proof of Theorem \ref{comp gras 4} carries over to this context. 
\end{proof}
\begin{rem}
In this paper we have focused on the study of the global theory of Grassmannians and flag varieties in Derived Algebraic Geometry, ending up with the construction of $\mathcal{DG}rass_k\left(V\right)$ and $\mathcal{DF}lag_k\left(V\right)$. The infinitesimal picture of these stacks -- including the computation of their tangent complexes -- will be analysed in \cite{dN}.
\end{rem}

\section*{Notations and conventions}
\begin{itemize}
\item $\mathrm{diag}\left(-\right)=$ diagonal of a bisimplicial set
\item $k=$ fixed field of characteristic $0$, unless otherwise stated
\item If $A$ is a (possibly differential graded) local ring, $\mathfrak m_A$ will be its unique maximal (possibly differential graded) ideal
\item $R=$ (possibly differential graded) commutative unital ring or $k$-algebra, unless otherwise stated
\item If $\left(M,d\right)$ is a cochain complex (in some suitable category) then $\left(M\left[n\right],d_{\left[n\right]}\right)$ will be the cochain complex such that $M\left[n\right]^j:=M^{j+n}$ and $d_{\left[n\right]}^j=d^{j+n}$
\item $\mathbb G_m=$ multiplicative group scheme over $k$
\item $X=$ semi-separated quasi-compact (possibly proper) scheme over $R$ or $k$ of finite dimension, unless otherwise stated
\item $\mathcal X=$ derived scheme over $R$
\item $\mathscr O_X=$ structure sheaf of $X$
\item $\mathbb L^{\mathcal F/R}=$ (absolute) cotangent complex of the derived geometric stack $\mathcal F$ over $R$
\item $\mathrm D\left(X\right)=$ derived category of $X$
\item $\Delta=$ category of finite ordinal numbers
\item $\mathfrak{Alg}_k=$ category of commutative associative unital algebras over $k$
\item $\mathfrak{Alg}_R=$ category of commutative associative unital algebras over $R$
\item $\mathfrak{Alg}_{H^0\left(R\right)}=$ category of commutative associative unital algebras over $H^0\left(R\right)$
\item $\mathfrak{Ch}_{\geq 0}\left(\mathfrak{Vect}_k\right)=$ model category of chain complexes of $k$-vector spaces 
\item $\mathfrak{dgAlg}^{\leq 0}_k=$ model category of (cochain) differential graded commutative algebras over $k$ in non-positive degrees
\item $\mathfrak{dgAlg}^{\leq 0}_R=$ model category of (cochain) differential graded commutative algebras over $R$ in non-positive degrees
\item $\mathfrak{dgAlg}^{\leq 0}_{R\left[t\right]}=$ model category of (cochain) differential graded commutative algebras over $R\left[t\right]$ in non-positive degrees
\item $\mathfrak{dgArt}^{\leq 0}_k=$ model category of (cochain) differential graded local Artin algebras over $k$ in non-positive degrees
\item $\mathfrak{dgMod}_R=$ model category of $R$-modules in (cochain) complexes
\item $\mathfrak{dgMod}\left(\mathcal X\right)=$ model category of derived modules over $\mathcal X$
\item $\mathfrak{dgMod}\left(\mathcal X\right)_{\mathrm{cart}}=$ $\infty$-category of derived quasi-coherent sheaves over $\mathcal X$
\item $\mathfrak{dg}_b\mathfrak{Nil}^{\leq 0}_R=$ $\infty$-category of bounded below differential graded commutative $R$-algebras in non-positive degrees such that the canonical map $A\rightarrow H^0\left(A\right)$ is nilpotent
\item $\mathfrak{dg}_b\mathfrak{Nil}^{\leq 0}_{H^0\left(R\right)}=$ $\infty$-category of bounded below differential graded commutative $H^0\left(R\right)$-algebras in non-positive degrees such that the canonical map $A\rightarrow H^0\left(A\right)$ is nilpotent
\item $\mathfrak{dgVect}_k^{\leq 0}=$ model category of (cochain) differential graded vector spaces over $k$ in non-positive degrees
\item $\mathfrak{FdgMod}_R=$ model category of filtered $R$-modules in (cochain) complexes
\item $\mathfrak{FdgMod}\left(\mathcal X\right)=$ model category of filtered derived modules over $\mathcal X$
\item $\mathfrak{FdgMod}\left(\mathcal X\right)_{\mathrm{cart}}=$ $\infty$-category of filtered derived quasi-coherent sheaves over $\mathcal X$
\item $\mathbb G_m$-$\mathfrak{dgMod}_{R\left[t\right]}=$ model category of graded $R\left[t\right]$-modules in (cochain) complexes
\item $\mathbb G_m$-$\mathfrak{dgMod}\left(\mathcal X\left[t\right]\right)=$ model category of graded derived modules over $\mathcal X\left[t\right]$\footnote{The definition of the derived scheme $\mathcal X\left[t\right]$ is given in the body of the paper (Section 2.3).}
\item $\mathfrak{Grpd}=$ 2-category of groupoids
\item $\mathfrak{Mod}_R=$ category of $R$-modules
\item $\mathfrak{Perf}\left(X\right)=$ dg-category of perfect complexes of $\mathscr O_X$-modules
\item $\mathfrak{QCoh}\left(X\right)=$ category of quasi-coherent sheaves over $X$
\item $\mathfrak{Set}=$ category of sets
\item $\mathfrak{sAlg}_k=$ model category of simplicial commutative associative unital algebras over $k$ 
\item $\mathfrak{sCat}=$ model category of simplicial categories
\item $\mathfrak{sSet}=$ simplicial model category of simplicial sets
\item $\mathfrak{sVect}_k=$ model category of simplicial vector spaces over $k$
\item $\mathfrak{Vect}_k=$ category of vector spaces over $k$
\end{itemize}

\end{document}